\newtheorem{theo}{Theorem}[section]
\newtheorem{lem}[theo]{Lemma}
\newtheorem{propo}[theo]{Proposition}
\newtheorem{coro}[theo]{Corollary}
\theoremstyle{definition}
\newtheorem{defi}[theo]{Definition}
\theoremstyle{remark}
\newtheorem{rem}[theo]{Remark}
\newtheorem{ex}[theo]{Example}
\newtheorem*{ex2}{Example}
\def\R{\mathbb{R}}
\def\Z{\mathbb{Z}}
\def\C{\mathbb{C}}
\def\N{\mathbb{N}}
\def\Q{\mathbb{Q}}
\def\n{\eta}
\def\r{\rho}
\def\s{\sigma}
\def\a{\alpha}
\def\e{\varepsilon}
\def\f{\varphi}
\def\b{\beta}
\def\n'{\nu}
\def\g{\gamma}
\def\k{\kappa}
\def\G{\Gamma}
\def\pz {\partial_{z}}
\def\pt {\partial_{t}}
\def\dt {\Delta_{t}}
\begin{document}
\sloppy
\title{A density theorem in parameterized differential Galois theory.}

\author{Thomas Dreyfus}
\address{Université Paris Diderot - Institut de Mathématiques de Jussieu,}
\curraddr{4, place Jussieu 75005 Paris.}
\email{tdreyfus@math.jussieu.fr.}
\thanks{Work partially supported by ANR, contract ANR-06-JCJC-0028}
\subjclass[2010]{Primary 34M15, 12H20, 34M03.}

%\keywords{u}

\date{\today}

%\dedicatory{u}

\begin{abstract}
We study parameterized linear differential equations with coefficients depending meromorphically  upon  the  parameters. As a main result, analogously to the unparameterized density theorem of Ramis, we show that the parameterized  monodromy,  the  parameterized exponential  torus  and  the  parameterized  Stokes  operators are topological generators in  Kolchin  topology, for  the parameterized differential Galois group introduced by Cassidy and Singer. We  prove an  analogous  result  for  the global  parameterized  differential  Galois  group, which generalizes a result by Mitschi and Singer. These authors give also a  necessary  condition  on  a  group  for being a global parameterized differential Galois group;
as a corollary of the density theorem, we prove that their condition is also sufficient. As an application, we  give  a  characterization  of completely integrable equations,  and  we  give  a  partial answer to a question of Sibuya about the transcendence properties of a given Stokes matrix. Moreover, using a parameterized Hukuhara-Turrittin theorem, we show that the Galois group descends to  a  smaller  field, whose  field  of  constants  is not differentially  closed. 
 \end{abstract} 

\maketitle

\tableofcontents

\pagebreak[3]
\section*{Introduction}
Let us consider a linear differential system of the form 
$$ \pz Y(z)=A(z)Y(z),$$
 where~$\pz=\frac{d}{dz}$, and~$A(z)$ is an~$m\times m$ matrix whose entries are germs of meromorphic functions in a neighborhood of a point, say~$0$ to fix ideas. The differential Galois group, which measures the algebraic dependencies among the solutions, can be viewed as an algebraic subgroup of~$\mathrm{GL}_{m}(\C)$ via the injective group morphism
$$\begin{array}{cccc}
\r_{U}: & Gal & \longrightarrow & \mathrm{GL}_{m}(\C) \\ 
 & \s  &\mapsto  & U(z)^{-1}\s (U(z)),
\end{array}~$$
where~$U(z)$ is some arbitrary fundamental solution, i.e., an invertible solution matrix. \par
Let~$U(z)$ be a fundamental solution contained in a Picard-Vessiot extension of the equation~${\pz Y(z)=A(z)Y(z)}$. The linear differential equation is said to be regular singular at~$0$ if there exists an invertible matrix~$P(z)$ whose entries are germs of meromorphic functions such that~$W(z)=P(z)U(z)$ satisfies ~$$\pz W(z)=\frac{A_{0}}{z}W(z),$$ where~$A_{0}$ is a matrix with constant complex entries. In this case,~$W(z)$ usually involves multivalued functions. 
Analytic continuation of~$W(z)$ along any simple loop~$\gamma$ around~$0$ yields another fundamental solution~$W(z)M_{\gamma}$. The matrix~$M_{\gamma}$, which is a monodromy matrix,  has complex entries, and does not depend on the choice of the homotopy class of~$\gamma$. The Schlesinger theorem says that the Zariski closure of the group generated by the monodromy matrix is the Galois group. In the general case, i.e., in presence of irregular singularities, the monodromy is no longer sufficient to provide a complete collection of topological generators. Ramis has shown that the group generated by the monodromy, the exponential torus and the Stokes operators, which is defined in a transcendental way as a subgroup of the differential Galois group, is dense in the latter in the Zariski topology.\\
\par
More recently, a Galois theory for parameterized linear differential equations of the form
$$ \pz Y(z,t)=A(z,t)Y(z,t),$$
where~$t=(t_{1},\dots,t_{n})$ are parameters, has been developed in \cite{CS}. See also \cite{HS,L,Rob,Um}.
Namely, the Galois group, which measures the~$(\partial_{t_{1}},\dots,\partial_{t_{n}})$-differential and algebraic dependencies among the solutions, can be seen as a differential group in the sense of Kolchin, that is a group of matrices whose entries lie in a differential field and satisfy a set of polynomial differential equations in the variables~$t_{1},\dots,t_{n}$. See \cite{C72,C89,Kol73,Kol85,MO}. To be applied, the theory from \cite{CS} requires the field of constants with respect to~$\pz$ to be of characteristic~$0$ and differentially closed (see~$\S \ref{3sec21}$). The drawback of this latter assumption is that a differentially closed field is a very big field, and cannot be interpreted as a field of functions.\par
There is a link between the parameterized differential Galois theory and isomonodromy for equations with only regular singular poles (see \cite{CS,MS12,MS13}). Let~${\mathcal{D}(t_{0},r)=\big\{(z_{1},\dots,z_{n})\in \C^{n}\big| \forall i\leq n, |z_{i}-t_{0,i}|<r\big\}}$ be an open polydisc in~$\C^{n}$, let~$\mathcal{D}$ be an open subset of~$\C$, and let~$A(z,t)$ be a matrix whose entries are analytic on~$\mathcal{D} \times \mathcal{D}(t_{0},r)$. We consider open disks~$D_{j}$ that cover~$\mathcal{D}$, and a solution~$U_{j}(z,t)$ of~$\pz Y(z,t)=A(z,t)Y(z,t)$ analytic on~$D_{j} \times \mathcal{D}(t_{0},r)$. If~${D_{i}\cap D_{j} \neq \varnothing}$, we define~${C_{i,j}(t)=U_{i}(z,t)^{-1}U_{j}(z,t)}$, the connection matrices. Following Definition~5.2 in \cite{CS} (see also \cite{Bo,M83}), the parameterized linear differential equation~${\pz Y(z,t)=A(z,t)Y(z,t)}$ is said to be isomonodromic\label{3p3} if, there is a choice of~$\left(D_{i}\right)$ covering ~$\mathcal{D}$, and of the solutions~$U_{i}(z,t)$ of~$\pz Y(z,t)=A(z,t)Y(z,t)$ analytic on~$D_{i} \times \mathcal{D}(t_{0},r)$ such that the connection matrices are independent of~$t$. In this case, the matrix of the monodromy is constant on the polydisc ~$\mathcal{D}(t_{0},r)$. When~$A(z,t)$ is of the form~$\sum_{i=1}^{s} \frac{A_{i}(t)}{z-u_{i}}$, such that all the~$A_{i}(t)$ have analytic entries on~$U$ and~$u_{i}\in \mathcal{D}$, the following statements are equivalent (see  \cite{CS}, Propositions 5.3 and 5.4).
\begin{itemize}
\item The Galois group is conjugate, over a differentially closed field (see Definition \ref{3defi4}), to a group of constant matrices.
\item The parameterized linear differential equation is isomonodromic in the above sense.
\item The parameterized linear differential equation is completely integrable (see Definition \ref{3defi2}).
\end{itemize}

\medskip
We are interested in the case where the parameterized linear differential equation may have irregular singularities, in a sense we are going to explain. The main result of this paper is a parameterized analogue of the density theorem of Ramis: we give topological generators for the Galois group in the Kolchin topology (in which closed sets are zero sets of differential algebraic polynomials). As an application of our main result, we improve Proposition 3.9 in \cite{CS} (see Remark \ref{3rem1}): a parameterized linear differential equation is completely integrable if and only if the topological generators for the Galois group just mentioned are conjugate to constant matrices over a field of meromorphic functions. Notice that the latter is not differentially closed.\\
\par
 The article is organized as follows. In the first section we study parameterized linear differential systems from an analytic point of view. The parameters will vary in~$U$, a non-empty polydisc of~$\C^{n}$. Let~$t=(t_{1},\dots,t_{n})\in U$ denote the multiparameter. Let~$\mathcal{M}_{U}$ be the field of meromorphic functions on~$U$ and let~$\hat{K}_{U}=\mathcal{M}_{U}[[z]][ z^{-1}]$ \label{3p1}.
 The Hukuhara-Turrittin theorem in this case gives the following result (see  Remark \ref{3rem2} for a discussion of a similar result present in \cite{Sch}): \\
\pagebreak[3]
\subparagraph{} \textbf{Proposition (see Proposition \ref{3propo4} below).}
\textit{Let~$\pz Y(z,t)=A(z,t)Y(z,t)$, with~$A(z,t)\in \mathrm{M}_{m}\Big( \hat{K}_{U}\Big)$ (that is a~$m\times  m$ matrix with entries in~$\hat{K}_{U}$). Then, there exist a non empty polydisc~$U'\subset U$ and ~$\nu\in \N^{*}$, such that we have a fundamental solution~$F(z,t)$ of the form:~$$F(z,t)=\hat{H}(z,t) z^{L(t)}e^{Q(z,t)},$$ 
where:
\begin{itemize}
 \item ~$\hat{H}(z,t)\in\mathrm{GL}_{m}\Big(\hat{K}_{U'}\big[z^{1/\nu}\big]\Big)$.
 \item ~$L(t)\in \mathrm{M}_{m}(\mathcal{M}_{U'})$.
 \item ~$e^{Q(z,t)}=\mathrm{Diag}(e^{q_{i}(z,t)})$, with~$q_{i}(z,t) \in z^{-1/\nu}\mathcal{M}_{U'} \big[z^{-1/\nu}\big]$.
 \item Moreover, we have~$ z^{L(t)}e^{Q(z,t)}=e^{Q(z,t)}z^{L(t)}$.\\
\end{itemize}}
\par 
See Remark \ref{3rem5} for a discussion about the uniqueness of a fundamental solution of~${\pz Y(z,t)=A(z,t)Y(z,t)}$ written in the above way.\par 
In~$\S \ref{3sec13}$, we briefly review the Stokes phenomenon in the unparameterized case. We have solutions, which are analytic in some sector and  Gevrey asymptotic to the formal part of the solution in the Hukuhara-Turrittin canonical form. The fact that various asymptotic solutions do not glue to a single solution on the Riemann surface of the logarithm is called the Stokes phenomenon. \par 
Let~$ U$ be a non empty polydisc of~$\C^{n}$ and let~$f(z,t)=\displaystyle \sum f_{i}(t)z^{i}\in \hat{K}_{U}$. We say that~$f(z,t)$ belongs to ~$\mathcal{O}_{U}(\{z\})$ \label{3p2} if for all~$t\in U$,~$z\mapsto \displaystyle \sum f_{i}(t)z^{i}$ is a germ of a meromorphic function at~$0$.
Remark that if~${f(z,t)\in\mathcal{O}_{U}(\{z\})\subset \mathcal{M}_{U}\big[\big[z\big]\big]\big[z^{-1}\big]=\hat{K}_{U}}$, then the~$z$-coefficients~$f_{i}(t)$ of~$f(z,t)$ are analytic on~$U$.\par 
In~$\S \ref{3sec14}$, we study the Stokes phenomenon for equations of the form~${\pz Y(z,t)=A(z,t)Y(z,t)}$, with~$A(z,t)\in \mathrm{M}_{m}( \mathcal{O}_{U}(\{z\}))$. In particular, we prove that the asymptotic solutions depend analytically (under mild conditions) upon the parameters.
\\ \par 
In the second section, we use the parameterized Hukuhara-Turrittin theorem to deduce some Galois theoretic properties of parameterized linear differential equations in coefficients in~$\mathcal{O}_{U}(\{z\})$. We first recall some facts from \cite{CS} about parameterized differential Galois theory. The problem is that the theory in \cite{CS} cannot be applied here, since~$\mathcal{M}_{U}$, our field of constants with respect to~$\pz$, is a field of functions that are meromorphic in~$t_{1},\dots,t_{n}$, and this field is not differentially closed (see~$\S  \ref{3sec21}$). In the papers \cite{GGO,Wib}, the authors prove the existence of parameterized Picard-Vessiot extensions under weaker assumptions than in \cite{CS}. See also \cite{CHS,N}. We do not use these latter results because we need a parameterized Hukuhara-Turrittin theorem, which proves directly that a parameterized Picard-Vessiot extension exists, not necessarily unique, in order to study the parameterized Stokes phenomenon. This allow us to define a group, we will call by abuse of language, see Remark \ref{3rem6}, the parameterized differential Galois group.
 In~$\S \ref{3sec23}$, we consider the local case~$\pz Y(z,t)=A(z,t)Y(z,t)$, with~${A(z,t)\in \mathrm{M}_{m}\big( \mathcal{O}_{U}(\{z\})\big)}$. We state and show the main result: \\ 
\pagebreak[3]
 \subparagraph{} \textbf{Parameterized analogue of the density theorem of Ramis  (Theorem \ref{3theo2}).}
\textit{The group generated by the parameterized monodromy, the parameterized exponential torus and the parameterized Stokes operators is dense in the parameterized differential Galois group for the Kolchin topology. \\}
\par
Then, we turn to the global case. We consider equations with coefficients in~$\mathcal{M}_{U}(z)$ and study their global Galois group. We prove a density theorem in this global setting, see Theorem \ref{3theo1}. The proof in the unparameterized case can be found in \cite{M}. In~$\S \ref{3sec25}$, we give various examples of calculations.
\\ \par
In the third section, we give three applications. First, we prove a criterion for the integrability of differential systems (see Definition \ref{3defi2}):\\
\pagebreak[3] \subparagraph{} \textbf{Proposition (see Proposition \ref{3propo6} below).}
\textit{Let~$A(z,t)\in \mathrm{M}_{m}(\mathcal{M}_{U}(z))$. The linear differential equation~$\pz Y(z,t)=A(z,t)Y(z,t)$ 
is completely integrable if and only if there exists a fundamental solution such that the matrices of the parameterized monodromy, the parameterized exponential torus and the parameterized Stokes operators for all the singularities are constant, i.e., do not depend on~$z$. \\}
\par
 As a second application, we give a partial answer to a question of Sibuya (see \cite{Si}), regarding the differential transcendence properties of a Stokes matrix  of the parameterized linear differential equation: 
~$$\begin{pmatrix}
\pz Y(z,t) \\ 
\pz^{2} Y(z,t)
\end{pmatrix} =\begin{pmatrix}
0 & 1 \\ 
z^{3}+t & 0
\end{pmatrix}\begin{pmatrix}
 Y(z,t) \\ 
\pz Y(z,t)
\end{pmatrix}.$$
Sibuya was asking whether an entry of a given Stokes matrix at infinity is~$\pt$-differentially transcendental, i.e., satisfies no differential polynomial equation. We prove that it is at least not~$\pt$-finite, i.e., that it satisfies no linear differential equations.\par
As a last application, we deal with the inverse problem. We prove that if~$G$ is the global parameterized differential Galois group of some equation having coefficients in~$k(z)$ (see~$\S \ref{3sec33}$), then~$G$ contains a finitely generated Kolchin dense subgroup. The converse of this latter assertion has been proved in Corollary 5.2 in \cite{MS12}, and we obtain a result on the inverse problem:\\
\pagebreak[3]
\subparagraph{}\textbf{Theorem (see Theorem \ref{3theo3} below).}
\textit{ ~$G$ is the global parameterized differential Galois group of some equation having coefficients in~$k(z)$ if and only if~$G$ contains a finitely generated Kolchin-dense subgroup. \\}
\subparagraph{}
In the appendix, we prove the following result.\\ 
\pagebreak[3]
\subparagraph{} \textbf{Theorem (see Theorem \ref{3theo4} below).}
\textit{Let us consider~$\pz Y(z,t)=A(z,t)Y(z,t)$, with~${A(z,t)\in \mathrm{M}_{m}\Big( \hat{K}_{U}\Big)}$. Then, there exists a non empty polydisc~$U'\subset U$, such that we have a fundamental solution~$F(z,t)$ of the form:~$$F(z,t)=\hat{P}(z,t) z^{C(t)}e^{Q(z,t)},$$ 
where:
\begin{itemize}
 \item ~$\hat{P}(z,t)\in\mathrm{GL}_{m}\Big(\hat{K}_{U'}\Big)$.
 \item ~$C(t)\in \mathrm{M}_{m}(\mathcal{M}_{U'})$.
 \item ~$e^{Q(z,t)}=\mathrm{Diag}\big(e^{q_{i}(z,t)}\big)$, with~$q_{i}(z,t) \in z^{-1/\nu}\mathcal{M}_{U'} \big[z^{-1/\nu}\big]$, for some~$\nu \in \N^{*}$.\\
\end{itemize}}
Remark that contrary to Proposition \ref{3propo4}, the entries of the formal part are not ramified. On the other hand,~$ z^{C(t)}$ and~$e^{Q(z,t)}$ do not commute anymore. This theorem is not necessary for the proof of the main result of the paper, this is the reason why we give the proof in the appendix. However, this result is important since it permits one to determine the equivalence classes (see \cite{VdPS}, Page 7) of parameterized linear differential systems in coefficients in~$\hat{K}_{U}$. \\
\subparagraph{}\textbf{Acknowledgments.}
This paper was prepared during my thesis, supported by the region Ile de France. I want to thank my advisor, Lucia Di Vizio for her helpful comments and the interesting discussions we had during the preparation of this paper. I also want to thank the organizers of the seminars that had made it possible for me to present the results contained in this paper. I want to thank Jean-Pierre Ramis, Guy Casale, Reinhard Schäfke, Daniel Bertrand and Michael F. Singer for pointing some mistakes and inaccuracies in this paper. Michael F. Singer in particular suggested the contribution of this paper to Theorem \ref{3theo3}. I certainly thank Carlos E. Arreche and Claude Mitschi for the read-through. Lastly, I heartily thank the anonymous referees of the two successive submissions and Jacques Sauloy who spent a great lot of time and effort to help me make the present paper readable.

\pagebreak[3]
\section{Local analytic linear differential systems depending upon parameters.}\label{3sec1}

In~$\S \ref{3sec11}$, we define the field to which the entries of the fundamental solution, in the Hukuhara-Turrittin canonical form, will belong. In~$\S \ref{3sec12}$, we prove a parameterized version of the Hukuhara-Turrittin theorem. In~$\S \ref{3sec13}$, we briefly review the Stokes phenomenon in the unparameterized case. In~$\S \ref{3sec14}$,  we study the Stokes phenomenon in the parameterized case. 

\pagebreak[3]
\subsection{Definition of the fields.}\label{3sec11}
Let us consider a linear differential system of the form~${\pz Y(z)=A(z)Y(z)}$, where~$A(z)$ is an~$m \times m$ matrix whose entries belongs to~$\C\big[\big[z\big]\big]\big[z^{-1}\big]$. We know we can find a formal fundamental solution in the Hukuhara-Turrittin canonical form~$\hat{H}(z)z^{L}e^{Q(z)}$, where:
\begin{itemize}
 \item ~$\hat{H}(z)$ is a matrix of formal power series in~$z^{1/\nu}$ for some ~$\nu\in \N^{*}$.
 \item~$L \in \mathrm{M}_{m}(\C)$. 
 \item~$Q(z)= \mathrm{Diag}(q_{i}(z))$, with~$q_{i}(z) \in z^{-1/\nu}\C \big[z^{-1/\nu}\big]$.
\item Moreover, we have~$z^{L}e^{Q(z)}=e^{Q(z)}z^{L}$.
\end{itemize}
Notice that this formulation is trivially equivalent to Theorem 3.1 in \cite{VdPS}. Let~$U$ be a non empty polydisc of~$\C^{n}$, let~$\hat{K}_{U}$ and~$\mathcal{M}_{U}$ defined in page \pageref{3p1}. We want to construct a field containing a fundamental set of solutions of 
$$ \partial_{z}Y(z,t)=A(z,t)Y(z,t),$$
 where~$A(z,t)\in \mathrm{M}_{m}\Big(\hat{K}_{U}\Big)$.  
 Let~$\dt=\{\partial_{t_{1}},\dots,\partial_{t_{n}}\}$ and let 
$$\textbf{E}_{U}=\displaystyle \bigcup_{\n' \in \N^{*}}  z^{\frac{-1}{\n'}} \mathcal{M}_{U} \Big[z^{\frac{-1}{\n'}}\Big].~$$
We define formally the~$(\pz,\dt)$-ring, i.e., a ring equipped with~$n+1$ derivations~$\pz,\partial_{t_{1}},\dots,\partial_{t_{n}}$ a priori not required to commute with each other,~$$R_{U}:=\hat{K}_{U}\left[\log, \left(z^{a(t)}\right)_{a(t) \in \mathcal{M}_{U}},
\Big(e(q(z,t))\Big)_{q(z,t) \in \textbf{E}_{U}} \right],$$
 with the following rules:
\begin{enumerate}
\item The symbols~$\log$,~$\Big(z^{a(t)}\Big)_{a(t) \in \mathcal{M}_{U}}$ and~$\Big(e(q(z,t))\Big)_{q(z,t) \in \textbf{E}_{U}}$ only satisfy the following relations:
$$\begin{array}{cclcclccl}
&&&z^{a(t)+b(t)}&=&z^{a(t)}z^{b(t)},&e(q_{1}(z,t)+q_{2}(z,t))&=&e(q_{1}(z,t))e(q_{2}(z,t)),\\

&&&z^{a}&=&z^{a}\in \hat{K}_{U} \hbox{ for }a\in \Z,&e(0)&=&1.
\end{array}~$$
\item The following rules of differentiation
$$\begin{array}{cclcclccl}
\pz \log&=&z^{-1},&\pz z^{a(t)}&=&\frac{a(t)}{z}z^{a(t)},&\pz e(q(z,t))&=&\pz (q(z,t)) e(q(z,t)),\\

\partial_{t_{i}} \log& =&0,&\partial_{t_{i}} z^{a(t)}&=&\partial_{t_{i}}(a(t))\log z^{a(t)},&\partial_{t_{i}}e(q(z,t))&=&\partial_{t_{i}} (q(z,t)) e(q(z,t)),
\end{array}~$$
equip the ring with a~$(\pz,\dt)$-differential structure, since these rules go to the quotient as can be readily checked.
\end{enumerate}
 The intuitive interpretations of these symbols are~$\log=\log(z)$,~$z^{a(t)}=e^{a(t)\log(z)}$ and~$e(q(z,t))=e^{q(z,t)}$. Let~$f(z,t)$ be one these latter functions. Then~$f(z,t)$ has a natural interpretation as an analytic function on~$ \widetilde{\C} \times U'$, where~$\widetilde{\C}$ is the Riemann surface of the logarithm and~$U'$ is some non empty polydisc contained in~$U$. We will use the analytic function instead of the symbol when we will consider asymptotic solutions (see~$\S  \ref{3sec13}$ and~$\S \ref{3sec14}$). For the time being, however, we see them only as symbols.\par 
Let~$\overline{\mathcal{M}_{U}}$ be the algebraic closure of~$\mathcal{M}_{U}$. In the same way as for~$R_{U}$, we construct the~$(\pz,\dt)$-ring 

$$\overline{R}_{U}:=\overline{\mathcal{M}_{U}}\big[\big[z\big]\big]\big[z^{-1}\big]\left[\log, \left(z^{a(t)}\right)_{a(t) \in \overline{\mathcal{M}_{U}}},\Big(e(q(z,t))\Big)_{q(z,t) \in \displaystyle \bigcup_{\n' \in \N^{*}}  z^{\frac{-1}{\n'}} \overline{\mathcal{M}_{U}}\left[z^{\frac{-1}{\n'}}\right]}\right].$$ 
 We can see (Proposition 3.22 in \cite{VdPS}) that this latter is an integral domain and its field of fractions has field of constants with respect to~$\pz$ equal to~$\overline{\mathcal{M}_{U}}$. Since~$R_{U}\subset \overline{R}_{U}$,~$R_{U}$ is also an integral domain. Therefore, we may consider the~$(\pz,\dt)$-fields:
~$$K_{F,U}=\mathcal{M}_{U}\left(\log, \left(z^{a(t)}\right)_{a(t) \in \mathcal{M}_{U} }\right),$$
~$$\hat{K}_{F,U}=\hat{K}_{U}\left(\log, \left(z^{a(t)}\right)_{a(t) \in \mathcal{M}_{U}}\right),$$
and
$$\widehat{\textbf{K}_{U}}=\hat{K}_{U}\left(\log, \left(z^{a(t)}\right)_{a(t) \in \mathcal{M}_{U}}, \Big(e(q(z,t))\Big)_{q(z,t) \in \textbf{E}_{U}}\right).$$
In the definition of the fields~$K_{F,U}$ and~$\hat{K}_{F,U}$, the subscript~$F$ stands for Fuchsian. Since~$\widehat{\textbf{K}_{U}}$ is contained in the field of fractions of~$\overline{R}_{U}$, it has field of constants with respect to~$\pz$ equal to~$\overline{\mathcal{M}_{U}}\cap \widehat{\textbf{K}_{U}}=\mathcal{M}_{U}$.\par 
We have defined~$(\pz,\dt)$-fields where all the derivations commute with each other. We have the following inclusions of ~$(\pz,\dt)$-fields:
$$\begin{array}{ccccccc}
&&K_{F,U}&&&&\\
&\nearrow&&\searrow&&&\\
\mathcal{M}_{U}&\rightarrow&\hat{K}_{U}&\rightarrow&\hat{K}_{F,U}&\rightarrow&\widehat{\textbf{K}_{U}}. 
\end{array}$$
\pagebreak[3]
\begin{rem}\label{3rem4}
Any algebraic function over~$\mathcal{M}_{U}$ can be seen as an element of~$\mathcal{M}_{U'}$, for some non-empty~${U'\subset U}$. Therefore, a finite extension of~$\mathcal{M}_{U}$ can be embedded in~$\mathcal{M}_{U'}$ for a convenient choice of~$U'\subset U$. We will use this fact in the rest of the paper.
\end{rem}
\pagebreak[3]
\begin{lem}\label{3lem8}
Let~$ U$ be a non empty polydisc of~$\C^{n}$ and let~$L(t) \in \mathrm{M}_{m}\Big(\overline{\mathcal{M}_{U}}\Big)$, where~$\overline{\mathcal{M}_{U}}$ is the algebraic closure of~$\mathcal{M}_{U}$. There exist a non empty polydisc~$U'\subset U$, and~${z^{L(t)}\in \mathrm{GL}_{m}(K_{F,U'})}$ satisfying~$$\pz z^{L(t)}=\frac{L(t)}{z}z^{L(t)}=z^{L(t)}\frac{L(t)}{z}.$$
\end{lem}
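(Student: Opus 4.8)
The plan is to reduce $z^{L(t)}$ to scalar and nilpotent building blocks by a Jordan decomposition carried out over the field $\overline{\mathcal{M}_U}$, and then to descend the resulting data onto a field of genuine meromorphic functions on a smaller polydisc by means of Remark \ref{3rem4}. First I would exploit that $\overline{\mathcal{M}_U}$ is algebraically closed: any $L(t)\in \mathrm{M}_m(\overline{\mathcal{M}_U})$ admits a Jordan normal form over it, so there are $P(t)\in \mathrm{GL}_m(\overline{\mathcal{M}_U})$ and a block-diagonal $J(t)=\mathrm{Diag}(J_1(t),\dots,J_r(t))$ with $L(t)=P(t)J(t)P(t)^{-1}$, each block being $J_i(t)=\lambda_i(t)\,\mathrm{Id}+N_i$ with $\lambda_i(t)\in\overline{\mathcal{M}_U}$ and $N_i$ the standard nilpotent. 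Working over the field $\overline{\mathcal{M}_U}$ rather than pointwise in $t$ dissolves the usual difficulty that the Jordan type might jump along $U$: it is fixed once and for all by the field-theoretic normal form. Since the finitely many entries of $P(t)$, $P(t)^{-1}$ and the $\lambda_i(t)$ lie in $\overline{\mathcal{M}_U}$, hence are algebraic over $\mathcal{M}_U$, Remark \ref{3rem4} lets me pass to a single non-empty polydisc $U'\subset U$ on which all of them are represented by elements of $\mathcal{M}_{U'}$. This descent is the conceptual crux of the statement, and the reason a smaller polydisc is unavoidable.

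Next I would define the candidate blockwise. On each block I set
$$z^{J_i(t)}:=z^{\lambda_i(t)}\sum_{k\geq 0}\frac{N_i^{\,k}}{k!}(\log)^k,$$
a finite sum since $N_i$ is nilpotent, where $z^{\lambda_i(t)}$ and $\log$ denote the symbols generating $K_{F,U'}$; this makes sense precisely because $\lambda_i(t)\in\mathcal{M}_{U'}$ after the descent. Putting $z^{J(t)}:=\mathrm{Diag}(z^{J_i(t)})$ and
$$z^{L(t)}:=P(t)\,z^{J(t)}\,P(t)^{-1},$$
all entries lie in $K_{F,U'}$. Invertibility is then immediate, since each block is a unit with inverse $z^{-\lambda_i(t)}\sum_{k\geq 0}\frac{(-N_i)^k}{k!}(\log)^k$, so that $z^{L(t)}\in \mathrm{GL}_m(K_{F,U'})$.

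It remains to verify the two differential identities, which I expect to be the only place genuinely requiring care. Using the defining rules $\pz \log=z^{-1}$ and $\pz z^{a(t)}=\frac{a(t)}{z}z^{a(t)}$, together with the fact that within a single block $\lambda_i(t)$ is a scalar, so that $z^{\lambda_i(t)}\,\mathrm{Id}$ commutes with the polynomial $\sum_{k\geq 0}\frac{N_i^k}{k!}(\log)^k$ and $N_i$ commutes with this same polynomial, a direct computation gives $\pz z^{J_i(t)}=\frac{J_i(t)}{z}z^{J_i(t)}=z^{J_i(t)}\frac{J_i(t)}{z}$ on each block, hence $\pz z^{J(t)}=\frac{J(t)}{z}z^{J(t)}=z^{J(t)}\frac{J(t)}{z}$. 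Finally, because $P(t)$ and $P(t)^{-1}$ have entries in $\mathcal{M}_{U'}$ they are constant for $\pz$, so conjugating and using $L(t)=P(t)J(t)P(t)^{-1}$ yields $\pz z^{L(t)}=\frac{L(t)}{z}z^{L(t)}=z^{L(t)}\frac{L(t)}{z}$, as required. The subtle point throughout is to keep the scalar part, the nilpotent part, and their $z$-powers commuting, which the blockwise bookkeeping guarantees.
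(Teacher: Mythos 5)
Your proof is correct and follows essentially the same route as the paper: a Jordan decomposition of $L(t)$ over the algebraically closed field $\overline{\mathcal{M}_U}$, descent of the finitely many algebraic entries to $\mathcal{M}_{U'}$ via Remark \ref{3rem4}, the explicit formula $z^{L(t)}=P(t)\,\mathrm{Diag}\big(z^{\lambda_i(t)}\big)e^{N\log}P(t)^{-1}$, and a direct verification of the differential identities using that $P(t)$ is $\pz$-constant. The only cosmetic difference is that you use the full Jordan normal form with standard (hence automatically $t$-independent) nilpotent blocks, whereas the paper uses the additive decomposition $L(t)=P(t)(D(t)+N(t))P(t)^{-1}$ and shrinks $U'$ to make $N(t)$ constant.
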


\begin{proof}
  Let~$L(t)=P(t)(D(t)+N(t))P^{-1}(t)$, with~$D(t)=\mathrm{Diag}(d_{i}(t))$,~$d_{i}(t)\in \overline{\mathcal{M}_{U}}$,~$N(t)$ nilpotent,~${D(t)N(t)=N(t)D(t)}$ and~$P(t) \in \mathrm{GL}_{m}\Big(\overline{\mathcal{M}_{U}}\Big)$ be the Jordan decomposition of~$L(t)$.\par 
Due to Remark \ref{3rem4}, there exists a non empty polydisc~$U'\subset U$, such that~$d_{i}(t)\in \mathcal{M}_{U'}$ and~${P(t) \in \mathrm{GL}_{m}(\mathcal{M}_{U'})}$. We may restrict~$U'$ and assume that~$N(t)$ does not depend upon~$t$ in~$U'$. Let us write~$N:=N(t)$.
Then, the matrix~$z^{L(t)}=P(t)\mathrm{Diag}(z^{d_{i}(t)})e^{N\log}P^{-1}(t)$ belongs to~$\mathrm{GL}_{m}(K_{F,U'})$ and~$z^{L(t)}$ satisfies~$$\pz z^{L(t)}=\frac{L(t)}{z}z^{L(t)}=z^{L(t)}\frac{L(t)}{z}.$$
\end{proof}
Let~$a(t)\in \mathcal{M}_{U}$ and let~$(a(t))\in \mathrm{M}_{1}(\mathcal{M}_{U})$ be the corresponding matrix. Then, we have~$ z^{a(t)}=z^{(a(t))}$.
\pagebreak[3]
\subsection{The Hukuhara-Turrittin theorem in the parameterized case.}\label{3sec12}

The goal of this subsection is to give the parameterized version of the Hukuhara-Turrittin theorem. In the appendix, we prove a slightly different result, which is not needed in the paper. See Theorem \ref{3theo4}.
\pagebreak[3]
\begin{propo}\label{3propo4}
Let~$ U$ be a non empty polydisc of~$\C^{n}$ and consider ~$$\partial_{z}Y(z,t)=A(z,t)Y(z,t),$$
 with~$A(z,t)\in \mathrm{M}_{m}\Big(\hat{K}_{U}\Big)$. There exists a non empty polydisc~$U'\subset U$ such that we have a fundamental solution~$F(z,t)\in \mathrm{GL}_{m}\Big(\widehat{\textbf{K}_{U'}}\Big)$ of the form
$$F(z,t)=\hat{H}(z,t) z^{L(t)}e\Big(Q(z,t)\Big),$$ 
where:
\begin{itemize}
 \item ~$\hat{H}(z,t)\in \mathrm{GL}_{m}\Big(\hat{K}_{U'}\left[z^{1/\nu}\right]\Big)$, for some~$\nu\in \N^{*}$.
 \item ~$L(t) \in \mathrm{M}_{m}(\mathcal{M}_{U'})$.
 \item ~$e\Big(Q(z,t)\Big)=\mathrm{Diag}\Big(e(q_{i}(z,t))\Big)$, with~$q_{i}(z,t) \in \textbf{E}_{U'}$.
 \item  Moreover, we have~$e\Big(Q(z,t)\Big)z^{L(t)}=z^{L(t)}e\Big(Q(z,t)\Big)$.
\end{itemize}
Furthermore, if~$A(z,t)\in \mathrm{M}_{m}\Big(\mathcal{O}_{U}(\{z\})\Big)$, there exists a non empty polydisc~$U''\subset U'$ such that we may assume that the~$z$-coefficients of~$\hat{H}(z,t)$ are all analytic on~$U''$.
\end{propo}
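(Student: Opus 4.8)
The plan is to reduce the parameterized Hukuhara-Turrittin problem to the classical unparameterized one, handling the fiber $t$ as a fixed complex parameter while controlling the dependence on $t$ via analytic and algebraic arguments. First I would treat the entries of $A(z,t)$ as elements of $\hat K_U = \mathcal M_U[[z]][z^{-1}]$, viewing $\mathcal M_U$ as the new field of constants with respect to $\pz$. The classical Hukuhara-Turrittin theorem (Theorem 3.1 in \cite{VdPS}), stated at the start of $\S \ref{3sec11}$, applies verbatim over any differential field of characteristic zero, so it applies over $\mathcal M_U$ (or its algebraic closure $\overline{\mathcal M_U}$) to produce a formal fundamental solution of the form $\hat H(z,t) z^{L(t)} e(Q(z,t))$, where a priori $\hat H$, $L$ and the $q_i$ have coefficients in $\overline{\mathcal M_U}$ rather than $\mathcal M_U$. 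The commutation $e(Q)z^{L}=z^{L}e(Q)$ is automatic from the canonical form, and the exponents $q_i$ automatically lie in $z^{-1/\nu}\overline{\mathcal M_U}[z^{-1/\nu}]$ for some $\nu$.

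The key obstacle, and the heart of the argument, is descending from $\overline{\mathcal M_U}$ back to $\mathcal M_U$ (at the cost of shrinking $U$). Here I would invoke Remark \ref{3rem4}: every algebraic function over $\mathcal M_U$ is meromorphic on some smaller polydisc $U'\subset U$. The eigenvalue computations producing the $q_i(z,t)$ and the entries of $L(t)$ involve only finitely many algebraic operations over $\mathcal M_U$ — extracting roots of the characteristic polynomials governing the Newton polygon slopes and the formal diagonalization — so all the resulting algebraic functions of $t$ simultaneously lie in $\mathcal M_{U'}$ after passing to a suitable $U'$. Thus I get $q_i(z,t)\in \textbf{E}_{U'}$ and, after a further application of Remark \ref{3rem4}, a Jordan-type reduction of $L(t)$ with entries in $\mathcal M_{U'}$; Lemma \ref{3lem8} then guarantees that $z^{L(t)}\in \mathrm{GL}_m(K_{F,U'})$ is a genuine element of the constructed field $\widehat{\textbf K_{U'}}$ satisfying the required differential equation. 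Collecting the pieces, $\hat H(z,t)\in \mathrm{GL}_m(\hat K_{U'}[z^{1/\nu}])$ and the product $F=\hat H z^{L} e(Q)$ is a fundamental solution lying in $\mathrm{GL}_m(\widehat{\textbf K_{U'}})$, which is exactly the asserted form.

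For the final sentence, suppose in addition that $A(z,t)\in \mathrm{M}_m(\mathcal O_U(\{z\}))$, so the $z$-coefficients of $A$ are analytic on $U$ and the formal power series in $z$ converge for each fixed $t$. I would argue that the recursive determination of the coefficients of $\hat H(z,t)$ — obtained by substituting $F$ into $\pz Y = A Y$ and solving order by order in $z^{1/\nu}$ — expresses each coefficient as a rational (hence meromorphic) function of finitely many earlier coefficients and of the entries of $A$, $L$, and the $q_i$. The denominators that appear are differences of eigenvalue data and are nonzero as meromorphic functions; after removing their (locally finite) zero and pole loci one obtains a polydisc $U''\subset U'$ on which every coefficient of $\hat H$ is analytic. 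I expect the genuinely delicate point to be ensuring that this shrinking of the domain can be done \emph{uniformly} — that is, that the countably many coefficients do not force $U''$ to degenerate to a point. This is controlled by the fact that the obstruction denominators are governed by the finitely many slopes and eigenvalues fixed once and for all in the formal reduction, so only finitely many distinct denominator types arise, and their common nonvanishing locus contains a nonempty polydisc $U''$ on which all $z$-coefficients of $\hat H(z,t)$ are simultaneously analytic.
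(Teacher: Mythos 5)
Your first two steps match the paper's proof exactly: apply the classical Hukuhara--Turrittin theorem (Theorem 3.1 in van der Put--Singer) over $C=\overline{\mathcal{M}_{U}}$, then descend the finitely many algebraic functions appearing in $L(t)$ and the $q_{i}(z,t)$ to $\mathcal{M}_{U'}$ via Remark \ref{3rem4} and Lemma \ref{3lem8}. The genuine gap is at ``Collecting the pieces, $\hat{H}(z,t)\in\mathrm{GL}_{m}\big(\hat{K}_{U'}[z^{1/\nu}]\big)$''. What the classical theorem over the algebraic closure gives you is a formal part $\hat{H}'(z,t)$ with entries in $\overline{\mathcal{M}_{U'}}\big[\big[z^{1/\nu}\big]\big]\big[z^{-1/\nu}\big]$: \emph{infinitely many} $z$-coefficients, each merely algebraic over $\mathcal{M}_{U'}$. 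Remark \ref{3rem4} lets you shrink the polydisc once per \emph{finite} family of algebraic functions; it cannot be applied to the infinitely many coefficients of $\hat{H}'$, and nothing in your argument explains why they can be taken simultaneously meromorphic on one fixed $U'$. That descent is the actual content of the proposition beyond the classical theorem, and the paper proves it by a separate argument you are missing: $\hat{H}'$ satisfies the linear system $\pz \hat{H}'=A\hat{H}'-\hat{H}'\left(z^{-1}L(t)+\pz Q(z,t)\right)$, which after vectorizing becomes $\pz \widetilde{H}'=C(z,t)\widetilde{H}'$ with $C(z,t)\in\mathrm{M}_{m^{2}}\big(\hat{K}_{U'}[z^{1/\nu}]\big)$ --- coefficients in $\mathcal{M}_{U'}$, since $L$ and $Q$ have already been descended. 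Identifying powers of $z^{1/\nu}$ yields the recursion $\left(\frac{i}{\nu}+1\right)\widetilde{H}'_{i+\nu}(t)=\sum_{l=N}^{i-M}C_{i-l}(t)\widetilde{H}'_{l}(t)$, an $\mathcal{M}_{U'}$-linear recursion in which the only divisions are by the nonzero constants $\frac{i}{\nu}+1$; hence the solution set is defined over $\mathcal{M}_{U'}$ and a fundamental solution with all coefficients in $\mathcal{M}_{U'}$ exists (one still has to note that the finitely many free coefficients may be chosen in $\mathcal{M}_{U'}$ with $\hat{H}$ invertible, which is the step the paper calls ``now clear'').

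The same recursion is what makes the ``Furthermore'' part work, and it dissolves the uniformity worry you flag rather than resolving it the way you suggest. When $A(z,t)\in\mathrm{M}_{m}\big(\mathcal{O}_{U}(\{z\})\big)$ its $z$-coefficients are already analytic on $U$, so a \emph{single} shrink to $U''$ removing the poles of the finitely many coefficients of $z^{-1}L(t)+\pz Q(z,t)$ makes all $z$-coefficients of $C(z,t)$ analytic on $U''$; the recursion then propagates analyticity to every $\widetilde{H}'_{i}(t)$ by induction, with no $t$-dependent denominators at all and no further shrinking. Your picture --- denominators given by ``differences of eigenvalue data'', controlled because ``only finitely many distinct denominator types arise'' --- is not the mechanism and is left unproven: a direct order-by-order inversion for $\hat{H}$ involves Sylvester-type operators built from the leading terms of $A$ and $\pz Q$, and these can genuinely be singular (the formal solution is not unique), which is precisely why the paper passes to the vectorized first-order system before identifying coefficients.
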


\pagebreak[3]
\begin{rem}\label{3rem5}
Remark that we have no uniqueness of the fundamental solution written in the same way as above, since for all~$\kappa\in \Z$,~$z^{\k}\hat{H}(z,t) z^{L(t)-\k}e^{Q(z,t)}$ is also a fundamental solution. However, because of the construction of~$\widehat{\textbf{K}_{U'}}$, we obtain that if for~${i\in \{1,2\}}$,~$\hat{H}_{i}(z,t)z^{L_{i}(t)}e\Big(Q_{i}(z,t)\Big)$ is a fundamental solution of~${\partial_{z}Y(z,t)=A(z,t)Y(z,t)}$ written in the same way as above, then, up to a permutation,~$Q_{1}$ and~$Q_{2}$ have the same entries.
\end{rem}

\pagebreak[3]
\begin{ex}[\cite{Sch}, Introduction]
If we consider~$$z^{2} \pz Y(z,t)=\begin{pmatrix}
t & 1 \\ 
z & 0
\end{pmatrix}  Y(z,t),$$ we get the solution

\begin{equation}\label{3eq3}
\begin{pmatrix}

\begin{pmatrix}
1 & 1 \\ 
0 & -t
\end{pmatrix}+O(z)\end{pmatrix}\begin{pmatrix}
z^{\frac{1}{t}}e^{\frac{-t}{z}} & 0 \\ 
0 & z^{\frac{-1}{t}}
\end{pmatrix},
\end{equation}
 for~$t\neq 0$ and the solution
$$\begin{pmatrix}
1 & 1 \\ 
z^{1/2} & -z^{1/2}
\end{pmatrix}\begin{pmatrix} \begin{pmatrix}
1& 0 \\ 
0 &1
\end{pmatrix}+O(z^{1/2})\end{pmatrix} \begin{pmatrix}
z^{\frac{1}{4}}e^{-z^{-1/2}} & 0 \\ 
0 & z^{\frac{1}{4}}e^{z^{-1/2}}
\end{pmatrix} ,$$
 for~$t=0$. The latter is not the specialization of (\ref{3eq3}) at~$t=0$. The problem is that the level of the unparameterized system (see~$\S \ref{3sec13}$ for the definition) at~$t=0$ is~$1$ and the level of the unparameterized system for~$t\neq 0$ is~$\frac{1}{2}$. This example shows that we cannot get a solution in the parameterized Hukuhara-Turrittin form, that remains valid for all values of the parameter~$t$. This is the reason why we have to restrict the subset of the parameter-space.
\end{ex} 
\pagebreak[3]
\begin{rem}\label{3rem2}
Similar results to Proposition \ref{3propo4} have been proved in Theorem 4.2 of \cite{Sch}. We now explain the result of Schäfke. Let~$U$ be an open connected subset of~$\C^{n}$ that contains~$0$ and let~${A(z,t)=\displaystyle\sum_{l=s}^{\infty}A_{l}(t)}$, with~$s\in \Z$, and~$A_{l}(t)$ analytic in~$U$. In particular, ~${A(z,t)\in \mathrm{M}_{m}\Big(\hat{K}_{U}\Big)}$. Let us consider~${\pz Y(z,t)=A(z,t)Y(z,t)}$ and assume that for all~${t\in U}$, there exists a solution~$\hat{H}_{t}(z) z^{L_{t}}e\Big(Q(z,t)\Big)$, given in the classical Hukuhara-Turrittin canonical form such that:
\begin{itemize}
\item The~$z$-coefficients of the~$q_{i}(z,t)$ are analytic functions in~$t\in U$.
\item The degree in~$z^{-1}$ of~$q_{i}(z,t)-q_{j}(z,t)$ is independent of~$t$ in~$U$.
\item If~$q_{i}(z,t)\not \equiv q_{j}(z,t)$, then~$ q_{i}(z,0)\neq q_{0}(z,0)$.
\end{itemize}
Under these assumptions, Schäfke concludes that, there exists an open neighborhood~${U'\subset U}$ of~$0$ in the~$t$-plane such that there exists a solution~${\hat{H}(z,t) z^{L(t)}e(Q(z,t))\in \mathrm{GL_{m}}(\widehat{\textbf{K}_{U'}})}$ with~$\hat{H}(z,t)=\sum_{l=0}^{\infty}\hat{H}_{l}(t)$ and~$t\mapsto \hat{H}_{l}(t),L(t)$ are analytic. Notice that Schäfke gives a necessary and sufficient condition, that can be algorithmically checked, for well behaved exponential part. See \cite{Sch}, Theorem~5.2. Using Schäfke's theorem, we can deduce Proposition \ref{3propo4} only in the particular case where~$A(z,t)$ has entries with~$z$-coefficients analytic in~$U$. Note that \cite{Sch} does not allow us to deduce the general case. See also \cite{BV},~$\S$ 10, Theorem 1, for another result of this nature.
\end{rem}

\begin{proof}[Proof of Proposition \ref{3propo4}]
Let~${K=C\big[\big[z\big]\big]\big[z^{-1}\big]}$, where~$C$ is an algebraically closed field of characteristic~$0$ equipped with a derivation~$\pz$ that acts trivially on~$C$ and with~${\pz(z)=1}$. The Hukuhara-Turrittin theorem (see Theorem 3.1 in \cite{VdPS}) is valid for  linear differential system with entries in~$K$. We apply it with~$C=\overline{\mathcal{M}_{U}}$, the algebraic closure of~$\mathcal{M}_{U}$. \par
Let us consider the matrices~$L(t) \in \mathrm{M}_{m}\Big(\overline{\mathcal{M}_{U}}\Big)$ and~$Q(z,t)=\mathrm{Diag}\Big(q_{i}(z,t)\Big)$, with~${q_{i}(z,t) \in z^{-1/\nu}\overline{\mathcal{M}_{U}}\left[z^{-1/\nu}\right]}$ for some~$\nu\in \N$. Because of Remark \ref{3rem4} and Lemma \ref{3lem8}, there exists a non empty polydisc~$ U'\subset U$, such that we may define~$z^{L(t)}\in \mathrm{GL}_{m}(K_{F,U'})$ satisfying~$\pz z^{L(t)}=\frac{L(t)}{z}z^{L(t)}=z^{L(t)}\frac{L(t)}{z}$,~$L(t) \in \mathrm{M}_{m}(\mathcal{M}_{U'})$ and~$q_{i}(z,t) \in \textbf{E}_{U'}$. Hence, there exists a non empty polydisc~$ U'\subset U$ such that the Hukuhara-Turrittin theorem gives a fundamental solution
$$F'(z,t)=\hat{H}'(z,t) z^{L(t)}e\Big(Q(z,t)\Big),$$ 
where:
\begin{itemize}
 \item ~$\hat{H}'(z,t)\in \mathrm{GL}_{m}\Big(\overline{\mathcal{M}_{U'}}\left[\left[z^{1/\nu}\right]\right]\left[z^{-1/\nu}\right]\Big)$, for some~$\nu\in \N$.
 \item ~$L(t) \in \mathrm{M}_{m}(\mathcal{M}_{U'})$.
 \item ~$e\Big(Q(z,t)\Big)=\mathrm{Diag}\Big(e(q_{i}(z,t))\Big)$, with~$q_{i}(z,t) \in \textbf{E}_{U'}$.
 \item  Moreover, we have~$e\Big(Q(z,t)\Big)z^{L(t)}=z^{L(t)}e\Big(Q(z,t)\Big)$.
\end{itemize}
 Let us prove now that we may find~$\hat{H}(z,t)\in \mathrm{GL}_{m}\Big(\hat{K}_{U'}\left[z^{1/\nu}\right]\Big)$, such that~${F(z,t)=\hat{H}(z,t) z^{L(t)}e\Big(Q(z,t)\Big)}$ is a fundamental solution. 
The matrix~$$F'(z,t)=\hat{H}'(z,t)z^{L(t)}e\Big(Q(z,t)\Big)$$ satisfies the parameterized linear differential equation
$$ \pz F'(z,t)=A(z,t)F'(z,t),$$
and the matrix~$z^{L(t)}e\Big(Q(z,t)\Big)$ satisfies parameterized linear differential equation:
$$ \pz z^{L(t)}e\Big(Q(z,t)\Big)=\left(z^{-1}L(t)+\pz Q(z,t)\right)z^{L(t)}e\Big(Q(z,t)\Big)=z^{L(t)}e\Big(Q(z,t)\Big)\left(z^{-1}L(t)+\pz Q(z,t)\right).$$
Hence,
$$\pz \hat{H}'(z,t)=A(z,t)\hat{H}'(z,t)-\hat{H}'(z,t)\left(z^{-1}L(t)+\pz Q(z,t)\right).$$
We write~$\hat{H}'(z,t)$ as a column vector~$\widetilde{H}'(z,t)$ of size~$m^{2}$. Let~$C(z,t)\in \mathrm{M}_{m^{2}}\Big(\hat{K}_{U'}\left[z^{1/\nu}\right]\Big)$, with~$\nu \in \N^{*}$ such that ~$\widetilde{H}'(z,t)$ satisfies  the parameterized linear differential system:
$$\pz \widetilde{H}'(z,t)=C(z,t)\widetilde{H}'(z,t).$$
Let us write~$\widetilde{H}'(z,t)=\displaystyle \sum_{i \geq N} \widetilde{H}'_{i}(t)z^{i/\nu}$ and~$C(z,t)=\displaystyle \sum_{i \geq M} C_{i}(t)z^{i/\nu}$, where~$M,N\in\Z$. Then, by identifying the coefficients of the ~$z^{i/\nu}$-terms of the power series in the equation~$\pz \widetilde{H}'(z,t)=C(z,t)\widetilde{H}'(z,t),$ we find that:
$$
\left(\frac{i}{\nu}+1\right)\widetilde{H}'_{i+\nu}(t)=\displaystyle \sum_{ 
l=N}^{i-M} C_{i-l}(t)\widetilde{H}'_{l}(t).
$$
We recall that because of the definition of~$\hat{K}_{U'}\left[z^{1/\nu}\right]$, every~$C_{i}(t)$ belongs to~$\mathrm{M}_{m}\left(\mathcal{M}_{U'}\right)$. The fact that there exists a fundamental solution~$\hat{H}(z,t) z^{L(t)}e\Big(Q(z,t)\Big)$, with 
${\hat{H}(z,t)\in \mathrm{GL}_{m}\Big(\hat{K}_{U'}\left[z^{1/\nu}\right]\Big)}$ is now clear.\par 
Assume now that ~$A(z,t)\in \mathrm{M}_{m}\Big(\mathcal{O}_{U}(\{z\})\Big)$. Let~$U''$ be a non empty polydisc with~${U''\subset U'}$ such that for~$z\neq 0$ fixed, the entries of the~$z$-coefficients of~$z^{-1}L(t)+\pz Q(z,t)$ are analytic on~$U''$. Then, the entries of the~$z$-coefficients of~$C(z,t)$ are all analytic on~$U''$. Hence, we may assume that the entries of the~$z$-coefficients of~$\hat{H}(z,t)$ are all analytic on~$U''$.
\end{proof}
\pagebreak[3]
\begin{rem}\label{3rem3}
If we take a smaller non empty polydisc~$ U$, we may assume that if we consider~${\partial_{z}Y(z,t)=A(z,t)Y(z,t)}$,  with~$A(z,t)\in \mathrm{M}_{m}\Big(\mathcal{O}_{U}(\{z\})\Big)$, then the fundamental solution of Proposition \ref{3propo4} belongs to~$\mathrm{GL}_{m}\Big(\widehat{\textbf{K}_{U}}\Big)$, and the entries of the~$z$-coefficients of~$\hat{H}(z,t)$ are all analytic on~$U$.
\end{rem}

\pagebreak[3]
\subsection{Review of the Stokes phenomenon in the unparameterized case.}\label{3sec13}
In this subsection we will briefly review the Stokes phenomenon in the unparameterized case. See \cite{CR,E,E81,LR90,LR94,LR95,LRR,M91,M95,MR,R80,R85,Ras,Re,RS,S09,W} and in particular Chapter 8 of \cite{VdPS} for more details. We will generalize some results concerning the summation of divergent series in the parameterized case in~$\S \ref{3sec14}$. First we treat the example of the Euler equation:
$$ z^{2}\pz Y(z)+Y(z)=z,$$
which admits as a solution the formal series:
$\hat{f}(z)=\displaystyle\sum_{n=0}^{\infty} (-1)^{n}n!z^{n+1}$. Classical methods of differential equations give another solution:
$$ f(z)=\int_{0}^{z} e^{1/z}e^{-1/t}\frac{dt}{t}=\int_{0}^{\infty} \frac{1}{1+u}e^{-u/z}du,$$
where~$1/t-1/z=u/z$. The solution~$\hat{f}(z)$ is divergent and the solution~$f(z)$ can be extended to an analytic function on the sector:
$$V=\left\{z \in \widetilde{\C}\Big|  \arg(z)\in \left]\frac{-3\pi}{2},\frac{+3\pi}{2}\right[\right\} .$$
 On this sector,~$f(z)$ is~$1$-Gevrey asymptotic to~$\hat{f}(z)$: for every closed subsector~$W$ of~$V$, there exist~${A_{W} \in \R}$ and ~$\e >0$ such that for all~$N$ and all~$z\in W$ with~$|z|<\e$,
$$\left|f(z)-\sum_{n= 0}^{N-1} (-1)^{n}n!z^{n+1}\right|\leq (A_{W})^{N+1}(N+1)! |z|^{N+1}.~$$    
We can also consider~$f(e^{2i\pi}z)$, which is an asymptotic solution on the sector:
$$V'= \left\{z\in \widetilde{\C} \Big|\arg(z)\in\left]\frac{\pi}{2},\frac{7\pi}{2}\right[ \right\} .$$
The two asymptotic solutions do not glue to a single asymptotic solution on~$V\cup V'$. In fact, the residue theorem implies that the difference in~$V\cap V'$ of the two asymptotic solutions is:
$$2i\pi e^{1/z}.$$
 The fact that various asymptotic solutions do not glue to a single analytic solution is called the Stokes phenomenon. 
\par 
More generally, let us consider a linear differential equation~$\pz Y(z)=A(z)Y(z)$ such that the entries of~$A(z)$ are germs of meromorphic functions in a neighborhood of~$0$. Let~$\hat{H}(z)z^{L}e\Big(Q(z)\Big)$, with~${Q(z)=\mathrm{Diag}\Big(q_{i}(z)\Big)}$, be a fundamental solution in the Hukuhara-Turrittin canonical form. Since for all~$k$ that belongs to~$\N$,~${\hat{H}(z)z^{L}e\Big(Q(z)\Big)=\hat{H}(z)\mathrm{Diag}(z^{k}) z^{L-k\mathrm{Id}}e\Big(Q(z)\Big)}$, we may assume that~$\hat{H}(z)$ has no pole at~$z=0$. The levels of~$\pz Y(z)=A(z)Y(z)$ are the degrees in~$z^{-1}$ of the~$q_{i}(z)-q_{j}(z)$ (the levels are positive rational numbers and are well defined because of Remark \ref{3rem5}). Consider~${q(z)=q_{k}z^{-k/\nu}+\dots+q_{1}z^{-1/\nu}\in z^{-1/\nu}\C\left[z^{-1/\nu}\right]}$ with~$\nu\in\N$. The real number~$d$ is called singular for~$q(z)$ if~$q_{k}e^{-idk/\nu}$ is a positive real number. These correspond to the arguments~$d$ such that~$r\mapsto e^{q\left(re^{id}\right)}$ increases fastest as~$r$ tends to~$0^{+}$.
The singular directions of~$\pz Y(z)=A(z)Y(z)$ (we will write singular directions when no confusion is likely to arise) are the real numbers that are singular for one of the ~$q_{i}(z)-q_{j}(z)$, with~$i\neq j$. Notice that the set of singular directions is finite modulo~$2\pi\nu$ for some~$\nu\in \N$. Let~$k_{1}<\dots<k_{r}$ be the levels of the linear differential equation. There exists a decomposition~$\hat{H}(z)=\hat{H}_{k_{1}}(z)+\dots+\hat{H}_{k_{r}}(z)$, such that for~$d$ not a singular direction, there exists an unique~$r$-tuple of matrices~$\Big(H_{k_{1}}^{d}(z),\dots,H_{k_{r}}^{d}(z)\Big)$, such that~$H_{k_{i}}^{d}(z)$ is analytic on the sector 
$$V_{d}=\left\{ z\in \widetilde{\C} \Big| \arg(z)\in \left]d-\frac{\pi}{2k_{i}},d+\frac{\pi}{2k_{i}}\right[\right\},$$
 and is~$k_{i}$-Gevrey asymptotic to~$\hat{H}_{k_{i}}(z)=\sum_{n\in \N} \hat{H}_{n,k_{i}}z^{n}$ on~$V_{d}$:
for every closed subsector~$W$ of~$V_{d}$, there exist~$A_{W} \in \R$ and ~$\e >0$ such that for all~$N$ and all~$z\in W$ with~$|z|<\e$,
$$\left|H_{k_{i}}^{d}(z)-\sum_{n=0}^{N-1} \hat{H}_{n,k_{i}}z^{n}\right|\leq (A_{W})^{N}\G\left(1+\frac{N}{k_{i}}\right) |z|^{N},~$$ 
where~$\G$ denotes the Gamma function.  Until the end of the paper, we will denote a fixed determination of the complex logarithm by~$\log (z)$. Furthermore, the matrix 
\begin{equation}\label{3eq2}
\Big(H_{k_{1}}^{d}(z)+\dots+H_{k_{r}}^{d}(z)\Big)e^{L\log(z)}e^{Q(z)}=H^{d}(z)e^{L\log(z)}e^{Q(z)},
\end{equation} which  is analytic on the sector~$\left\{ z\in \widetilde{\C} \Big| \arg(z)\in \left]d-\frac{\pi}{2k_{r}},d+\frac{\pi}{2k_{r}}\right[\right\}$, is a solution of~$\pz Y(z)=A(z)Y(z)$. As a matter of fact,~$H_{k_{i}}^{d}(z)$ is~$k_{i}$-Gevrey asymptotic to~$\hat{H}_{k_{i}}(z)$ on the larger sector:~$$\left\{ z\in \widetilde{\C} \Big| \arg(z)\in \left]d_{l}-\frac{\pi}{2k_{i}},d_{l+1}+\frac{\pi}{2k_{i}}\right[\right\},$$ where ~$d_{l},d_{l+1}$ are two singular directions and such that~$]d_{l},d_{l+1}[$ contains no singular directions. Therefore, we can construct an analytic solution on the sector
$\left\{ z\in \widetilde{\C} \Big| \arg(z)\in \left]d_{l}-\frac{\pi}{2k_{r}},d_{l+1}+\frac{\pi}{2k_{r}}\right[\right\}.$
 Let~$d\in \R$, and let:~$$d-\frac{\pi}{2k_{r}}<d^{-}<d<d^{+}<d+\frac{\pi}{2k_{r}},$$ such that there are no singular directions in~$[d^{-},d[\bigcup ]d,d^{+}]$.
We get two matrices~$H^{d^{+}}(z)e^{L\log(z)}e^{Q(z)}$ and~$H^{d^{-}}(z)e^{L\log(z)}e^{Q(z)}$ which are germs of analytic solutions on the sectors~$$\left\{z \in \widetilde{\C}\Big|\arg(z)\in \left]d^{-}-\frac{\pi}{2k_{r}},d+\frac{\pi}{2k_{r}}\right[\right\} \hbox{ and }\left\{z \in \widetilde{\C}\Big|\arg(z)\in \left]d-\frac{\pi}{2k_{r}},d^{+}+\frac{\pi}{2k_{r}}\right[\right\} .$$ The two matrices are in particular germs of solutions of ~$\pz Y(z)=A(z)Y(z)$ on the sector~$$\left\{z \in \widetilde{\C}\Big|\arg(z)\in \left]d-\frac{\pi}{2k_{r}},d+\frac{\pi}{2k_{r}}\right[\right\} .$$ A computation shows that there exists a matrix~$St_{d}\in \mathrm{GL}_{m}(\C)$, which we call the Stokes matrix in the direction~$d$,  such that: 
$$
H^{d^{+}}(z)e^{L\log(z)}e^{Q(z)}= H^{d^{-}}(z)e^{L\log(z)}e^{Q(z)}St_{d}.
$$
\pagebreak[3]
\begin{propo}\label{3propo10}
The following  statements are equivalent.
\begin{enumerate}
\item The entries of~$\hat{H}(z)$ converge.
\item~$St_{d}=\mathrm{Id}$ for all~$d\in \R$.
\item~$St_{d}=\mathrm{Id}$ for all singular directions.
\end{enumerate}
\end{propo}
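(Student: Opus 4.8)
The plan is to prove the cycle $(1)\Rightarrow(2)\Rightarrow(3)\Rightarrow(1)$; the implication $(2)\Rightarrow(3)$ is trivial because the singular directions form a subset of $\R$.

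For $(1)\Rightarrow(2)$, suppose the entries of $\hat{H}(z)$ converge. Then (after passing to the variable $z^{1/\nu}$ if $\hat{H}$ is ramified) $\hat{H}(z)$ defines a genuine holomorphic germ $H(z)$ on a punctured disc, and $H(z)e^{L\log(z)}e^{Q(z)}$ is an actual solution of $\pz Y(z)=A(z)Y(z)$. Being convergent, $H(z)$ is $0$-Gevrey, hence a fortiori $k_{i}$-Gevrey asymptotic to $\hat{H}_{k_{i}}(z)$ on every sector. By the uniqueness of the sectorial sums carrying the prescribed Gevrey asymptotics, the sum $H^{d}(z)$ must equal $H(z)$ for every non-singular direction $d$, independently of $d$. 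In particular $H^{d^{+}}(z)=H^{d^{-}}(z)=H(z)$, and since $e^{L\log(z)}e^{Q(z)}$ is invertible, the defining relation $H^{d^{+}}e^{L\log(z)}e^{Q(z)}=H^{d^{-}}e^{L\log(z)}e^{Q(z)}St_{d}$ forces $St_{d}=\mathrm{Id}$ for every $d\in\R$.

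The heart of the matter is $(3)\Rightarrow(1)$. Assume $St_{d}=\mathrm{Id}$ at every singular direction. Recall that for $d$ ranging in an interval $]d_{l},d_{l+1}[$ free of singular directions, the sum $H^{d}(z)$ does not depend on $d$ and extends analytically to the enlarged sector with arguments in $]d_{l}-\tfrac{\pi}{2k_{r}},d_{l+1}+\tfrac{\pi}{2k_{r}}[$. Across a singular direction $d$ the relation above together with $St_{d}=\mathrm{Id}$ and the invertibility of $e^{L\log(z)}e^{Q(z)}$ yields $H^{d^{+}}(z)=H^{d^{-}}(z)$ on the overlap, so the sectorial sums attached to consecutive intervals glue. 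Patching over all directions produces a single analytic function $H(z)$ asymptotic to $\hat{H}(z)$ on a full punctured neighborhood of $0$ on the Riemann surface of the logarithm. Since each $\hat{H}_{k_{i}}(z)=\sum_{n\in\N}\hat{H}_{n,k_{i}}z^{n}$ is an (unramified, in the variable $z^{1/\nu}$) power series, the $\hat{H}$-part carries trivial formal monodromy, so $H$ is in fact single valued in $w=z^{1/\nu}$ and thus holomorphic on a punctured disc in $w$.

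To conclude, recall that $\hat{H}(z)$ has been normalized so as to have no pole at $z=0$; hence $H(w)$ stays bounded as $w\to 0$ and, by Riemann's removable singularity theorem, extends holomorphically across the origin. The Taylor expansion at $0$ of a holomorphic function is its own asymptotic expansion, so the convergent Taylor series of $H$ must coincide with $\hat{H}$, proving that $\hat{H}(z)$ converges. I expect the delicate point to be precisely this direction: organizing the gluing of the multisums across the (finitely many, modulo $2\pi\nu$) singular directions and verifying single-valuedness in $w$ — that is, correctly bookkeeping the ramification and the analytic continuation around $0$ so that the removable-singularity argument applies; the other two implications are comparatively soft.
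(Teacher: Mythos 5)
Your proposal is correct, and its logical skeleton (the cycle $(1)\Rightarrow(2)\Rightarrow(3)\Rightarrow(1)$ versus the paper's pairwise equivalences) is only a cosmetic difference; the substantive divergence is in $(3)\Rightarrow(1)$. There the paper does not argue at all: it simply invokes the proof of Theorem 8.10 of \cite{VdPS}, whereas you unpack that black box and reconstitute the standard argument --- gluing the multisums $H^{d}$ across the singular directions (using $St_{d}=\mathrm{Id}$ and the invertibility of $e^{L\log(z)}e^{Q(z)}$, which is exactly right since the relation $H^{d^{+}}e^{L\log(z)}e^{Q(z)}=H^{d^{-}}e^{L\log(z)}e^{Q(z)}St_{d}$ yields $H^{d^{+}}=H^{d^{-}}$ precisely when $St_{d}=\mathrm{Id}$), passing to $w=z^{1/\nu}$, and concluding by boundedness (your recollection that $\hat{H}$ is normalized without pole at $z=0$ is the correct hypothesis here) plus Riemann's removable singularity theorem, so that the Taylor series of the glued function is $\hat{H}$. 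What your route buys is a self-contained proof; what the paper's buys is brevity and no need to re-verify the delicate bookkeeping. Your $(1)\Rightarrow(2)$ coincides with the paper's one-line argument ($\hat{H}$ is Gevrey asymptotic to itself, so $H^{d}=\hat{H}$ by uniqueness of the sums). One point you flag yourself deserves sharpening: single-valuedness in $w$ does not follow from the mere fact that $\hat{H}$ ``carries trivial formal monodromy'' --- two functions asymptotic to the same series need not agree --- but rather from Watson-type uniqueness of the multisum on sectors of opening larger than $\pi/k$: after a full turn, $w\mapsto H(we^{2i\pi})$ is again a function with the prescribed multi-level Gevrey asymptotics to $\hat{H}$ on a wide sector around each non-singular direction, hence coincides with $H^{d}$ there. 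Since you already invoke exactly this uniqueness principle in your $(1)\Rightarrow(2)$, the gap is one of formulation, not of substance.
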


\begin{proof}
From what is preceding, we deduce that if~$d$ is not a singular direction, then~${St_{d}=\mathrm{Id}}$. Therefore, the statements 2 and 3 are equivalent. If the entries of~$\hat{H}(z)$ converge, then, since~$\hat{H}(z)$ is Gevrey asymptotic to itself on every sector of $\widetilde{\C}$, for all~$d\in \R$, ~$H^{d}(z)=\hat{H}(z)$ and (2) holds. Assume now that~$St_{d}=\mathrm{Id}$ for all singular directions. From the proof of \cite{VdPS}, Theorem 8.10, we obtain that the entries of~$\hat{H}(z)$ converge.
\end{proof}

We can compute the asymptotic solutions using the Laplace and the Borel transformations. See Chapters 2 and 3 of \cite{B} for more details.
\pagebreak[3]
\begin{defi}\label{3defi3}
\begin{trivlist}
\item (1) Let~$k\in \Q$. The formal Borel transform~$\hat{\mathcal{B}}_{k}$ is the map that transforms the formal power series~$\sum a_{n}z^{n}$ into the formal power series:
$$\hat{\mathcal{B}}_{k}\left(\sum a_{n}z^{n}\right)=\sum \frac{a_{n}}{\G(1+\frac{n}{k})}z^{n}.$$
\item (2) Let~$d\in \R$,~$k\in \Q$,~$\e>0$ and let~$f$ analytic on the sector~$\left\{ z\in \widetilde{\C} \Big| \arg(z)\in ]d-\e,d+\e[ \right\}$. We assume that there exist~$A,B>0$ such that for~$\arg(z)=d$,
$$|f(z)|\leq Ae^{B|z|^{k}}.~$$ 
Then, the following integral is the germ of an analytic function on~$\left\{ z\in \widetilde{\C} \Big|\arg(z)\in \left]d-\frac{\pi}{2k},d+\frac{\pi}{2k}\right[\right\}$ (see \cite{B}, page 13 for a proof), and is called the Laplace transform of order~$k$ in the direction~$d$ of~$f$:
$$ \mathcal{L}_{k,d}(f)(z)=\displaystyle \int_{0}^{\infty e^{id}}f(u)e^{-\left(\frac{u}{z}\right)^{k}}d\left(\left(\frac{u}{z}\right)^{k}\right).$$
\end{trivlist}
\end{defi}
 
For a proof of the following proposition, see Section 7.2 of \cite{B}. 
\pagebreak[3]
\begin{propo}\label{3propo1}
Let~$k_{1}<\dots<k_{r}$ be the levels of~$\pz Y(z)=A(z)Y(z)$ and set~${k_{r+1}=+\infty}$. Suppose that~$d\in \R$ is not a singular direction, and let~$\hat{h}(z)$ be an entry of~$\hat{H}(z)$. Let~$(\kappa_{1},\dots,\kappa_{r})$ defined as:
$$\kappa_{i}^{-1}=k_{i}^{-1}-k_{i+1}^{-1}.$$ 
The series~$\hat{\mathcal{B}}_{\kappa_{r}}\circ\dots\circ\hat{\mathcal{B}}_{\kappa_{1}}(\hat{h})$ converges and there exist~$\e_{1},A_{1},B_{1}>0$ such that it has an analytic continuation~$h_{1}$ on the sector~$\left\{ z\in \widetilde{\C} \Big| \arg(z)\in ]d-\e_{1},d+\e_{1}[ \right\}$, and in this sector,
$$|h_{1}(z)|\leq A_{1}e^{B_{1}|z|^{\kappa_{1}}}.~$$ 
Moreover, for~$j=2$ (resp.~$j=3,\dots,j=r$), there exist~$\e_{j},A_{j},B_{j}>0$ such that the function~${h_{j+1}=\mathcal{L}_{\kappa_{j},d}(h_{j})}$ is analytic on the sector~$\left\{ z\in \widetilde{\C} \Big| \arg(z)\in ]d-\e_{j},d+\e_{j}[ \right\}$ and on this sector
$$|h_{j}(z)|\leq A_{j}e^{B_{j}|z|^{\kappa_{j}}}.~$$ 
Therefore, we may apply~$\mathcal{L}_{\kappa_{r},d}\circ\dots\circ\mathcal{L}_{\kappa_{1},d}\circ\hat{\mathcal{B}}_{\kappa_{r}}
\circ\dots\circ\hat{\mathcal{B}}_{\kappa_{1}}$ to every entry of~$\hat{H}(z)$.  We have the following equality:
$$ H^{d}(z)=\mathcal{L}_{\kappa_{r},d}\circ\dots\circ\mathcal{L}_{\kappa_{1},d}\circ
\hat{\mathcal{B}}_{\kappa_{r}}\circ\dots\circ\hat{\mathcal{B}}_{\kappa_{1}}\Big(\hat{H}\Big).$$
\end{propo}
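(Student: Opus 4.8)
The plan is to realize the entry $H^d(z)$ as the multisum of $\hat h$ produced by the stated acceleration scheme, in four stages: Gevrey bookkeeping, convergence after the full formal Borel stack, analytic continuation with exponential control along $\arg z=d$, and identification of the outcome with $H^d$ by a Watson-type uniqueness. First I would record the Gevrey estimates. By the construction recalled in $\S\ref{3sec13}$, each block $H^d_{k_i}(z)$ is $k_i$-Gevrey asymptotic to $\hat H_{k_i}(z)$ on a sector of opening exceeding $\pi/k_i$; the relative Watson lemma then forces $\hat H_{k_i}$ to be a $k_i$-Gevrey series, so that $\hat h$, being a finite sum of blocks of levels $k_1<\dots<k_r$, is $k_1$-Gevrey: its coefficients are bounded by $C^{n}\G(1+n/k_1)$ for some $C>0$. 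This is the input that tames the formal Borel transforms of the scheme.

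Second, I would prove convergence of the fully Borel-transformed series. Since $k_{r+1}=+\infty$, the exponents telescope, $\sum_{i=1}^{r}\kappa_i^{-1}=\sum_{i=1}^{r}(k_i^{-1}-k_{i+1}^{-1})=k_1^{-1}$. The composite $\hat{\mathcal{B}}_{\kappa_r}\circ\dots\circ\hat{\mathcal{B}}_{\kappa_1}$ divides the $n$-th coefficient of $\hat h$ by $\prod_{i=1}^{r}\G(1+n/\kappa_i)$; by Stirling this product is only geometrically smaller than $\G(1+n/k_1)$, so after the $k_1$-Gevrey division the coefficients are $O(R^{n})$ for some $R>0$. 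Hence $\hat{\mathcal{B}}_{\kappa_r}\circ\dots\circ\hat{\mathcal{B}}_{\kappa_1}(\hat h)$ has positive radius of convergence and defines a germ $h_1$ at the origin.

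The technical heart is the simultaneous control of analytic continuation and exponential growth, and this is where the hypothesis that $d$ is not singular enters. I would use that the formal solution, transported to the successive Borel planes, satisfies convolution equations inherited from $\pz Y(z)=A(z)Y(z)$, whose singularities lie precisely along the singular directions (the directions in which some $q_i-q_j$ increases fastest, as in $\S\ref{3sec13}$). Because the ray $\arg z=d$ avoids all of them, $h_1$ continues analytically along $d$ with growth $\le A_1 e^{B_1|z|^{\kappa_1}}$, which is exactly the hypothesis of Definition \ref{3defi3}(2) with $k=\kappa_1$; thus $\mathcal{L}_{\kappa_1,d}$ applies and yields $h_2$. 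Inductively, each $h_j$ has growth of order $\kappa_j$ along $d$, $\mathcal{L}_{\kappa_j,d}$ is legitimate, and produces $h_{j+1}$ of order $\kappa_{j+1}$, until the final transform $\mathcal{L}_{\kappa_r,d}$ delivers a function analytic and of moderate growth in a genuine sector about $d$. I expect the main obstacle to be exactly this step: showing that the singularities in the iterated Borel planes sit on the singular directions, and propagating the exponential-order estimates across each acceleration; this is the substance of \cite[\S 7.2]{B}.

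Finally I would identify $h_{r+1}=\mathcal{L}_{\kappa_r,d}\circ\dots\circ\mathcal{L}_{\kappa_1,d}\circ\hat{\mathcal{B}}_{\kappa_r}\circ\dots\circ\hat{\mathcal{B}}_{\kappa_1}(\hat h)$ with the entry of $H^d(z)$. The Borel--Laplace construction shows $h_{r+1}$ is $k_i$-Gevrey asymptotic to $\hat H_{k_i}$ on a sector of opening exceeding $\pi/k_i$ about $d$ for each $i$, and by $\S\ref{3sec13}$ the corresponding entry of $H^d(z)$ has the same property. Since $d$ is not singular these sectors have opening $>\pi/k_i$, so the relative Watson lemma (equivalently, the uniqueness part of the Ramis--Sibuya theorem) forces the two to agree. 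Assembling the entries gives $H^d(z)=\mathcal{L}_{\kappa_r,d}\circ\dots\circ\mathcal{L}_{\kappa_1,d}\circ\hat{\mathcal{B}}_{\kappa_r}\circ\dots\circ\hat{\mathcal{B}}_{\kappa_1}(\hat H)$, as claimed.
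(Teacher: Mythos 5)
Your sketch is, in outline, exactly the standard multisummation argument ($k_1$-Gevrey bounds on $\hat{h}$, the telescoping identity $\sum_{i}\kappa_i^{-1}=k_1^{-1}$ giving convergence after the full Borel stack, analytic continuation with exponential growth off the singular directions, iterated Laplace transforms, and identification of the result with $H^{d}$ by Watson-type uniqueness), and this is precisely the proof the paper invokes: the paper gives no argument of its own for this proposition but refers to Section 7.2 of \cite{B}. Since you explicitly defer the one genuinely hard step (locating the singularities in the successive Borel planes along the singular directions and propagating the exponential-order estimates) to that same reference, your proposal matches the paper's treatment and is correct at the level of detail the paper itself provides.
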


\pagebreak[3]
\subsection{Stokes phenomenon in the parameterized case.}\label{3sec14}

Let~$\pz Y(z,t)=A(z,t)Y(z,t)$, with~${A(z,t)\in \mathrm{M}_{m}( \mathcal{O}_{U}\Big(\{z\})\Big)}$ (see page \pageref{3p2}), where~$ U$ is a non empty polydisc of~$\C^{n}$, and consider~${F(z,t)=\hat{H}(z,t) z^{L(t)}e\Big(Q(z,t)\Big)}$, with~$Q(z,t)=\mathrm{Diag}\Big(q_{i}(z,t)\Big)$, the fundamental solution of Proposition \ref{3propo4}. Since for all~$k\in \N$,~$F(z,t)$ is equal to~$\hat{H}(z,t)\mathrm{Diag}(z^{k}) z^{L(t)-k\mathrm{Id}}e\Big(Q(z,t)\Big)$, we may assume that~$\hat{H}(z,t)$ has no pole at~$z=0$. We define the levels of the system~$\pz Y(z,t)=A(z,t)Y(z,t)$ as the levels of the specialized system. The levels may depend upon~$t$, but they are invariant on the complementary of a closed set with empty interior. We want to extend the definition of the singular directions to the parameterized case. Consider~$q(z,t)=q_{k}(t)z^{-k/\nu}+\dots+q_{1}(t)z^{-1/\nu}\in \textbf{E}_{U}$. A continuous function~$d:U\rightarrow \R$ is called singular for~$q(z,t)$ if~$$\forall t\in U, \quad q_{k}(t)e^{-id(t)k/\nu}\in \R^{\geq 0}.$$
In general, if~$d(t)$ is a singular direction for~$q(z,t)$, the positive number~$q_{k}(t)e^{-id(t)k/\nu}$ depends on~$t$.
 The singular directions of~$\pz Y(z,t)=A(z,t)Y(z,t)$ (we will write singular directions when no confusion is likely to arise) are the directions that are singular for one of the ~$q_{i}(z,t)-q_{j}(z,t)$, with~$i\neq j$.
\pagebreak[3]
\begin{rem}
\begin{trivlist}
\item (1)
It may happen that for some~$t_{0}\in U$, the singular directions of~${\pz Y(z,t)=A(z,t)Y(z,t)}$ evaluated at~$t_{0}$ are not equal to the singular directions of the specialized system~${\pz Y(z,t_{0})=A(z,t_{0})Y(z,t_{0})}$. Take for example~${n=1}$,~${U=\C}$,~${t_{0}=0}$ and~$A(z,t)=\mathrm{Diag}\Big(-2tz^{-3}-z^{-2},2tz^{-3}+z^{-2}\Big)$. The two exponentials are~${e(q_{1}(z,t))=e(tz^{-2}+z^{-1})}$ and~$e(q_{2}(z,t))=e(-tz^{-2}-z^{-1})$.
However, there exists~$V\subset U$, a closed set with empty interior, such that for all~$t_{0}$ in~$U\setminus V$, the singular directions of~$\pz Y(z,t)=A(z,t)Y(z,t)$ evaluated at~$t_{0}$ are equal to the singular directions of the specialized system~$\pz Y(z,t_{0})=A(z,t_{0})Y(z,t_{0})$. \par 
\item (2)
Unfortunately, two different singular directions may be equal on a subset of~$U$. For example, for~${n=1}$,~$U=\C^{*}$, and~$A(z,t)=\mathrm{Diag}\Big(z^{-2},tz^{-2},-tz^{-2}\Big)$ we find three exponentials:~$e^{-1/z},e^{t/z}$ and~$e^{-t/z}$. For~$t\in \R^{>0}$, the singular directions of~$(2t)z^{-1}$ are the same as the singular directions of~$ (t+1)z^{-1}$.
\end{trivlist}
\end{rem}
Let~$(d_{i}(t))_{i\in \N}$ be the singular directions, and 
$$
\mathcal{D}=\Big\{t\in U \Big|\exists j, j'\in \N, \hbox{ such that } d_{j}\not\equiv  d_{j'} \hbox{ and }d_{j}(t)=d_{j'}(t)\Big\}.$$
\pagebreak[3]
\begin{lem}\label{3lem9}
~$\mathcal{D}$ is a closed subset of~$U$ with empty interior.
\end{lem}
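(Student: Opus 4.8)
We must show that
$$
\mathcal{D}=\Big\{t\in U \,\Big|\,\exists j,j'\in \N \hbox{ such that } d_{j}\not\equiv d_{j'} \hbox{ and } d_{j}(t)=d_{j'}(t)\Big\}
$$
is closed with empty interior. The plan is to realize $\mathcal{D}$ as a countable union of sets, each of which is closed with empty interior, and then invoke the Baire category theorem (since $U$, a polydisc in $\C^{n}$, is a Baire space). Concretely, for each pair $(j,j')$ with $d_{j}\not\equiv d_{j'}$ set $\mathcal{D}_{j,j'}=\{t\in U \mid d_{j}(t)=d_{j'}(t)\}$, so that $\mathcal{D}=\bigcup_{j,j'} \mathcal{D}_{j,j'}$ over this countable index set. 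It therefore suffices to prove that each $\mathcal{D}_{j,j'}$ is closed with empty interior.

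\medskip

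\textbf{Closedness.} Each singular direction $d_{i}$ is, by its very definition in the text, a continuous function $U\to\R$. Hence $t\mapsto d_{j}(t)-d_{j'}(t)$ is continuous, and $\mathcal{D}_{j,j'}$ is the preimage of the closed set $\{0\}$, so it is closed in $U$. Consequently $\mathcal{D}$ is an $F_{\sigma}$ set.

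\medskip

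\textbf{Empty interior.} This is the crux, and the main obstacle. Fix $j\neq j'$ with $d_{j}\not\equiv d_{j'}$; I would argue that $\mathcal{D}_{j,j'}$ cannot contain a nonempty open set. The singular directions arise from the leading exponents of the $q_{i}(z,t)-q_{i''}(z,t)\in \textbf{E}_{U}$: if $q(z,t)=q_{k}(t)z^{-k/\nu}+\dots$ is the relevant difference associated with $d_{j}$, then $d_{j}(t)$ is determined modulo $2\pi\nu/k$ by the condition $q_{k}(t)e^{-id_{j}(t)k/\nu}\in\R^{\geq 0}$, i.e.\ $d_{j}(t)\equiv -\tfrac{\nu}{k}\arg\big(q_{k}(t)\big)$. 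Thus locally $d_{j}$ and $d_{j'}$ are (branches of) real-analytic functions of $t$ built from arguments of the meromorphic coefficients $q_{k}(t),q'_{k'}(t)\in\mathcal{M}_{U}$. The equation $d_{j}(t)=d_{j'}(t)$ then translates into a relation of the form $\arg\big(q_{k}(t)^{a}/q'_{k'}(t)^{b}\big)\equiv \hbox{const}$ for suitable integers, equivalently that a fixed nonzero power-quotient $R(t)$ of meromorphic functions takes values in a fixed ray $e^{i\theta_{0}}\R^{>0}$. Were this to hold on a nonempty open $U_{0}\subset U$, the real-analytic function $t\mapsto \mathrm{Im}\big(e^{-i\theta_{0}}R(t)\big)$ would vanish identically on $U_{0}$, hence on all of $U$ by the identity principle for the holomorphic function $R$ (the image of a nonempty-interior set under a nonconstant holomorphic map is open, so it cannot lie in a ray unless $R$ is constant). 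This forces $R(t)$ to be a constant on that ray throughout $U$, which in turn forces $d_{j}\equiv d_{j'}$ on $U$, contradicting $d_{j}\not\equiv d_{j'}$. Hence $\mathcal{D}_{j,j'}$ has empty interior.

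\medskip

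\textbf{Conclusion.} Each $\mathcal{D}_{j,j'}$ is closed with empty interior, and $\mathcal{D}$ is their countable union. By Baire's theorem $\mathcal{D}$, being a countable union of closed nowhere-dense sets, has empty interior; it is closed because the index set of pairs realizing a given point can be taken locally finite near any $t_{0}$ (only finitely many singular directions are distinct modulo $2\pi\nu$, as noted in the unparameterized review), so $\mathcal{D}$ is locally a finite union of the closed sets $\mathcal{D}_{j,j'}$ and is therefore itself closed. I expect the delicate point to be making precise the passage from ``$d_{j}(t)=d_{j'}(t)$ on an open set'' to an identity of the underlying holomorphic data, i.e.\ correctly handling the multivaluedness of $\arg$ and the ramification index $\nu$; once the relation is phrased in terms of a genuine holomorphic function $R(t)$ lying in a ray, the identity principle closes the argument cleanly.
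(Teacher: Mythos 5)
Your proof is correct and takes essentially the same route as the paper's: both reduce the coincidence of two distinct singular directions on an open set to a nonvanishing meromorphic function (the paper's $q(t)/q'(t)$, your power-quotient $R(t)$) having constant argument on a polydisc, conclude it is constant by the open-mapping/identity principle, propagate to all of $U$ to contradict $d_{j}\not\equiv d_{j'}$, and finish using the finiteness of the singular directions modulo $2\pi\nu$. Your explicit handling of the exponents $a,b$ (the paper tacitly assumes the leading degrees are comparable) and of closedness is a refinement rather than a different argument, and the appeal to Baire is superfluous since the relevant union is locally finite.
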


\begin{proof}
 Assume that there exist a non empty polydisc~$ D\subset \mathcal{D}$, and two singular directions~$d_{j}(t),d_{j'}(t)$ such that~$d_{j}(t)=d_{j'}(t)$ on~$D$. Then, there exist a non empty polydisc~$D'\subset D$ and~$q(t),q'(t)\in \mathcal{M}_{D'}$ that do not vanish on~$D'$ such that~$q(t)/q'(t)$ has constant argument on~$D'$.
An analytic function with constant argument on a polydisc is constant. Hence, we deduce that~$d_{j}(t)=d_{j'}(t)$ on a polydisc, which implies that~$d_{j}(t)=d_{j'}(t)$ on~$U$. Since the set of singular directions is finite modulo~$2\pi \nu$ with~$\nu\in \N^{*}$,~$\mathcal{D}$ has empty interior.
\end{proof}

Thus, if we take a smaller non empty polydisc~$ U$, we may assume the following:
\begin{itemize}
\item~$\mathcal{D}=\varnothing$.
\item The levels of~$\pz Y(z,t)=A(z,t)Y(z,t)$ are independent of~$t$
\item For all~$t_{0}\in U$, the singular directions of~$\pz Y(z,t)=A(z,t)Y(z,t)$ evaluated at~$t_{0}$ are equal to the singular directions of the specialized system~${\pz Y(z,t_{0})=A(z,t_{0})Y(z,t_{0})}$.
\end{itemize}
 We still consider~$\pz Y(z,t)=A(z,t)Y(z,t)$ a parameterized linear differential system with~${A(z,t) \in \mathrm{M}_{m}\Big(\mathcal{O}_{U}(\{z\})\Big)}$ and ~$\hat{H}(z,t)z^{L(t)}e\Big(Q(z,t)\Big)\in \mathrm{GL}_{m}\left(\widehat{\textbf{K}_{U}}\right)$ the fundamental solution in the same form as in Proposition \ref{3propo4}. Let~$d(t)$ be a singular direction, and let~$k_{1}<\dots<k_{r}$ be the levels of~${\pz Y(z,t)=A(z,t)Y(z,t)}$. For~$t$ belonging to~$U$, we define the parameterized Stokes matrix~$St_{d(t)}$ (we will just call it the Stokes matrix when no confusion is likely to arise) as~$t\mapsto St_{d(t)}$, where~$St_{d(t)}$ is the Stokes matrix of the specialized system defined just before Proposition \ref{3propo10}.  
\pagebreak[3]
\begin{propo}\label{3propo3}
Let~$d(t)$ continuous in~$t$ such that for all~$t_{0}$ in~$U$,~$d(t_{0})$ is not a singular direction of the unparameterized linear differential equation~${\pz Y(z,t_{0})=A(z,t_{0})Y(z,t_{0})}$. We define~${t\mapsto H^{d(t)}(z,t)e^{L(t)\log(z)}e^{Q(z,t)}}$, as the solution (\ref{3eq2}), of the specialized system. Let~$d_{1}(t),d_{2}(t)$ be two singular directions such that for all~${t\in U}$, $d_{1}(t)<d(t)<d_{2}(t)$ and~$]d_{1}(t),d_{2}(t)[$ contains no singular directions. Then, there exists a map~${U\rightarrow \R^{>0}}$,${t\mapsto\e(t)}$, which is not necessarily continuous, such that~$H^{d(t)}(z,t)e^{L(t)\log(z)}e^{Q(z,t)}$ is meromorphic in~$(z,t)$ for
$$ (z,t)\in \left\{ z\in \widetilde{\C}\Big|\arg(z)\in \left]d_{1}(t)-\frac{\pi}{2k_{r}},d_{2}(t)+\frac{\pi}{2k_{r}} \right[ , \hbox{ and }0<|z|<\e(t)\right\} \times U.$$
\end{propo}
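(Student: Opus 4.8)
The plan is to establish meromorphy locally. Since being meromorphic in $(z,t)$ is a local property on $\widetilde{\C}\times U$, it suffices to produce, around each point $(z_{0},t_{0})$ of the indicated region, a neighborhood on which $H^{d(t)}(z,t)e^{L(t)\log(z)}e^{Q(z,t)}$ is meromorphic. The factors $e^{L(t)\log(z)}=z^{L(t)}$ and $e^{Q(z,t)}$ are analytic in $z$ on the sector (where $0<|z|$) and meromorphic in $t$, because $L(t)\in \mathrm{M}_{m}(\mathcal{M}_{U})$ by Lemma \ref{3lem8} and the $z$-coefficients of the $q_{i}(z,t)$ lie in $\mathcal{M}_{U}$ by Proposition \ref{3propo4}. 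Hence the whole problem reduces to showing that the summed matrix $H^{d(t)}(z,t)$, namely the sum of the $H^{d(t)}_{k_{i}}(z,t)$, depends jointly analytically on $(z,t)$ near each such $(z_{0},t_{0})$.

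To this end, fix $(z_{0},t_{0})$ in the region. After shrinking $U$ so that the levels are constant, $\mathcal{D}=\varnothing$, and the singular directions are stable (Lemma \ref{3lem9}), I choose a direction $d^{*}$, \emph{independent of} $t$, that is non-singular for every $t$ in a neighborhood $V$ of $t_{0}$, that lies in $\big(d_{1}(t),d_{2}(t)\big)$ for $t\in V$, and that satisfies $\arg(z_{0})\in\big(d^{*}-\tfrac{\pi}{2k_{r}},d^{*}+\tfrac{\pi}{2k_{r}}\big)$; such a $d^{*}$ exists because $\arg(z_{0})$ lies in the claimed sector and the $d_{j}(t)$ move continuously. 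Using a summation direction independent of $t$ turns the Borel--Laplace contour into a fixed ray, and by independence of the sum on the non-singular direction one has $H^{d^{*}}(z,t)=H^{d(t)}(z,t)$ on the overlap; thus it suffices to prove joint analyticity of $H^{d^{*}}(z,t)$ for $t\in V$ and $z$ near $z_{0}$. I then carry the summation formula of Proposition \ref{3propo1},
\[
H^{d^{*}}(z,t)=\mathcal{L}_{\kappa_{r},d^{*}}\circ\dots\circ\mathcal{L}_{\kappa_{1},d^{*}}\circ\hat{\mathcal{B}}_{\kappa_{r}}\circ\dots\circ\hat{\mathcal{B}}_{\kappa_{1}}\big(\hat{H}(\cdot,t)\big),
\]
along, tracking the dependence on $t$. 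By Proposition \ref{3propo4} and Remark \ref{3rem3} the $z$-coefficients of $\hat{H}(z,t)$ are analytic on $U$, and since the formal Borel transforms $\hat{\mathcal{B}}_{\kappa_{i}}$ act coefficientwise by division by values of $\G$, the transformed series has coefficients analytic in $t$.

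The hard part will be the analytic continuation step together with the exponential bounds, locally uniformly in $t$. First, the Gevrey estimates for the formal solution $\hat{H}$ come from the recursion defining its coefficients; as the coefficients of $A(z,t)$ are analytic in $t$, these estimates are locally uniform in $t$, whence the composite Borel transform converges on a polydisc in $u$ of radius bounded below uniformly for $t\in V$ and defines $h_{1}(u,t)$ analytic in $(u,t)$ near $u=0$. Next, the singularities of the Borel transforms in the $u$-plane occur at points governed by the leading coefficients of the $q_{i}(z,t)-q_{j}(z,t)$, which lie in $\mathcal{M}_{U}$ and hence move analytically with $t$; the hypothesis that $d^{*}$ is non-singular for every $t\in V$ means exactly that these points stay off the ray $\arg(u)=d^{*}$, so after shrinking $V$ a fixed tubular neighborhood of the ray avoids them for all $t\in V$. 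On this tube one obtains, as in Proposition \ref{3propo1} but with constants locally uniform in $t$, a jointly analytic continuation $h_{1}(u,t)$ with $|h_{1}(u,t)|\leq A_{1}e^{B_{1}|u|^{\kappa_{1}}}$ for $t\in V$.

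Finally, each Laplace transform
\[
h_{j+1}(z,t)=\mathcal{L}_{\kappa_{j},d^{*}}(h_{j})(z,t)=\int_{0}^{\infty e^{id^{*}}}h_{j}(u,t)\,e^{-\left(\frac{u}{z}\right)^{\kappa_{j}}}\,d\Big(\big(\tfrac{u}{z}\big)^{\kappa_{j}}\Big)
\]
converges for $z$ in a fixed subsector containing $z_{0}$ with $|z|$ small, and Morera's theorem together with differentiation under the integral sign, justified by the locally uniform exponential bounds, shows that $h_{j+1}$ is analytic in $(z,t)$ and satisfies the next exponential estimate. Iterating for $j=1,\dots,r$ yields $H^{d^{*}}(z,t)=H^{d(t)}(z,t)$ analytic in $(z,t)$ near $(z_{0},t_{0})$, and multiplying by $z^{L(t)}$ and $e^{Q(z,t)}$ gives the desired meromorphy there. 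Since such a neighborhood exists around every point of the region, the full solution is meromorphic on it, the radius $\e(t)$ being the local size produced by this construction, which may jump as $t$ varies, explaining why $\e$ need not be continuous.
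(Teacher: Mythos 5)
Your proof is correct and takes essentially the same route as the paper: the paper likewise replaces $d(t)$ by a locally constant direction and then obtains joint meromorphy from the Borel--Laplace formula of Proposition \ref{3propo1}, controlling the $t$-dependence exactly as you do (termwise formal Borel transforms with $t$-analytic coefficients, then Laplace transforms dominated by bounds $Ae^{B|u|^{\kappa_i}}$ made uniform on compact neighborhoods in $t$ --- this is the content of Lemma \ref{3lem3}). Your tube argument for the analytic continuation in the Borel plane and the Morera step are explicit fill-ins of details the paper asserts more briefly, not a different method.
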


Notice that the existence of~$d(t)$ continuous in~$t$ such that for all~$t_{0}$ in~$U$,~$d(t_{0})$ is not a singular direction of the unparameterized linear differential equation~${\pz Y(z,t_{0})=A(z,t_{0})Y(z,t_{0})}$ is a direct consequence of the fact that~$\mathcal{D}=\varnothing$, and the fact that the singular directions are continuous in~$t$.

\begin{proof}
We recall that we have assumed that for all~$t_{0}\in U$, the singular directions of~${\pz Y(z,t)=A(z,t)Y(z,t)}$ evaluated at~$t_{0}$ are equal to the singular directions of the specialized system~${\pz Y(z,t_{0})=A(z,t_{0})Y(z,t_{0})}$. We have seen in~$\S \ref{3sec13}$, that for~$t$ fixed, the asymptotic solution is a germ of meromorphic function on the sector~$$\left\{z \in \widetilde{\C}\Big|  \arg(z)\in\left]d_{1}(t)-\frac{\pi}{2k_{r}},d_{2}(t)+\frac{\pi}{2k_{r}}\right[\right\}  .$$
We may replace~$d(t)$ by any function, possibly non continuous, such that for all~$t\in U$, ${d_{1}(t)<d(t)<d_{2}(t)}$. Since the singular directions are continuous in~$t$, we may assume that~$d(t)$ is locally constant. Since for~$z\neq 0$,~${t\mapsto e^{L(t)\log(z)}e^{Q(z,t)}}\in \mathcal{M}_{U}$,
 this is now a consequence of Proposition \ref{3propo1} and Lemma \ref{3lem3} below.
\end{proof}
\pagebreak[3]
\begin{lem}\label{3lem3}
We keep the same notation as in  Definition \ref{3defi3} and Proposition \ref{3propo1}. Let~$\hat{h}(z,t)$ be one of the entries of~$\hat{H}(z,t)$. Let~$ V \subset U$ be a non empty polydisc, and let~${d\in \R}$ such that for all~$t\in V$,~$d$ is not an unparameterized singular direction of~$\pz Y(z,t)=A(z,t)Y(z,t)$. Then, there exists a map ~$U\rightarrow \R^{>0}$,~$t\mapsto\e(t)$, which is not necessary continuous such that~$$ \mathcal{L}_{\kappa_{r},d}\circ\dots\circ\mathcal{L}_{\kappa_{1},d}\circ
\hat{\mathcal{B}}_{\kappa_{r}}\circ\dots\circ\hat{\mathcal{B}}_{\kappa_{1}}\Big(\hat{h}\Big)$$
is meromorphic in~$(z,t)$ on~$$(z,t)\in\left\{ z\in \widetilde{\C}\Big|\arg(z)\in \left]d-\frac{\pi}{2k_{r}},d+\frac{\pi}{2k_{r}}\right[, \hbox{ and }0<|z|<\e(t) \right\}\times V.$$ Moreover, for all~$j\leq n$:
$$\mathcal{L}_{\kappa_{r},d}\circ\dots\circ\mathcal{L}_{\kappa_{1},d}\circ
\hat{\mathcal{B}}_{\kappa_{r}}\circ\dots\circ\hat{\mathcal{B}}_{\kappa_{1}}\Big(\partial_{t_{j}}\hat{h}\Big) 
=\partial_{t_{j}}\left(\mathcal{L}_{\kappa_{r},d}\circ\dots\circ\mathcal{L}_{\kappa_{1},d}\circ
\hat{\mathcal{B}}_{\kappa_{r}}\circ\dots\circ\hat{\mathcal{B}}_{\kappa_{1}}\Big(\hat{h}\Big)\right).$$
\end{lem}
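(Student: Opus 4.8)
The statement has two parts: first, that the iterated Borel–Laplace transform of a fixed entry $\hat h(z,t)$ of $\hat H(z,t)$ produces a function meromorphic jointly in $(z,t)$ on the indicated domain; and second, that these transforms commute with each $\partial_{t_j}$. The plan is to follow the unparameterized construction of Proposition \ref{3propo1} step by step, tracking the dependence on $t$ throughout, and to establish analyticity in $t$ by an argument that reduces everything to the analyticity of the $z$-coefficients of $\hat h(z,t)$, which we know from Proposition \ref{3propo4} (together with Remark \ref{3rem3}) to be analytic on $U$. The commutation with $\partial_{t_j}$ will follow by differentiating under the integral sign and under the (convergent) formal Borel sum.

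**The meromorphy in $(z,t)$.** First I would treat the initial formal Borel transform $\hat{\mathcal B}_{\kappa_r}\circ\dots\circ\hat{\mathcal B}_{\kappa_1}(\hat h)$. Writing $\hat h(z,t)=\sum_i a_i(t)z^{i/\nu}$ with the $a_i(t)$ analytic on $V$ (by Remark \ref{3rem3}), the Borel transform only divides the coefficients by Gamma-factors depending on $i$, so the resulting series has coefficients that are still analytic in $t$, and the convergence estimates from Proposition \ref{3propo1} hold for each fixed $t$. The key point to make is that the convergence radius and the bounds $A_1, B_1$ can be chosen locally uniformly in $t$ on a polydisc, so that the sum defines a function jointly analytic in $(z,t)$ near the origin in $z$; then its analytic continuation along the non-singular direction $d$ stays jointly analytic because $d$ is not singular for any $t\in V$ (using that the levels and singular directions are independent of $t$ after shrinking $U$, as arranged just before the lemma). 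I would then iterate: at each Laplace step $h_{j+1}=\mathcal L_{\kappa_j,d}(h_j)$, the integral $\int_0^{\infty e^{id}} h_j(u,t)\,e^{-(u/z)^{\kappa_j}}\,d((u/z)^{\kappa_j})$ depends on $t$ only through the integrand, which is jointly analytic in $(u,t)$ and satisfies the exponential bound with constants locally uniform in $t$; hence by a standard theorem on parameter-dependent integrals the result is jointly analytic in $(z,t)$ on a sector of opening $\pi/\kappa_j$, where the $z$-domain may shrink as $t$ varies (this is the source of the possibly discontinuous $t\mapsto\e(t)$). After $r$ such steps we land on the sector of half-opening $\pi/2k_r$ claimed, and multiplying by the meromorphic pieces $z^{-1/\nu}$-factors gives meromorphy in $(z,t)$.

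**The commutation with $\partial_{t_j}$.** For the second assertion I would argue step by step as well. The formal Borel transform $\hat{\mathcal B}_k$ commutes with $\partial_{t_j}$ tautologically, since it acts only on the $z$-exponent and $\partial_{t_j}$ acts only on the coefficients $a_i(t)$; on the convergent sum this is justified by term-by-term differentiation, legitimate because of the local uniformity of the convergence established above. For the Laplace step, commutation with $\partial_{t_j}$ amounts to differentiating under the integral sign, which is permitted precisely because the integrand and its $\partial_{t_j}$-derivative admit a common dominating exponential bound locally uniform in $t$ — exactly the estimates produced along the way. Composing these commutations over all the Borel and Laplace factors yields the displayed identity.

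**Main obstacle.** The routine-looking parts (applying the one-variable Proposition \ref{3propo1} fiberwise, and differentiating under the integral) are not where the difficulty lies; the real work is showing that all the Gevrey/exponential estimates can be made \emph{locally uniform in $t$}, so that the fiberwise-analytic functions assemble into genuinely jointly analytic ones and so that dominated-convergence-type justifications apply. In particular I expect the delicate step to be controlling the analytic continuation and the Laplace integrals uniformly across a polydisc while $d$ is held fixed and non-singular for every $t\in V$; the fact that the singular directions are continuous in $t$ and that $d$ avoids all of them on $V$ is what prevents the exponential bounds from degenerating, but making the uniformity precise (and explaining why $\e(t)$ need not be continuous) is the heart of the matter.
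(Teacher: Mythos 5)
Your proposal follows the same two-stage architecture as the paper's proof: treat the iterated formal Borel transform first, then induct over the successive Laplace transforms, obtaining joint meromorphy and the commutation with~$\partial_{t_{j}}$ from dominating bounds that are uniform on compact sets in~$t$ and permit termwise differentiation and differentiation under the integral sign. Two divergences are worth flagging. First, at the Borel stage you aim at a stronger statement than the paper uses: you want the radius of convergence and the constants~$A_{1},B_{1}$ to be locally uniform in~$t$ so as to obtain joint analyticity directly. The paper neither establishes nor needs such uniformity there: it simply observes that, by Proposition~\ref{3propo1} applied fiberwise, $\hat{\mathcal{B}}_{\kappa_{r}}\circ\dots\circ\hat{\mathcal{B}}_{\kappa_{1}}(\hat{h})$ is for each fixed~$t$ a germ of a meromorphic function whose~$z$-coefficients are analytic on~$U$, i.e., it lies in~$\mathcal{O}_{U}(\{z\})\left[z^{1/\nu}\right]$; this already gives that~$t\mapsto h_{1}(z,t)$ is meromorphic for each~$z$ on the ray~$\arg(z)=d$ and licenses termwise application of~$\partial_{t_{j}}$, which is all the subsequent induction requires.

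Second --- and this is where your write-up stops short of a proof --- you explicitly defer what you call the heart of the matter, namely making the exponential estimates locally uniform in~$t$, and you suggest this uniformity should come from the continuity of the singular directions and the fact that~$d$ avoids them on~$V$. That is not how the paper closes the gap, and the suggested route is vaguer than necessary: non-singularity of~$d$ for every~$t\in V$ is used only so that the fiberwise analytic continuation~$h_{1}$ exists at all. The uniformity is instead obtained by a short inductive device: assuming that~$t\mapsto h_{i}(u,t)$ is meromorphic on a compact neighborhood~$W_{t_{0}}\subset V$ for~$\arg(u)=d$, the fiberwise constants of Proposition~\ref{3propo1} in the bound~$|h_{i}(u,t)|\leq A(t)e^{B(t)|u|^{\kappa_{i}}}$ may be taken \emph{continuous} on~$W_{t_{0}}$ precisely because~$h_{i}$ is meromorphic in~$t$, and then replaced by their maxima~$A,B$ on the compact set~$W_{t_{0}}$. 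The function~$u\mapsto Ae^{B|u|^{\kappa_{i}}}$ is then the dominating function needed both for joint meromorphy of~$h_{i+1}=\mathcal{L}_{\kappa_{i},d}(h_{i})$ and for differentiating under the integral; the possibly discontinuous~$\e(t)$ arises exactly as you say. So your plan is the right one, but the single step you flag as delicate is the step a complete proof must actually carry out, and the paper's mechanism for it (meromorphy in~$t$ at the previous stage implies continuity of the fiberwise bounds, then compactness) is the ingredient missing from your argument.
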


\begin{proof}
We will proceed in two steps.
\begin{trivlist}
\item (1) We recall that~$\hat{h}(z,t)\in \hat{K}_{U}\left[z^{1/\nu}\right]$ ($\nu\in \N^{*}$ has been defined in Proposition \ref{3propo4}) and (see Remark \ref{3rem3}) all the~$z$-coefficients are analytic on~$U$. Because of Proposition \ref{3propo1}, for~$t$ fixed,~$\hat{\mathcal{B}}_{\kappa_{r}}\circ\dots\circ\hat{\mathcal{B}}_{\kappa_{1}}\Big(\hat{h}\Big),$ is a germ of a meromorphic function. Therefore, it belongs to~$\mathcal{O}_{U}(\{z\})\left[z^{1/\nu}\right]$ .
Let~$h_{1}$ be the analytic continuation defined in Proposition \ref{3propo1}. In particular, for all~$z\in \widetilde{\C}$ with~$\arg(z)=d$,~$t\mapsto h_{1}(z,t)\in \mathcal{M}_{V}$. The fact that we have a meromorphic function allows us to differentiate termwise and for all~$j\leq n$,~$\partial_{t_{j}}h_{1}$ is equal to the analytic continuation of:
$$
\hat{\mathcal{B}}_{\kappa_{r}}\circ\dots\circ\hat{\mathcal{B}}_{\kappa_{1}}\Big(\partial_{t_{j}}\hat{h}\Big) .$$
\item 
(2) Let~$h_{2},\dots,h_{r}$ be the successive Laplace transforms defined in Proposition \ref{3propo1}. Let~$t_{0}\in V$, let~$W_{t_{0}}$ be a compact neighborhood of~$t_{0}$ in~$V$, let~$i\leq r$, and assume that for~$z\in \widetilde{\C}$ with~$\arg(z)=d$,~$t\mapsto h_{i}(z,t)$ is meromorphic on~$W_{t_{0}}$. It is sufficient to prove that for all~$z\in \widetilde{\C}$ with~$\arg(z)\in\left]d-\frac{\pi}{2\kappa_{i}},d+\frac{\pi}{2\kappa_{i}}\right[$ and~$|z|$ sufficiently small,~$t\mapsto h_{i+1}(z,t)$ is meromorphic on~$W_{t_{0}}$ and for all~$j\leq n$: 
$$\mathcal{L}_{\kappa_{i},d}\Big(\partial_{t_{j}}h_{i}\Big) 
=\partial_{t_{j}}\Big(\mathcal{L}_{\kappa_{i},d}(h_{i})\Big)=\partial_{t_{j}}h_{i+1}.$$
 The function
$\mathcal{L}_{\kappa_{i},d}\left(h_{i}\right)$ is an integral of a meromorphic function depending analytically upon parameters, and we just have to prove that it is possible to find a function~$f$ such that, for all~$t\in W_{t_{0}}$,~$|h_{i}(u,t)|<|f(u)|$ and for~$\arg(z)\in\left]d-\frac{\pi}{2\kappa_{i}},d+\frac{\pi}{2\kappa_{i}}\right[$, ~$|z|$ sufficiently small,~$\mathcal{L}_{\kappa_{i},d}(|f|)(z)<\infty$.
From Proposition \ref{3propo1}, we obtain the existence of~$A(t),B(t)>0$ such that for~$\arg(u)=d$,~$|h_{i}(u,t)|\leq A(t)e^{B(t)|u|^{\kappa_{i}}}$. Since~$h_{i}(u,t)$ is meromorphic, we may assume that~$A(t)$ and~$B(t)$ are continuous on~$W_{t_{0}}$.  The functions~$A(t)$ and~$B(t)$ admit a maximum~$A$ and~$B$ on the compact set~$W_{t_{0}}$. Finally for~$\arg(z)\in\left]d-\frac{\pi}{2\kappa_{i}},d+\frac{\pi}{2\kappa_{i}}\right[$ and~$|z|$ sufficiently small, 
$$\left| \mathcal{L}_{\kappa_{i},d}h_{i}\right| =\left| \displaystyle \int_{0}^{\infty e^{id}}h_{i}(u,t)e^{-\left(\frac{u}{z}\right)^{\kappa_{i}}}d\left(\left(\frac{u}{z}\right)^{\kappa_{i}}\right)\right|\leq  \displaystyle \int_{0}^{\infty } Ae^{B|u|^{\kappa_{i}}}\left|e^{-\left(\frac{u}{z}\right)^{\kappa_{i}}}\right|d\left(\left(\frac{u}{z}\right)^{\kappa_{i}}\right)<\infty.$$
\end{trivlist}
\end{proof}

\pagebreak[3]
\section{Parameterized differential Galois theory}

In this section we are interested in the parameterized differential Galois theory: this is a generalization of the differential Galois theory for parameterized linear differential equations. In~$\S \ref{3sec21}$, we review the parameterized differential Galois theory developed in \cite{CS}. In~$\S \ref{3sec22}$, we prove that some of the results of~$\S \ref{3sec21}$ stay valid without the assumption that the field of constants is differentially closed. This will help us in~$\S$ \ref{3sec22'} to prove that the local analytic parameterized differential Galois group descends to a smaller field, whose  field  of  constants  is not differentially  closed. In~$\S \ref{3sec23}$, we explain the main result of the paper: we show an analogue of the density theorem of Ramis in the parameterized case. In~$\S \ref{3sec24}$, we give a similar result for the global parameterized differential Galois group. We end by giving various examples of computation of parameterized differential Galois groups using the parameterized density theorem.

\pagebreak[3]
\subsection{Basic facts.}\label{3sec21}

We recall some facts from \cite{CS} about Galois theory of parameterized linear differential equations. Classical Galois theory of unparameterized linear differential equation is presented in some books such as  \cite{VdPS} and \cite{Ma}. \\ \par 
Let~$K$ be a differential field of characteristic~$0$ with~$n+1$ commuting derivations:~$\partial_{0},\dots,\partial_{n}$. We want to study differential equations of the form~$\partial_{0} Y=AY$, with~${A\in \mathrm{M}_{m}(K)}$. Let~$C_{K}$ be the field of constants with respect to~$\partial_{0}$. Since all the derivations commute with~$\partial_{0}$,~$(C_{K},\partial_{1},\dots,\partial_{n})$ is a differential field. By abuse, we will sometimes start from a~$(\partial_{1},\dots,\partial_{n})$-differential field~$C_{K}$ and build a~$(\partial_{0},\dots,\partial_{n})$-differential field extension~$K$ of~$C_{K}$, such that~$C_{K}$ is the field of constants with respect to~$\partial_{0}$. 
\pagebreak[3]
\begin{ex}
If~$K=\hat{K}_{U}$, then~$\partial_{0}=\pz$,~$\{ \partial_{1},\dots,\partial_{n}\}=\dt$, and~$C_{K}=\mathcal{M}_{U}$. 
\end{ex}

A parameterized Picard-Vessiot extension for the parameterized linear differential equation~$\partial_{0} Y=AY$ on~$K$ is a ($\partial_{0},\dots,\partial_{n}$)-differential field extension~$\widetilde{K}\Big|K$  with the following properties:
\begin{itemize}
 \item  There exists a fundamental solution for~$\partial_{0}Y=AY$ in~$\widetilde{K}$, i.e., an invertible matrix~$U=(u_{i,j})$, with entries in~$\widetilde{K}$,  such that~$\partial_{0} U=AU$.
\item~$\widetilde{K}=K\langle u_{i,j} \rangle_{\partial_{0},\dots,\partial{n}}$, i.e.,~$\widetilde{K}$ is the ($\partial_{0},\dots,\partial_{n}$)-differential field generated by~$K$ and the~$ u_{i,j}$. 
\item The field of constants of~$\widetilde{K}$ with respect to~$\partial_{0}$  is~$C_{K}$.
\end{itemize}
\par
Let~$L$ be a~$(\partial_{1},\dots,\partial_{n})$-field of characteristic~$0$ with commuting derivations. The~$(\partial_{1},\dots,\partial_{n})$-differential ring~$L\{y_{1},\dots, y_{k}\}_{\partial_{1},\dots,\partial_{n}}$ of differential polynomials in~$k$ indeterminates over~$L$ is the usual polynomial ring in the infinite set of variables
$$\{ \partial_{1}^{\nu_{1}}\dots\partial_{n}^{\nu_{n}}y_{j}\}^{\nu_{i}\in \N}_{j\leq k},~$$
and with derivations extending those in~$\{\partial_{1},\dots,\partial_{n}\}$ on~$L$, defined by:
$$\partial_{i}\left(\partial_{1}^{\nu_{1}}\dots\partial_{n}^{\nu_{n}}y_{j} \right) =\partial_{1}^{\nu_{1}}\dots\partial_{i}^{\nu_{i}+1}\dots\partial_{n}^{\nu_{n}}y_{j}.$$
\pagebreak[3]
\begin{defi}[\cite{CS}, Definition 3.2]\label{3defi4}
 We say that~$(C_{K},\partial_{1},\dots,\partial_{n})$ is differentially closed if it has the following property: 
For any~$k,l \in \N$ and for all~${P_{1},\dots,P_{k}\in C_{K}\{y_{1},\dots,y_{l}\}_{\partial_{1},\dots,\partial_{n}}}$, the system 
 \[
 \; \left \{
\begin{array}{ccl}
     P_{1}(\a_{1},\dots,\a_{l})&=&0  \\
      & \vdots     &           \\   
   P_{k-1}(\a_{1},\dots,\a_{l})&=&0 \\
  P_{k}(\a_{1},\dots,\a_{l})&\neq&0, \\
\end{array}
\right. 
\]  has a solution in~$C_{K}$ as soon as it has a solution in a~$(\partial_{1},\dots,\partial_{n})$-differential field containing~$C_{K}$.
\end{defi}
For the simplicity of the notation, we will say that~$C_{K}$ differentially closed rather than~$(C_{K},\partial_{1},\dots,\partial_{n})$ is differentially closed. Note that there exists a differentially closed extension of~$C_{K}$, see \cite{CS}, Section 9.1. By definition, a differentially closed field is algebraically closed.
\pagebreak[3]
\begin{propo}[\cite{CS}, Theorem 9.5]
Assume that~$C_{K}$ is differentially closed. Then, we have existence of the parameterized Picard-Vessiot extension for~$\partial_{0} Y=AY$. We have also the uniqueness of the  parameterized Picard-Vessiot extension for~$\partial_{0} Y=AY$, up to~$(\partial_{0},\dots,\partial_{n})$-differential isomorphism. 
\end{propo}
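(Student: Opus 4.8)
The plan is to follow the standard Picard--Vessiot construction, adapted to the several commuting derivations, with the differential closedness of $C_K$ entering only at the single decisive point where one controls the $\partial_0$-constants. Write $\Delta=\{\partial_0,\dots,\partial_n\}$. For existence, I would first build a ``universal'' solution ring: introduce $m^2$ indeterminates $X=(X_{ij})$, form the $(\partial_1,\dots,\partial_n)$-differential polynomial ring $K\{X_{ij}\}_{\partial_1,\dots,\partial_n}$ localized at $\det X$, and call it $S$. I equip $S$ with a derivation $\partial_0$ by declaring $\partial_0 X=AX$, requiring $\partial_0$ to restrict to the given derivation on $K$ and to commute with $\partial_1,\dots,\partial_n$. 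Since the iterated derivatives $\partial_1^{\nu_1}\cdots\partial_n^{\nu_n}X_{ij}$ are algebraically independent over $K$, this determines $\partial_0$ on all of $S$ and makes $S$ a $\Delta$-differential ring. By Zorn's lemma I choose a $\Delta$-differential ideal $\mathfrak{m}\subset S$ maximal among proper differential ideals, and set $R=S/\mathfrak{m}$; then $R$ is differentially simple, the image $U$ of $X$ is a fundamental solution of $\partial_0 Y=AY$, and $R=K\{U_{ij},1/\det U\}$.

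The heart of the argument, and the step I expect to be the main obstacle, is to show that the $\partial_0$-constants of $R$ (equivalently, of its total ring of fractions) reduce to $C_K$; everything else is then formal. A new $\partial_0$-constant is a priori only a $(\partial_1,\dots,\partial_n)$-differential element satisfying some differential-algebraic relations over $C_K$, so it cannot be excluded by the naive algebraicity argument used in the unparameterized theory. Here I would invoke differential closedness exactly in the form of Definition \ref{3defi4}: the ring $C_R$ of $\partial_0$-constants is a $(\partial_1,\dots,\partial_n)$-differential subring, and the differential simplicity of $R$ forces $C_R$ to be a $(\partial_1,\dots,\partial_n)$-field; one then checks that any element of $C_R$ satisfies over $C_K$ a finite system of the shape $P_1=\dots=P_{k-1}=0$, $P_k\neq 0$ that already has a solution in a $(\partial_1,\dots,\partial_n)$-extension of $C_K$, whence by differential closedness a solution in $C_K$ itself. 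Comparing the two solutions inside $R$ and using simplicity yields $C_R=C_K$. This is precisely the point at which the hypothesis cannot be dropped, and it is what the whole paper is designed to circumvent when $C_K=\mathcal{M}_U$ fails to be differentially closed.

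Granting that there are no new constants, the simplicity of $R$ together with $C_R=C_K$ gives that $R$ is an integral domain, so I may set $\widetilde{K}=\mathrm{Frac}(R)$; by construction $\widetilde{K}$ contains the fundamental solution $U$, is $\Delta$-generated over $K$ by its entries, and has $\partial_0$-constant field $C_K$, i.e.\ it is a parameterized Picard--Vessiot extension. For uniqueness, given two such extensions with PPV rings $R_1,R_2$, I would form the $\Delta$-differential ring $R_1\otimes_K R_2$, choose a maximal $\Delta$-ideal $\mathfrak{n}$, and apply the constants lemma once more (this is where differential closedness is reused) to see that $R_1\otimes_K R_2/\mathfrak{n}$ is differentially simple with $\partial_0$-constants $C_K$. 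The two fundamental solutions then differ by a matrix in $\mathrm{GL}_m(C_K)$, so the induced $K$-$\Delta$-homomorphisms $R_i\to R_1\otimes_K R_2/\mathfrak{n}$ are isomorphisms onto a common image; this produces a $\Delta$-differential $K$-isomorphism $R_1\cong R_2$, hence $\widetilde{K}_1\cong\widetilde{K}_2$ as $(\partial_0,\dots,\partial_n)$-differential extensions of $K$.
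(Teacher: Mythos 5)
First, a point of comparison: the paper offers no proof of this statement at all --- it is quoted from \cite{CS}, Theorem 9.5 --- so your proposal must be measured against the proof given there, whose architecture (the $(\partial_{1},\dots,\partial_{n})$-differential polynomial ring localized at $\det X$ with $\partial_{0}X=AX$, a maximal $\Delta$-ideal via Zorn where $\Delta=\{\partial_{0},\dots,\partial_{n}\}$, a $\Delta$-simple quotient $R$, a no-new-constants lemma, and uniqueness via a maximal $\Delta$-ideal of $R_{1}\otimes_{K}R_{2}$) you reproduce faithfully. The routine steps are fine: $\partial_{0}$ is well defined on $S$ because the $\partial_{1}^{\nu_{1}}\cdots\partial_{n}^{\nu_{n}}X_{ij}$ are independent indeterminates, in characteristic $0$ a $\Delta$-simple ring is a domain, and your uniqueness argument is the standard one (though you should state explicitly that a PPV ring is $\Delta$-simple, so that the kernels of $R_{i}\to R_{1}\otimes_{K}R_{2}/\mathfrak{n}$, being proper $\Delta$-ideals, vanish).

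The genuine gap is in the constants lemma, which you rightly identify as the heart but do not actually prove. Knowing that a $\partial_{0}$-constant $c\in C_{R}$ satisfies some system $P_{1}=\dots=P_{k-1}=0$, $P_{k}\neq 0$ over $C_{K}$ which, by Definition \ref{3defi4}, also has a solution $d\in C_{K}$ identifies nothing: two solutions of the same system need not coincide, and your proposed finish --- ``comparing the two solutions inside $R$ and using simplicity'' --- fails, because the $\Delta$-ideal of $R$ generated by $c-d$ need not be proper, so simplicity yields no contradiction when $c\neq d$. What the proof in \cite{CS} actually supplies at this point is Kolchin's theory of constrained extensions: since $R$ is $\Delta$-simple and finitely $\Delta$-generated over $K$, its fraction field is a \emph{constrained} $\Delta$-extension of $K$ (a nontrivial theorem of Kolchin, not a formal consequence of simplicity), hence each $\partial_{0}$-constant $c$ is constrained over $C_{K}$, i.e., satisfies a pair $(p,q)$ with $p(c)=0$, $q(c)\neq 0$ such that \emph{every} solution of $p=0$, $q\neq 0$ is a generic solution of $p$; and a field that is differentially closed in the sense of Definition \ref{3defi4} is precisely one admitting no proper constrained extension, whence $c\in C_{K}$. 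Without the constrainedness input, your reduction to Definition \ref{3defi4} is an assertion rather than an argument; once that lemma is in place, the remainder of your construction, including the reuse of the constants lemma for $R_{1}\otimes_{K}R_{2}$ and the observation that the two fundamental solutions differ by a matrix in $\mathrm{GL}_{m}(C_{K})$, is sound and matches the cited proof.
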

\textbf{Until the end of the subsection \ref{3sec21}, we assume that~$C_{K}$ is differentially closed.}\par
Let us consider~$\partial_{0} Y=AY$, with~${A\in \mathrm{M}_{m}(K)}$ and let~$\widetilde{K}\Big| K$ be a parameterized Picard-Vessiot extension. The parameterized differential Galois group~$Gal_{\partial_{0}}^{\partial_{1},\dots,\partial_{n}}\left(\widetilde{K}\Big| K\right)$ is the group of field automorphisms of~$\widetilde{K}$ which induce the identity on~$K$ and commute with all the derivations. This latter is independent of the choice of the parameterized Picard-Vessiot extension, since all the parameterized Picard-Vessiot extensions are~$(\partial_{0},\dots,\partial_{n})$-differentially isomorphic. In the  unparameterized case, the differential Galois group is an algebraic subgroup of~$\mathrm{GL}_{m}(C_{K})$. In the parameterized case, we find a linear differential algebraic subgroup:
\pagebreak[3]
\begin{defi}\label{3defi5}
Let us consider~$m^{2}$ indeterminates~$(X_{i,j})_{i,j\leq m}$. We say that a subgroup~$G$ of ~$\mathrm{GL}_{m}(C_{K})$ is a linear differential algebraic group if there exist ${P_{1},\dots,P_{k}\in C_{K}\{X_{i,j}\}_{\partial_{1},\dots,\partial_{n}}}$ such that for~${A=(a_{i,j}) \in \mathrm{GL}_{m}(C_{K})}$,~$$A \in G \Longleftrightarrow P_{1}(a_{i,j})=\dots=P_{k}(a_{i,j})=0.$$
\end{defi}
Let~$U$ be a fundamental solution of~$\partial_{0}Y=AY$. One proves directly that the map:\\
$$\begin{array}{cccc}
\r_{U}: & Gal_{\partial_{0}}^{\partial_{1},\dots,\partial_{n}}\left(\widetilde{K}\Big| K\right) & \longrightarrow & \mathrm{GL}_{m}(C_{K}) \\ 
 & \f &\longmapsto  & U^{-1}\f(U),
\end{array}$$
is an injective group morphism.
A fundamental fact is that~$$\hbox{Im } \r_{U}=\left\{ U^{-1}\f(U), \f \in Gal_{\partial_{0}}^{\partial_{1},\dots,\partial_{n}}\left(\widetilde{K}\Big| K\right) \right\}~$$ is a linear differential algebraic subgroup of~$\mathrm{GL}_{m}(C_{K})$ (see Theorem 9.5 in \cite{CS}). If we take a different fundamental solution in~$\widetilde{K}$, we obtain a conjugate linear differential algebraic subgroup of~$\mathrm{GL}_{m}(C_{K})$. We will identify~$Gal_{\partial_{0}}^{\partial_{1},\dots,\partial_{n}}\left(\widetilde{K}\Big| K\right)~$ with a linear differential algebraic subgroup of~$\mathrm{GL}_{m}(C_{K})$ for a chosen fundamental solution. We put a topology on~$\mathrm{GL}_{m}(C_{K})$, called Kolchin topology, for which the closed sets are defined as the zero loci of finite sets of differential polynomials with coefficients in~$C_{K}$. 
\pagebreak[3]
\begin{ex}\label{3ex3}{(Example 3.1 in \cite{CS})}
Let~$n=1$, let~$(C_{K},\pt)$ be a differentially closed~$\pt$-field that contains~$(\C(t),\pt)$, and let us consider~$K=C_{K}(z)$, the~$(\pz,\pt)$-differential field of rational functions in the indeterminate~$z$, with coefficients in~$C_{K}$, where~$z$ is a~$\pt$-constant with~$\pz z =1$,~$C_{K}$ is the field of constants with respect to~$\pz$, and~$\pz$ commutes with~$\pt$. Let us consider the parameterized differential equation
$$\pz Y(z,t)=\frac{t}{z}Y(z,t).$$ 
The fundamental solution is~$(z^{t})$ and~$K(z^{t},\log)$ is a Parameterized Picard-Vessiot extension (see~$\S \ref{3sec11}$ for the notations). Here, we have added~$\log$ because we want the extension to be closed under the derivations~$\pz$ and~$\pt$. Using the fact the Galois group commutes with~$\pz$ and~$\pt$, we find that the Galois group is given by:
$$\left\{  f\in C_{K}\big| f\neq 0 \; \hbox{and} \; f\pt^{2}f -(\pt f)^{2}=0\right\}.$$
We can see that if we take~$C_{K}=\C(t)$ or~$C_{K}=\mathcal{M}_{\C}$ (see page \pageref{3p1}), which are not differentially closed, then we find two different groups of differential automorphisms:
$$\left\{  f\in \C(t)\big| f\neq 0 \; \hbox{and} \; f\pt^{2}f -(\pt f)^{2}=0\right\}=\C^{*}$$
and 
$$\left\{  f\in \mathcal{M}_{\C}\big| f\neq 0 \; \hbox{and} \; f\pt^{2}f -(\pt f)^{2}=0\right\}=\left\{ ce^{bt} \big| b \in \C, c\in \C^{*} \right\} ,$$
which shows the importance of considering a Galois group defined over a differentially closed field.
See Example \ref{3ex1} for the resolution of this ambiguity using the parameterized density theorem.
\end{ex}

There is a Galois correspondence theorem for parameterized differential Galois theory, see Theorem~9.5 in \cite{CS}. For ~$G$ subgroup of~$Gal_{\partial_{0}}^{\partial_{1},\dots,\partial_{n}}\left(\widetilde{K}\Big| K\right)~$, let:~$$\widetilde{K}^{G}=\left\{ a \in \widetilde{K}\Big| \s (a)=a ,\; \forall \s \in G \right\} .$$ 
Then, the theorem says that the Kolchin closed subgroups of~$Gal_{\partial_{0}}^{\partial_{1},\dots,\partial_{n}}\left(\widetilde{K}\Big| K\right)~$ are in bijection with the~$(\partial_{0},\dots,\partial_{n})$-differential subfields of~$\widetilde{K}$ containing~$K$, via the map:~$$G \mapsto \widetilde{K}^{G}.$$
The inverse map is given by:
$$M \mapsto Gal_{\partial_{0}}^{\partial_{1},\dots,\partial_{n}}\left(\widetilde{K}\Big| M\right),$$
where~$Gal_{\partial_{0}}^{\partial_{1},\dots,\partial_{n}}\left(\widetilde{K}\Big| M\right)$ denotes the set of elements of~$Gal_{\partial_{0}}^{\partial_{1},\dots,\partial_{n}}\left(\widetilde{K}\Big| K\right)$ inducing identity on~$M$. In particular, we have the following corollary:
\pagebreak[3]
\begin{coro}\label{3coro3}
Let~$G$ be an arbitrary subgroup of~$Gal_{\partial_{0}}^{\partial_{1},\dots,\partial_{n}}\left(\widetilde{K}\Big| K\right)~$. Then,~$\widetilde{K}^{G}=K$ if and only if~$G$ is dense for Kolchin topology in~$Gal_{\partial_{0}}^{\partial_{1},\dots,\partial_{n}}\left(\widetilde{K}\Big| K\right)~$.
\end{coro}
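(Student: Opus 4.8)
The plan is to reduce the whole statement to the Galois correspondence just recalled, applied not to $G$ itself but to its Kolchin closure. Write $\mathcal{G}=Gal_{\partial_{0}}^{\partial_{1},\dots,\partial_{n}}\left(\widetilde{K}\big|K\right)$ and let $\overline{G}$ denote the Kolchin closure of $G$ inside $\mathcal{G}$. The first point to record is that $\overline{G}$ is again a subgroup, hence a Kolchin closed subgroup of $\mathcal{G}$: this is the differential-algebraic analogue of the fact that the Zariski closure of a subgroup of an algebraic group is a subgroup, and it holds for linear differential algebraic groups by Cassidy's theory (see \cite{C72}). Once this is available, $\overline{G}$ is a legitimate object to feed into the Galois correspondence.

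The crux of the argument is the identity $\widetilde{K}^{G}=\widetilde{K}^{\overline{G}}$. The inclusion $\widetilde{K}^{\overline{G}}\subseteq\widetilde{K}^{G}$ is immediate since $G\subseteq\overline{G}$. For the reverse inclusion I would fix $a\in\widetilde{K}^{G}$ and show that the stabilizer $\mathrm{Stab}(a)=\left\{\sigma\in\mathcal{G}\,\middle|\,\sigma(a)=a\right\}$ is Kolchin closed; then $G\subseteq\mathrm{Stab}(a)$ forces $\overline{G}\subseteq\mathrm{Stab}(a)$, that is, $a\in\widetilde{K}^{\overline{G}}$. To see that $\mathrm{Stab}(a)$ is Kolchin closed, one uses that $\widetilde{K}=K\langle u_{i,j}\rangle_{\partial_{0},\dots,\partial_{n}}$ is generated as a $(\partial_{0},\dots,\partial_{n})$-differential field by the entries of a fixed fundamental solution $U=(u_{i,j})$. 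Thus $a$ is a differential rational function of the $u_{i,j}$ over $K$, and since $\sigma(U)=U\rho_{U}(\sigma)$, the difference $\sigma(a)-a$ becomes, after identifying $\sigma$ with the matrix $\rho_{U}(\sigma)\in\mathrm{GL}_{m}(C_{K})$, a differential rational expression in the entries of $\rho_{U}(\sigma)$. Its vanishing therefore cuts out a Kolchin closed subset of $\mathcal{G}$, as desired. This step, and in particular the bookkeeping needed to rewrite $\sigma(a)-a$ as a differential-polynomial condition on the matrix entries, is where the only genuine work lies; everything else is formal.

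With $\widetilde{K}^{G}=\widetilde{K}^{\overline{G}}$ in hand, the corollary follows from the Galois correspondence. By definition $G$ is Kolchin dense in $\mathcal{G}$ precisely when $\overline{G}=\mathcal{G}$. Since $\overline{G}$ is Kolchin closed, the bijectivity of the map $H\mapsto\widetilde{K}^{H}$ on Kolchin closed subgroups gives $\overline{G}=\mathcal{G}$ if and only if $\widetilde{K}^{\overline{G}}=\widetilde{K}^{\mathcal{G}}$; moreover, applying the inverse map $M\mapsto Gal_{\partial_{0}}^{\partial_{1},\dots,\partial_{n}}\left(\widetilde{K}\big|M\right)$ at $M=K$ shows that $K$ corresponds to $\mathcal{G}$, whence $\widetilde{K}^{\mathcal{G}}=K$. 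Combining these observations with the identity from the previous paragraph yields the chain $G\text{ dense}\iff\overline{G}=\mathcal{G}\iff\widetilde{K}^{\overline{G}}=K\iff\widetilde{K}^{G}=K$, which is exactly the asserted equivalence.
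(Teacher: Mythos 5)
Your proof is correct and follows exactly the route the paper intends: the paper states this corollary without proof as an immediate consequence of the Galois correspondence of Theorem~9.5 in \cite{CS}, and your argument is precisely that deduction, with the standard supporting facts (the Kolchin closure of a subgroup is a closed subgroup, and $\widetilde{K}^{G}=\widetilde{K}^{\overline{G}}$ because the stabilizer of any $a\in\widetilde{K}$ is Kolchin closed, via expanding the coefficients of $\sigma(a)-a$ in a $C_{K}$-basis of $\widetilde{K}$) filled in correctly.
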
 

Let~$L | M | K$ be~$(\partial_{1},\dots,\partial_{n})$-differential field extensions. Notice that we do not exclude~${L=M=K}$. All the definitions we are going to give before the next Proposition come from \cite{HS},~$\S$ 6.2.3.\par
Given~$a_{1},\dots,a_{k}\in L$ and~$P\in M\{X_{1},\dots,X_{k}\}_{\partial_{1},\dots,\partial_{n}}$, we remark that~$P(a_{1},\dots,a_{n})$ is well defined.
Then, we may define the ~$(\partial_{1},\dots,\partial_{n})$-differential transcendence degree of~$L$ over~$M$ as the maximum number of elements~$a_{1},\dots,a_{k}$ of~$L$ such that:
$$ P(a_{1},\dots,a_{k})\neq 0,$$
for all non-zero~$(\partial_{1},\dots,\partial_{n})$-differential polynomials~$P$ with coefficients in~$M$. The~$(\partial_{1},\dots,\partial_{n})$-differential transcendence degree of an integral domain over another integral domain is defined to be the~$(\partial_{1},\dots,\partial_{n})$-differential transcendence degree of the fraction field of the first one over the fraction field of the second one.\par 
Let us consider~$m^{2}$ indeterminates~$(X_{i,j})_{i,j\leq m}$. Let~$(p)$ be a prime~$(\partial_{1},\dots,\partial_{n})$-differential ideal of~$C_{K}\{ X_{i,j}\}_{\partial_{1},\dots,\partial_{n}}$, i.e., a prime ideal stable under the derivations~$\partial_{1},\dots,\partial_{n}$. The~$(\partial_{1},\dots,\partial_{n})$-dimension of~$(p)$ over~$C_{K}$ is defined to be the ~$(\partial_{1},\dots,\partial_{n})$-differential transcendence degree of the quotient ring~$C_{K}\{ X_{i,j}\}_{\partial_{1},\dots,\partial_{n}}/(p)$ over~$C_{K}$.\par 
Let~$(r)$ be a radical~$(\partial_{1},\dots,\partial_{n})$-differential ideal of~$C_{K}\{ X_{i,j}\}_{\partial_{1},\dots,\partial_{n}}$, i.e., a radical ideal stable under the derivations~$\partial_{1},\dots,\partial_{n}$. Let ~$ (p_{1}),\dots,(p_{\nu})$ with~$\nu\in \N^{*}$ be the prime~$(\partial_{1},\dots,\partial_{n})$-differential ideals such that~$(r)=\displaystyle \bigcap_{k\leq \nu} (p_{k})$. The~$(\partial_{1},\dots,\partial_{n})$-dimension of~$(r)$ over~$C_{K}$ is defined to be the maximum in~$k$ of the~$(\partial_{1},\dots,\partial_{n})$-dimension of~$(p_{k})$ over~$C_{K}$.\par 
Assume that~$M\subset \widetilde{K}$. Let~$(q)$ be the radical~$(\partial_{1},\dots,\partial_{n})$-differential ideal of~$C_{K}\{ X_{i,j}\}_{\partial_{1},\dots,\partial_{n}}$ that defines~$Gal_{\partial_{0}}^{\partial_{1},\dots,\partial_{n}}\left(\widetilde{K}\Big| M\right)$ (see the proof of Proposition 9.10 in \cite{CS}). We define the~$(\partial_{1},\dots,\partial_{n})$-differential  dimension of 
$Gal_{\partial_{0}}^{\partial_{1},\dots,\partial_{n}}\left(\widetilde{K}\Big| M\right)$ over~$C_{K}$  as the~$(\partial_{1},\dots,\partial_{n})$-dimension of~$(q)$ over~$C_{K}$. 
\pagebreak[3]
\begin{propo}[\cite{HS}, Proposition 6.26]\label{3propo9}
The~$(\partial_{1},\dots,\partial_{n})$-differential transcendence degree of~$\widetilde{K}$ over~$M$ is equal to the 
$(\partial_{1},\dots,\partial_{n})$-differential  dimension  of 
$Gal_{\partial_{0}}^{\partial_{1},\dots,\partial_{n}}\left(\widetilde{K}\Big| M\right)$ over~$C_{K}$.
\end{propo}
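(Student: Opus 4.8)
\emph{Proof proposal.} Write $\tau(A|B)$ for the $(\partial_{1},\dots,\partial_{n})$-differential transcendence degree in the sense defined above (for radical ideals, taking the maximum over the minimal differential primes), and set $G:=Gal_{\partial_{0}}^{\partial_{1},\dots,\partial_{n}}\left(\widetilde{K}\Big| M\right)$. The plan is to work over $M$ throughout and to transport the computation of the differential dimension of $G$ through the torsor isomorphism furnished by the parameterized Picard-Vessiot theory. First, by the Galois correspondence (Theorem 9.5 in \cite{CS}, see Corollary \ref{3coro3}) the field $M$ corresponds to $G$, and $\widetilde{K}$ is itself a parameterized Picard-Vessiot extension of $M$ with parameterized Galois group $G$; its field of $\partial_{0}$-constants is still $C_{K}$, since $C_{K}\subseteq K\subseteq M\subseteq\widetilde{K}$. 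By the definitions recalled above, the $(\partial_{1},\dots,\partial_{n})$-differential dimension of $G$ over $C_{K}$ is precisely $\tau(C_{K}\{G\}|C_{K})$, where $C_{K}\{G\}=C_{K}\{X_{i,j}\}_{\partial_{1},\dots,\partial_{n}}/(q)$ is the differential coordinate ring of $G$. Thus it suffices to prove $\tau(\widetilde{K}|M)=\tau(C_{K}\{G\}|C_{K})$.

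Let $R\subseteq\widetilde{K}$ be the parameterized Picard-Vessiot ring of $\widetilde{K}$ over $M$, so that $\widetilde{K}=\mathrm{Frac}(R)$ and $R$ is a $(\partial_{0},\dots,\partial_{n})$-differential $G$-torsor over $M$. The central input is the torsor isomorphism coming from the Tannakian description of $G$ in \cite{CS,HS}: there is a $(\partial_{0},\dots,\partial_{n})$-differential $R$-algebra isomorphism
$$ R\otimes_{M}R \;\cong\; R\otimes_{C_{K}}C_{K}\{G\}, $$
where the left-hand side is an $R$-algebra via its left factor and the right-hand side via $R$. This is exactly where the differential closedness of $C_{K}$ enters: it ensures that $G$ is cut out by the radical differential ideal $(q)$ over $C_{K}$ and that the torsor becomes trivial after base change to $R$, so that the two descriptions of $R\otimes_{M}R$ coincide. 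Concretely, if $U\in\mathrm{GL}_{m}(\widetilde{K})$ is the fundamental solution used to realize $G\subseteq\mathrm{GL}_{m}(C_{K})$, the isomorphism matches the matrix of relative coordinates $U^{-1}\otimes U$ with the generic matrix $(X_{i,j})$ of $C_{K}\{G\}$.

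Finally I compute the differential transcendence degree of each side of this isomorphism as an $R$-algebra. On the right, extending scalars from the constant field $C_{K}$ to $R$ leaves the differential transcendence degree unchanged, so this degree equals $\tau(C_{K}\{G\}|C_{K})$, the differential dimension of $G$. On the left, base change of the ground field $M$ up to $R$ likewise preserves differential transcendence degree, so the degree equals $\tau(R|M)=\tau(\widetilde{K}|M)$. Since the two $R$-algebras are isomorphic, the two numbers agree, which is the assertion. The main obstacle is establishing and correctly reading off the torsor isomorphism above — in particular that $R\otimes_{M}R$ is governed by the differential coordinate ring $C_{K}\{G\}$ — and justifying the two invariance properties of the $(\partial_{1},\dots,\partial_{n})$-differential transcendence degree: its stability under tensoring over the differentially closed constant field $C_{K}$ and under base extension of $M$. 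Both rest on the differential-algebraic dimension theory of Kolchin (additivity of differential transcendence degree in towers and its preservation under these extensions), together with the fact that $C_{K}$ is the full field of $\partial_{0}$-constants, so that passing to $R$ or to $\widetilde{K}$ creates no new differential-algebraic relations among the coordinates.
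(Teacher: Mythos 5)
The paper gives no proof of this statement: it is imported verbatim from \cite{HS}, Proposition 6.26, and the proof there is precisely the torsor argument you outline --- the isomorphism $R\otimes_{M}R\cong R\otimes_{C_{K}}C_{K}\{G\}$ (available because $C_{K}$ is assumed differentially closed throughout this subsection), followed by the two base-change invariances of the $(\partial_{1},\dots,\partial_{n})$-differential transcendence degree from Kolchin's dimension theory. Your proposal is therefore correct and takes essentially the same route as the source the paper cites.
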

\pagebreak[3]
\begin{ex2}\textit{\ref{3ex3} (bis).}
Let us keep the same notations as in Example \ref{3ex3}. The parameterized Picard-Vessiot extension is~$K(z^{t},\log)$ and the Galois group is:
$\left\{  f\in C_{K}\big| f\neq 0 \; \hbox{and} \; f\pt^{2}f -(\pt f)^{2}=0\right\}.$ We may directly check that the~$\pt$-differential  dimension of the Galois group is~$0$ and therefore,~$z^{t}$ satisfies a~$\pt$-differential polynomial equation with coefficients in~$C_{K}$.
\end{ex2}

\pagebreak[3]
\subsection{Parameterized differential Galois theory for a non-differentially closed field of constants.}\label{3sec22}
 Let~$K$ be a differential field of characteristic~$0$ with~$n+1$ commuting derivations:~$\partial_{0},\dots,\partial_{n}$. Let~$C_{K}$ be the field of constants with respect to~$\partial_{0}$. Note that we do not assume~$C_{K}$ to be differentially closed. Consider~$\partial_{0}Y=AY$, with~$A\in \mathrm{M}_{m}(K)$, and assume the existence of~$\widetilde{K}\Big|K$, a parameterized Picard-Vessiot extension for ~$\partial_{0}Y=AY$ (see~$\S$ \ref{3sec21}). This means in particular that the field of constants of~$\widetilde{K}$ with respect to~$\partial_{0}$ is~$C_{K}$. Let~$F=(F_{i,j})\in \mathrm{GL}_{m}\left(\widetilde{K}\right)$ be a fundamental solution such that~$\widetilde{K}= K\langle F_{i,j}\rangle_{\partial_{0},\dots,\partial{n}}$ (see~$\S \ref{3sec21}$ for the notation). Let ~$Aut_{\partial_{0}}^{\partial_{1},\dots,\partial_{n}}\left(\widetilde{K}\Big|K\right)$ be the group of~$(\partial_{0},\dots,\partial_{n})$-differential field automorphisms of~$\widetilde{K}$ letting~$K$ invariant

\pagebreak[3]
\begin{rem}\label{3rem6}
We avoid here the notation~$Gal_{\partial_{0}}^{\partial_{1},\dots,\partial_{n}}\left(\widetilde{K}\Big|K\right)$, because we have no theorem that guarantees the uniqueness of the parameterized Picard-Vessiot extension~$\widetilde{K}\Big|K$, since~$C_{K}$ is not differentially closed. However we will call it the parameterized differential Galois group, or Galois group, if no confusion is likely to arise.
\end{rem}

We extend Definition \ref{3defi5} for the field~$C_{K}$. Let us consider~$m^{2}$ indeterminates~$(X_{i,j})_{i,j\leq m}$. We say that a subgroup~$G$ of ~$\mathrm{GL}_{m}(C_{K})$ is a linear differential algebraic group if there exist~${P_{1},\dots,P_{k}\in C_{K}\{X_{i,j}\}_{\partial_{1},\dots,\partial_{n}}}$ such that for~$ A=(a_{i,j}) \in \mathrm{GL}_{m}(C_{K})$,~$$A \in G \Longleftrightarrow P_{1}(a_{i,j})=\dots=P_{k}(a_{i,j})=0.$$
The goal of the subsection is to prove:
\pagebreak[3]
\begin{propo}\label{3propo8}
\begin{trivlist}
\item (1) Let us consider the injective group morphism:~$$\begin{array}{cccc}
\r_{F}: & Aut_{\partial_{0}}^{\partial_{1},\dots,\partial_{n}}\left(\widetilde{K}\Big|K\right)& \longrightarrow & \mathrm{GL}_{m}(C_{K}) \\ 
 & \f &\longmapsto  & F^{-1}\f(F).
\end{array}~$$
Then,
$$\hbox{Im } \r_{F}=\left\{ F^{-1}\f(F), \f \in Aut_{\partial_{0}}^{\partial_{1},\dots,\partial_{n}}\left(\widetilde{K}\Big|K\right) \right\}$$ is a linear differential algebraic subgroup of~$\mathrm{GL}_{m} (C_{K})$. We will identify~$Aut_{\partial_{0}}^{\partial_{1},\dots,\partial_{n}}\left(\widetilde{K}\Big|K\right)$ with a linear differential algebraic subgroup of~$\mathrm{GL}_{m} (C_{K})$ for a chosen fundamental solution. The image is independent of this choice, up to conjugacy by an element of~$\mathrm{GL}_{m}(C_{K})$.
\item (2) Let~$G$ be a subgroup of~$Aut_{\partial_{0}}^{\partial_{1},\dots,\partial_{n}}\left(\widetilde{K}\Big|K\right)$. If~$\widetilde{K}^{G}=K$, then~$G$ is dense for Kolchin topology in~$Aut_{\partial_{0}}^{\partial_{1},\dots,\partial_{n}}\left(\widetilde{K}\Big|K\right)$. 
\end{trivlist}
\end{propo}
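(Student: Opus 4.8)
The plan is to run the standard Tannakian construction of Picard--Vessiot theory, verifying at each step that the only property of $C_K$ used is that it is the full field of $\partial_0$-constants of $\widetilde{K}$, never its differential closedness. Abbreviate the automorphism group to $\mathcal{A}=Aut_{\partial_0}^{\partial_1,\dots,\partial_n}(\widetilde{K}|K)$. For (1), I would first record the routine facts. If $\phi\in\mathcal{A}$ then $\phi(F)$ again solves $\partial_0 Y=AY$, since $A\in\mathrm{M}_m(K)$, $\phi$ fixes $K$ and commutes with $\partial_0$; hence $F^{-1}\phi(F)$ is $\partial_0$-constant, so $\rho_F(\phi)\in\mathrm{GL}_m(C_K)$. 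As $C_K\subseteq K$ is fixed pointwise by $\phi$, the relation $\psi(F)=F\rho_F(\psi)$ gives $\phi(\psi(F))=\phi(F)\rho_F(\psi)$, whence $\rho_F$ is a morphism; it is injective because $\widetilde{K}$ is generated over $K$ by the entries of $F$. Two fundamental solutions in $\widetilde{K}$ differ by right multiplication by a matrix in $\mathrm{GL}_m(C_K)$, and this matrix is fixed by every $\phi$, so changing $F$ conjugates $\mathrm{Im}\,\rho_F$; this gives the asserted independence.

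The substantive part of (1) is that $\mathrm{Im}\,\rho_F$ is Kolchin closed. I would form the Picard--Vessiot ring $R=K\{F_{i,j},\det(F)^{-1}\}_{\partial_0,\dots,\partial_n}\subseteq\widetilde{K}$ and the tensor product $R\otimes_K R$ with the diagonal action of all derivations. The comparison matrix $Z=(F\otimes 1)^{-1}(1\otimes F)$ is $\partial_0$-constant, so it lies in $\mathrm{GL}_m(D)$ where $D=(R\otimes_K R)^{\partial_0}$ is a $(\partial_1,\dots,\partial_n)$-differential $C_K$-algebra. The key lemma is that multiplication $R\otimes_{C_K}D\to R\otimes_K R$ is an isomorphism and that $D$ is generated as a differential $C_K$-algebra by the entries of $Z$ and $\det(Z)^{-1}$; this is precisely where the no-new-constants property of $\widetilde{K}|K$ is used, and I expect its verification to be the main obstacle in (1), as it must be carried out without differential closedness. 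Granting it, the cocycle identity $Z_{13}=Z_{12}Z_{23}$ in $R\otimes_K R\otimes_K R$ endows $D$ with a differential Hopf-algebra structure, so the radical differential ideal $\mathfrak{q}=\ker\big(C_K\{X_{i,j},1/\det\}_{\partial_1,\dots,\partial_n}\to D,\ X\mapsto Z\big)$ cuts out a linear differential algebraic subgroup. Finally I would identify $\mathrm{Im}\,\rho_F$ with the $C_K$-points of this group: each $\phi\in\mathcal{A}$ induces $R\otimes_K R\to R$, $a\otimes b\mapsto a\,\phi(b)$, carrying $Z$ to $\rho_F(\phi)$ and $D$ into $C_K$, so $\rho_F(\phi)$ annihilates $\mathfrak{q}$; conversely a $C_K$-point is a differential $C_K$-algebra homomorphism $D\to C_K$ which, tensored with $R$, produces a differential $K$-automorphism of $R$ sending $F\mapsto Fg$ and hence an element of $\mathcal{A}$ mapping to $g$.

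For (2) I would first reduce to Kolchin-closed subgroups. For each $a\in\widetilde{K}$ the set $\{g\in\mathrm{Im}\,\rho_F: \rho_F^{-1}(g)(a)=a\}$ is Kolchin closed, because $a$ is a differential-rational function of the $F_{i,j}$ over $K$ and the substitution $F\mapsto Fg$ turns $\rho_F^{-1}(g)(a)=a$ into a differential-polynomial condition on $g$ after clearing denominators. Hence $\widetilde{K}^G=\widetilde{K}^{\overline{G}}$, where $\overline{G}$ denotes the Kolchin closure of $G$ in $\mathrm{Im}\,\rho_F$. It therefore suffices to prove that a Kolchin-closed subgroup $H\subseteq\mathcal{A}$ with $\widetilde{K}^H=K$ must be all of $\mathcal{A}$; equivalently, that every proper closed $H\subsetneq\mathcal{A}$ satisfies $\widetilde{K}^H\supsetneq K$.

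This reverse inclusion of the Galois correspondence is the hard part, since in \cite{CS} it relies on differential closedness. I would deduce it from a differential-algebraic Chevalley theorem: a proper Kolchin-closed subgroup $H$ of the linear differential algebraic group $\mathcal{A}$ is the stabilizer of a line in a finite-dimensional differential-polynomial representation obtained from the standard representation on the solution space $\mathrm{span}_{C_K}(F)$ by prolongations together with tensor and exterior powers. Because $\widetilde{K}$ is stable under every $\partial_j$, all of these constructions are realized inside $\widetilde{K}$, so the associated semi-invariant yields an element of $\widetilde{K}^H$; the delicate point I would need to check is that this element is genuinely moved by some member of $\mathcal{A}\setminus H$, so that it does not already lie in $K=\widetilde{K}^{\mathcal{A}}$, giving $\widetilde{K}^H\supsetneq K$ and the desired contradiction. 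A less self-contained alternative would be to descend to a differentially closed extension $\widehat{C_K}\supseteq C_K$ and invoke Corollary \ref{3coro3}, at the cost of a base-change lemma keeping the field of $\partial_0$-constants under control.
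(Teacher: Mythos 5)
Your part (1) is, modulo packaging, the paper's own argument. Your comparison matrix $Z=(F\otimes 1)^{-1}(1\otimes F)$ and your key lemma (the multiplication map $R\otimes_{C_{K}}D\to R\otimes_{K}R$ is an isomorphism, with $D$ generated by the entries of $Z$ and $\det(Z)^{-1}$) are exactly the content of the paper's Lemma \ref{3lem2}, which asserts that $q\widetilde{K}\{Y_{i,j},1/\det(Y_{i,j})\}_{\partial_{1},\dots,\partial_{n}}$ is generated by its intersection $I$ with $C_{K}\{Y_{i,j},1/\det(Y_{i,j})\}_{\partial_{1},\dots,\partial_{n}}$, where the substitution $(X_{i,j})=(Z_{i,j})(Y_{i,j})$ plays the role of your $Z$. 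You flag this lemma as ``the main obstacle'' but do not prove it; in fact it admits a short proof using only that $C_{K}$ is the full field of $\partial_{0}$-constants of $\widetilde{K}$: expand elements of the extended ideal in a $C_{K}$-basis of $C_{K}\{Y_{i,j},1/\det(Y_{i,j})\}_{\partial_{1},\dots,\partial_{n}}$, take a relation of minimal length with one coefficient normalized to $1$, apply $\partial_{0}$, and conclude by induction that all coefficients are forced into $C_{K}$. So for (1) you have the right route, left incomplete at the single point that carries the content; the rest of your (1), including the identification of $\mathrm{Im}\,\r_{F}$ with the $C_{K}$-points cut out by the resulting radical differential ideal, matches the paper's chain of equivalences.

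Part (2) is where you genuinely diverge, and your primary route has a real gap. The reduction to Kolchin-closed subgroups via $\widetilde{K}^{G}=\widetilde{K}^{\overline{G}}$ is fine, but the differential-algebraic Chevalley theorem you then invoke is established over differentially closed fields, whereas here $C_{K}$ is by hypothesis not differentially closed and $H$ is merely a Kolchin-closed subgroup of a group of $C_{K}$-rational points; no stabilized-line representation theorem is available at this level of generality, and this is precisely the machinery the subsection is designed to avoid. Worse, your parenthetical identity $K=\widetilde{K}^{\mathcal{A}}$ (with $\mathcal{A}$ the full automorphism group) is false in this setting: the paper remarks immediately after the proposition that the converse of (2) fails when $C_{K}$ is not differentially closed, and in Example 3.1 of \cite{CS} the element $\log$ is fixed by the whole group yet does not lie in $K$. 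Your argument only needs the true inclusion $K\subseteq\widetilde{K}^{\mathcal{A}}$, but the misstatement signals that the half of the Galois correspondence you are leaning on simply does not hold here; note also that a stabilized line yields a semi-invariant, not an invariant, so an extra step (ratios, plus moved-ness under some element of $\mathcal{A}\setminus H$) would be required. The paper instead proves (2) directly and elementarily: if the Kolchin closure of $G$ were proper, there would exist $P\in C_{K}\{Y_{i,j},1/\det(Y_{i,j})\}_{\partial_{1},\dots,\partial_{n}}$ vanishing on $G$ but not in $I$, hence not in $J=q\widetilde{K}\{Y_{i,j},1/\det(Y_{i,j})\}_{\partial_{1},\dots,\partial_{n}}$ by Lemma \ref{3lem2}; one then picks $Q=f_{1}Q_{1}+\dots+f_{\nu}Q_{\nu}$ of minimal length (with $f_{i}\in\widetilde{K}$, $Q_{i}$ over $K$, $f_{1}=1$) vanishing at $(Z_{i,j})(g_{i,j})$ for all $g\in G$ but outside $J$, and minimality forces $Q^{g}=Q$ for all $g\in G$, hence $f_{i}\in\widetilde{K}^{G}=K$, hence $Q\in q\subset J$, a contradiction. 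Your fallback option (base change to a differentially closed $\widehat{C_{K}}$ and Corollary \ref{3coro3}) can be made to work, but the deferred ``base-change lemma'' is substantial: one must show $\widetilde{K}\otimes_{C_{K}}\widehat{C_{K}}$ is a domain (this holds in characteristic $0$ because $C_{K}$ is relatively algebraically closed in $\widetilde{K}$) and that the $\partial_{0}$-constants stay equal to $\widehat{C_{K}}$ after base change --- essentially the content of the later Proposition \ref{3propo7} --- so as written your (2) is a plausible program rather than a proof.
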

Remark that, contrary to Corollary \ref{3coro3}, the converse of (2) is false when~$C_{K}$ is not differentially closed. See \cite{CS}, Example 3.1.
Before showing the proposition, we point out two facts we will use in the proof. Let~$L|K$ be a~$(\partial_{0},\dots,\partial_{n})$-differential field extension and~$a_{1},\dots,a_{k}\in L$.
\begin{itemize}
\item As in the case where~$C_{K}$ is differentially closed (see~$\S \ref{3sec21}$), if~${P\in K\{ X_{1},\dots,X_{k} \}_{\partial_{1},\dots,\partial_{n}}}$, then~$P(a_{1},\dots,a_{k})$ is well defined.
\item The set~$\left\{ P(a_{1},\dots,a_{k})\big|P\in K\{ X_{1},\dots,X_{k} \}_{\partial_{1},\dots,\partial_{n}}\right\}$ is a~$(\partial_{0},\dots,\partial_{n})$-differential field extension we will denote by~$L\{ a_{1},\dots,a_{k} \}_{\partial_{1},\dots,\partial_{n}}\big|L$.
\end{itemize}
\begin{proof}[Proof of Proposition \ref{3propo8}]
\begin{trivlist}
\item (1)
We follow here the proof of Proposition 9.10 in \cite{CS}. We consider the differential polynomial ring:~$$R=K\{ X_{i,j},1/\det(X_{i,j}) \}_{\partial_{1},\dots,\partial_{n}},$$ and endow it with the~$\partial_{0}$-differential structure defined by~$\partial_{0} (X_{i,j})=A(X_{i,j})$. Let us consider:~$$S=K\{ F_{i,j},1/\det(F_{i,j}) \}_{\partial_{0},\dots,\partial_{n}},$$
the~$(\partial_{0},\dots,\partial_{n})$-differential subring of~$\widetilde{K}$ generated over~$K$ by the~$F_{i,j}$ and~$1/\det(F_{i,j})$. It is an integral domain. Let~$q$ be the obvious prime~$(\partial_{0},\dots,\partial_{n})$-differential ideal such that~$R/q\simeq S$. Let~$Z_{i,j}$ be the image of~$X_{i,j}$ in~$S\subset\widetilde{K}$, so that~$(Z_{i,j})$ is a fundamental solution for~$\partial_{0}Y=AY$ in~$S$.  Consider the following rings:
$$\begin{array}{clc}
\widetilde{K}\{ X_{i,j},1/\det (X_{i,j}) \}_{\partial_{1},\dots,\partial_{n}}&=&\widetilde{K}\{ Y_{i,j},1/\det (Y_{i,j}) \}_{\partial_{1},\dots,\partial_{n}}\\
\cup &&\cup\\
K\{ X_{i,j},1/\det (X_{i,j}) \}_{\partial_{1},\dots,\partial_{n}}&&
 C_{K}\{ Y_{i,j},1/\det (Y_{i,j})  \}_{\partial_{1},\dots,\partial_{n}},
\end{array}
$$
where the indeterminates~$Y_{i,j}$ are defined by~$(X_{i,j})=(Z_{i,j})(Y_{i,j})$. We remark that~$\partial_{0} (Y_{i,j})=0$.
Since we consider fields that are of characteristic~$0$, the differential ideal:~$$q\widetilde{K}\{ Y_{i,j},1/\det (Y_{i,j}) \}_{\partial_{1},\dots,\partial_{n}}\subset \widetilde{K}\{ X_{i,j},1/\det (X_{i,j}) \}_{\partial_{1},\dots,\partial_{n}}=\widetilde{K}\{ Y_{i,j},1/\det (Y_{i,j}) \}_{\partial_{1},\dots,\partial_{n}},$$ is a radical~$(\partial_{0},\dots,\partial_{n})$-differential ideal (see Corollary A.17 in \cite{VdPS}).  
The next lemma is an adaptation  of Lemma 9.8 in \cite{CS} without the assumption that the field of constants is differentially closed.
\pagebreak[3]
\begin{lem}\label{3lem2}
 The~$(\partial_{0},\dots,\partial_{n})$-ideal~$q\widetilde{K} \{ Y_{i,j},1/\det(Y_{i,j})\}_{\partial_{1},\dots,\partial_{n}}$ is generated by: 	$$I=q\widetilde{K} \{ Y_{i,j},1/\det(Y_{i,j})\}_{\partial_{1},\dots,\partial_{n}}\cap C_{K} \{ Y_{i,j},1/\det(Y_{i,j})\}_{\partial_{1},\dots,\partial_{n}}.$$
\end{lem}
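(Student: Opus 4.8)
The plan is to adapt the proof of Lemma 9.8 in \cite{CS}, replacing the use of differential closedness of~$C_K$ by the single structural fact that~$C_K$ is the field of~$\partial_0$-constants of~$\widetilde{K}$. Write~$\widetilde{B}=\widetilde{K}\{Y_{i,j},1/\det(Y_{i,j})\}_{\partial_1,\dots,\partial_n}$ and~$B=C_K\{Y_{i,j},1/\det(Y_{i,j})\}_{\partial_1,\dots,\partial_n}$, and set~$J=q\widetilde{B}$, so that~$I=J\cap B$ by definition. The inclusion~$I\widetilde{B}\subseteq J$ is immediate since~$J$ is an ideal of~$\widetilde{B}$ containing~$I$; the whole content is the reverse inclusion~$J\subseteq I\widetilde{B}$.

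First I would record the ambient structure. Since the~$Y_{i,j}$ and all their~$\partial_1,\dots,\partial_n$-derivatives are~$\partial_0$-constants, the derivation~$\partial_0$ acts on~$\widetilde{B}$ only through the coefficients in~$\widetilde{K}$; concretely~$\widetilde{B}=\widetilde{K}\otimes_{C_K}B$ as~$\widetilde{K}$-algebras, with~$\partial_0$ acting as~$\partial_0\otimes\mathrm{id}$. Choosing a~$C_K$-basis~$(b_\lambda)$ of~$B$, which is then an~$\widetilde{K}$-basis of~$\widetilde{B}$, one sees at once that the~$\partial_0$-constants of~$\widetilde{B}$ are exactly~$B$: if~$x=\sum_\lambda f_\lambda b_\lambda$ then~$\partial_0 x=\sum_\lambda(\partial_0 f_\lambda)b_\lambda$ vanishes if and only if every~$f_\lambda\in C_K$. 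This is the only point at which the hypothesis on the constants of~$\widetilde{K}$ enters, and it is precisely what replaces differential closedness; note that the~$\partial_1,\dots,\partial_n$-structure of~$B$ plays no role here and~$B$ may be treated as a bare~$C_K$-algebra.

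The core is then a vanishing statement: \emph{a~$\partial_0$-stable ideal of~$\widetilde{K}\otimes_{C_K}D$, for any~$C_K$-algebra~$D$ with~$\partial_0=\partial_0\otimes\mathrm{id}$, that meets the ring of constants~$D$ only in~$0$ is the zero ideal.} To prove it I would run the classical minimal-length argument: pick a nonzero element of the ideal of minimal length~$r$ relative to a~$C_K$-basis of~$D$, normalize its leading coefficient to~$1$ by multiplying by a unit of~$\widetilde{K}$ (legitimate because~$\widetilde{K}$ is a field and the ideal is~$\widetilde{B}$-stable), and apply~$\partial_0$. The result again lies in the ideal but has length strictly less than~$r$, hence is~$0$ by minimality. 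Therefore all coefficients of the chosen element are~$\partial_0$-constants, so it lies in~$D$, hence in the ideal's intersection with~$D$, which is~$0$; this contradicts its being nonzero.

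To finish, I would apply this to~$\bar{J}$, the image of~$J$ in~$\widetilde{B}/(I\widetilde{B})=\widetilde{K}\otimes_{C_K}(B/I)$ (using flatness of~$\widetilde{K}$ over~$C_K$ to identify~$I\widetilde{B}=\widetilde{K}\otimes_{C_K}I$). This~$\bar{J}$ is~$\partial_0$-stable, and its intersection with the constant ring~$B/I$ is trivial: an element~$b\in B$ maps into~$\bar{J}$ if and only if~$b\in J+I\widetilde{B}=J$, i.e. if and only if~$b\in J\cap B=I$, i.e. if and only if its class in~$B/I$ is~$0$. The vanishing statement then forces~$\bar{J}=0$, that is~$J=I\widetilde{B}$, which is the assertion. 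The main obstacle is really only the bookkeeping of the second paragraph, namely the tensor decomposition and the identification of the~$\partial_0$-constants of~$\widetilde{B}$ with~$B$; once these are in place the minimal-length argument is routine and, crucially, at no stage requires~$C_K$ to be differentially closed.
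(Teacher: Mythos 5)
Your proof is correct and is in substance the paper's own argument: both rest on the identification $\widetilde{K}\{Y_{i,j},1/\det(Y_{i,j})\}_{\partial_{1},\dots,\partial_{n}}=\widetilde{K}\otimes_{C_{K}}C_{K}\{Y_{i,j},1/\det(Y_{i,j})\}_{\partial_{1},\dots,\partial_{n}}$, expansion along a $C_{K}$-basis that $\partial_{0}$ annihilates, and the two facts that the $\partial_{0}$-constants of $\widetilde{K}$ are exactly $C_{K}$ and that $\widetilde{K}$ is a field (permitting normalization of a coefficient to $1$). The only difference is packaging: the paper runs a direct induction on the length of an element of $q\widetilde{K}\{Y_{i,j},1/\det(Y_{i,j})\}_{\partial_{1},\dots,\partial_{n}}$, combining $\partial_{0}(f)$ and $\partial_{0}(m_{2}^{-1}f)$ to conclude, whereas you first quotient by the ideal generated by $I$ and kill a minimal-length element with a single application of $\partial_{0}$ --- an equivalent, if somewhat tidier, formulation of the same classical argument.
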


\begin{proof}
Let~$(e_{i})_{i\in B}$ be a basis of~$C_{K}\{ Y_{i,j},1/\det(Y_{i,j})\}_{\partial_{1},\dots,\partial_{n}}$ over~$C_{K}$. Let:~$$f=\sum_{i=1}^{n} m_{i}e_{i}\in q\widetilde{K}\{ Y_{i,j},1/\det(Y_{i,j})\}_{\partial_{1},\dots,\partial_{n}},$$ with~$m_{i} \in \widetilde{K}.$ By induction on~$n$ we will show that~$f\in I$. If~$n=0$ or~$1$ there is nothing to prove. We assume that~$n>1$. We can suppose that~$m_{1}=1$ and~$m_{2}\notin C_{K}$. Then, because of the fact that the field of constants of~$\widetilde{K}$ with respect to~$\pz$ is~$C_{K}$:
$$\partial_{0}(f)=\sum_{i=2}^{n} \partial_{0}(m_{i})e_{i} \neq 0 \hbox{ and } f\in q\widetilde{K}\{ Y_{i,j},1/\det(Y_{i,j})\}_{\partial_{1},\dots,\partial_{n}}.$$
Then, by induction,~$\partial_{0}(f)\in I$. With the same argument:
$$\partial_{0}(m_{2}^{-1}f) \in I.$$ Then,~$\partial_{0}(m_{2}^{-1})f=\partial_{0}(m_{2}^{-1}f)-m_{2}^{-1}\partial_{0}f \in I$. Since~$\partial_{0}(m_{2}^{-1})\neq 0$, we obtain that~$f\in I$.
\end{proof}

By Lemma \ref{3lem2},~$q\widetilde{K}\{ X_{i,j},1/\det (X_{i,j}) \}_{\partial_{1},\dots,\partial_{n}}$ is generated by:
~$$I=q\widetilde{K}\{ X_{i,j},1/\det (X_{i,j}) \}_{\partial_{1},\dots,\partial_{n}}\cap C_{K} \{ Y_{i,j},1/\det(Y_{i,j})\}_{\partial_{1},\dots,\partial_{n}}.$$ Clearly~$I$ is a~$(\partial_{1},\dots,\partial_{n})$-radical ideal of~$C_{K} \{ Y_{i,j},1/\det(Y_{i,j})\}_{\partial_{1},\dots,\partial_{n}}$.
Let~${C=(C_{i,j}) \in \mathrm{GL}_{m} (C_{K})}$. The following statements are equivalent:
\begin{enumerate}
\item~$(C_{i,j}) \in Aut_{\partial_{0}}^{\partial_{1},\dots,\partial_{n}}\left(\widetilde{K}\Big|K\right)$.
\item The map~$K\{ X_{i,j},1/\det(X_{i,j})\}_{\partial_{1},\dots,\partial_{n}}\rightarrow K\{ X_{i,j},1/\det(X_{i,j})\}_{\partial_{1},\dots,\partial_{n}}$ defined by~${(X_{i,j})\mapsto (X_{i,j})(C_{i,j}):=(\displaystyle\sum_{k=1}^{m}X_{i,k}C_{k,j})}$ leaves~$q$ invariant.
\item  The map~$K\{ X_{i,j},1/\det(X_{i,j})\}_{\partial_{1},\dots,\partial_{n}}\rightarrow \widetilde{K}$ defined by ~$(X_{i,j})\mapsto (Z_{i,j})(C_{i,j})$ sends~$q$ to~$0$.
\item  The map~$\widetilde{K}\{ X_{i,j},1/\det(X_{i,j})\}_{\partial_{1},\dots,\partial_{n}}\rightarrow \widetilde{K}$ defined by~$(X_{i,j})\mapsto (Z_{i,j})(C_{i,j})$ sends 
\begin{center}
$q\widetilde{K}\{ X_{i,j},1/\det(X_{i,j})\}_{\partial_{1},\dots,\partial_{n}}=q\widetilde{K}\{ Y_{i,j},1/\det(Y_{i,j})\}_{\partial_{1},\dots,\partial_{n}}$ to~$0$.
\end{center}
\item  The map~$\widetilde{K}\{ Y_{i,j},1/\det(Y_{i,j})\}_{\partial_{1},\dots,\partial_{n}}\rightarrow \widetilde{K}$ defined by~$(Y_{i,j})\mapsto (C_{i,j})$ sends 
\begin{center}
   ~$q\widetilde{K}\{ Y_{i,j},1/\det(Y_{i,j})\}_{\partial_{1},\dots,\partial_{n}}$ to~$0$.
\end{center}
\end{enumerate}
The theorem is now a consequence of the fact that~$q\widetilde{K}\{ Y_{i,j},1/\det(Y_{i,j})\}_{\partial_{1},\dots,\partial_{n}}$ is generated by~$I$, a~$(\partial_{1},\dots,\partial_{n})$-radical ideal of~$C_{K} \{ Y_{i,j},1/\det(Y_{i,j})\}_{\partial_{1},\dots,\partial_{n}}$.
\item (2)
We follow the proof of Proposition 9.10 in \cite{CS}, and use the same notations as before. By construction, the ideal~$I$ of Lemma \ref{3lem2} above is the differential ideal that defines the Galois group.
Assume that the Kolchin closure of~$G$ is not the whole Galois group.  Then, there exists~${P\in C_{K}\{ Y_{i,j},1/\det(Y_{i,j}) \}_{\partial_{1},\dots,\partial_{n}}}$ such that~$P\notin I$ and~$P(g)= 0$ for all~$g\in G$. Lemma \ref{3lem2} implies that~$$P\notin J=q\widetilde{K} \{ Y_{i,j},1/\det(Y_{i,j})\}_{\partial_{1},\dots,\partial_{n}}.$$ Let~$$T=\Big\{Q\in  \widetilde{K}\{ X_{i,j},1/\det(X_{i,j}) \}_{\partial_{1},\dots,\partial_{n}}\Big|Q\notin J \hbox{ and } Q\Big((Z_{i,j})(g_{i,j})\Big)=0, \forall g=(g_{i,j})\in G\Big\}.$$ Since~$P\in T$,~$T\neq \{0\}$. An element~$Q\in T$ can be written as:
$$Q=f_{1}Q_{1}+\dots+f_{\nu}Q_{\nu},$$
where~$f_{i}\in \widetilde{K}$ and~$Q_{i}\in K\{ X_{i,j},1/\det(X_{i,j}) \}_{\partial_{1},\dots,\partial_{n}}$. Let~$Q=f_{1}Q_{1}+\dots+f_{\nu}Q_{\nu}\in T$ such that:
\begin{itemize}
\item~$f_{1}=1$.
\item All the~$f_{i}$ are non-zero.
\item~$\nu$ is minimal.
\end{itemize} 
For all~$g\in G$, let~$Q^{g}=f_{1}^{g}Q_{1}+\dots+f_{\nu}^{g}Q_{\nu}\in T$.
Let~$g\in G$. Since~$Q-Q^{g}$ is shorter than~$Q$, and satisfies~$\left(Q-Q^{g}\right)((Z_{i,j})(g_{i,j}))=0$, we have~$Q-Q^{g}\in J$. If~$Q-Q^{g}\neq 0$, there exists~$l\in\widetilde{K}$ such that~$Q-l(Q-Q^{g})$ is shorter than~$Q$. Since~${Q-l(Q-Q^{g})\in T}$, this is not possible unless~$Q-Q^{g}= 0$. Therefore,~$Q=Q^{g}$, for all~$g\in G$, and so~${Q\in K\{ X_{i,j},1/\det(X_{i,j}) \}_{\partial_{1},\dots,\partial_{n}}}$. Since~$Q(Z_{i,j})=0$, we have~$Q\in J$. This yields the result. 
\end{trivlist}
\end{proof}

\pagebreak
\subsection{A result of descent for the local analytic parameterized differential Galois group.}\label{3sec22'}
We keep the notations of Section \ref{3sec1}. Let~$\pz Y(z,t)=A(z,t)Y(z,t)$, with~${A(z,t)\in \mathrm{M}_{m}\Big(\mathcal{O}_{U}(\{z\})\Big)}$, where~$U$ is a non empty polydisc of~$\C^{n}$ and~$\mathcal{O}_{U}(\{z\})$ has been defined in Page~\pageref{3p2}. 
\pagebreak[3]  
\begin{rem}
Note that~$\mathcal{O}_{U}(\{z\})$ is a ring but not a field in general.
 For example, if~${n=1}$,~${(z-t)^{-1}\notin \mathcal{O}_{U}(\{z\})}$. However, we have~${(z-t)^{-1}\in \mathcal{O}_{\C^{*}}(\{z\})}$. More generally let~${\a(z,t) \in   \mathcal{O}_{U}(\{z\})}$. For~$t\in U$, let~$R(t)$ minimal such that~$|\a(z,t)|\neq 0$ for~$0<|z|<R(t)$. There exists a non empty polydisc~$U'$ such that there exists~$\e>0$ with~$R(t)>\e$ on~$U'$. In particular, we have~$\a(z,t)^{-1}\in \mathcal{O}_{U'}(\{z\})$.
\end{rem}
Since~$\mathcal{O}_{U}(\{z\})\subset\hat{K}_{U}$, which is a field,~$\mathcal{O}_{U}(\{z\})$ is an integral domain, and we can define~$K_{U}$ as the fraction field of~$\mathcal{O}_{U}(\{z\})$. We have~$$\{ a \in K_{U} | \pz a =0 \}=\{ a \in \hat{K}_{U} | \pz a =0 \}=\mathcal{M}_{U}.$$ 
Let: ~$$F(z,t)=(F_{i,j})=\hat{H}(z,t) z^{L(t)}e\Big(Q(z,t)\Big)\in \mathrm{GL}_{m}\Big(\widehat{\textbf{K}_{U}}\Big), (\hbox{see } \S \ref{3sec11})~$$
be the fundamental solution given in Proposition \ref{3propo4}.
Let us denote~$K_{U}\langle F_{i,j} \rangle_{\pz,\dt}=\widetilde{K_{U}}$, which is a~$(\pz,\dt)$-differential subfield of~$\widehat{\textbf{K}_{U}}$. We have seen in~$\S$ \ref{3sec11}, that~$\widehat{\textbf{K}_{U}}$ has field of constants with respect to~$\pz$ equal to~$\mathcal{M}_{U}$. Then, we deduce that~$\widetilde{K_{U}}\Big|K_{U}$ is a parameterized Picard-Vessiot extension. Therefore, the results of~$\S \ref{3sec22}$ may be applied here; and we can define a parameterized differential Galois group~$Aut_{\pz}^{\dt}\left(\widetilde{K_{U}}\Big|K_{U}\right)$, which will be identified with a linear differential algebraic subgroup of~$\mathrm{GL}_{m}(\mathcal{M}_{U})$. We want to prove now that it is the ``same'' as the one of~$\S \ref{3sec21}$. \\ \par
Let~$C$ be a ($\dt$)-differentially closed field that contains~$\mathcal{M}_{U}$. Let us define~$C[[z]][z^{-1}]$, the~$(\pz,\dt)$-differential field, where~$z$ is a~$(\dt)$-constant with~$\pz z =1$,~$C$ is the field of constants with respect to~$\pz$, and~$\pz$ commutes with all the derivations. We define the ring~$K_{U}\otimes_{\mathcal{M}_{U}}C$ with the differential structure given by:
$$\forall a\in K_{U}, \forall c\in C, \forall \partial\in \{\pz,\dt\},\quad \partial (a\otimes_{\mathcal{M}_{U}}c)=\partial a\otimes_{\mathcal{M}_{U}}c+ a\otimes_{\mathcal{M}_{U}}\partial c.~$$
This~$(\pz,\dt)$-differential ring can be naturally embedded into~$C[[z]][z^{-1}]$, which implies that it is an integral domain. Therefore, we may define~$\mathcal{K}_{C,U}$, the field of fractions of~$K_{U}\otimes_{\mathcal{M}_{U}}C$. We see now~$\mathcal{K}_{C,U}$ (resp.~$K_{U}\otimes_{\mathcal{M}_{U}}C$) as a subfield (resp. subring) of~$C[[z]][z^{-1}]$. 
\pagebreak[3]
\begin{propo}\label{3propo7}
Let us keep the same notations. Let~$\pz Y(z,t)=A(z,t)Y(z,t)$, with~${A(z,t)\in \mathrm{M}_{m}\Big(\mathcal{O}_{U}(\{z\})\Big)}$. The extension field~$\mathcal{K}_{C,U}\langle F_{i,j} \rangle_{\pz,\dt}\Big|\mathcal{K}_{C,U}=\widetilde{\mathcal{K}_{C,U}}\Big| \mathcal{K}_{C,U}$ is a parameterized Picard-Vessiot extension for~$\pz Y(z,t)=A(z,t)Y(z,t)$. Moreover, there exist~$P_{1},\dots,P_{k}\in \mathcal{M}_{U}\{X_{i,j}\}_{\dt}$ such that the image of the representation of~$Gal_{\pz}^{\dt}\left(\widetilde{\mathcal{K}_{C,U}}\Big| \mathcal{K}_{C,U}\right)$ (resp.~$Aut_{\pz}^{\dt}\left(\widetilde{K_{U}}\Big|K_{U}\right)$) associated to~$F(z,t)$ is the set of~$C$-rational points (resp.~$\mathcal{M}_{U}$-rational points) of the linear differential algebraic subgroup of~$\mathrm{GL}_{m}(C)$ (resp.~$\mathrm{GL}_{m}(\mathcal{M}_{U})$) defined by~$P_{1},\dots,P_{k}$. More explicitly:
$$\begin{array}{ll}
&\left\{ F^{-1}\f(F), \f \in Gal_{\pz}^{\dt}\left(\widetilde{\mathcal{K}_{C,U}}\Big| \mathcal{K}_{C,U}\right)\right\}\\
=&\left\{ A=(a_{i,j}) \in \mathrm{GL}_{m}(C)\Big| P_{1}(a_{i,j})=\dots=P_{k}(a_{i,j})=0\right\}\\\\
&\left\{ F^{-1}\f(F), \f \in Aut_{\pz}^{\dt}\left(\widetilde{K_{U}}\Big|K_{U}\right)\right\}\\
=&\left\{ A=(a_{i,j}) \in \mathrm{GL}_{m}(\mathcal{M}_{U})\Big| P_{1}(a_{i,j})=\dots=P_{k}(a_{i,j})=0\right\}.
\end{array}~$$
\end{propo}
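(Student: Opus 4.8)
The plan is to proceed in three stages: first check that $\widetilde{\mathcal{K}_{C,U}}\big|\mathcal{K}_{C,U}$ is a parameterized Picard-Vessiot extension (so that, $C$ being differentially closed, the theory of \cite{CS} recalled in \S\ref{3sec21} applies over $\mathcal{K}_{C,U}$), then record the linear disjointness of $\widetilde{K_U}$ and $C$ over $\mathcal{M}_U$, and finally descend the defining ideal of the Galois group from $C$ down to $\mathcal{M}_U$. For the Picard-Vessiot property, the matrix $F=(F_{i,j})$ of Proposition \ref{3propo4} is already a fundamental solution and $\widetilde{\mathcal{K}_{C,U}}=\mathcal{K}_{C,U}\langle F_{i,j}\rangle_{\pz,\dt}$ by definition, so only the computation of the $\pz$-constants remains. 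I would embed $\widetilde{\mathcal{K}_{C,U}}$ into the analogue $\widehat{\textbf{K}}_{C,U}$ of $\widehat{\textbf{K}_U}$ obtained by the construction of \S\ref{3sec11} with $C$ in place of $\mathcal{M}_U$ (equivalently of $\overline{\mathcal{M}_U}$, since $C$ is differentially closed, hence algebraically closed); $F$ genuinely lives there because the entries of $L(t)$ and the $z$-coefficients of the $q_i(z,t)$ lie in $\mathcal{M}_U\subset C$. Proposition 3.22 of \cite{VdPS} then applies verbatim and shows that $\widehat{\textbf{K}}_{C,U}$ has $\pz$-constants exactly $C$; since $C\subseteq\mathcal{K}_{C,U}\subseteq\widetilde{\mathcal{K}_{C,U}}\subseteq\widehat{\textbf{K}}_{C,U}$, the field of $\pz$-constants of $\widetilde{\mathcal{K}_{C,U}}$ is squeezed to be $C$, and $Gal_{\pz}^{\dt}\!\left(\widetilde{\mathcal{K}_{C,U}}\big|\mathcal{K}_{C,U}\right)$ is a well-defined linear differential algebraic subgroup of $\mathrm{GL}_m(C)$.

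Next I would establish linear disjointness. Viewing both $\widetilde{K_U}$ and $C$ inside $\widehat{\textbf{K}}_{C,U}$, the standard fact (the same principle already used in Lemma \ref{3lem2}) that constants which are linearly independent over the subfield of $\pz$-constants of a differential field stay linearly independent over that field applies: the elements of $C$ are $\pz$-constants and $\mathcal{M}_U$ is the field of $\pz$-constants of $\widetilde{K_U}$, so they remain $\widetilde{K_U}$-linearly independent. Hence the multiplication map $\widetilde{K_U}\otimes_{\mathcal{M}_U}C\to\widehat{\textbf{K}}_{C,U}$ is injective, $\widetilde{K_U}\otimes_{\mathcal{M}_U}C$ is a domain, and its fraction field is precisely $\widetilde{\mathcal{K}_{C,U}}=\mathcal{K}_{C,U}\langle F_{i,j}\rangle_{\pz,\dt}$; the same argument reproves $\mathcal{K}_{C,U}=\mathrm{Frac}(K_U\otimes_{\mathcal{M}_U}C)$.

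The heart of the matter is the descent of the defining ideal. I would run the construction in the proof of Proposition \ref{3propo8} in parallel over $K_U$, producing the prime $(\pz,\dt)$-ideal $q=\ker\big(K_U\{X_{i,j},1/\det\}_{\dt}\to\widetilde{K_U},\,X\mapsto F\big)$ with $S=K_U\{X\}/q$ and the radical $\dt$-ideal $I_{\mathcal{M}_U}\subset\mathcal{M}_U\{Y_{i,j},1/\det\}_{\dt}$ that defines $Aut_{\pz}^{\dt}(\widetilde{K_U}|K_U)$ (with $Y=(Y_{i,j})$ the constant indeterminates given by $(X)=(F)(Y)$), and over $\mathcal{K}_{C,U}$, producing $q_C$ and $I_C\subset C\{Y_{i,j},1/\det\}_{\dt}$ defining $Gal_{\pz}^{\dt}$. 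The key claim is $q_C=q\,\mathcal{K}_{C,U}\{X_{i,j},1/\det\}_{\dt}$: since $\mathcal{K}_{C,U}\{X\}/q\,\mathcal{K}_{C,U}\{X\}\cong S\otimes_{K_U}\mathcal{K}_{C,U}$ is a localization of $S\otimes_{\mathcal{M}_U}C\subseteq\widetilde{K_U}\otimes_{\mathcal{M}_U}C$, it is, by the disjointness of the previous step, a domain embedding into $\widetilde{\mathcal{K}_{C,U}}$, so $q\,\mathcal{K}_{C,U}\{X\}$ is already prime and equals the kernel $q_C$. Extending to $\widetilde{\mathcal{K}_{C,U}}\{Y\}_{\dt}$ gives $q_C\,\widetilde{\mathcal{K}_{C,U}}\{Y\}_{\dt}=I_{\mathcal{M}_U}\,\widetilde{\mathcal{K}_{C,U}}\{Y\}_{\dt}$ by Proposition \ref{3propo8}, and intersecting with $C\{Y\}_{\dt}$ yields $I_C=I_{\mathcal{M}_U}\,C\{Y\}_{\dt}$, because $\mathcal{M}_U$ is a field, so $\mathcal{M}_U\{Y\}_{\dt}/I_{\mathcal{M}_U}$ is a flat (free) $\mathcal{M}_U$-module and the induced map $C\{Y\}_{\dt}/I_{\mathcal{M}_U}C\{Y\}_{\dt}\to\widetilde{\mathcal{K}_{C,U}}\{Y\}_{\dt}/I_{\mathcal{M}_U}\widetilde{\mathcal{K}_{C,U}}\{Y\}_{\dt}$ stays injective. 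Choosing for $P_1,\dots,P_k$ a finite set of generators of the radical $\dt$-ideal $I_{\mathcal{M}_U}$ (they exist by the Ritt--Raudenbush theorem over the $\dt$-field $\mathcal{M}_U$ of characteristic zero, and can be taken in $\mathcal{M}_U\{X_{i,j}\}_{\dt}$ after clearing the $1/\det$), the two groups are cut out by the same $P_i$: over $\mathcal{M}_U$ by Proposition \ref{3propo8}(1), and over $C$ by the \cite{CS} Galois correspondence applied to $I_C=I_{\mathcal{M}_U}C\{Y\}_{\dt}$. I expect the injectivity of $S\otimes_{K_U}\mathcal{K}_{C,U}\hookrightarrow\widetilde{\mathcal{K}_{C,U}}$ --- equivalently, that extending the constants from $\mathcal{M}_U$ to $C$ creates no new differential-algebraic relations among the $F_{i,j}$ --- to be the main obstacle, and it is exactly here that the linear disjointness of $\widetilde{K_U}$ and $C$ over $\mathcal{M}_U$ is indispensable.
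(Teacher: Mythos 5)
Your proposal is correct and follows essentially the same route as the paper's proof (itself modeled on Proposition 3.3 of \cite{MS12}): the heart of both is the linear disjointness of $C$ and $\widetilde{K_{U}}$ over $\mathcal{M}_{U}$, obtained by the minimal-length relation trick already used in Lemma \ref{3lem2}, followed by the fact that the ideal of differential relations of $F$ extends unchanged, your $q_{C}=q\,\mathcal{K}_{C,U}\{X_{i,j}\}_{\dt}$ being exactly the paper's identity $I_{1}=CI_{0}$. The only divergences are local: the paper derives the Picard--Vessiot property of $\widetilde{\mathcal{K}_{C,U}}\Big|\mathcal{K}_{C,U}$ directly from that linear independence (writing a $\pz$-constant as $\sum d_{k}P_{k}$ over an $\mathcal{M}_{U}$-basis $(d_{k})$ of $C$) rather than via your embedding into the symbolic field over $C$ and Proposition 3.22 of \cite{VdPS}, and it leaves implicit the final extraction of $P_{1},\dots,P_{k}$ (your freeness/intersection step and the appeal to Ritt--Raudenbush), which you rightly spell out.
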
 

\begin{proof}
We follow the proof of \cite{MS12}, Proposition 3.3. Let~$(d_{k})$ be an~$\mathcal{M}_{U}$-basis of~$C$. Let us prove that the~$d_{k}$ are linearly independent over~$\widetilde{K_{U}}$. Write~$\sum_{k\leq \kappa} d_{k}P_{k}=0$ with~${0\neq P_{k}\in\widetilde{K_{U}}}$,~$\kappa\geq 2$ minimal and~$P_{\kappa}=1$.  We have~$\sum_{k\leq \kappa-1} d_{k}\pz P_{k}=0$. If~$\kappa=2$, then~$\pz P_{1}=0$. If~$\kappa>2$, we have that for all~$k$,~$\pz P_{k}=0$, because of the minimality of~$\kappa$. Since~$\widetilde{K_{U}}\Big|K_{U}$ is a parameterized Picard-Vessiot extension, for all~$k$,~$P_{k}\in \mathcal{M}_{U}$, and the~$d_{k}$ are linearly independent over~$\widetilde{K_{U}}$.\par
Now, we prove that~$\mathcal{K}_{C,U}\langle F_{i,j} \rangle_{\pz,\dt}\Big|\mathcal{K}_{C,U}$ is a parameterized Picard-Vessiot extension for~${\pz Y(z,t)=A(z,t)Y(z,t)}$.  Let~$\a\in \mathcal{K}_{C,U}\langle F_{i,j} \rangle_{\pz,\dt}$ with~$\pz\a=0$. We may assume that~$\a=\sum d_{k}P_{k}$, where~$P_{k}\in \widetilde{K_{U}}$. We have~$\pz \a=\sum d_{k}\pz P_{k}= 0.$ Since the~$d_{k}$ are linearly independent over~$\widetilde{K_{U}}$, we find~$\pz P_{k}=0$. Hence,~$P_{k}\in \mathcal{M}_{U}$, because~$\widetilde{K_{U}}\Big|K_{U}$ is a parameterized Picard-Vessiot extension. 
  Therefore,~$\a\in C$ and~$\mathcal{K}_{C,U}\langle F_{i,j}\rangle_{\pz,\dt}\Big|\mathcal{K}_{C,U}$ is a parameterized Picard-Vessiot extension for~$\pz Y(z,t)=A(z,t)Y(z,t)$.\par
Let~$Y_{i,j}$ be a set of~$m^{2}$ indeterminates and let~$I_{0}$,~$I_{1}$  be~$(\pz,\dt)$-differential ideals such that:  
$$\begin{array}{ccccc}
R_{0}&=&K_{U}\{F_{i,j}\}_{\pz,\dt}&=&K_{U}\{Y_{i,j}\}_{\pz,\dt}/ I_{0} \\
R_{1}&=&\mathcal{K}_{C,U}\{F_{i,j}\}_{\pz,\dt}&=&\mathcal{K}_{C,U}\{Y_{i,j}\}_{\pz,\dt}/ I_{1}.
\end{array}$$
The group~$Aut_{\pz}^{\dt}\left(\widetilde{K_{U}}\Big|K_{U}\right)$ (resp.~$Gal_{\pz}^{\dt}\left(\widetilde{\mathcal{K}_{C,U}}\Big| \mathcal{K}_{C,U}\right)$) is the set of~$B\in \mathrm{GL}_{m}(\mathcal{M}_{U})$ (resp.~${B\in \mathrm{GL}_{m}(C)}$) such that~$(F_{i,j})B$ is again a zero of~$I_{0}$ (resp.~$I_{1}$).
We just have to prove that~$I_{1}=CI_{0}$. The inclusion~${CI_{0}\subset I_{1}}$ is clear. Let us prove the other inclusion. Let~$P\in I_{1}$. Without loss of generality, we may assume that~$P\in \left(K_{U}\otimes_{\mathcal{M}_{U}}C\right)[Y_{i,j}]$. Let  us write~$P=\sum d_{k}P_{k}$, where~$P_{k}\in K_{U}[Y_{i,j}]$. One finds that:
$$P(F_{i,j})=\sum d_{k}P_{k}(F_{i,j})=0.$$
Since the~$d_{k}$ are linearly independent over~$\widetilde{K_{U}}$, one finds, 
$P_{k}(F_{i,j})=0$, and therefore~${I_{1}=CI_{0}}$.\par 
\end{proof}

\pagebreak[3]
\subsection{An analogue of the density theorem in the parameterized case.}\label{3sec23}
Let us consider~${\pz Y(z,t)=A(z,t)Y(z,t)}$, with~$A(z,t)\in \mathrm{M}_{m}\Big(\mathcal{O}_{U}(\{z\})\Big)$, where~$U$ is a non empty polydisc of~$\C^{n}$.  We want to find topological generators for~$Aut_{\pz}^{\dt}\left(\widetilde{K_{U}}\Big|K_{U}\right)$ for Kolchin topology. \par
We now define the parameterized monodromy. The notion of monodromy in the unparameterized case is well explained in \cite{VdPS}. For more details about parameterized monodromy, see \cite{CS,MS12,MS13,Si90}.
\pagebreak[3]
\begin{defi}\label{3defi1}
The notations are introduced in~$\S$ \ref{3sec11}. We define~$\hat{m}$, the formal parameterized monodromy, as follows: 
\begin{itemize}
\item~$\forall \hat{H}(z,t) \in \hat{K}_{U},\hat{m}\Big(\hat{H}(z,t)\Big)=\hat{H}(z,t)~$.
\item~$\forall a(t) \in \mathcal{M}_{U}, \hat{m}(z^{a(t)})=e^{2i\pi a (t)}z^{a (t)}$.
\item~$\hat{m}(\log)=2i\pi + \log$.
\item For all~$ q(z,t)=\sum a_{n}z^{-n} \in \textbf{E}_{U}=\displaystyle \bigcup_{\n' \in \Q^{>0}}  z^{\frac{-1}{\n'}} \mathcal{M}_{U} \left[z^{\frac{-1}{\n'}}\right]$, we define~$$\hat{m}\Big(e(q(z,t))\Big)=e\Big(\sum a_{n}e^{-2i\pi n}z^{-n}\Big).$$
\end{itemize}

From the construction of~$\hat{K}_{U}\Big[\log, \left(z^{a(t)}\right)_{a(t) \in \mathcal{M}_{U}} \Big(e(q(z,t))\Big)_{q(z,t) \in \textbf{E}_{U}}\Big]$, it is easy to check that~$\hat{m}$ induces a well defined~$(\pz,\dt)$-differential ring automorphism of~$\hat{K}_{U}\Big[\log, \left(z^{a(t)}\right)_{a(t) \in \mathcal{M}_{U}} \Big(e(q(z,t))\Big)_{q(z,t) \in \textbf{E}_{U}}\Big]$, and then it can be extended as a~$(\pz,\dt)$-differential field automorphism of~$\widehat{\textbf{K}_{U}}$ letting~$K_{U}$ invariant. Since~$\widetilde{K_{U}}\subset \widehat{\textbf{K}_{U}}$, and since~$\widetilde{K_{U}}$ is stable under~$\hat{m}$,~$\hat{m}$ induces an element of~$Aut_{\pz}^{\dt}\left(\widetilde{K_{U}}\Big|K_{U}\right)$.
\end{defi}
\pagebreak[3]
\begin{rem}
 In the regular singular case with one singularity at~$0$, the definition  
of formal parameterized monodromy restricts to the definition given in \cite{MS12}.
\end{rem}

We now introduce the parameterized exponential torus, which is a subgroup of~$Aut_{\pz}^{\dt}\left(\widetilde{K_{U}}\Big|K_{U}\right)$ consisting of elements that act on the~$e\Big(q(z,t)\Big)$, with~$q(z,t)\in \textbf{E}_{U}$.
\pagebreak[3]
\begin{defi}
 Let~$\a$ be a character of~$\textbf{E}_{U}$. We define~$\tau_{\a}$ as the map
\begin{itemize}
 \item~$\tau_{\a}~$ is the identity on~$\hat{K}_{F,U}$.
 \item~$\forall q(z,t) \in \textbf{E}_{U},\tau_{\a} (e(q(z,t)))=\a(q(z,t))e(q(z,t))$.
\end{itemize}
From the construction of~$\hat{K}_{U}\Big[\log, \left(z^{a(t)}\right)_{a(t) \in \mathcal{M}_{U}} \Big(e(q(z,t))\Big)_{q(z,t) \in \textbf{E}_{U}}\Big]$, it is easy to check that~$\tau_{\a}$ induces a well defined~$(\pz,\dt)$-differential ring automorphism of~$\hat{K}_{U}\Big[\log, \left(z^{a(t)}\right)_{a(t) \in \mathcal{M}_{U}} \Big(e(q(z,t))\Big)_{q(z,t) \in \textbf{E}_{U}}\Big]$, and then it can be extended to a~$(\pz,\dt)$-differential field automorphism of~$\widehat{\textbf{K}_{U}}$ letting~$K_{U}$ invariant. Since~$\widetilde{K_{U}}\subset \widehat{\textbf{K}_{U}}$, and since~$\widetilde{K_{U}}$ is stable under~$\tau_{\a}$,~$\tau_{\a}$ induces an element of~$Aut_{\pz}^{\dt}\left(\widetilde{K_{U}}\Big|K_{U}\right)$. \par 
 The  parameterized exponential torus (or simply, the exponential torus) is the subgroup of~$Aut_{\pz}^{\dt}\left(\widetilde{K_{U}}\Big|K_{U}\right)$ consisting of the~$\tau_{\a}$, where~$\a$ is a character of~$\textbf{E}_{U}$. Notice that the matrices of the exponential torus belongs to~$\mathrm{GL}_{m}(\C)$, while the coefficients of the matrix of~$\hat{m}$ depend upon~$t$.
\end{defi}
\pagebreak[3]
\begin{ex}\label{3ex4}
Let~$t=(t_{1},t_{2})$ and let us consider~$$ \pz\begin{pmatrix}
 Y_{1}(z,t)   \\ 
 Y_{2}(z,t)
\end{pmatrix}=
\begin{pmatrix}
-t_{1}z^{-2} & 0 \\ 
0 &-t_{2}z^{-2} 
                              \end{pmatrix} 
\begin{pmatrix}
 Y_{1}(z,t)   \\ 
 Y_{2}(z,t)
\end{pmatrix},$$ 
which admits~$\begin{pmatrix}
e^{t_{1}/z} & 0 \\ 
0 &e^{t_{2}/z}
                              \end{pmatrix}~$
as fundamental solution. The parameterized exponential torus and the parameterized differential Galois group are both equal to 
$$\left\{ \begin{pmatrix}
\a & 0 \\ 
0 &\b \end{pmatrix}, \hbox{ where } \a,\b \in \C^{*}\right\}.$$
Remark that the unparameterized exponential torus (see p.80 of \cite{VdPS}) and the unparameterized differential Galois group are isomorphic to~$\left(\C^{*}\right)^{2}$ if and only if~$t_{1}$ and~$t_{2}$ are linearly independant over $\Q$. In particular, the matrices of the parameterized exponential torus evaluated at a specialized value~$(u,v)$ of the parameter are not always equal to the matrices of the unparameterized exponential torus of the system~$$\pz \begin{pmatrix}
 Y_{1}(z,u,v)   \\ 
 Y_{2}(z,u,v)
\end{pmatrix}=
\begin{pmatrix}
-uz^{-2} & 0 \\ 
0 &-vz^{-2} 
                              \end{pmatrix} 
\begin{pmatrix}
 Y_{1}(z,u,v)   \\ 
 Y_{2}(z,u,v)
\end{pmatrix}.$$ This is a difference between the exponential torus and the two other generators of the parameterized differential Galois group: the monodromy 	and the Stokes operators (see Definition \ref{3defi6} below).
\end{ex}
\pagebreak[3]
\begin{lem}\label{3lem5}
Let~$d(t)$ be a singular direction of~$\pz Y(z,t)=A(z,t)Y(z,t)$ (see~$\S \ref{3sec14}$). The Stokes matrix~$St_{d(t)}$ induces an element of~$Aut_{\pz}^{\dt}\left(\widetilde{K_{U}}\Big|K_{U}\right)$.
\end{lem}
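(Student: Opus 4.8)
The plan is to realize $St_{d(t)}$ as the comparison isomorphism between two summations of the fundamental solution $F(z,t)=\hat{H}(z,t)z^{L(t)}e\big(Q(z,t)\big)$ in regular directions flanking $d(t)$, exactly as in the unparameterized Ramis theory; the genuinely new ingredient is that summation now commutes with $\dt$ and produces functions meromorphic jointly in $(z,t)$. After shrinking $U$ so that $\mathcal{D}=\varnothing$ (Lemma \ref{3lem9}) and the levels $k_{1}<\dots<k_{r}$ and singular directions are as arranged in \S\ref{3sec14}, I would choose two locally constant functions $d^{-}(t)<d(t)<d^{+}(t)$ close to $d(t)$, none of whose values is a singular direction and with no singular direction strictly inside $]d^{-}(t),d(t)[$ or $]d(t),d^{+}(t)[$.

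For each of the regular directions $d^{-},d^{+}$, let $\mathcal{S}_{d^{\pm}}$ denote the operator that sums the entries of $\hat{H}(z,t)$ via $\mathcal{L}_{\kappa_{r},d^{\pm}}\circ\cdots\circ\mathcal{L}_{\kappa_{1},d^{\pm}}\circ\hat{\mathcal{B}}_{\kappa_{r}}\circ\cdots\circ\hat{\mathcal{B}}_{\kappa_{1}}$ (Proposition \ref{3propo1}) and interprets the symbols $\log$, $z^{a(t)}$, $e(q(z,t))$ as the corresponding analytic functions on $\widetilde{\C}\times U$. By the standard properties of multisummation, $\mathcal{S}_{d^{\pm}}$ is an injective homomorphism commuting with $\pz$; Lemma \ref{3lem3} shows it also commutes with each $\partial_{t_{j}}$, and Proposition \ref{3propo3} shows that it maps $\widetilde{K_{U}}$ into the field $\mathcal{R}$ of germs of functions meromorphic in $(z,t)$ on a common sectorial domain about $d(t)$. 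Thus each $\mathcal{S}_{d^{\pm}}$ is an injective $(\pz,\dt)$-differential field homomorphism $\widetilde{K_{U}}\hookrightarrow\mathcal{R}$, and the two agree on $K_{U}$, since an element of $K_{U}\subset\mathcal{O}_{U}(\{z\})$ has convergent $z$-expansion and therefore sums to its natural meromorphic interpretation independently of the direction.

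By definition of the Stokes matrix (the displayed equality just before Proposition \ref{3propo10}, applied at each $t$), one has $\mathcal{S}_{d^{+}}(F)=\mathcal{S}_{d^{-}}(F)\,St_{d(t)}$. Because the entries of $St_{d(t)}$ lie in $\mathcal{M}_{U}\subset\mathcal{S}_{d^{-}}(K_{U})$ and $St_{d(t)}$ is invertible over $\mathcal{M}_{U}$, the images $\mathcal{S}_{d^{+}}(\widetilde{K_{U}})$ and $\mathcal{S}_{d^{-}}(\widetilde{K_{U}})$ coincide, both being the field generated over $\mathcal{S}_{d^{-}}(K_{U})$ by the entries of $\mathcal{S}_{d^{-}}(F)$. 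Hence $\sigma:=\mathcal{S}_{d^{-}}^{-1}\circ\mathcal{S}_{d^{+}}$ is a well-defined $(\pz,\dt)$-differential field automorphism of $\widetilde{K_{U}}$, with inverse $\mathcal{S}_{d^{+}}^{-1}\circ\mathcal{S}_{d^{-}}$; it fixes $K_{U}$ since $\mathcal{S}_{d^{+}}$ and $\mathcal{S}_{d^{-}}$ agree there. Applying $\mathcal{S}_{d^{-}}^{-1}$ to $\mathcal{S}_{d^{+}}(F)=\mathcal{S}_{d^{-}}(F)\,St_{d(t)}=\mathcal{S}_{d^{-}}\big(F\,St_{d(t)}\big)$ yields $\sigma(F)=F\,St_{d(t)}$, so that $F^{-1}\sigma(F)=St_{d(t)}$, and $\sigma$ is the desired element of $Aut_{\pz}^{\dt}\left(\widetilde{K_{U}}\Big|K_{U}\right)$.

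The main obstacle is the claim that $\mathcal{S}_{d^{-}}$ and $\mathcal{S}_{d^{+}}$ really do land in one and the same field of functions meromorphic jointly in $(z,t)$ and commute with the parameter derivations $\partial_{t_{j}}$: in the classical unparameterized situation this is immediate, whereas here it is precisely the content of Lemma \ref{3lem3} and Proposition \ref{3propo3}. Once those parameterized summation statements are granted, everything else is the formal comparison argument above, and the verification that $\sigma$ is a bijective differential automorphism fixing $K_{U}$ is routine.
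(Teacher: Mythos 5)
Your proposal is correct and takes essentially the same route as the paper: the paper likewise defines maps $i^{\pm}$ sending $\hat{H}(z,t)z^{L(t)}e\big(Q(z,t)\big)$ to the asymptotic sums $H^{d^{\pm}(t)}(z,t)e^{L(t)\log(z)}e^{Q(z,t)}$ (your $\mathcal{S}_{d^{\pm}}$), invokes the unparameterized multisummation theory for the $\pz$-field isomorphism property, and uses Lemma \ref{3lem3} together with locally constant choices of $d^{\pm}(t)$ to get commutation with the $\partial_{t_{i}}$, exactly as you do. Your explicit composition $\sigma=\mathcal{S}_{d^{-}}^{-1}\circ\mathcal{S}_{d^{+}}$ with $F^{-1}\sigma(F)=St_{d(t)}$ merely spells out the field-theoretic comparison that the paper leaves implicit.
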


\begin{proof} 
Let us recall the construction of the Stokes matrices. Let~$d(t)$ be a singular direction and let~$k_{r}$ be the biggest level of~$\pz Y(z,t)=A(z,t)Y(z,t)$.  The assumption we have made on~$\mathcal{D}$ (see~$\S \ref{3sec14}$) tells us that there exists~$ t\mapsto d^{\pm}(t)$ continuous in~$t$ such that ~$$d(t)-\frac{\pi}{2k_{r}}<d^{-}(t)<d(t)<d^{+}(t)<d(t)+\frac{\pi}{2k_{r}},$$
with no singular directions in~$[d^{-}(t),d(t)[\bigcup ]d(t),d^{+}(t)]$.
From the construction of~$St_{d(t)}$, and~$\S \ref{3sec13}$, we know that 
$$H^{d^{+}(t)}(z,t)e^{L(t)\log(z)}e^{Q(z,t)}= H^{d^{-}(t)}(z)e^{L(t)\log(z)}e^{Q(z,t)}St_{d(t)}.$$
By construction, the Stokes matrix induces identity on~$K_{U}$. 
To prove that the Stokes matrices are elements of~$Aut_{\pz}^{\dt}\left(\widetilde{K_{U}}\Big|K_{U}\right)$, we have to prove that the maps~$i^{\pm}$, that send~$\hat{H}(z,t)z^{L(t)}e\Big(Q(z,t)\Big)$ to~$H^{d^{\pm}(t)}(z,t)e^{L(t)\log(z)}e^{Q(z,t)}$, induce~$(\pz,\dt)$-field isomorphisms. From the unparameterized case (see Theorem 2, Section 6.4 of \cite{B}), and the relations satisfied by the symbols~$\log$,~$\left(z^{a(t)}\right)_{a(t) \in \mathcal{M}_{U}}$ and~$\Big(e(q(z,t))\Big)_{q(z,t) \in \textbf{E}_{U}}$ (see~$\S \ref{3sec11}$),~$i^{\pm}$ induce~$\pz$-field isomorphisms.\par
We want now to prove that if~$\hat{H}(z,t)$ admits,~$H^{d^{\pm}(t)}(z,t)$ as asymptotic sum in the direction~$d^{\pm}(t)$, then~$\partial_{t_{i}}\hat{H}(z,t)$ admits~$\partial_{t_{i}}H^{d^{\pm}(t)}(z,t)$ as asymptotic sum in the direction~$d^{\pm}(t)$, for all~$i\leq n$. This is a consequence of Lemma \ref{3lem3} and the fact that we may assume that the~$d^{\pm}(t)$ are locally constant. Hence~$i^{\pm}$ commute with~$\partial_{t_{i}}$ and~$i^{\pm}$ induce~$(\pz,\dt)$-field isomorphisms.
\end{proof}
\pagebreak[3]
\begin{defi}\label{3defi6}
Let~$d(t)$ be a singular direction of~$\pz Y(z,t)=A(z,t)Y(z,t)$. The element of~$Aut_{\pz}^{\dt}\left(\widetilde{K_{U}}\Big|K_{U}\right)$ induced by the Stokes matrix in the direction~$d(t)$ is the Stokes operator in the direction~$d(t)$. For simplicity of notation, we write~$St_{d(t)}$ for both the Stokes operator and the Stokes matrix in the direction~$d(t)$.
\end{defi}
\pagebreak[3]
\begin{propo}\label{3propo5}
 If~$g(z,t) \in  \widetilde{K_{U}}$ is fixed by all the Stokes operators~$St_{d(t)}$, the monodromy and the exponential torus, then~$g(z,t)\in K_{U}$.
\end{propo}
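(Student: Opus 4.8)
The inclusion $K_{U}\subseteq \widetilde{K_{U}}^{G}$, where $G$ is the group generated by the monodromy, the exponential torus and the Stokes operators, is clear since all these operators fix $K_{U}$ by construction; so the plan is to prove the reverse inclusion, peeling off successively the symbols $e(q(z,t))$, then $z^{a(t)}$ and $\log$, and finally forcing convergence. I would work inside $\widehat{\textbf{K}_{U}}$, exploiting that the ring $\hat{K}_{F,U}\big[(e(q(z,t)))_{q\in \textbf{E}_{U}}\big]$ is the group algebra of the torsion-free abelian group $(\textbf{E}_{U},+)$ over $\hat{K}_{F,U}$, hence an integral domain graded by $\textbf{E}_{U}$ with $\deg e(q)=q$, whose fraction field is $\widehat{\textbf{K}_{U}}$. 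Each $\tau_{\alpha}$ acts on the grading by $\tau_{\alpha}(f_{q}e(q))=\alpha(q)f_{q}e(q)$ for $f_{q}\in \hat{K}_{F,U}$. For a graded domain over a field, the degree-zero part of the fraction field is $\hat{K}_{F,U}$ itself, and it is exactly the set of elements fixed by all characters $\alpha\colon \textbf{E}_{U}\to \C^{*}$ (such characters separate the points of the torsion-free group $\textbf{E}_{U}$). Hence invariance of $g$ under the whole exponential torus gives $g\in \widetilde{K_{U}}\cap \hat{K}_{F,U}$.

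Next I would use the monodromy. Now $g\in \hat{K}_{F,U}=\hat{K}_{U}\big(\log,(z^{a(t)})_{a\in \mathcal{M}_{U}}\big)$, which is graded by $\mathcal{M}_{U}/\Z$ (since $z^{n}\in \hat{K}_{U}$ for $n\in \Z$), the class-$\bar{a}$ component being $z^{a}\hat{K}_{U}[\log]$. The formal monodromy preserves this grading, acting by $\hat{m}\big(z^{a}P(\log)\big)=e^{2i\pi a}z^{a}P(\log+2i\pi)$ for $P\in \hat{K}_{U}[\log]$. Comparing leading coefficients in $\log$ in the componentwise equation $\hat{m}(g)=g$ forces $e^{2i\pi a}=1$, i.e. $a\in \Z$, on every nonzero component; thus only the class $\bar{a}=0$ survives and $g=P(\log)$ with $P(\log+2i\pi)=P(\log)$, whence $P$ is constant. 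Therefore $g\in \widetilde{K_{U}}\cap \hat{K}_{U}$, i.e. $g=\sum_{i}g_{i}(t)z^{i}$ is a formal Laurent series with $g_{i}\in \mathcal{M}_{U}$.

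It then remains to prove, using the Stokes operators, that this $g$ converges in $z$ for every $t\in U$, so that $g\in \mathcal{O}_{U}(\{z\})\subseteq K_{U}$. Fix $t$ and let $k_{1}<\dots<k_{r}$ be the levels. For any non-singular direction $d$ the multisum $S_{d}(g)=\mathcal{L}_{\kappa_{r},d}\circ\dots\circ\hat{\mathcal{B}}_{\kappa_{1}}(g)$ is defined, is $k_{r}$-Gevrey asymptotic to $g$ on a sector of opening larger than $\pi/k_{r}$, and, by Lemma \ref{3lem3} and Proposition \ref{3propo3}, depends meromorphically on $(z,t)$; moreover the resulting map $S_{d}$ is a $(\pz,\dt)$-differential homomorphism on $\widetilde{K_{U}}$. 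By the construction of the Stokes operators (Lemma \ref{3lem5}), if $d_{0}(t)$ is a singular direction and $d^{-}<d_{0}(t)<d^{+}$ bound a sector containing no other singular direction, then the one-sided sums satisfy $S_{d^{+}}=S_{d^{-}}\circ St_{d_{0}(t)}$ on $\widetilde{K_{U}}$. Since $g$ is fixed by $St_{d_{0}(t)}$, this gives $S_{d^{+}}(g)=S_{d^{-}}(g)$, so the two one-sided sums glue and $S(g)$ continues analytically across $d_{0}(t)$. Running over the finitely many singular directions, and using that $g\in \hat{K}_{U}$ is $\hat{m}$-invariant (no $z^{a(t)}$, no $\log$), the summed function is single-valued on a full punctured neighborhood of $z=0$ and asymptotic there to the Laurent series $g$, which has finite principal part; hence it is meromorphic at $0$ and $z\mapsto \sum_{i}g_{i}(t)z^{i}$ converges. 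As this holds for all $t\in U$, we conclude $g\in \mathcal{O}_{U}(\{z\})\subseteq K_{U}$.

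The first two steps are routine graded-algebra arguments; the main obstacle is the last one, namely converting the abstract invariance of $g$ under the $St_{d(t)}$ into the analytic equality $S_{d^{+}}(g)=S_{d^{-}}(g)$ and then into genuine convergence, while controlling the dependence on the multiparameter $t$. This is precisely where Lemma \ref{3lem3} and Proposition \ref{3propo3} are indispensable: they guarantee that multisummation commutes with the $\partial_{t_{i}}$ and produces functions meromorphic in $(z,t)$, so that each $S_{d}$ is a bona fide $(\pz,\dt)$-differential homomorphism and the gluing argument survives the passage from the abstract Picard--Vessiot field to actual specialized analytic solutions.
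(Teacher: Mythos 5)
Your proof is correct and takes essentially the same route as the paper: invariance under the exponential torus and the monodromy reduces $g$ to $\widetilde{K_{U}}\cap\hat{K}_{U}$ (the paper gets this wholesale from Proposition~3.25 of \cite{VdPS}, which you reprove by the graded/group-algebra argument), and then Stokes invariance plus the parameterized summation results (Lemma~\ref{3lem3}, Proposition~\ref{3propo3}) glue the one-sided sums into a single-valued function meromorphic on a punctured disc times $U$, forcing $g\in K_{U}$. The only cosmetic deviation is that the paper writes $g=P(\hat{H}_{i,j})$ and multisums the entries $\hat{H}_{i,j}$, evaluating the differential polynomial $P$ at the sums via the field isomorphisms $i^{\pm}$ of Lemma~\ref{3lem5}, rather than asserting, as you do, that multisummation is directly defined and a $(\pz,\dt)$-morphism on all of $\widetilde{K_{U}}$ (true, but resting on the closure of multisummable series under differential-field operations).
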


\begin{proof}
Let~$\overline{\mathcal{M}_{U}}$ be the algebraic closure of~$\mathcal{M}_{U}$.  Proposition 3.25 of \cite{VdPS} implies that if~${g(z,t)\in \widehat{\textbf{K}_{U}}}$ is fixed by the monodromy and the exponential torus, then~${g(z,t)\in \widehat{\textbf{K}_{U}}\cap \overline{\mathcal{M}_{U}}\big[\big[z\big]\big]\big[z^{-1}\big]=\hat{K}_{U}}$. Since~${\widetilde{K_{U}}\subset\widehat{\textbf{K}_{U}}}$, we have to prove that if~$g(z,t)\in \widetilde{K_{U}}\cap \hat{K}_{U}$ is fixed by all the Stokes operators, then~$g(z,t)\in K_{U}$. Let~${g(z,t)\in \widetilde{K_{U}}\cap \hat{K}_{U}}$ be fixed by all the Stokes operators.  Let~${F(z,t)=\hat{H}(z,t) z^{L(t)}e\Big(Q(z,t)\Big)}$ be the fundamental solution defined in Proposition~\ref{3propo4} and let~$\Big(\hat{H}_{i,j}\Big)$ be the entries of the matrix~$\hat{H}(z,t)$. There exists~$P\in K_{U}\langle X_{i,j}\rangle_{\pz,\dt}$ such that~$P(\hat{H}_{i,j})=g(z,t)$.  
Let~$d(t)$ that satisfies the same properties as in Proposition \ref{3propo3}.  Because of Proposition \ref{3propo3}, there exists a map ~$U\rightarrow \R^{>0}$,~$t\mapsto\e(t)$, which is not necessarily continuous such that~$P\Big(H^{d(t)}_{i,j}\Big)$ is meromorphic in~$(z,t)$ for 
$$ (z,t)\in \left\{ z\in \widetilde{\C}\Big|\arg(z)\in \left]d_{1}(t)-\frac{\pi}{2k_{r}},d_{2}(t)+\frac{\pi}{2k_{r}}\right[\hbox{ and } 0<|z|<\e(t) \right\}\times U,$$
where~$d_{1}(t),d_{2}(t)$ are two singular directions. Since~$P\Big(\hat{H}_{i,j}\Big)$ is fixed by all the Stokes operators,~$P\Big(H^{d(t)}_{i,j}\Big)$  is meromorphic in~$(z,t)$ for~$0<|z|<\e(t)$ and~${(z,t)\in  \widetilde{\C} \times U}$.
Moreover,~${P\Big(H^{d(t)}_{i,j}\Big)(z,t)=P\Big(H^{d(t)}_{i,j}\Big)(e^{2i\pi}z,t)}$ on its domain of definition, which means that~$P\Big(H^{d(t)}_{i,j}\Big)$  is meromorphic in~$(z,t)$ for~$0<|z|<\e(t)$ and~${(z,t)\in  \C \times U}$. We recall that~$K_{U}$ consists of elements~${f(z,t)\in \hat{K}_{U}}$ such that for~${0<|z|<\e(t)}$,~${t\mapsto f(z,t)\in \mathcal{M}_{U}}$. We have,~$P(H^{d(t)}_{i,j})\in K_{U}$. We have seen in Lemma~\ref{3lem5} that the map that sends~$\hat{H}(z,t)z^{L(t)}e\Big(Q(z,t)\Big)$ to~$H^{d(t)}(z,t)e^{L(t)\log(z)}e^{Q(z,t)}$ induces a~$(\pz,\dt)$-field isomorphism. Since the above map leaves~$K_{U}$ invariant, this implies that~${P\Big(\hat{H}_{i,j}\Big)=g(z,t)\in K_{U}}$.
\end{proof}

We can now prove the main theorem of this paper. We recall some notations. Let~${\pz Y(z,t)=A(z,t)Y(z,t)}$, with~${A(z,t) \in \mathrm{M}_{m}\Big(\mathcal{O}_{U}(\{z\})\Big)}$ (see page~$ \pageref{3p2}$), let~$K_{U}$ be the fraction field of~$\mathcal{O}_{U}(\{z\})$, and let~$\widetilde{K_{U}}\Big|K_{U}$ be the parameterized Picard-Vessiot extension defined in the beginning of~$\S \ref{3sec23}$. Let~$Aut_{\pz}^{\dt}\left(\widetilde{K_{U}}\Big|K_{U}\right)$ be the field automorphisms of~$\widetilde{K_{U}}$ which commute with all the derivations and leave~$K_{U}$ invariant.
\pagebreak[3]
\begin{theo}[Parameterized analogue of the density theorem of Ramis]\label{3theo2}
 The group generated by the monodromy, the exponential torus and the Stokes operators is dense for the Kolchin topology in~$Aut_{\pz}^{\dt}\left(\widetilde{K_{U}}\Big|K_{U}\right)$.
\end{theo}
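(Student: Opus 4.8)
The plan is to deduce the theorem directly from the two structural results already established, Proposition \ref{3propo8} and Proposition \ref{3propo5}, so that all the genuine work has been done upstream. First I would record that the three families of generators are legitimate elements of the automorphism group: the formal monodromy $\hat{m}$ (Definition \ref{3defi1}), the elements $\tau_{\a}$ of the exponential torus, and the Stokes operators $St_{d(t)}$ (Lemma \ref{3lem5} and Definition \ref{3defi6}) each induce a $(\pz,\dt)$-differential field automorphism of $\widetilde{K_{U}}$ fixing $K_{U}$. Let $G\subset Aut_{\pz}^{\dt}\left(\widetilde{K_{U}}\Big|K_{U}\right)$ be the subgroup they generate.

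The key step is to identify the fixed field $\widetilde{K_{U}}^{G}$. Since every generator of $G$ leaves $K_{U}$ invariant, we have $K_{U}\subseteq \widetilde{K_{U}}^{G}$. For the reverse inclusion I would invoke Proposition \ref{3propo5}: any $g(z,t)\in\widetilde{K_{U}}$ fixed by the monodromy, the exponential torus and all the Stokes operators already lies in $K_{U}$. A field element fixed by a generating set is fixed by the whole group, so this gives $\widetilde{K_{U}}^{G}\subseteq K_{U}$, whence $\widetilde{K_{U}}^{G}=K_{U}$.

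Finally I would feed this into Proposition \ref{3propo8}(2), which is precisely the Galois-correspondence statement valid over the non-differentially-closed field of constants $\mathcal{M}_{U}$: if a subgroup $G$ of the automorphism group has fixed field equal to $K_{U}$, then $G$ is Kolchin-dense in $Aut_{\pz}^{\dt}\left(\widetilde{K_{U}}\Big|K_{U}\right)$. Applying it to our $G$ yields the theorem. The point worth stressing is that the statement itself is a short formal consequence; the real difficulty is concentrated in Proposition \ref{3propo5}, whose proof rests on the parameterized multisummation machinery (Proposition \ref{3propo1}, Lemma \ref{3lem3} and Proposition \ref{3propo3}) establishing that an element fixed by all the Stokes operators extends to a single-valued meromorphic function in $K_{U}$, and in establishing Proposition \ref{3propo8}(2) without differential closedness of $\mathcal{M}_{U}$ (via Lemma \ref{3lem2}). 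I expect no further obstacle at the level of this theorem beyond correctly assembling these two inputs.
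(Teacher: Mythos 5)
Your proposal is correct and follows exactly the paper's own proof: the paper likewise notes that the three families of operators lie in $Aut_{\pz}^{\dt}\left(\widetilde{K_{U}}\Big|K_{U}\right)$, reduces the theorem via Proposition~\ref{3propo8}(2) to showing that any element of $\widetilde{K_{U}}$ fixed by the monodromy, the exponential torus and the Stokes operators lies in $K_{U}$, and cites Proposition~\ref{3propo5} for that fact. Your added remarks (fixed by generators implies fixed by the group; the real work is in Propositions~\ref{3propo5} and~\ref{3propo8}) are accurate but only make explicit what the paper leaves implicit.
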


\begin{proof}
First of all, we have already pointed out that the monodromy, the exponential torus and the Stokes operators are elements of~$Aut_{\pz}^{\dt}\left(\widetilde{K_{U}}\Big|K_{U}\right)$. Using Proposition \ref{3propo8}, we just have to prove that if~$\a(z,t)\in \widetilde{K_{U}}$ is fixed by the monodromy, the exponential torus and the Stokes operators, then it belongs to~$K_{U}$. This is exactly Proposition  \ref{3propo5}.
\end{proof}
\pagebreak[3]
\begin{rem}
\begin{trivlist}
\item (1)
Let~$\C(t)\{ z\}$ be the subset of~$\mathcal{O}_{U}(\{z\})$ consisting of elements of the form~$\sum_{i>N} a_{i}(t)z^{i}$, with~$a_{i}(t)\in \C(t)$ and~$N\in \Z$. Let us consider~$\pz Y(z,t)=A(z,t)Y(z,t)$, with~${A(z,t) \in \mathrm{M}_{m}\Big(\C(t)\{ z\}\Big)}$. Even if we were able to define a parameterized Picard-Vessiot extension over~$\C(t)\{ z\}$, we would not have a parameterized analogue of the density theorem of Ramis, because the monodromy is not defined in this case. In general, we have~$$\hat{m}(z^{\a (t)})=e^{2i\pi \a(t)}z^{\a (t)} \notin \C(t)\{ z\}(z^{\a (t)}).$$
 This is why we take a larger field of constants with respect to~$\pz$. 
\item  (2) Similarly, we can prove that the group generated by the monodromy and the exponential torus is dense for Kolchin topology in~$Aut_{\pz}^{\dt}\left( \widetilde{K_{U}}\Big|\hat{K}_{U}\cap \widetilde{K_{U}}\right)$.
\end{trivlist}
\end{rem}
\pagebreak[3]
\begin{coro}\label{3coro2}
$Aut_{\pz}^{\dt}\left(\widetilde{K_{U}}\Big|K_{U}\right)$ contains a finitely generated Kolchin-dense subgroup.
\end{coro}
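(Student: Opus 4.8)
The plan is to extract finitely many generators out of the generating set furnished by Theorem \ref{3theo2} and to deal with the exponential torus by hand. By Theorem \ref{3theo2}, the group $G$ generated by the monodromy $\hat{m}$, the exponential torus $T$, and the Stokes operators is dense in $Aut_{\pz}^{\dt}\left(\widetilde{K_{U}}\Big|K_{U}\right)$ for the Kolchin topology. Among these generators, $\hat{m}$ is a single element, and since the set of singular directions is finite modulo $2\pi\nu$ (see~$\S \ref{3sec13}$ and~$\S \ref{3sec14}$), there are only finitely many Stokes operators $St_{d_{1}(t)},\dots,St_{d_{N}(t)}$. Thus the only obstruction to finite generation is $T$, and I would reduce the statement to the claim that $T$ contains a \emph{cyclic} subgroup that is Kolchin-dense in $T$.

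First I would make $T$ explicit. In the basis in which the fundamental solution of Proposition \ref{3propo4} is written, $T$ is the group of constant diagonal matrices $\mathrm{Diag}\Big(\a(q_{1}(z,t)),\dots,\a(q_{m}(z,t))\Big)$, where $\a$ runs over the characters of $\textbf{E}_{U}$ and $q_{1},\dots,q_{m}$ are the diagonal entries of $Q$. Hence $T$ is (the group of constant points of) an algebraic torus over $\C$, isomorphic to $(\C^{*})^{d}$, where $d$ is the $\Z$-rank of $\langle q_{1},\dots,q_{m}\rangle\subset\textbf{E}_{U}$. An algebraic torus over $\C$ contains a Zariski-dense cyclic subgroup: in suitable coordinates one takes $g$ whose coordinates $\lambda_{1},\dots,\lambda_{d}$ are multiplicatively independent, so that distinct characters of $T$ take distinct values on $g$. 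Such a $g$ is realized by a character $\a$ of $\textbf{E}_{U}$, obtained by prescribing multiplicatively independent values on a $\Z$-basis of the free abelian group $\langle q_{1},\dots,q_{m}\rangle$ and extending to all of $\textbf{E}_{U}$ using the divisibility of $\C^{*}$.

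The heart of the argument is then to show that this single $g$ already generates a Kolchin-dense subgroup of $T$. Since $g$, and hence every power $g^{k}$, is $\dt$-constant, any differential polynomial $P\in\mathcal{M}_{U}\{X_{i,j},1/\det\}_{\dt}$ loses all its proper derivative terms when evaluated on $g^{k}$, so its restriction to $T$ is an ordinary Laurent polynomial in the diagonal entries, i.e.\ a finite $\mathcal{M}_{U}$-combination $\sum_{\chi}c_{\chi}(t)\chi$ of characters $\chi$ of $T$. If $P$ vanishes on $\langle g\rangle$, then $\sum_{\chi}c_{\chi}(t)\chi(g)^{k}=0$ for all $k\in\Z$ and all $t\in U$; as the scalars $\chi(g)$ are pairwise distinct by the multiplicative independence of the $\lambda_{j}$, the Vandermonde determinant (equivalently, the linear independence over $\C$ of the characters $k\mapsto\chi(g)^{k}$ of $\Z$) forces $c_{\chi}(t)=0$ for every $\chi$ and every $t$. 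Hence $P$ vanishes on all of $T$, and the Kolchin closure of $\langle g\rangle$ is $T$.

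Finally I would conclude as follows. Set $H_{0}=\langle \hat{m},\, g,\, St_{d_{1}(t)},\dots,St_{d_{N}(t)}\rangle$, a finitely generated subgroup of $Aut_{\pz}^{\dt}\left(\widetilde{K_{U}}\Big|K_{U}\right)$. Since multiplication and inversion are Kolchin-continuous, the Kolchin closure of a subgroup is again a subgroup; as this closure contains $g$ it contains the Kolchin closure $T$ of $\langle g\rangle$, and it also contains $\hat{m}$ and all the Stokes operators, so it contains $G$. Taking closures once more and invoking Theorem \ref{3theo2}, the Kolchin closure of $H_{0}$ contains the closure of $G$, which is all of $Aut_{\pz}^{\dt}\left(\widetilde{K_{U}}\Big|K_{U}\right)$; thus $H_{0}$ is a finitely generated Kolchin-dense subgroup. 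I expect the main obstacle to be exactly the density of $\langle g\rangle$ in $T$: one has to verify that passing to the non-differentially-closed field $\mathcal{M}_{U}$ causes no loss, and this is precisely what the reduction of differential polynomials on $\dt$-constant matrices, combined with the multiplicative genericity of $g$, is designed to guarantee.
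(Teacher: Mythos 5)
Your proof is correct, but it takes a genuinely different route from the paper's for the key step, namely the exponential torus. The paper never forms Kolchin closures of cyclic subgroups: it fixes $\Q$-linearly independent $q_{1}(z,t),\dots,q_{\b}(z,t)\in \textbf{E}_{U}$ with $\widetilde{K_{U}}\subset \hat{K}_{F,U}\big(e(q_{1}(z,t)),\dots,e(q_{\b}(z,t))\big)$, takes as generators the monodromy, finitely many Stokes operators $St_{d_{1}(t)},\dots,St_{d_{k}(t)}$, and $\b$ torus elements $\tau_{1},\dots,\tau_{\b}$ (each scaling one $e(q_{i}(z,t))$ by a fixed non-root-of-unity and fixing the others), and then verifies density directly through the fixed-field criterion of Proposition \ref{3propo8}(2): any $g(z,t)\in\widetilde{K_{U}}$ fixed by these generators is shown to lie in $K_{U}$, by using invariance under $\tau_{\b}$ together with the $\C$-linear independence of the $e(Nq_{\b}(z,t))$, $N\in\Z$, over $\hat{K}_{F,U}\big(e(q_{1}(z,t)),\dots,e(q_{\b-1}(z,t))\big)$ to eliminate the exponentials one at a time, and then invoking Proposition \ref{3propo5} for the Stokes part. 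You instead argue on the group side: you quote Theorem \ref{3theo2} and replace the whole torus by a \emph{single} element with multiplicatively independent coordinates, proving Kolchin density of the cyclic subgroup in $T$ via the observation that a differential polynomial over $\mathcal{M}_{U}$ evaluated on $\dt$-constant diagonal matrices degenerates to an $\mathcal{M}_{U}$-combination of characters, plus a Vandermonde argument; you then conclude by the closure-of-a-subgroup trick. Both mechanisms are sound, and your reduction correctly confirms that the non-differentially-closed constant field $\mathcal{M}_{U}$ causes no loss (only the forward implication of Proposition \ref{3propo8}(2), packaged inside Theorem \ref{3theo2}, is ever used). What your route buys is a more economical generating set --- one torus element instead of $\b$, the Kolchin analogue of the classical fact that a torus has a topological generator --- and a reusable lemma on constant tori; the paper's route keeps everything at the field level so that the torus and Stokes steps are dispatched by the same fixed-field machinery. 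One point you assert as tersely as the paper does deserves a sentence: a priori there is one Stokes operator for each singular direction $d(t)\in\R$, so the set of Stokes operators is only finite \emph{modulo} the identification of directions congruent modulo $2\nu\pi$; the paper addresses this with its remark that $St_{d(t)}=\mathrm{Id}$ if and only if $St_{2\nu\pi+d(t)}=\mathrm{Id}$ before applying Proposition \ref{3propo5}, and your closure argument likewise needs every Stokes operator to lie in the group generated by $\hat{m}$ and the finitely many chosen ones, so you should record explicitly the periodicity (or conjugacy under the formal monodromy) that guarantees this.
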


\begin{proof}
Let~$q_{1} (z,t),\dots,q_{\b}(z,t) \in \textbf{E}_{U}$ be~$\Q$-linearly independent such that 
$$\widetilde{K_{U}}\subset \hat{K}_{F,U}\Big(e(q_{1}(z,t)),\dots,e(q_{\b}(z,t))\Big).$$
Let~$\tau_{i}$ be an element of the exponential torus that fixes the~$e(q_{j}(z,t))$ for~$j\neq i$, and that sends~$e(q_{i}(z,t))$ to~$a e(q_{i}(z,t))$, with~$a$ not a root of unity. \par 
By the definition of the singular directions (see~$\S \ref{3sec14}$), there exists~$\nu\in \N^{*}$ such that the singular directions modulo~$2 \nu \pi$ are in finite number. Let~$d_{1}(t),\dots,d_{k}(t)$ be continuous singular directions such that, if~$d(t)$ is a singular direction, then~$d(t)$ is equal to one of the~$d_{i}(t)$ modulo~$2 \nu \pi$.
Let~$g(z,t)\in \widetilde{K_{U}}$ be fixed by the monodromy,~$\tau_{1},\dots,\tau_{\b}$, and~$St_{d_{1}(t)},\dots,St_{d_{k}(t)}$. 
Using (2) of Proposition \ref{3propo8}, it is sufficient to prove that~${g(z,t)\in K_{U}}$.\par
We can write~$g(z,t)$ as an element of:
$$\hat{K}_{F,U}\Big(e(q_{1}(z,t)),\dots,e(q_{\b-1}(z,t)))(e(q_{\b}(z,t))\Big).$$
Since the~$q_{i}(z,t)\in \textbf{E}_{U}$ are~$\Q$-linearly independent, we know by construction that the ~$e(Nq_{\b}(z,t))$, with~$N\in \Z$, are~$\C$-linearly independent over~$\hat{K}_{F,U}\Big(e(q_{1}(z,t)),\dots,e(q_{\b-1}(z,t))\Big)$. If we add the fact that~$g (z,t)$ is fixed by~$\tau_{\b}$, we obtain:~$$ g(z,t) \in\hat{K}_{F,U}\Big(e(q_{1}(z,t)),\dots,e(q_{\b-1}(z,t))\Big).$$
We apply the same argument~$\b$ times to conclude that~$g (z,t) \in  \hat{K}_{F,U}\cap \widetilde{K_{U}}$.
By the construction of the Stokes operators, we have that~$St_{d(t)}=\mathrm{Id}$ if and only if~$St_{2\nu \pi +d(t)}=\mathrm{Id}$, where~$\nu\in \N^{*}$ has been defined in the proof. Proposition \ref{3propo5} allows us to conclude that~$g(z,t)\in K_{U}$.
\end{proof}

\pagebreak[3]
\subsection{The density theorem for the global parameterized differential Galois group.}\label{3sec24}

 In this subsection, we consider parameterized linear  differential equation of the form:
$$
\pz Y(z,t)=A(z,t)Y(z,t),
$$
 with~$A(z,t)\in \mathrm{M}_{m}\Big(\mathcal{M}_{U}(z)\Big)$. We want to prove a density theorem for the global parameterized differential Galois group. The result in the unparameterized case is due to Ramis and a proof can be found for instance in \cite{M}, Proposition 1.3. The parameterized singularities of~$\pz Y(z,t)=A(z,t)Y(z,t)$ (that is the poles, including maybe~$\infty$, of~$A(z,t)$, as a rational function in~$z$) belong to the algebraic closure of~$\mathcal{M}_{U}$. Because of Remark~\ref{3rem4}, after taking a smaller non empty polydisc~$ U$, we may assume that the set of parameterized singularities belongs to~$\mathcal{M}_{U}$. We will write singularity instead of  parameterized singularity when no confusion is likely to arise. 
Let~$S=\{ \a_{1}(t),\dots,\a_{k}(t)\}\subset \mathbb{P}_{1}(\mathcal{M}_{U})$ be the set of the singularities of~$\pz Y(z,t)=A(z,t)Y(z,t)$. 
  For any singularity~$\a(t)$ of~${\pz Y(z,t)=A(z,t)Y(z,t)}$, we may define the levels and the set of singular directions of~$\a(t)$ by considering~$$\pz Y(z-\a(t),t)=A(z-\a(t),t)Y(z-\a(t),t) \hbox{ if }\infty \not\equiv \a(t)\in S$$ and~$$\pz Y(z^{-1},t)=A(z^{-1},t)Y(z^{-1},t) \hbox{ if } \infty \equiv \a(t)\in S.$$ Let~$(d_{i,j}(t))$ be the singular directions~$\a_{i}(t)$. As in~$\S \ref{3sec14}$, we define:
$$\mathcal{D}_{\a_{i}(t)}=\left\{t\in U \Big|\exists j, j'\in \N, \hbox{ such that } d_{i,j} \not\equiv d_{i,j'}\hbox{ and }d_{i,j}(t)=d_{i,j'}(t)\right\}.~$$
From Lemma \ref{3lem9}, all the~$\mathcal{D}_{\a_{i}(t)}$ are closed set with empty interior.
After taking a smaller non empty polydisc~$U$, we may assume that: 
\begin{itemize} 
\item
There exists~$\e >0$ such that for all~$t\in U$ and for all~$i\neq j$:~$$|\a_{i}(t)-\a_{j}(t)|>\e.$$ 
\item~$\mathcal{D}_{\a_{i}(t)}=\varnothing$ for all~$i\leq k$.
\item For all the singularities of~$\pz Y(z,t)=A(z,t)Y(z,t)$, the levels are independent of~$t$.
\item For all~$t_{0}\in U$, for all the singularities~$\infty\not \equiv \a(t)\in S$ (resp. for the singularity~$\infty$), the singular directions of~$\pz Y(z-\a(t),t)=A(z-\a(t),t)Y(z-\a(t),t)$ (resp.~${\pz Y(z^{-1},t_{0})=A(z^{-1},t_{0})Y(z^{-1},t_{0})}$) evaluated at~$t_{0}$ are equal to the singular directions of the specialized system~$\pz Y(z-\a(t),t_{0})=A(z-\a(t),t_{0})Y(z-\a(t),t_{0})$ (resp.~${\pz Y(z^{-1},t_{0})=A(z^{-1},t_{0})Y(z^{-1},t_{0})}$).
\item Every entry of every~$z$-coefficient of~$A(z,t)$ is analytic on~$U$.
\end{itemize}
Let~$x_{0}(t)\in \mathcal{M}_{U}$ and let~$\e>0$ such that:
$$ \forall t\in U, \forall i<j\leq k,\quad |x_{0}(t)-\a_{j}(t)|>\e \hbox{ and }|\a_{i}(t)-\a_{j}(t)|>\e.$$
For all~$i\leq k$ and all~$t\in U$, we define~$U_{\a_{i}(t)}$, the polydisc in the~$z$-plane with center~$\a_{i}(t)$ and with radius~$\e$. Let~$d_{\a_{i}}(t)$ be a continuous ray from~$\a_{i}(t)$ in~$U_{\a_{i}(t)}$,~$b_{\a_{i}}(t)$ be the continuous point of~$d_{\a_{i}}(t)$ with~$|b_{\a_{i}}(t)-\a_{i}(t)|=\e$ and~$\g_{\a_{i}}(t)$ be a continuous path in~$\mathbb{P}_{1}(\mathcal{M}_{U})$ from~$x_{0}(t)$ to~$b_{\a_{i}}(t)$ such that for all~$t\in U$ and all~$j\leq k$,~$|\g_{\a_{i}}(t)-\a_{j}(t)|>\e/2$. Analytic continuation of~$F(z,t)=(F_{i,j})$, a germ of solution at~$x_{0}(t)$ with the path~$\g_{\a_{i}}(t)$ and~$d_{\a_{i}}(t)$ provides a fundamental solution~$F^{d_{\a_{i}}(t)}(z,t)$ on a germ of open sector with vertex~$\a_{i}(t)$ bisected by~$d_{\a_{i}}(t)$. \par
Let~$\widetilde{\mathcal{M}_{U}(z)}=\mathcal{M}_{U}(X)\langle F_{i,j} \rangle_{\pz,\dt}$. From the assumptions we have made on~$x_{0}(t)$, we deduce that this field has a field of constants with respect to~$\pz$ equal to~$\mathcal{M}_{U}$. Therefore, we deduce that~$\widetilde{\mathcal{M}_{U}(z)}\Big| \mathcal{M}_{U}(z)$ is a parameterized Picard-Vessiot extension. The results of~$\S \ref{3sec22}$ may be applied here and we can define a parameterized differential Galois group~$Aut_{\pz}^{\dt}\left(\widetilde{\mathcal{M}_{U}(z)}\Big| \mathcal{M}_{U}(z)\right)$, which will be identified with a linear differential algebraic subgroup of~$\mathrm{GL}_{m}(\mathcal{M}_{U})$. We will make the same abuse of language as in the local case (see Remark \ref{3rem6}) and call it the parameterized linear differential Galois group, or Galois group, if no confusion is likely to arise. As in Proposition \ref{3propo7}, we want to prove now that it is the ``same'' as the one of~$\S \ref{3sec21}$. \par
Let~$C$ be a ($\dt$)-differentially closed field that contains~$\mathcal{M}_{U}$ and let~$C(z)$ denote the~$(\pz,\dt)$-differential field of rational functions in the indeterminate~$z$, with coefficients in~$C$, where~$z$ is a~$(\dt)$-constant with~$\pz z =1$,~$C$ is the field of constants with respect to~$\pz$, and~$\pz$ commutes with all the derivations. The next proposition is the analogue in the global case of  Proposition \ref{3propo7}.
\pagebreak[3]
\begin{propo}\label{3propo2}
Let us keep the same notations. Let~$\pz Y(z,t)=A(z,t)Y(z,t)$, with~${A(z,t)\in \mathrm{M}_{m}(\mathcal{M}_{U}(z))}$. The extension field~$C(z)\langle F_{i,j} \rangle_{\pz,\dt}\Big|C(z):=\widetilde{C(z)}\Big|C(z)$ is a parameterized Picard-Vessiot extension for~$\pz Y(z,t)=A(z,t)Y(z,t)$. Moreover, there exist~$P_{1},\dots,P_{k}\in \mathcal{M}_{U}\{X_{i,j}\}_{\dt}$ such that the image of the representation of~$Gal_{\pz}^{\dt}\left(\widetilde{C(z)}\Big| C(z)\right)$ (resp.~$Aut_{\pz}^{\dt}\left(\widetilde{\mathcal{M}_{U}(z)}\Big| \mathcal{M}_{U}(z)\right)$) associated to~$F(z,t)$ is the set of~$C$-rational points (resp.~$\mathcal{M}_{U}$-rational points) of the linear differential algebraic subgroup of~$\mathrm{GL}_{m}(C)$ (resp.~$\mathrm{GL}_{m}(\mathcal{M}_{U})$) defined by~$P_{1},\dots,P_{k}$. More explicitly:
$$\begin{array}{ll}
&\left\{ F^{-1}\f(F), \f \in Gal_{\pz}^{\dt}\left(\widetilde{C(z)}\Big| C(z)\right)\right\}\\
=&\left\{ A=(a_{i,j}) \in \mathrm{GL}_{m}(C)\Big| P_{1}(a_{i,j})=\dots=P_{k}(a_{i,j})=0\right\}\\\\
&\left\{ F^{-1}\f(F), \f \in Aut_{\pz}^{\dt}\left(\widetilde{\mathcal{M}_{U}(z)}\Big| \mathcal{M}_{U}(z)\right)\right\}\\
=&\left\{ A=(a_{i,j}) \in \mathrm{GL}_{m}(\mathcal{M}_{U})\Big| P_{1}(a_{i,j})=\dots=P_{k}(a_{i,j})=0\right\}.
\end{array}~$$
\end{propo}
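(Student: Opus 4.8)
The plan is to transcribe the proof of Proposition \ref{3propo7} (itself modeled on \cite{MS12}, Proposition 3.3), replacing the local field $K_U$ by $\mathcal{M}_U(z)$, the field $\mathcal{K}_{C,U}$ by $C(z)$, and the extension $\widetilde{K_U}$ by $\widetilde{\mathcal{M}_U(z)}$. The structural fact to secure first is that $C(z)$ is the fraction field of the $(\pz,\dt)$-differential domain $\mathcal{M}_U(z)\otimes_{\mathcal{M}_U}C$: since $\mathcal{M}_U[z]\otimes_{\mathcal{M}_U}C=C[z]$ is a domain, the ring $\mathcal{M}_U(z)\otimes_{\mathcal{M}_U}C$ is the localization of $C[z]$ at the nonzero elements of $\mathcal{M}_U[z]$, hence a domain with fraction field $C(z)$, and any $\mathcal{M}_U$-basis $(d_k)$ of $C$ is simultaneously an $\mathcal{M}_U(z)$-basis of $\mathcal{M}_U(z)\otimes_{\mathcal{M}_U}C$.

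First I would fix such a basis $(d_k)$ and show the $d_k$ are linearly independent over $\widetilde{\mathcal{M}_U(z)}$. Assuming a nontrivial relation $\sum_{k\le\kappa}d_kP_k=0$ with $P_k\in\widetilde{\mathcal{M}_U(z)}$, $\kappa\ge 2$ minimal and $P_\kappa=1$, applying $\pz$ (which annihilates each $d_k\in C$ and $P_\kappa$) yields $\sum_{k\le\kappa-1}d_k\pz P_k=0$; minimality of $\kappa$ forces $\pz P_k=0$, so each $P_k$ lies in the $\pz$-constants of $\widetilde{\mathcal{M}_U(z)}$, which equal $\mathcal{M}_U$ because $\widetilde{\mathcal{M}_U(z)}\mid\mathcal{M}_U(z)$ is a parameterized Picard-Vessiot extension. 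This contradicts the $\mathcal{M}_U$-independence of the $d_k$. Next I would verify that $\widetilde{C(z)}\mid C(z)$ is a parameterized Picard-Vessiot extension: the matrix $F=(F_{i,j})$ is a fundamental solution generating $\widetilde{C(z)}$ over $C(z)$ by construction, so only the field of constants needs checking, and any $\alpha\in\widetilde{C(z)}$ with $\pz\alpha=0$, written $\alpha=\sum d_kP_k$ with $P_k\in\widetilde{\mathcal{M}_U(z)}$, satisfies $\sum d_k\pz P_k=0$, whence $\pz P_k=0$, $P_k\in\mathcal{M}_U$, and $\alpha\in C$.

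The \emph{heart} of the argument is then to realize both groups through differential ideals. Writing $R_0=\mathcal{M}_U(z)\{Y_{i,j}\}_{\pz,\dt}/I_0$ and $R_1=C(z)\{Y_{i,j}\}_{\pz,\dt}/I_1$, with $I_0,I_1$ the defining ideals of $F$, the groups $Aut_{\pz}^{\dt}\left(\widetilde{\mathcal{M}_U(z)}\Big|\mathcal{M}_U(z)\right)$ and $Gal_{\pz}^{\dt}\left(\widetilde{C(z)}\Big|C(z)\right)$ consist of those $B$ over $\mathcal{M}_U$ (resp. over $C$) for which $(F_{i,j})B$ is again a zero of $I_0$ (resp. $I_1$). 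The whole statement then reduces to the equality $I_1=CI_0$, from which a common set of defining differential polynomials $P_1,\dots,P_k\in\mathcal{M}_U\{X_{i,j}\}_{\dt}$ is extracted. The inclusion $CI_0\subset I_1$ is immediate; for the reverse, given $P\in I_1$ one clears denominators to assume $P\in(\mathcal{M}_U(z)\otimes_{\mathcal{M}_U}C)[Y_{i,j}]$, expands $P=\sum d_kP_k$ with $P_k\in\mathcal{M}_U(z)[Y_{i,j}]$, and from $P(F_{i,j})=\sum d_kP_k(F_{i,j})=0$ the linear independence of the $d_k$ over $\widetilde{\mathcal{M}_U(z)}$ forces $P_k(F_{i,j})=0$, i.e. $P_k\in I_0$.

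The main obstacle is not a single hard step but ensuring that the identification $C(z)=\mathrm{Frac}\left(\mathcal{M}_U(z)\otimes_{\mathcal{M}_U}C\right)$ is set up cleanly enough that the linear algebra over the basis $(d_k)$ transfers verbatim from the local case; the one place where the global setting genuinely differs from Proposition \ref{3propo7} is that $C(z)$ is given directly as a field rather than constructed as a fraction field, so the domain property and the basis statement above must be recorded explicitly. Once that is in place, every remaining step parallels Proposition \ref{3propo7}.
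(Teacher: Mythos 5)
Your proposal is correct and follows essentially the same route as the paper: the paper's proof of this proposition is literally ``this is exactly the same reasoning as in Proposition~\ref{3propo7}'', i.e.\ the three steps you transcribe (linear independence of an $\mathcal{M}_{U}$-basis $(d_{k})$ of $C$ over $\widetilde{\mathcal{M}_{U}(z)}$, the constants computation showing $\widetilde{C(z)}\Big|C(z)$ is parameterized Picard--Vessiot, and the ideal identity $I_{1}=CI_{0}$). Your explicit verification that $C(z)=\mathrm{Frac}\left(\mathcal{M}_{U}(z)\otimes_{\mathcal{M}_{U}}C\right)$ with $(d_{k})$ remaining a basis is a welcome piece of bookkeeping the paper leaves implicit, not a deviation from its argument.
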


\begin{proof}
This is exactly the same reasoning as in Proposition \ref{3propo7}.
\end{proof}

We want to find topological generators for~$Aut_{\pz}^{\dt}\left(\widetilde{\mathcal{M}_{U}(z)}\Big| \mathcal{M}_{U}(z)\right)$ for the Kolchin topology. \par
For~$\a(t)\in \mathcal{M}_{U}$, let~$$K_{U,\a(t)}=\{ f(z-\a(t),t) \; | \; f(z,t)\in K_{U} \},$$  and let~$$K_{U,\infty}=\{ f(z^{-1},t) \; | \; f(z,t)\in K_{U} \}.$$
Let~$\a(t)\in S$ and let~$Aut_{\pz}^{\dt}\left(\widetilde{\mathcal{M}_{U}(z)}\Big|K_{U,\a(t)}\cap\widetilde{\mathcal{M}_{U}(z)}\right)$ be the local Galois group for the fundamental solution~$F^{d_{\a}(t)}(z,t)$ described above. If we conjugate~$Aut_{\pz}^{\dt}\left(\widetilde{\mathcal{M}_{U}(z)}\Big|K_{U,\a(t)}\cap \widetilde{\mathcal{M}_{U}(z)}\right)$ by the differential isomorphism defined by analytic continuation of~$F(z,t)$ described above, we get an injective morphism of linear differential algebraic groups:~$$Aut_{\pz}^{\dt}\left(\widetilde{\mathcal{M}_{U}(z)}\Big|K_{U,\a(t)}\cap\widetilde{\mathcal{M}_{U}(z)}\right)\hookrightarrow Aut_{\pz}^{\dt}\left(\widetilde{ \mathcal{M}_{U}(z)}\Big| \mathcal{M}_{U}(z)\right).$$ 
Using the maps~$i^{\pm}$ defined in the proof of Lemma \ref{3lem5} and the injection above, we can define the monodromy, the exponential torus, and the Stokes operators for any singularities in ~$S$, as elements of~$$Aut_{\pz}^{\dt}\left(\widetilde{\mathcal{M}_{U}(z)}\Big|\mathcal{M}_{U}(z)\right).$$
\pagebreak[3]
\begin{theo}[Global parameterized analogue of the density theorem of Ramis]\label{3theo1}
Let ~${\pz Y(z,t)=A(z,t)Y(z,t)}$, where~${A(z,t)\in \mathrm{M}_{m}(\mathcal{M}_{U}(z))}$. For~$\a(t)\in S$, let~$G_{\a(t)}$ be the subgroup of:
$$Aut_{\pz}^{\dt}\left(\widetilde{\mathcal{M}_{U}(z)}\Big|K_{U,\a(t)}\cap\widetilde{\mathcal{M}_{U}(z)}\right),$$ 
generated by the monodromy, the exponential torus and the Stokes operators. Let~$G$ be the subgroup of~$Aut_{\pz}^{\dt}\left(\widetilde{\mathcal{M}_{U}(z)}\Big|\mathcal{M}_{U}(z)\right)$ generated by the~$G_{\a(t)}$, with~$\a (t)\in S$. Then~$G$ in dense for Kolchin topology in~$$Aut_{\pz}^{\dt}\left(\widetilde{\mathcal{M}_{U}(z)}\Big|\mathcal{M}_{U}(z)\right).$$ 
\end{theo}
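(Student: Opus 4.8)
The plan is to reduce the density statement to a computation of a fixed field, exactly as in the local case. By part (2) of Proposition \ref{3propo8}, applied to the parameterized Picard-Vessiot extension $\widetilde{\mathcal{M}_{U}(z)}\big|\mathcal{M}_{U}(z)$, it suffices to prove that $\widetilde{\mathcal{M}_{U}(z)}^{G}=\mathcal{M}_{U}(z)$, that is, that every $g\in \widetilde{\mathcal{M}_{U}(z)}$ fixed by all the local generators $G_{\a(t)}$, $\a(t)\in S$, already lies in $\mathcal{M}_{U}(z)$. So I would fix such a $g$ and write it as a $(\pz,\dt)$-differential rational expression $P(F_{i,j})$ in the entries of the global fundamental solution $F(z,t)$, which is a germ of meromorphic function in $(z,t)$ at the base point $x_{0}(t)$; after continuation, it is meromorphic in $(z,t)$ on the domain lying over $\mathbb{P}_{1}(\C)\setminus S$.

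First I would globalize the single-valuedness. Each monodromy operator attached to $\a_{i}(t)$ is realized, through the injection built from analytic continuation of $F(z,t)$ along $\g_{\a_{i}}(t)$, as analytic continuation of $g$ along the loop $\g_{\a_{i}}(t)\cdot c_{i}\cdot \g_{\a_{i}}(t)^{-1}$, where $c_{i}$ is a small positive loop around $\a_{i}(t)$. These loops generate the fundamental group of $\mathbb{P}_{1}(\C)\setminus S$ based at $x_{0}(t)$, so the hypothesis that $g$ is fixed by every monodromy forces the analytic continuation of $g$ to be single valued. Hence $g$ defines, for each $t\in U$, a meromorphic function on $\mathbb{P}_{1}(\C)\setminus S$, depending meromorphically on the parameters.

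Next I would analyze $g$ near each singularity. Fixing $\a_{i}(t)$ and passing to the local chart by the substitution $z\mapsto z-\a_{i}(t)$ (resp. $z\mapsto z^{-1}$ at $\infty$), the group $G_{\a_{i}(t)}$ becomes the subgroup generated by the local monodromy, exponential torus and Stokes operators of the local analytic situation of $\S\,\ref{3sec23}$. Since $g$ is fixed by all of these, Proposition \ref{3propo5} applies in this chart and shows that $g$ belongs to $K_{U,\a_{i}(t)}$; concretely $g$ is meromorphic in $z$ in a punctured neighbourhood of $\a_{i}(t)$, with $z$-coefficients in $\mathcal{M}_{U}$, and extends meromorphically across $\a_{i}(t)$. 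The parameterized summability statements, Proposition \ref{3propo3} and Lemma \ref{3lem3}, together with the $\dt$-equivariance of the maps $i^{\pm}$ established in Lemma \ref{3lem5}, are what guarantee that this local meromorphy holds jointly in $(z,t)$ and is compatible with the derivations $\dt$.

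Finally I would glue. Combining the previous two steps, $g$ is, for each $t\in U$, single valued and meromorphic on all of $\mathbb{P}_{1}(\C)$, including the points of $S$, with meromorphic dependence on $t$. A single-valued meromorphic function on the compact surface $\mathbb{P}_{1}(\C)$ is rational, and tracking the coefficients gives $g\in \mathcal{M}_{U}(z)$, as required. I expect the main obstacle to be the local step: transporting Proposition \ref{3propo5}, stated for a singularity at $0$, to each $\a_{i}(t)\in S$ while controlling the joint $(z,t)$-meromorphy across the conjugation by the continuation of $F(z,t)$, and then arguing that local meromorphy at every singularity together with global single-valuedness yields genuine rationality in $z$ over $\mathcal{M}_{U}$ — the parameterized analogue of the classical fact that a single-valued meromorphic function on $\mathbb{P}_{1}(\C)$ is rational.
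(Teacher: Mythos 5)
Your proposal is correct and takes essentially the same route as the paper's proof: reduce via part (2) of Proposition \ref{3propo8} to showing that the fixed field of $G$ is $\mathcal{M}_{U}(z)$, apply the reasoning of Proposition \ref{3propo5} in a local chart at each $\a(t)\in S$ to conclude $g\in K_{U,\a(t)}$, and then deduce that $g$, being meromorphic in $(z,t)$ on $\mathbb{P}_{1}(\C)\times U$ with finitely many poles in $z$ for fixed $t$, lies in $\mathcal{M}_{U}(z)$. You merely spell out the single-valuedness step (the monodromy operators generating the fundamental group of $\mathbb{P}_{1}(\C)\setminus S$) that the paper leaves implicit in its phrase ``by the same reasoning as in Proposition \ref{3propo5}.''
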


\begin{proof}
We use (2) of Proposition \ref{3propo8}. We have to prove that the subfield of~$\widetilde{\mathcal{M}_{U}(z)}$ fixed by~$G$ is~$\mathcal{M}_{U}(z)$. Let~$f(z,t)\in \widetilde{\mathcal{M}_{U}(z)}$ be fixed by~$G$. Then, by the same reasoning as in Proposition \ref{3propo5}, it follows that~$f(z,t)$ belongs to~$K_{U,\a(t)}$, for~$\a(t)\in S$. Therefore, we deduce that~$ f(z,t)$ is meromorphic in~$(z,t)$ on~$\mathbb{P}_{1}(\C)\times U$, and has a finite number of poles in the~$z$-plane for~$t$ fixed. Hence, 
~$f(z,t)\in \mathcal{M}_{U}(z)~$.
\end{proof}

In particular, this generalizes Theorem 4.2 in \cite{MS12} which says that, if the equation has only regular singular poles, then the group generated by the monodromy at each pole is dense for Kolchin topology in~$Aut_{\pz}^{\dt}\left(\widetilde{\mathcal{M}_{U}(z)}\Big|\mathcal{M}_{U}(z)\right)$.  
\pagebreak[3]
\begin{coro}\label{3coro1}
$Aut_{\pz}^{\dt}\left(\widetilde{\mathcal{M}_{U}(z)}\Big|\mathcal{M}_{U}(z)\right)$  contains a finitely generated Kolchin-dense subgroup.
\end{coro}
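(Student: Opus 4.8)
The plan is to combine the global density theorem (Theorem \ref{3theo1}) with the local finite-generation result (Corollary \ref{3coro2}), exactly as the latter was deduced from the local density theorem. Recall that $S=\{\a_{1}(t),\dots,\a_{k}(t)\}$ is \emph{finite}, and that the group $G$ of Theorem \ref{3theo1} is generated by the local groups $G_{\a_{i}(t)}$, each of which is itself generated by the monodromy, the exponential torus and the Stokes operators at $\a_{i}(t)$.

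First, for each singularity $\a_{i}(t)\in S$ I would produce a finite subset of $G_{\a_{i}(t)}$ whose generated subgroup $\widetilde{G}_{\a_{i}(t)}$ is Kolchin-dense in the local group $Aut_{\pz}^{\dt}\left(\widetilde{\mathcal{M}_{U}(z)}\Big|K_{U,\a_{i}(t)}\cap\widetilde{\mathcal{M}_{U}(z)}\right)$. This is precisely the content of the proof of Corollary \ref{3coro2} applied after the coordinate change $z\mapsto z-\a_{i}(t)$ (resp.\ $z\mapsto z^{-1}$ at $\infty$): one selects finitely many $\Q$-linearly independent $q(z,t)\in\textbf{E}_{U}$ generating the relevant exponentials, the corresponding finitely many torus elements $\tau_{j}$, a single monodromy, and the finitely many Stokes operators $St_{d_{j}(t)}$ arising from the finitely many singular directions modulo $2\nu\pi$.

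Next, let $\widetilde{G}$ be the subgroup of $Aut_{\pz}^{\dt}\left(\widetilde{\mathcal{M}_{U}(z)}\Big|\mathcal{M}_{U}(z)\right)$ generated by all the $\widetilde{G}_{\a_{i}(t)}$, $i\leq k$. Since $k$ is finite and each $\widetilde{G}_{\a_{i}(t)}$ is finitely generated, $\widetilde{G}$ is finitely generated. To see it is Kolchin-dense I would use (2) of Proposition \ref{3propo8}: it suffices to show that any $f\in\widetilde{\mathcal{M}_{U}(z)}$ fixed by $\widetilde{G}$ lies in $\mathcal{M}_{U}(z)$. Such an $f$ is fixed by the chosen generators at each $\a_{i}(t)$; by the argument of Proposition \ref{3propo5} (as used in Corollary \ref{3coro2}) this forces $f\in K_{U,\a_{i}(t)}$ for every $i\leq k$. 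Then, exactly as in the proof of Theorem \ref{3theo1}, being simultaneously local at each singularity of $S$ forces $f$ to be meromorphic on $\mathbb{P}_{1}(\C)\times U$ with finitely many poles in the $z$-plane for fixed $t$, whence $f\in\mathcal{M}_{U}(z)$.

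The only point requiring care is the claim that the finitely many chosen generators at a given singularity already trap $f$ inside $K_{U,\a_{i}(t)}$, and this is the genuinely nontrivial input supplied by Corollary \ref{3coro2}: its proof shows that fixing the selected torus elements kills the exponential part (the $\Q$-linear independence of the $q$'s gives $\C$-linear independence of the $e(Nq(z,t))$), after which fixing the monodromy and the finitely many Stokes operators reduces to Proposition \ref{3propo5}. I expect no further obstacle; the argument is a direct splicing of the finite-generation mechanism of Corollary \ref{3coro2} with the global fixed-field computation of Theorem \ref{3theo1}.
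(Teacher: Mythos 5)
Your proof is correct and follows essentially the same route as the paper: the paper's (very terse) proof likewise notes that the global group is generated by the local groups at the finitely many singularities and invokes Corollary \ref{3coro2} at each one, with the density of the resulting finitely generated subgroup checked via Proposition \ref{3propo8}(2) and the fixed-field argument of Theorem \ref{3theo1}. Your write-up simply makes explicit the details the paper leaves implicit, including the key point that the finitely many chosen generators at each singularity already trap a fixed element inside $K_{U,\a_{i}(t)}$.
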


\begin{proof}
In the proof of Theorem \ref{3theo1}, we see that the global parameterized differential Galois group is generated by all local parameterized differential Galois groups. Since there is a finite number of singularities, this is a consequence of Corollary \ref{3coro2}. 
\end{proof}

\pagebreak
\subsection{Examples.}\label{3sec25}

In all the examples, we will compute the global parameterized differential Galois group. This means that the base field is~$\mathcal{M}_{U}(z)$.
\pagebreak[3]
\begin{ex}\label{3ex1}
 Let us consider~$\pz Y(z,t)=\frac{t}{z}Y(z,t)$. This example was considered by direct computations in Example \ref{3ex3} but we will compute here~$Aut_{\pz}^{\dt}\left(\widetilde{\mathcal{M}_{U}(z)}\Big|\mathcal{M}_{U}(z)\right)$ using the parameterized density theorem. The fundamental solution is~$(z^{t})$ and the parameterized Picard-Vessiot extension over~$\mathcal{M}_{U}(z)$ is~$\mathcal{M}_{U}(z,z^{t},\log)$ (we want the extension to be closed under the derivations~$\pz$ and~$\pt$). The exponential torus and the Stokes matrices are trivial. The monodromy sends~$z^{t}$ to~$e^{2i\pi t}z^{t}$. The element~$e^{2i\pi t}$ satisfies the differential equation
$$\pt \left(\frac{\pt e^{2i\pi t}}{e^{2i\pi t}}\right)=0.$$
Therefore, the Kolchin closure of the monodromy is contained in:
$$\left\{ a\in \mathcal{M}_{U} \Big|\pt \left(\frac{\pt a}{a}\right) \right\}=\{ ce^{bt}, b \in \C, c\in \C^{*} \}.$$
Conversely, the map that sends~$z^{t}$ to~$ce^{bt}z^{t}$ is an element of~$Aut_{\pz}^{\dt}\left(\widetilde{\mathcal{M}_{U}(z)}\Big|\mathcal{M}_{U}(z)\right)$. Finally, viewed as a linear differential algebraic subgroup of~$\mathrm{GL}_{1}(\mathcal{M}_{U})$, 
$$\begin{array}{lll}
Aut_{\pz}^{\dt}\left(\widetilde{\mathcal{M}_{U}(z)}\Big|\mathcal{M}_{U}(z)\right) & \simeq & \left\{ a\in \mathcal{M}_{U} \Big| \pt \left(\frac{\pt a}{a}\right)=0\right\}\\\\
&= &\{a\in \mathcal{M}_{U} |   a\neq 0 \hbox{ and } a\pt^{2}a -(\pt a)^{2}=0\}\\\\
&\subseteq & \mathrm{GL}_{1}(\mathcal{M}_{U}).\\
\end{array}$$
\end{ex}
\pagebreak[3]
\begin{ex}[Parameterized Euler equation]
Let~$f(t)$ be an analytic function different from~$0$, and let us consider:~$$ 
\pz^{2} Y(z,t)+\left(\frac{1}{z}-\frac{1}{f(t)z^{2}}\right)\pz Y(z,t)+\frac{1}{f(t)z^{3}} Y(z,t)=0,$$
which can be seen as a system:
$$ \pz\begin{pmatrix}
 Y(z,t)   \\ 
\pz Y(z,t)
\end{pmatrix}=
\begin{pmatrix}
0 & 1 \\ 
\frac{-1}{f(t)z^{3}} &\frac{1}{f(t)z^{2}}-\frac{1}{z}
                              \end{pmatrix} 
\begin{pmatrix}
Y(z,t)   \\ 
\pz Y(z,t)
\end{pmatrix}.$$
If~$f\equiv 1$, we recognize the Euler equation. A fundamental solution is:~$$\begin{pmatrix}
1 & \hat{F}(z,t) \\ 
\frac{1}{f(t)z^{2}} &\pz \hat{F}(z,t) \end{pmatrix}\begin{pmatrix}
e\left(\frac{-1}{f(t)z}\right) &0 \\ 
0 &1 \end{pmatrix},$$ where~$\hat{F}(z,t) =-\displaystyle \sum_{n \geq 0} n! (f(t)z)^{n+1}$. The only  singularity is~$0$. The monodromy is trivial. Let~$\tau$ be an element of the exponential torus. Then, the image of the fundamental solution under~$\tau$ is:
$$\begin{pmatrix}
1 & \hat{F}(z,t) \\ 
\frac{1}{f(t)z^{2}} &\pz \hat{F}(z,t) \end{pmatrix}\begin{pmatrix}
\a e\left(\frac{-1}{f(t)z}\right) &0 \\ 
0 &1 \end{pmatrix},$$
with~$\a \in \C^{*}$.
 Therefore, the matrices of the elements of the exponential torus are of the form~$\mathrm{Diag}(\a,1)$, with~$\a \in \C^{*}$.  The only level of the system is~$1$ and the singular directions are the~$\arg\left(f(t)^{-1}\right)+2k\pi$, with~$k\in\Z$.
As we have seen in Proposition \ref{3propo1}, we can compute the Stokes matrix with the Laplace and the Borel transforms. It follows from the definition of the formal Borel transform that~$$\hat{\mathcal{B}}_{1}\Big(\hat{F}(z,t)\Big)\equiv\log(1-f(t)z).$$ Let~$0<\e <\frac{\pi}{2}$ be such that there are no singular directions in:~$$\Big[\arg\left(f(t)^{-1}\right)-\e,\arg\left(f(t)^{-1}\right)\Big[\quad \bigcup \quad \Big]\arg\left(f(t)^{-1}\right),\arg\left(f(t)^{-1}\right)+\e\Big].$$
Then, the following matrices are fundamental solutions: 
$$\begin{pmatrix}
1 & \mathcal{L}_{1,\arg\left(f(t)^{-1}\right)+\e}(\log(1-f(t)z)) \\ 
\frac{1}{f(t)z^{2}} &\pz \mathcal{L}_{1,\arg\left(f(t)^{-1}\right)+\e}(\log(1-f(t)z))  \end{pmatrix}\begin{pmatrix}
e^{\frac{-1}{f(t)z}}&0 \\ 
0 &1 \end{pmatrix},$$
$$\begin{pmatrix}
1 & \mathcal{L}_{1,\arg\left(f(t)^{-1}\right)-\e}(\log(1-f(t)z)) \\ 
\frac{1}{f(t)z^{2}} &\pz \mathcal{L}_{1,\arg\left(f(t)^{-1}\right)-\e}(\log(1-f(t)z))  \end{pmatrix}\begin{pmatrix}
e^{\frac{-1}{f(t)z}}&0 \\ 
0 &1 \end{pmatrix}.$$
To compute the Stokes matrix in the direction~$\arg\left(f(t)^{-1}\right)$, we have to compute:~$$\mathcal{L}_{1,\arg\left(f(t)^{-1}\right)+\e}( \log(1-f(t)z))-\mathcal{L}_{1,\arg\left(f(t)^{-1}\right)-\e}(\log(1-f(t)z)) .$$
We have
$$\begin{array}{ccc}
&&\mathcal{L}_{1,\arg\left(f(t)^{-1}\right)+\e}( \log(1-f(t)z))-\mathcal{L}_{1,\arg\left(f(t)^{-1}\right)-\e}(\log(1-f(t)z)) \\\\
& = &z^{-1} \int_{0}^{\infty i(\arg\left(f(t)^{-1}\right)+\e)} \log(1-f(t)u)e^{-(\frac{u}{z})} d(u) \\ \\
 & -  &z^{-1}\int_{0}^{\infty i(\arg\left(f(t)^{-1}\right)-\e)} \log(1-f(t)u)e^{-(\frac{u}{z})} d(u).
\end{array}~$$

Integration by parts and the residue theorem imply that:
$$\mathcal{L}_{1,\arg\left(f(t)^{-1}\right)+\e}( \log(1-f(t)z))-\mathcal{L}_{1,\arg\left(f(t)^{-1}\right)-\e}(\log(1-f(t)z))=2i\pi f(t) e^{-\left(\frac{1}{f(t)z}\right)}.$$ Therefore, the Stokes matrix in this direction is~$\begin{pmatrix}
1 & 2i\pi f(t) \\ 
0 &1 \end{pmatrix}~$. Finally we obtain:
$$\begin{array}{lcl}
Aut_{\pz}^{\dt}\left(\widetilde{\mathcal{M}_{U}(z)}\Big|\mathcal{M}_{U}(z)\right)&\simeq&\left\{ \begin{pmatrix}
\a & b f \\ 
0 &1 \end{pmatrix}, \hbox{ where } \a \in \C^{*}\hbox{ and } b\in \C \right\}\\\\
&\simeq& \left\{ \begin{pmatrix}
\a & \b \\ 
0 &1 \end{pmatrix}, \hbox{ where }\pt \a =0,\a \neq 0 \hbox{ and } \pt \left(\frac{\b}{f}\right)=0 \right\}.
\end{array}~$$
\end{ex}
\pagebreak[3]
\begin{ex}[Bessel equation]
We are interested in the parameterized linear differential equation:
$$ \pz\begin{pmatrix}
 Y(z,t)   \\ 
\pz Y(z,t)
\end{pmatrix}=
\begin{pmatrix}
0 & 1 \\ 
\frac{(t^{2}-z^{2})}{z^{2}} &\frac{-1}{z}
                              \end{pmatrix} 
\begin{pmatrix}
Y(z,t)   \\ 
\pz Y(z,t)
\end{pmatrix}.$$
This equation has two singularities:~$0$ and~$\infty$.  Let~$U$ be a non empty disc such that~${U\cap (1/2+\Z)=\varnothing}$. First, we will compute the local group at~$0$,
~$$Aut_{\pz}^{\dt}\left(\widetilde{\mathcal{M}_{U}(z)}\Big|K_{U,0}\cap\widetilde{\mathcal{M}_{U}(z)}\right).$$ If~$t+1/2 \notin \Z$, the two solutions:
$$J_{t}(z)=\left(\frac{z}{2}\right)^{t} \displaystyle \sum_{k=0}^{\infty} \dfrac{(-1)^{k}z^{2k}}{k!\G (t+k+1)2^{k}}~$$ 
$$J_{-t}(z)=\left(\frac{z}{2}\right)^{-t} \displaystyle \sum_{k=0}^{\infty} \dfrac{(-1)^{k}z^{2k}}{k!\G (-t+k+1)2^{k}},~$$ 
are linearly independent (see \cite{Wat} Page 43) and we have a fundamental solution of the specialized system. The equation is regular singular at~$z=0$, and therefore, the group generated by the monodromy~$\hat{m}$ is dense for Kolchin topology in the parameterized differential Galois group~$Aut_{\pz}^{\dt}\left(\widetilde{\mathcal{M}_{U}(z)}\Big|K_{U,0}\cap\widetilde{\mathcal{M}_{U}(z)}\right)$. By the same reasoning as in Example \ref{3ex1}:
$$Aut_{\pz}^{\dt}\left(\widetilde{\mathcal{M}_{U}(z)}\Big|K_{U,0}\cap\widetilde{\mathcal{M}_{U}(z)}\right) \simeq \left\{ \begin{pmatrix}
\a & 0 \\ 
0 &\a^{-1} \end{pmatrix}, \hbox{ where }\a \neq 0 \hbox{ and }\a\pt^{2}\a -(\pt \a)^{2}=0 \right\}.$$
We now turn to the singularity at infinity. We have:
$$ \pz \begin{pmatrix}
Y(z^{-1},t)   \\ 
\pz Y(z^{-1},t)
\end{pmatrix}=
\begin{pmatrix}
0 & 1 \\ 
\frac{t^{2}}{z^{2}}-\frac{1}{z^{4}} &\frac{-1}{z}
                              \end{pmatrix} 
\begin{pmatrix}
 Y(z^{-1},t)   \\ 
\pz  Y(z^{-1},t)
\end{pmatrix}.$$
In order to compute the matrices of the monodromy, the elements of the exponential torus, and the Stokes operators, we make use of another basis of solutions:
$$H_{t}^{(1)}(z^{-1})=\frac{J_{-t}(z^{-1})-e^{-it\pi}J_{t}(z^{-1})}{i \sin(t\pi)}$$
$$H_{t}^{(2)}(z^{-1})=\frac{J_{-t}(z^{-1})-e^{it\pi}J_{t}(z^{-1})}{-i \sin(t\pi)}.$$
In \cite{Wat} page 198, we find that on the sector~$]-\pi,2\pi[$,~$H_{t}^{(1)}(z^{-1})$ is asymptotic to:
$$\widetilde{H}_{t}^{(1)}(z^{-1})=\left(\frac{2z}{\pi}\right)^{1/2}e^{i(z^{-1}-t\pi/2-\pi/4)}\displaystyle \sum_{k=0}^{\infty} \frac{(-1)^{k}\G (t+k+1/2)z^{k}}{(2i)^{k}k!\G (t-k+1/2)}.$$
The same holds for~$H_{t}^{(2)}(z^{-1})$ on the sector~$]-2\pi,\pi[$:
$$\widetilde{H}_{t}^{(2)}(z^{-1})=\left(\frac{2z}{\pi}\right)^{1/2}e^{-i(z^{-1}-t\pi/2-\pi/4)}\displaystyle \sum_{k=0}^{\infty} \frac{\G (t+k+1/2)z^{k}}{(2i)^{k}k!\G (t-k+1/2)}.$$
It follows that in the basis~$\left(H_{t}^{(1)}(z^{-1}),H_{t}^{(2)}(z^{-1})\right)$, the matrix of the monodromy  is:~$$\begin{pmatrix}
-1&0 \\ 
0&-1
\end{pmatrix}$$ 
and the matrices of the elements of the exponential torus are of the form:
$$\begin{pmatrix}
\a & 0\\ 
0 &\a^{-1}
\end{pmatrix}, \hbox{ where } \a \in \C^{*}.$$
The only level is~$1$ and due to the expression of~$\widetilde{H}_{t}^{1}(z^{-1})$ and~$\widetilde{H}_{t}^{2}(z^{-1})$, the singular directions are the directions~$\frac{\pi}{2}+k\pi$, with~$k\in \Z$. 
By definition, the Stokes matrix in the direction~$\frac{\pi}{2}+k\pi$ is the matrix that sends the asymptotic representation defined on the sector~${](k-1)\pi,(k+1)\pi[}$ to the asymptotic representation defined on the sector~${]k\pi,(k+2)\pi[}$.
In \cite{RM1}, 3.4.12 (see also \cite{Ber}), we find that in the basis~$(H_{t}^{1}(z^{-1}),H_{t}^{2}(z^{-1}))$ the  Stokes matrix in the direction~$\frac{\pi}{2}+2k\pi$ is
$$\begin{pmatrix}
1& 0 \\ 
2e^{2i\pi t}\cos(\pi t)&1
\end{pmatrix},$$
and the Stokes matrix in the direction~$-\frac{\pi}{2}+2k\pi$ is
$$\begin{pmatrix}
1& -2e^{-2i\pi t}\cos(\pi t) \\ 
0&1
\end{pmatrix}.$$
An application of the local and global density theorems (Theorems \ref{3theo1} and \ref{3theo2}) gives that~$${Aut_{\pz}^{\dt}\left(\widetilde{\mathcal{M}_{U}(z)}\Big|K_{U,\infty}\cap\widetilde{\mathcal{M}_{U}(z)}\right)\hbox{ and  }Aut_{\pz}^{\dt}\left(\widetilde{\mathcal{M}_{U}(z)}\Big|\mathcal{M}_{U}(z)\right)}$$ are linear differential algebraic subgroups of~$\mathrm{SL}_{2}(\mathcal{M}_{U})$, because all the matrices we have computed are in~$\mathrm{SL}_{2}(\mathcal{M}_{U})$, which is closed in the Kolchin topology. \par
Let~$C$ be a differentially closed field that contains~$\mathcal{M}_{U}$ and consider~$Gal_{\pz}^{\dt}\left(\widetilde{C(z)}\Big|C(z)\right)$, the parameterized differential Galois group defined in Proposition \ref{3propo2}. First, we are going to compute the Zariski closure~$G$ of~$Gal_{\pz}^{\dt}\left(\widetilde{C(z)}\Big|C(z)\right)$. Let~$C^{*}=C\setminus \{0\}$. From the classification of linear algebraic subgroup of~$\mathrm{SL}_{2}(C)$ (see \cite{VdPS}, Theorem 4.29), there are four possibilities:
\begin{enumerate}
\item~$G$ is conjugate to a subgroup of~$B=\left\{ \begin{pmatrix}
a&b \\ 
0&a^{-1}
\end{pmatrix}\hbox{, where } a\in C^{*},b\in \C \right\}$.
\item~$G$ is conjugate to a subgroup of
$$D_{\infty}=\left\{ \begin{pmatrix}
a &0 \\ 
0&a^{-1}
\end{pmatrix} \bigcup \begin{pmatrix}
0 &b^{-1} \\ 
-b&0
\end{pmatrix}\hbox{, where } a,b\in C^{*} \right\}.$$ 
\item~$G$ is finite.
\item~$G=\rm SL_{2}(C)$.
\end{enumerate}
From Proposition \ref{3propo2}, every matrix that belongs to~$Aut_{\pz}^{\dt}\left(\widetilde{\mathcal{M}_{U}(z)}\Big|\mathcal{M}_{U}(z)\right)$ belongs also to~$Gal_{\pz}^{\dt}\left(\widetilde{C(z)}\Big|C(z)\right)$. Since~$G$ must contain~$$\begin{pmatrix}
1& 0 \\ 
2e^{2i\pi t}\cos(\pi t)&1
\end{pmatrix} \hbox{ and }\begin{pmatrix}
1& -2e^{-2i\pi t}\cos(\pi t) \\ 
0&1
\end{pmatrix},$$
we find that the only possibility is that~$Gal_{\pz}^{\dt}\left(\widetilde{C(z)}\Big|C(z)\right)$ is Zariski dense in ~$ SL_{2}(C)$.
In \cite{C72}, Proposition 42, Cassidy classifies the Zariski-dense differential algebraic subgroups of~$\mathrm{SL}_{2}(C)$. Finally, we have two possibilities:
\begin{itemize}
\item~$Gal_{\pz}^{\dt}\left(\widetilde{C(z)}\Big|C(z)\right)$ is conjugate to~$\mathrm{SL}_{2}(C_{0})$ over~$\mathrm{SL}_{2}(C)$, where $${C_{0}=\{ a\in C(z)| \pz a=\pt a=0\}}.$$
\item~$Gal_{\pz}^{\dt}\left(\widetilde{C(z)}\Big|C(z)\right)=\mathrm{SL}_{2}(C)$.
\end{itemize}
 If~$Gal_{\pz}^{\dt}\left(\widetilde{C(z)}\Big|C(z)\right)$ is conjugate to~$\mathrm{SL}_{2}(C_{0})$ over~$\mathrm{SL}_{2}(C)$, the matrix of the monodromy of the singularity~$0$ is conjugate to a matrix~$M\in \mathrm{SL}_{2}(C_{0})$ over~$\mathrm{SL}_{2}(C)$. Similar matrices have the same eigenvalues, so the eigenvalues of~$M$ are~$e^{2i\pi t}$ and~$e^{-2i\pi t}$, which is not possible if~$M$ belongs to~$\mathrm{SL}_{2}(C_{0})$. Because of Proposition \ref{3propo2}, we find that
$$Aut_{\pz}^{\dt}\left(\widetilde{\mathcal{M}_{U}(z)}\Big|\mathcal{M}_{U}(z)\right)=\mathrm{SL}_{2}(\mathcal{M}_{U}).$$
\end{ex}

\pagebreak[3]
\section{Applications.}

We now give three applications of the parameterized differential Galois theory. In~$\S \ref{3sec31}$, we deal with linear differential equations that are completely integrable (see Definition~\ref{3defi2}). It was proved in \cite{CS} that an equation is completely integrable if and only if its parameterized differential Galois group is conjugate over a differentially closed field to a group of constant matrices. We use the global density theorem (Theorem \ref{3theo1}) to prove that the equation is completely integrable if and only if there exists a fundamental solution such that the matrices of the  topological generators for the Galois group appearing in the global density theorem (Theorem \ref{3theo1}) are constant matrices. As a corollary, we deduce that the equation is completely integrable if and only if the matrices of the topological generators for the Galois group given in the parameterized density theorem are conjugate over~$\mathrm{GL}_{m} (\mathcal{M}_{U})$ to constant matrices. In~$\S \ref{3sec32}$, we study an entry of a Stokes operator at the singularity at infinity of the equation:
$$ \pz^{2} Y(z,t)=(z^{3}+t)Y(z,t).$$
In particular, we prove that it is not~$\pt$-finite: it satisfies no parameterized linear differential equation. This partially answers a question by Sibuya.  In~$\S \ref{3sec33}$, we deal with the inverse problem in the parameterized differential Galois theory. Let~$k$ be a so-called universal~$(\dt)$-field (see~$\S \ref{3sec32}$). We give a necessary condition for a linear differential algebraic subgroup of~$\mathrm{GL}_{m}(k)$ for being the global parameterized differential Galois group for some equation having coefficients in~$k(z)$. The corresponding sufficient condition was proved in \cite{MS12}, Corollary 5.2.

\pagebreak[3]
\subsection{Completely integrable equations.}\label{3sec31}

In this subsection, we study completely integrable equations. See also \cite{GO} for an approach from the point of view of differential Tannakian categories.
\pagebreak[1]
\begin{defi}\label{3defi2}
Let~$A_{0}\in \mathrm{M}_{m}\Big(\mathcal{M}_{U}(z)\Big)$. We say that the linear differential equation~$\partial_{0}Y=A_{0}Y$ is completely integrable if there exist~$A_{1},\dots,A_{n}\in \mathrm{M}_{m}\Big(\mathcal{M}_{U}(z)\Big)$ such that, for all~${0\leq i,j \leq n}$,

$$ \partial_{t_{i}}A_{j}-\partial_{t_{j}}A_{i}=A_{i}A_{j}-A_{j}A_{i},$$
with~$\partial_{t_{0}}=\pz$.
\end{defi}

Sibuya shows in \cite{Si90}, Theorem A.5.2.3, that if the parameterized linear differential equation~$\pz Y(z,t)=A(z,t)Y(z,t)$ is regular singular, then it is isomonodromic (see Page~\pageref{3p3} for the definition) if and only if it is completely integrable. This result is not true in the irregular case. The main reason is the fact that there are more topological generators in the parameterized differential Galois group.
\pagebreak[3]
\begin{propo}\label{3propo6}
Let~$A_{0}(z,t)\in \mathrm{M}_{m}\Big(\mathcal{M}_{U}(z)\Big)$ and let~$\widetilde{ \mathcal{M}_{U}(z)}\Big| \mathcal{M}_{U}(z)$ be the parameterized Picard-Vessiot extension for~$\pz Y(z,t)=A_{0}(z,t)Y(z,t)$ defined in~$\S \ref{3sec24}$. The linear differential equation~${\pz Y(z,t)=A_{0}(z,t)Y(z,t)}$ 
is completely integrable if and only if there exists a fundamental solution~$F(z,t)$ in~$\widetilde{\mathcal{M}_{U}(z)}$ such that the images of the topological generators of~$Aut_{\pz}^{\dt}\left(\widetilde{ \mathcal{M}_{U}(z)}\Big| \mathcal{M}_{U}(z)\right)$ (see Theorem \ref{3theo1}), with respect to the representation associated to~$F(z,t)$, belong to~$\mathrm{GL}_{m}(\C)$.
\end{propo}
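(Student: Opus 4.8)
The plan is to prove both implications by producing, in the integrable case, a single fundamental solution that is horizontal for all of the derivations at once, and conversely by reconstructing the connection matrices $A_1,\dots,A_n$ from the invariance of the logarithmic derivatives $(\partial_{t_i}F)F^{-1}$ of such a solution. Throughout I work directly over $\mathcal{M}_{U}$, so that no descent of the Cassidy--Singer conjugation from a differentially closed field is needed; the formal computations are short and the real content sits in two descent steps.

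For the direct implication, suppose $\pz Y=A_{0}Y$ is completely integrable (Definition \ref{3defi2}), with $A_{1},\dots,A_{n}\in \mathrm{M}_{m}(\mathcal{M}_{U}(z))$ and $\partial_{t_{i}}A_{j}-\partial_{t_{j}}A_{i}=A_{i}A_{j}-A_{j}A_{i}$ for $0\le i,j\le n$ (with $\partial_{t_{0}}=\pz$). These are exactly the flatness conditions of the connection given by $\pz-A_{0}$ and the $\partial_{t_{i}}-A_{i}$, so the Frobenius theorem yields a local fundamental solution $F$ with $\pz F=A_{0}F$ and $\partial_{t_{i}}F=A_{i}F$ for all $i$. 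I would first argue that $F$ can be taken in $\mathrm{GL}_{m}(\widetilde{\mathcal{M}_{U}(z)})$: comparing $F$ near the base point with the solution $F_{0}$ defining $\widetilde{\mathcal{M}_{U}(z)}$, the matrix $D=F_{0}^{-1}F$ is $\pz$-constant and meromorphic in $(z,t)$, hence (after possibly shrinking $U$ as in \S\ref{3sec24}) lies in $\mathrm{GL}_{m}(\mathcal{M}_{U})$, so $F=F_{0}D\in\mathrm{GL}_{m}(\widetilde{\mathcal{M}_{U}(z)})$. Then for any $\sigma\in Aut_{\pz}^{\dt}(\widetilde{\mathcal{M}_{U}(z)}|\mathcal{M}_{U}(z))$, setting $C_{\sigma}=F^{-1}\sigma(F)$, the two evaluations $\partial_{t_{i}}\sigma(F)=\sigma(A_{i}F)=A_{i}FC_{\sigma}$ and $\partial_{t_{i}}(FC_{\sigma})=A_{i}FC_{\sigma}+F\partial_{t_{i}}C_{\sigma}$ force $\partial_{t_{i}}C_{\sigma}=0$ for every $i$. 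The field of $\dt$-constants of $\mathcal{M}_{U}$ being $\C$, this gives $C_{\sigma}\in\mathrm{GL}_{m}(\C)$; in particular the monodromy, the exponential torus and the Stokes operators all have constant matrices with respect to $F$.

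For the converse, suppose $F\in\mathrm{GL}_{m}(\widetilde{\mathcal{M}_{U}(z)})$ is a fundamental solution for which every topological generator $\sigma$ has constant matrix $C_{\sigma}=F^{-1}\sigma(F)\in\mathrm{GL}_{m}(\C)$. I would set $A_{i}:=(\partial_{t_{i}}F)F^{-1}\in\mathrm{M}_{m}(\widetilde{\mathcal{M}_{U}(z)})$ for $i=1,\dots,n$, while $A_{0}=(\pz F)F^{-1}$ is the given matrix. Because $\partial_{t_{i}}C_{\sigma}=0$, each $A_{i}$ is fixed by every generator: $\sigma(A_{i})=(\partial_{t_{i}}\sigma(F))\sigma(F)^{-1}=(\partial_{t_{i}}(FC_{\sigma}))(FC_{\sigma})^{-1}=(\partial_{t_{i}}F)F^{-1}=A_{i}$. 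By the global density theorem (Theorem \ref{3theo1}) the group generated by the monodromy, the exponential torus and the Stokes operators fixes precisely $\mathcal{M}_{U}(z)$ --- the fixed-field statement established in its proof through Proposition \ref{3propo8}(2) --- so $A_{i}\in\mathrm{M}_{m}(\mathcal{M}_{U}(z))$. Finally, differentiating $A_{j}=(\partial_{t_{j}}F)F^{-1}$, using $\partial_{t_{i}}(F^{-1})=-F^{-1}(\partial_{t_{i}}F)F^{-1}$ and the commutation of the derivations, gives $\partial_{t_{i}}A_{j}-\partial_{t_{j}}A_{i}=A_{i}A_{j}-A_{j}A_{i}$, so the equation is completely integrable.

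I expect the main obstacle to be the two descent steps rather than the algebra. In the direct implication, the delicate point is checking rigorously that the Frobenius solution descends into $\widetilde{\mathcal{M}_{U}(z)}$, i.e. controlling the meromorphy in $t$ of the flat section and justifying the harmless shrinking of $U$. In the converse, the crucial move is the passage from \emph{$A_{i}$ is fixed by all the generators} to \emph{$A_{i}\in\mathrm{M}_{m}(\mathcal{M}_{U}(z))$}, which does not follow from Kolchin density as a black box but from the sharper fact that the generators fix exactly the base field $\mathcal{M}_{U}(z)$. One could instead invoke the Cassidy--Singer equivalence (complete integrability is equivalent to the Galois group over a differentially closed field being conjugate to constant matrices), but that would leave precisely this descent-from-$C$ problem to be solved, whereas the argument above stays over the non-differentially-closed field $\mathcal{M}_{U}$.
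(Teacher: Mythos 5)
Your proof is correct, but it takes a genuinely different route from the paper's. The paper never integrates anything directly: it chains together the global density theorem (there is a fundamental solution making the generators constant if and only if $Aut_{\pz}^{\dt}\left(\widetilde{\mathcal{M}_{U}(z)}\Big|\mathcal{M}_{U}(z)\right)$ is conjugate over $\mathrm{GL}_{m}(\mathcal{M}_{U})$ to a subgroup of $\mathrm{GL}_{m}(\C)$, using that $\mathrm{GL}_{m}(\C)$ is Kolchin-closed), then Proposition \ref{3propo2} to transfer this conjugacy to $Gal_{\pz}^{\dt}\left(\widetilde{C(z)}\Big|C(z)\right)$ over a differentially closed field $C$, then Proposition 3.9 of \cite{CS} (conjugacy to constants over $C$ is equivalent to integrability with $A_{i}\in\mathrm{M}_{m}(C(z))$), and finally an algebraic descent in the style of \cite{DVH}, Proposition 1.24: clearing denominators turns the integrability identities into polynomial systems over $\mathcal{M}_{U}$, solvability in $C$ forces solvability in $\overline{\mathcal{M}_{U}}$, and Remark \ref{3rem4} lands the $A_{i}$ in $\mathrm{M}_{m}(\mathcal{M}_{U'}(z))$ for a smaller polydisc $U'$. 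You instead stay over $\mathcal{M}_{U}$ throughout: Frobenius integration plus the $\pz$-constancy of $D=F_{0}^{-1}F$ for the forward direction, and for the converse the logarithmic derivatives $(\partial_{t_{i}}F)F^{-1}$ combined with the sharp fixed-field fact --- you correctly identify that Kolchin density as a black box does not suffice, and that what is really used is the statement, established inside the proof of Theorem \ref{3theo1} as the hypothesis of Proposition \ref{3propo8}(2), that the generators fix exactly $\mathcal{M}_{U}(z)$. Your route buys a cleaner result in both directions: the converse requires no shrinking of $U$ and no differentially closed field at all, and your forward argument proves more than asked, namely that the \emph{entire} group is constant with respect to $F$ (since $\partial_{t_{i}}C_{\sigma}=0$ for every $\sigma$, not just the generators), whereas the paper's route buys the connection with the Cassidy--Singer integrability criterion and argues at the level of groups rather than solutions. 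Two caveats, both of which you flag and neither of which is worse than in the paper itself: your forward direction shrinks $U$ to make $D\in\mathrm{GL}_{m}(\mathcal{M}_{U})$ (genuinely needed, since $D$ solves linear systems $\partial_{t_{i}}D=B_{i}D$ with $B_{i}$ merely meromorphic on $U$ and may be multivalued on all of $U$), exactly as the paper's converse only produces the $A_{i}$ over a sub-polydisc $U'$; and the Frobenius step needs a base point of $(z,t)$-holomorphy of all the $A_{i}$, which exists but should be stated explicitly.
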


\begin{proof}
Let~$C$ be a differentially closed field that contains~$\mathcal{M}_{U}$ and let us consider~$C(z)$ as in~$\S\ref{3sec24}$. Let~$\widetilde{C(z)}\Big| C(z)$ be the parameterized Picard-Vessiot extension for~${\pz Y(z,t)=A_{0}(z,t)Y(z,t)}$, and let~$Gal_{\pz}^{\dt}\left(\widetilde{C(z)}\Big| C(z)\right)$ be the parameterized differential Galois group defined in~$\S \ref{3sec21}$. We recall that if we take a different fundamental solution in~$\widetilde{ \mathcal{M}_{U}(z)}$ to compute the Galois group, we obtain a conjugate linear differential algebraic subgroup of~$\mathrm{GL}_{m}(\mathcal{M}_{U})$.\par
 Using the global density theorem (Theorem \ref{3theo1}), we find that there exists a fundamental solution such that the matrices of the topological generators for the Galois group appearing in Theorem \ref{3theo1} are constant if and only if~$Aut_{\pz}^{\dt}\left(\widetilde{ \mathcal{M}_{U}(z)}\Big| \mathcal{M}_{U}(z)\right)$ is conjugate over~$\mathrm{GL}_{m}(\mathcal{M}_{U})$ to a subgroup of~$\mathrm{GL}_{m}(\C)$. Using Proposition \ref{3propo2}, we find that~$Aut_{\pz}^{\dt}\left(\widetilde{ \mathcal{M}_{U}(z)}\Big| \mathcal{M}_{U}(z)\right)$ is conjugate over~$\mathrm{GL}_{m}(\mathcal{M}_{U})$ to a subgroup of~$\mathrm{GL}_{m}(\C)$ if and only if~$Gal_{\pz}^{\dt}\left(\widetilde{
C(z)}\Big|C(z)\right)$ is conjugate over~$\mathrm{GL}_{m}(C)$ to a subgroup of~$\mathrm{GL}_{m}(C_{0})$, where~$$C_{0}=\left\{ a\in C(z) | \pz a=\partial_{t_{1}}a=\dots= \partial_{t_{n}}a=0\right\}.$$
Proposition 3.9, \cite{CS} says that this occurs if and only if there exist~${A_{1},\dots,A_{n}\in \mathrm{M}_{m}(C(z))}$ such that, for all~${0\leq i,j \leq n}$,
$$ \partial_{t_{i}}A_{j}-\partial_{t_{j}}A_{i}=A_{i}A_{j}-A_{j}A_{i},$$
with~$\partial_{t_{0}}=\pz$.
To finish, we follow the proof of Proposition 1.24 in \cite{DVH}. Let~$0< i \leq n$ and let us consider~$$ \pz A_{i}-\partial_{t_{i}}A_{0}=A_{0}A_{i}-A_{i}A_{0}.$$  By clearing the denominators, we obtain that every entry of every~$z$-coefficient of~$A_{i}$ satisfies a finite set of polynomial equations with coefficients in~$\mathcal{M}_{U}$. Since the polynomial equations have a solution in~$C$, they must have a solution in the algebraic closure of~$\mathcal{M}_{U}$. Using Remark \ref{3rem4}, we find the existence of a non empty polydisc~$U'\subset U$ such that all the~$A_{i}$ belong to~$\mathrm{M}_{m}\Big(\mathcal{M}_{U'}(z)\Big)$. This concludes the proof.
\end{proof}

In the proof of Proposition \ref{3propo6}, we have proved:
\pagebreak[3]
\begin{coro}
 Let~$A(z,t)\in \mathrm{M}_{m}\Big(\mathcal{M}_{U}(z)\Big)$. The equation  ~${\pz Y(z,t)=A(z,t)Y(z,t)}$, is completely integrable if and only if the matrices of the topological generators for the Galois group appearing in Theorem \ref{3theo1} are conjugate over~$\mathrm{GL}_{m}(\mathcal{M}_{U})$ to constant matrices.
\end{coro}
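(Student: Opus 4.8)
The plan is to deduce the corollary directly from Proposition \ref{3propo6}, using only the standard observation that the image of the representation $\rho_F$ depends on the chosen fundamental solution merely up to conjugation by a matrix with entries in the field of $\pz$-constants $\mathcal{M}_U$. First I would record how the fundamental solutions of $\pz Y(z,t)=A(z,t)Y(z,t)$ inside $\widetilde{\mathcal{M}_U(z)}$ are related: if $F$ and $F'$ are two such solutions, then $P:=F^{-1}F'$ satisfies $\pz P=0$, and since $\mathcal{M}_U$ is the field of constants of $\widetilde{\mathcal{M}_U(z)}$ with respect to $\pz$ (see $\S \ref{3sec24}$), this forces $P\in \mathrm{GL}_m(\mathcal{M}_U)$; conversely $FP$ is again a fundamental solution for every $P\in \mathrm{GL}_m(\mathcal{M}_U)$. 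Thus the fundamental solutions lying in $\widetilde{\mathcal{M}_U(z)}$ are exactly the $FP$ with $P\in \mathrm{GL}_m(\mathcal{M}_U)$.

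Next I would compute the effect of such a change on the representation. For $\sigma\in Aut_{\pz}^{\dt}\left(\widetilde{\mathcal{M}_U(z)}\Big|\mathcal{M}_U(z)\right)$ and $F'=FP$ with $P\in\mathrm{GL}_m(\mathcal{M}_U)$, the automorphism $\sigma$ fixes the base field $\mathcal{M}_U(z)$, which contains $\mathcal{M}_U$, so $\sigma(P)=P$ and hence
$$\rho_{F'}(\sigma)=(FP)^{-1}\sigma(FP)=P^{-1}\big(F^{-1}\sigma(F)\big)P=P^{-1}\rho_{F}(\sigma)P.$$
In particular, replacing $F$ by $FP$ conjugates \emph{simultaneously} the images of the monodromy, the exponential torus and all the Stokes operators of Theorem \ref{3theo1} by the single matrix $P\in \mathrm{GL}_m(\mathcal{M}_U)$.

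Finally I would assemble the two implications. For the direct implication, if the equation is completely integrable then Proposition \ref{3propo6} furnishes a fundamental solution $F$ for which the images of the topological generators lie in $\mathrm{GL}_m(\C)$; writing an arbitrary fundamental solution as $F_0$, we have $F=F_0P$ for some $P\in\mathrm{GL}_m(\mathcal{M}_U)$, and the displayed identity shows that with respect to $F_0$ the generators are conjugate over $\mathrm{GL}_m(\mathcal{M}_U)$ to constant matrices. For the converse, if with respect to some fundamental solution $F_0$ the generators are simultaneously conjugate by a matrix $P\in\mathrm{GL}_m(\mathcal{M}_U)$ to matrices in $\mathrm{GL}_m(\C)$, then $F:=F_0P$ is a fundamental solution for which $\rho_F$ sends the generators into $\mathrm{GL}_m(\C)$, and Proposition \ref{3propo6} yields complete integrability. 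Beyond this bookkeeping there is essentially no obstacle; the only point requiring genuine care is that the conjugating matrix $P$ really has entries in the (non-differentially-closed) constants field $\mathcal{M}_U$, since this is precisely what guarantees $\sigma(P)=P$ and hence that conjugation by $P$ is compatible with the Galois action.
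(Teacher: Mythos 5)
Your proof is correct and follows essentially the same route as the paper: the corollary is extracted in the paper from the proof of Proposition \ref{3propo6}, where the key ingredient is exactly the fact you make explicit, namely that changing the fundamental solution inside $\widetilde{\mathcal{M}_{U}(z)}$ amounts to right multiplication by some $P\in\mathrm{GL}_{m}(\mathcal{M}_{U})$, which is fixed by the Galois action and hence conjugates the images of all the topological generators simultaneously. Your only addition is to spell out this bookkeeping (including the correct reading of ``conjugate'' as simultaneous conjugation by a single matrix), which the paper leaves implicit.
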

\pagebreak[3]
\begin{rem}\label{3rem1}
This corollary improves Proposition 3.9 in \cite{CS}. The conjugation occurs in a field that is not differentially closed. Furthermore, we do not need the entire parameterized differential Galois group to be conjugate to a group of constant matrices in order to deduce that the equation~${\pz Y(z,t)=A(z,t)Y(z,t)}$ is completely integrable. 
\end{rem} 
In \cite{GO}, the authors study completely integrable parameterized linear differential equations using differential Tannakian categories. In particular, they prove that the notion of integrability with respect to all the parameters is equivalent to the notion of integrability with respect to each parameter separately, which generalizes \cite{D1}, Proposition~9. Furthermore, they improve Proposition 3.9 in \cite{CS} by avoiding the assumption that the field of constants is differentially closed.

\pagebreak[3]
\subsection{On the hyper transcendence of a Stokes matrix.}\label{3sec32}

In this subsection, we will study the parameterized linear differential equation:
\begin{equation}\label{3eq4}
\pz^{2} Y(z,t)=(z^{3}+t)Y(z,t).
\end{equation}
Sibuya, in Chapter 2 of \cite{Si}, shows that there exists a formal solution~$y_{0}(z,t)$  which admits an asymptotic representation~$\widetilde{y}_{0}(z,t)$ on the sector (see Theorem 6.1 in \cite{Si}):~$$\left\{ z\in \widetilde{\C} \Big| \arg(z) \in \left]\frac{-3\pi}{5},\frac{3\pi}{5}\right[ \right\} .$$ 
We easily check that for~$k\in \Z$,
$$y_{k}(z,t)=y_{0}\left(e^{\frac{-2ki\pi}{5}}z,e^{\frac{-6ki\pi}{5}}t\right)$$
is a solution of the differential equation (\ref{3eq4}) which has the asymptotic representation~${\widetilde{y}_{k}(z,t)=\widetilde{y}_{0}\Big(e^{\frac{-2ki\pi}{5}}z,e^{\frac{-6ki\pi}{5}}t\Big)}$ on the sector~$S_{k-1}\cup \bar{S}_{k}\cup S_{k+1},$ where
$$S_{k}=\left\{ z \in \widetilde{\C}\Big| \arg(z) \in \left]\frac{(2k-1)\pi}{5},\frac{(2k+1)\pi}{5}\right[ \right\} ,$$
and~$\bar{S}_{k}$ is its closure. 

$$\begin{tabular}{ccc}

\includegraphics[width=0.3\linewidth]{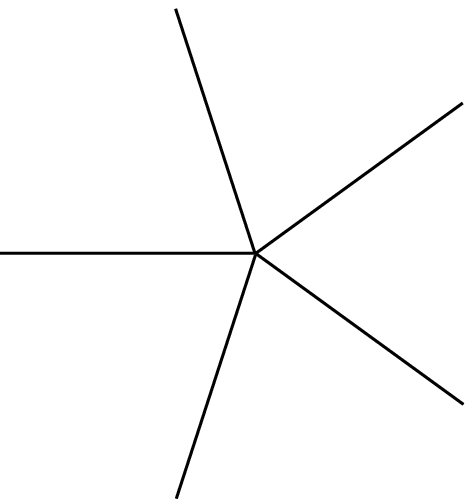}

&
\includegraphics[width=0.3\linewidth]{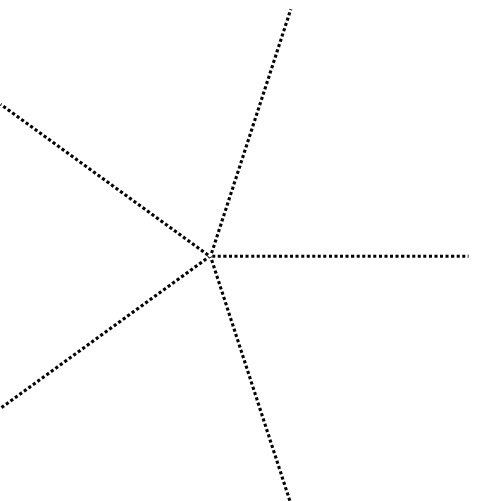}

&
\includegraphics[width=0.3\linewidth]{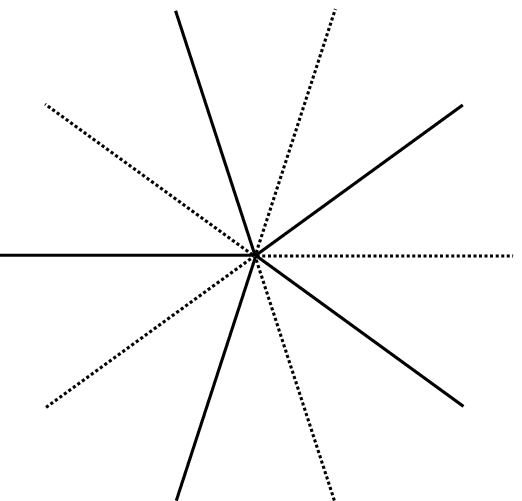}
 \\
\\
The sectors~$S_{k}$& The singular directions& Sectors~$S_{k}$ and singular directions
\end{tabular}$$

 The asymptotic representation~$\widetilde{y}_{k}(z,t)$ is bounded uniformly on each compact set in the~$t$-plane as~$|z|$ tends to infinity on the sector~$S_{k}$, and tends to infinity uniformly on each compact set in the~$t$-plane as~$|z|$ tends to infinity on the sectors~$S_{k-1}$ and~$S_{k+1}$. As we see in  \cite{Si}, page 83,~$y_{k+1}(z,t)$ and~$y_{k+2}(z,t)$ are linearly independent and we can write
$y_{k}(z,t)$ as a~$\mathcal{M}_{\C}$-linear combination  of ~$y_{k+1}(z,t)$ and~$y_{k+2}(z,t)$: \begin{eqnarray}\label{3eq6}\forall k\in \N, \forall z,t\in \C, \; y_{k}(z,t)=C_{k}(t)y_{k+1}(z,t)+\widetilde{C}_{k}(t)y_{k+2}(z,t),\end{eqnarray}
where~$\widetilde{C}_{k}(t),C_{k}(t) \in \mathcal{M}_{\C}$.
By Theorem 21.1 in \cite{Si}, we obtain that~$$\widetilde{C}_{k}(t)=-e^{\frac{2i\pi}{5}} \hbox{ and } C_{k}(t)=C_{0}\left(e^{\frac{-6ki\pi}{5}}t\right).$$
In \cite{Si}, the author asks if~$C_{0}(t)$ is differentially transcendental, i.e., satisfies no differential polynomial equations. We will use Galois theory to prove that for every non empty polydisc~$U$,~$C_{0}(t)$ is not~$\pt$-finite over~$\mathcal{M}_{U}$, i.e., satisfies no linear differential equations in coefficients in~$\mathcal{M}_{U}$. \\
The singularity of the system is at infinity. Let~$W(z,t)=zY(z^{-1},t)$. We obtain the parameterized linear differential equation 
\begin{eqnarray}\label{3eq5}z^{7} \pz^{2}W(z,t)=(1+tz^{3})W(z,t),\end{eqnarray}
which can be written in the form
$$\pz \begin{pmatrix}
 W(z,t)   \\ 
\pz W(z,t)
\end{pmatrix}=
\begin{pmatrix}
0 & 1 \\ 
\frac{1+tz^{3}}{z^{7}} &0
                              \end{pmatrix}\begin{pmatrix}
W(z,t)   \\ 
\pz W(z,t)
\end{pmatrix}.$$
Let~$k$ be a so-called universal~$(\dt)$-field of characteristic~$0$: for any~$(\dt)$-field~$k_{0}\subset k$,~$(\dt)$-finitely generated over~$\Q$, and any~$(\dt)$-finitely generated extension~$k_{1}$ of~$k_{0}$, there is a~$(\dt)$-differential~$k_{0}$-isomorphism of~$k_{1}$ into~$k$. See Chapter~$3$, Section~$7$ of \cite{Kol73} for more details. In particular,~$k$ is~$(\dt)$-differentially closed. Let~$k(z)$ denotes the~$(\pz,\dt)$-differential field of rational functions in the indeterminate~$z$, with coefficients in~$k$, where~$z$ is a~$(\dt)$-constant with~$\pz z =1$,~$k$ is the field of constants with respect to~$\pz$, and~$\pz$ commutes with all the derivations.\par
Let~$A(z,t)=\begin{pmatrix} 
0 & 1 \\ 
\frac{1+tz^{3}}{z^{7}} &0
                              \end{pmatrix}$. The two solutions~$zy_{1}(z^{-1},t)$,~$zy_{2}(z^{-1},t)$ admit asymptotic representations and the only singularity is~$0$. Therefore,~$$\mathcal{M}_{U}(z)\langle y_{1}(z^{-1},t),y_{2}(z^{-1},t)\rangle_{\pz,\pt}\Big|\mathcal{M}_{U}(z)=\widetilde{\mathcal{M}_{U}(z)}\Big|\mathcal{M}_{U}(z)$$ is a parameterized Picard-Vessiot extension for~$\pz W(z,t)=A(z,t)W(z,t)$. Because of Proposition~\ref{3propo2},~$\widetilde{k(z)}\Big|k(z)=k(z)\Big\langle y_{1}(z^{-1},t),y_{2}(z^{-1},t)\Big\rangle_{\pz,\pt}\Big|k(z)$ is a parameterized Picard-Vessiot extension.
\pagebreak[3]
\begin{lem}\label{3lem4}
$Gal_{\pz}^{\dt}\left(\widetilde{k(z)}\Big|k(z)\right)=\mathrm{SL}_{2}(k)$.
\end{lem}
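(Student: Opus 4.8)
The plan is to reproduce, for this irregular equation, the computation carried out for the Bessel example. Since the companion matrix of (\ref{3eq4}) has zero trace, the Wronskian of the pair $\big(y_{1}(z^{-1},t),y_{2}(z^{-1},t)\big)$ is a $\pz$-constant, so after normalising it to $1$ we may regard $Gal_{\pz}^{\dt}\big(\widetilde{k(z)}\,|\,k(z)\big)$ as a Kolchin-closed subgroup of $\mathrm{SL}_{2}(k)$. The equation has a single, irregular, singular point (at $z=0$ after the substitution $W(z,t)=zY(z^{-1},t)$); its two exponentials are $\pm q(z,t)$ with $q(z,t)=-\tfrac{2}{5}z^{-5/2}$, so the unique level is $5/2$ (this is exactly why Sibuya's sectors $S_{k}$ have opening $2\pi/5$) and the ramification order is $2$. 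Note that the polar part $q$ is \emph{independent} of $t$, so the whole $t$-dependence of the group sits in the Stokes data. The exponential torus is the full diagonal torus $\big\{\mathrm{Diag}(\a,\a^{-1}):\a\in\C^{*}\big\}$, and since the formal monodromy exchanges $q$ and $-q$ it is anti-diagonal in the corresponding basis. By Theorem \ref{3theo1} the group is Kolchin-generated by these data together with the Stokes operators at $z=0$.

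First I would compute the Zariski closure $G$. The equation exhibits a genuine Stokes phenomenon (already visible in the connection relation (\ref{3eq6}), where $\widetilde{C}_{k}=-e^{2i\pi/5}\neq0$), so by Proposition \ref{3propo10} not all Stokes matrices are trivial and we obtain a non-trivial unipotent Stokes operator $u$. Conjugating $u$ by the anti-diagonal formal monodromy produces a non-trivial unipotent of the opposite triangular type. Thus $G$ contains the diagonal torus together with an upper and a lower non-trivial unipotent, so by the classification of algebraic subgroups of $\mathrm{SL}_{2}$ (Theorem 4.29 in \cite{VdPS}) it can be neither conjugate into a Borel subgroup nor into $D_{\infty}$ (which contains no non-trivial unipotent), nor finite; hence $G=\mathrm{SL}_{2}(k)$ and the group is Zariski-dense.

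Cassidy's classification of the Zariski-dense differential algebraic subgroups of $\mathrm{SL}_{2}$ (Proposition 42 in \cite{C72}) then leaves only two options: either $Gal_{\pz}^{\dt}\big(\widetilde{k(z)}\,|\,k(z)\big)=\mathrm{SL}_{2}(k)$, or it is conjugate over $\mathrm{SL}_{2}(k)$ to $\mathrm{SL}_{2}(C_{0})$, where $C_{0}=\{a\in k:\pt a=0\}$. The crux is to exclude the second possibility. I would do this through the integrability criterion of Proposition \ref{3propo6} (together with Proposition \ref{3propo2}): being conjugate to a group of constant matrices is equivalent to $\pz^{2}Y=(z^{3}+t)Y$ being completely integrable, i.e. to the existence of $A_{1}\in\mathrm{M}_{2}\big(\mathcal{M}_{U}(z)\big)$ with $\pz A_{1}-\pt A_{0}=A_{0}A_{1}-A_{1}A_{0}$, where $A_{0}=\begin{pmatrix}0&1\\z^{3}+t&0\end{pmatrix}$. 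Writing out the four entries and eliminating $p,r,s$ in favour of the $(1,2)$-entry $q$ reduces this to the single scalar equation $q'''-4(z^{3}+t)q'-6z^{2}q=-2$ (with ${}'=\pz$) for one unknown $q\in\mathcal{M}_{U}(z)$. A pole-order count at a finite pole forces any rational-in-$z$ solution to be a polynomial (the term $q'''$ would otherwise dominate), and the leading term of $-4z^{3}q'-6z^{2}q$ then has $z$-degree at least $2$, which cannot equal the constant $-2$; hence there is no solution, the equation is not integrable, the second option is impossible, and $Gal_{\pz}^{\dt}\big(\widetilde{k(z)}\,|\,k(z)\big)=\mathrm{SL}_{2}(k)$.

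The main obstacle is precisely this exclusion of the constant case: everything else is formal, whereas this step genuinely detects the non-isomonodromy of $\pz^{2}Y=(z^{3}+t)Y$. I deliberately phrase it as a self-contained non-integrability computation because it presupposes \emph{nothing} about the $t$-dependence of the Stokes multiplier $C_{0}(t)$ — this is essential, since Lemma \ref{3lem4} will in turn be used to prove that $C_{0}(t)$ is not $\pt$-finite, and a Stokes-multiplier argument would risk circularity. An equivalent, more analytic, route (worth recording) is to observe that conjugacy into the constants forces a single $\theta$ with $\pt m=\theta m-m\theta$ for all $m$ in the group; testing this against the torus makes $\theta$ diagonal, and testing it against an upper and a lower Stokes operator forces the product of the two corresponding Stokes multipliers to be a $\pt$-constant, which is contradicted by Sibuya's explicit formulas.
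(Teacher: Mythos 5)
Your proposal is correct in substance but follows a genuinely different route from the paper at both of its two main steps. For the Zariski density of $Gal_{\pz}^{\dt}\left(\widetilde{k(z)}\Big|k(z)\right)$ in $\mathrm{SL}_{2}(k)$, the paper does not use the density theorem at all: it applies the unparameterized Kovacic algorithm \cite{Kov} to get $Gal_{\pz}\left(\widetilde{k(z)}\Big|k(z)\right)=\mathrm{SL}_{2}(k)$ and then invokes Proposition 6.26 of \cite{HS} (Proposition \ref{3propo9}) to conclude that the parameterized group is Zariski-dense, whereas you argue via the generators of Theorem \ref{3theo1} (diagonal torus, anti-diagonal formal monodromy, unipotent Stokes operators), in the style of the paper's Bessel example. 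For the exclusion of the second branch of Cassidy's dichotomy, the paper cites Remark 4.4 of \cite{D1}, which reduces conjugacy into $\mathrm{SL}_{2}(k_{0})$ to the existence of a rational solution of an explicit third-order parameterized ODE, and then defers the non-existence check to the algorithm of \cite{VdPS}, p.~100; you instead derive the equivalent criterion from integrability (Proposition \ref{3propo6}, i.e.\ Proposition 3.9 of \cite{CS}, via Proposition \ref{3propo2}), reduce the Lax-pair condition by hand to the scalar equation $q'''-4(z^{3}+t)q'-6z^{2}q=-2$, and kill it by a pole-order and degree count. I verified this reduction and the count: the elimination is correct, no finite pole can survive the $q'''$ term, and for a polynomial of $z$-degree $d\geq 0$ the leading coefficient of $-4z^{3}q'-6z^{2}q$ is $-(4d+6)q_{d}\neq 0$ (you should say this one line explicitly, since a priori the two terms could cancel), so the left side has degree $d+2\geq 2$ while the right side is constant. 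Your version buys a self-contained, checkable computation where the paper appeals to an external algorithm, and you are right that it avoids any circularity with the later non-$\pt$-finiteness result for $C_{0}(t)$; it is also genuinely needed here that you do \emph{not} reuse the Bessel example's monodromy-eigenvalue trick, since for this equation all the local formal data ($\pm\frac{2}{5}z^{-5/2}$, torus, formal monodromy) are $t$-independent.

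There is, however, one incorrect inference in your density step: the claim that the Stokes phenomenon is nontrivial ``already visible in (\ref{3eq6}), where $\widetilde{C}_{k}=-e^{2i\pi/5}\neq 0$.'' The coefficient $\widetilde{C}_{k}$ is not a Stokes multiplier; as Example \ref{3ex2} shows, the Stokes matrix $St_{8\pi/5}$ has off-diagonal entry $-C_{0}(t)e^{3i\pi/5}$, so nontriviality of the Stokes operators is equivalent to $C_{0}\not\equiv 0$, which does not follow from $\widetilde{C}_{k}\neq 0$ (that relation merely expresses $y_{k}$ in the basis $y_{k+1},y_{k+2}$ and would persist even with trivial Stokes data). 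The gap is real but localized and repairable without Kovacic: the solutions $y_{k}(z^{-1},t)$ are single-valued on $\C^{*}$, so the actual monodromy at the singularity is trivial, while the formal monodromy is anti-diagonal because the exponentials $\pm\frac{2}{5}z^{-5/2}$ are ramified; since the actual monodromy is the product of the Stokes matrices with the formal monodromy, not all Stokes matrices can equal $\mathrm{Id}$. (Alternatively: if all $C_{k}\equiv 0$, then iterating $y_{k}=-e^{2i\pi/5}y_{k+2}$ makes $y_{0}$ proportional to $y_{2},y_{4},\dots$, hence recessive in every sector $S_{k}$, and Liouville's theorem forces $y_{0}\equiv 0$, a contradiction.) With that repair, your argument — torus conjugation sweeping out both root groups, then the classification of Theorem 4.29 in \cite{VdPS} and Proposition 42 in \cite{C72} — goes through and yields the lemma.
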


Notice that the differential equation is of the form~$\pz^{2} W(z,t)=r(z,t)W(z,t)$, where~$r(z,t)\in k(z)$. In this case, we can compute the Galois group using a parameterized version of Kovacic's algorithm, see \cite{Ar,D1}. In order to have a self contained proof, we will perform the calculations explicitly.  

\begin{proof}
 If we apply Kovacic's algorithm (see \cite{Kov}), we find that the unparameterized differential Galois group~$Gal_{\pz}\left(\widetilde{k(z)} \Big| k(z)\right)$ is equal to~$\mathrm{SL}_{2}(k)$.
We apply Proposition 6.26 in \cite{HS}, to deduce that~$Gal_{\pz}^{\dt}\left(\widetilde{k(z)}\Big| k(z)\right)$ is Zariski-dense in~$\mathrm{SL}_{2}(k)$. By Proposition 42 in \cite{C72}, we deduce that there are two possibilities:
\begin{itemize}
\item ~$Gal_{\pz}^{\dt}\left(\widetilde{k(z)}\Big| k(z)\right)=\mathrm{SL}_{2}(k)$
\item~$Gal_{\pz}^{\dt}\left(\widetilde{k(z)}\Big|k(z)\right)$ is conjugate to~$\mathrm{SL}_{2}(k_{0})$ over~$\mathrm{SL}_{2}(k)$, where $$k_{0}=\left\{ a\in k(z) | \pz a=\pt a=0\right\}.$$
\end{itemize}
  We see in \cite{D1}, Remark 4.4, that the last case occurs if and only if the following parameterized differential equation has a solution in~$\mathcal{M}_{U}(z)$, for some non empty polydisc~$U$ in~$\C^{n}$:
$$\pz^{3} y(z,t)=\pz y(z,t)\frac{4+4tz^{3}}{z^{7}}+y(z,t)\pz \frac{4+4tz^{3}}{z^{7}}-\pt \frac{4+4tz^{3}}{z^{7}}.$$
With the algorithm presented in \cite{VdPS} p.100, we find that this does not happen and then:
$$Gal_{\pz}^{\dt}\left(\widetilde{k(z)}\Big| k(z)\right)=\mathrm{SL}_{2}(k).$$
\end{proof}
\pagebreak[3]
\begin{lem}
The singular directions of the equation (\ref{3eq5}) are:
$$ \frac{2k\pi}{5}\hbox{,with } k\in \Z.$$
\end{lem}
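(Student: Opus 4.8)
The plan is to read off the singular directions directly from their definition in $\S\ref{3sec14}$, once the formal exponential parts $q_i(z,t)$ of equation (\ref{3eq5}) are known. The singularity here is at $z=0$, and (\ref{3eq5}) is the scalar second order equation $\pz^2 W=r(z,t)W$ with $r(z,t)=\frac{1+tz^3}{z^7}=z^{-7}+tz^{-4}$. I would compute the exponentials via the associated Riccati equation: setting $u=\pz W/W$ gives $\pz u+u^2=r(z,t)$, and the exponential parts are then $q_i=\int u_i\,dz$ for the formal solutions $u_i$.

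First I would look for a formal Puiseux solution $u(z,t)=\sum_{j\geq 7}c_j(t)z^{-j/2}$. Near $z=0$ the term $\pz u\sim z^{-9/2}$ is negligible compared with $u^2$, so the dominant balance is $u^2\simeq z^{-7}$, forcing $c_7=\pm 1$. This produces two formal solutions, with leading exponential parts $q_{1,2}(z,t)=\mp\frac{2}{5}z^{-5/2}+\cdots$; in particular $\nu=2$, the unique level is $5/2$, and the leading coefficient of $q_1-q_2=-\frac{4}{5}z^{-5/2}+\cdots$ is the nonzero \emph{real constant} $-4/5$, independent of $t$ (so that the associated singular direction is a constant function $U\to\R$, as required).

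Then I would simply unwind the definition of singular direction. For an exponential part with leading term $c\,z^{-5/2}$ we have $k=5$, $\nu=2$, so $d$ is singular exactly when $c\,e^{-5id/2}\in\R^{\geq 0}$; since $\lvert e^{-5id/2}\rvert=1$ and $c$ is a nonzero real, this forces $e^{-5id/2}=\operatorname{sgn}(c)$. For $q_2-q_1$, whose leading coefficient is $+\tfrac45$, this reads $e^{-5id/2}=1$, i.e. $d\in\frac{4\pi}{5}\Z$ (the even multiples of $\frac{2\pi}{5}$); for $q_1-q_2$, with leading coefficient $-\tfrac45$, it reads $e^{-5id/2}=-1$, i.e. $d\in\frac{2\pi}{5}+\frac{4\pi}{5}\Z$ (the odd multiples of $\frac{2\pi}{5}$). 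Taking the union over the two differences $q_i-q_j$ gives precisely $\bigl\{\,\tfrac{2k\pi}{5}\mid k\in\Z\,\bigr\}$, which is the claim.

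The only genuine computation is the first step, the determination of the leading exponential behaviour; everything afterwards is routine. Accordingly the main point to verify carefully is that the leading term of $q_1-q_2$ does not degenerate and that $\pz u$ is truly subdominant — both of which are immediate here, since $q_1$ and $q_2$ have opposite leading terms $\mp\frac{2}{5}z^{-5/2}$ and $9/2<7$.
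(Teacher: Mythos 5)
Your proof is correct, but it takes a genuinely different route from the paper's. You compute the exponential parts directly from the equation: the Riccati substitution $u=\pz W/W$ and a dominant-balance computation give $u_{\pm}=\pm z^{-7/2}+\frac{7}{4}z^{-1}\pm\frac{t}{2}z^{-1/2}+\cdots$, hence $q_{1,2}=\mp\frac{2}{5}z^{-5/2}$ (in fact with no lower-order terms at all), and you then unwind the definition of singular directions from $\S\ref{3sec14}$, which only involves the leading coefficient $\mp\frac{4}{5}$ of $q_{i}-q_{j}$ --- real, nonzero and $t$-independent, so the conclusion follows. The paper does essentially no computation with the equation itself: it quotes Sibuya's result that the fundamental solution built from $zy_{k}(z^{-1},t)$ and $zy_{k+1}(z^{-1},t)$ admits an asymptotic representation exactly on the sector $\left]\frac{(2k-1)\pi}{5},\frac{(2k+3)\pi}{5}\right[$, notes that the only level is $\frac{5}{2}$, and reads off the singular directions from Proposition \ref{3propo3}: between two consecutive singular directions $d_{l}<d_{l+1}$ an asymptotic solution lives on $\left]d_{l}-\frac{\pi}{5},d_{l+1}+\frac{\pi}{5}\right[$, so maximal sectors of width $\frac{4\pi}{5}$ pin the singular directions at $\frac{2k\pi}{5}$. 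Your approach buys self-containedness (nothing from Sibuya's asymptotic theory is needed, and it makes visible \emph{why} the singular directions are constant in $t$), while the paper's buys brevity, since Sibuya's sectors are in any case used throughout $\S\ref{3sec32}$ and the exponential parts never need to be written down. One slip to fix: your ansatz should run over decreasing singularity, $u=\sum_{j\leq 7}c_{j}(t)z^{-j/2}$, not $\sum_{j\geq 7}$; your subsequent reasoning (corrections subdominant to $z^{-7/2}$, $\pz u\sim z^{-9/2}$ negligible against $u^{2}\sim z^{-7}$) clearly uses the correct indexing, so this is purely notational.
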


\begin{proof}
Let~$k\in \Z$. The matrix~$$\begin{pmatrix} zy_{k}(z^{-1},t) & zy_{k+1}(z^{-1},t) \\ 
\pz zy_{k}(z^{-1},t) & \pz zy_{k+1}(z^{-1},t) 
\end{pmatrix},$$ is a fundamental solution for the equation~$ \pz\begin{pmatrix}
 W(z,t)   \\ 
\pz W(z,t)
\end{pmatrix}=
\begin{pmatrix}
0 & 1 \\ 
\frac{1+tz^{3}}{z^{7}} &0
                              \end{pmatrix}\begin{pmatrix}
W(z,t)   \\ 
\pz W(z,t)
\end{pmatrix}.$
  The fundamental solution admits an asymptotic representation on the sectors:~$$\left\{ z\in \widetilde{\C}  \Big| \arg(z) \in \left]\frac{(2k-1)\pi}{5},\frac{(2k+3)\pi}{5}\right[ \right\} .$$
 The only level is~$\frac{5}{2}$. From Proposition \ref{3propo3} and the construction of the singular directions, we find that the singular directions are
$$ \frac{2k\pi}{5}\hbox{, with } k\in \Z.$$
\end{proof}
\pagebreak[3]
\begin{ex}\label{3ex2}
We want to compute the Stokes matrix in the direction~$\frac{8\pi}{5}$ for the fundamental solution:
$$\begin{pmatrix} zy_{1}(z^{-1},t) & zy_{2}(z^{-1},t) \\ 
\pz zy_{1}(z^{-1},t) & \pz zy_{2}(z^{-1},t) 
\end{pmatrix}.~$$
We recall the construction of the Stokes matrices. See~$\S \ref{3sec13}$ for the notations. Let~$\hat{H}(z,t)z^{L(t)}e\Big(Q(z,t)\Big)$ be a fundamental solution in the parameterized Hukuhara-Turrittin canonical form.
Let~$H^{-}(z,t)$ (resp.~$H^{+}(z,t)$) be the matrix such that
$$\begin{array}{ll}
H^{-}(z,t)e^{L(t)\log (z)}e^{Q(z,t)}&\left(\hbox{resp. }  H^{+}(z,t)e^{L(t)\log (z)}e^{Q(z,t)}\right)
\end{array}~$$
 is the germ of an asymptotic solution on the sector 
$$\begin{array}{ll}
\left\{ z\in \widetilde{\C}  \Big|\arg(z)\in \left]\pi,\frac{9\pi}{5}\right[\right\} &\left(\hbox{resp. }  \left\{z\in \widetilde{\C}  \Big|\arg(z)\in \left]\frac{7\pi}{5},\frac{11\pi}{5}\right[\right\}\right) 
\end{array}~$$
The Stokes matrix in the direction~$\frac{8\pi}{5}$ is the matrix that sends~$$H^{-}(z,t)e^{L(t)\log (z)}e^{Q(z,t)} \hbox{ to } H^{+}(z,t)e^{L(t)\log(z)}e^{Q(z,t)}.$$  
With the domain of definition of the asymptotic representation of~$z\widetilde{y}_{1}(z^{-1},t)$, we deduce from the definition of the Stokes operators that:
\begin{equation}\label{3eq8}
St_{\frac{8\pi}{5}}\Big(zy_{1}(z^{-1},t)\Big)=zy_{1}(z^{-1},t).
\end{equation}
We first write~$St_{\frac{8\pi}{5}}\Big(zy_{2}(z^{-1},t)\Big)$ in the basis~$$\Big(zy_{0}(z^{-1},t),zy_{1}(z^{-1},t)\Big).$$ There exist~$a(t)$ and~$b(t)\in \mathcal{M}_{U}$ such that:~$$St_{\frac{8\pi}{5}}\Big(zy_{2}(z^{-1},t)\Big)=a(t)zy_{0}(z^{-1},t)+b(t)zy_{1}(z^{-1},t).$$ 
By the construction of the asymptotic solutions with Laplace and Borel transforms (see Proposition~\ref{3propo1}), the asymptotic representation of~$St_{\frac{8\pi}{5}}\Big(zy_{2}(z^{-1},t)\Big)$ has to be bounded in some sector of~$\left]\frac{7\pi}{5},\frac{11\pi}{5}\right[$, which means that there exist~$\frac{7\pi}{5}<\a<\b<\frac{11\pi}{5}~$ and~$\e>0$ such that~$St_{\frac{8\pi}{5}}\Big(zy_{2}(z^{-1},t)\Big)$ is uniformly bounded for~$\arg(z)\in ]\a,\b[$ and~$z<|\e|$. Therefore,~$a(t)=0$ or~$b(t)=0$. Since the Stokes operators are automorphisms, we get~$b(t)=0$. Lemma \ref{3lem4} says that the parameterized differential Galois group is~$\mathrm{SL}_{2}(k)$. Therefore, because of Proposition \ref{3propo2} and Lemma \ref{3lem5}, the determinant of the matrix has to be~$1$. Thus by (\ref{3eq6}), we get that the Stokes matrix in direction~$\frac{8\pi}{5}$ is:~$$St_{\frac{8\pi}{5}}=\begin{pmatrix} 1 & -C_{0}(t)e^{\frac{3i\pi}{5}} \\ 
0 & 1 
\end{pmatrix}.$$ 
\end{ex}
\pagebreak[3]
\begin{lem}\label{3lem7}
Let~$C_{0}(t)$ be defined as above. Assume that~$C_{0}(t)$ is~$\pt$-finite over~$k$. Then, the~$\pt$-differential transcendence degree (see~$\S \ref{3sec21}$ for definition) of~$\widetilde{k(z)}$ over~$k(z)$ is at most ~$2$. 
\end{lem}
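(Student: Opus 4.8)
The plan is to combine the global density theorem (Theorem~\ref{3theo1}) with the transcendence-degree formula of Proposition~\ref{3propo9}. Write $G=Aut_{\pz}^{\dt}\left(\widetilde{k(z)}\big|k(z)\right)$, identified with a linear differential algebraic subgroup of $\mathrm{SL}_{2}(k)$: since the Wronskian $\det F$ is a $\pz$-constant it lies in $k$, hence is fixed by $G$, so the representation attached to $F$ lands in $\mathrm{SL}_{2}(k)$. By Proposition~\ref{3propo9} the $\pt$-differential transcendence degree of $\widetilde{k(z)}$ over $k(z)$ equals the $\pt$-differential dimension of $G$, so the whole statement reduces to proving that this dimension is at most $2$. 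By the global density theorem (Theorem~\ref{3theo1}), $G$ is the Kolchin closure of the subgroup $H$ generated by the monodromy $\hat{m}$, the exponential torus, and the Stokes operators $St_{2k\pi/5}$.

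Next I would pin down the entries of these generators. For the formal invariants of~(\ref{3eq5}) one checks that the exponential part $Q=\mathrm{Diag}\big(\pm\tfrac{2}{5}z^{-5/2}\big)$ and the exponent $L$ are both independent of $t$, so $\hat{m}$ and the elements of the exponential torus are represented by \emph{constant} matrices, and the entire $t$-dependence of $H$ is carried by the Stokes operators. By Example~\ref{3ex2} and relation~(\ref{3eq6}), each $St_{2k\pi/5}$ is unipotent with off-diagonal entry a $\C$-multiple of $C_{k}(t)=C_{0}\big(\omega^{k}t\big)$, where $\omega=e^{-6i\pi/5}$; since $\omega^{5}=1$ there are only finitely many distinct such functions $C_{0},\dots,C_{4}$.

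Now I would feed in the hypothesis. If $C_{0}$ is $\pt$-finite, then each $C_{k}(t)=C_{0}(\omega^{k}t)$ is $\pt$-finite as well, because composing a $\pt$-finite function with the scaling $t\mapsto\omega^{k}t$ preserves the property (it only rescales the coefficients of a linear differential equation). Hence the finitely many $C_{k}$ all lie in the solution space of a single linear differential operator, which is a finite-dimensional $\C$-vector space; equivalently the $\pt$-differential field $\mathcal{K}=\C\langle C_{0},\dots,C_{4}\rangle_{\pt}$ has finite transcendence degree over $\C$, and so $\pt$-differential transcendence degree $0$. Consequently every entry of every element of $H$ lies in $\mathcal{K}$, and I would conclude that $G=\overline{H}$ has $\pt$-differential dimension $0$: a generic point of $\overline{H}$ over the universal field $k$ may be taken inside $H$, so its entries lie in $\mathcal{K}$, whence $\dim_{\pt}G\le \mathrm{tr.deg}_{\pt}(\mathcal{K}/\C)=0\le 2$. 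Proposition~\ref{3propo9} then gives the claim. As a variant leading to the same bound, Kovacic's algorithm (used in the proof of Lemma~\ref{3lem4}) shows the unparameterized Galois group is $\mathrm{SL}_{2}$, so $G$ is Zariski dense, and Cassidy's classification in \cite{C72} then leaves only $G=\mathrm{SL}_{2}(k)$ or $G$ conjugate to $\mathrm{SL}_{2}$ of constants; the confinement of the entries to $\mathcal{K}$ excludes the first, and the second has $\pt$-differential dimension $0$.

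The delicate point, and the step I expect to be the real obstacle, is exactly the passage from \emph{``all generators have entries in the differential-transcendence-degree-$0$ field $\mathcal{K}$''} to \emph{``the Kolchin closure $G$ has $\pt$-differential dimension at most $2$''}. One cannot read this off the generators naively, since products of upper- and lower-triangular unipotents already generate $\mathrm{SL}_{2}$ as an algebraic group; the argument must be made at the level of the closure. The cleanest route is the genericity argument above, which relies on $k$ being universal so that a generic point of $G$ can be realized inside $H$; a more hands-on alternative would be to produce, from the common linear differential relation satisfied by the $C_{k}$, an explicit nonzero $\pt$-differential polynomial that vanishes on every generator, hence on $G$, but not on all of $\mathrm{SL}_{2}(k)$, thereby showing directly that $G$ is a proper subgroup.
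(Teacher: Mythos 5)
There is a genuine gap, and it sits exactly where you suspected: the passage from ``every element of the Kolchin-dense subgroup $H$ has entries in the field $\mathcal{K}=\C\langle C_{0},\dots,C_{4}\rangle_{\pt}$ of $\pt$-differential transcendence degree $0$'' to ``$\overline{H}$ has $\pt$-differential dimension $0$'' is false, and neither of your proposed repairs works. The genericity argument fails because a generic point of $\overline{H}$ need not lie in $H$ at all: the Kolchin closure of $\Z$ in $\mathbb{G}_{a}(k)$ is the field of $\pt$-constants of $k$, and no integer is a generic point of it; universality of $k$ guarantees that generic points exist in $k$, not that they can be found in $H$. Worse, the principle itself is false even for finitely generated groups: take $H=\left\langle \left(\begin{smallmatrix}1&t\\0&1\end{smallmatrix}\right),\left(\begin{smallmatrix}1&0\\t&1\end{smallmatrix}\right)\right\rangle\subset \mathrm{SL}_{2}(\C[t])$, whose entries lie in a field of transcendence degree $1$ (hence $\pt$-differential transcendence degree $0$) over $\C$. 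It is Zariski-dense in $\mathrm{SL}_{2}$, so by Cassidy's classification (\cite{C72}, Proposition 42) its Kolchin closure is either $\mathrm{SL}_{2}(k)$ or conjugate to the constant points of $\mathrm{SL}_{2}$; the latter is impossible since the product of the two generators has nonconstant trace $2+t^{2}$ and traces are conjugation-invariant. Hence $\overline{H}=\mathrm{SL}_{2}(k)$, of $\pt$-differential dimension $3$. This is the same phenomenon you flagged (products of opposite unipotents generate $\mathrm{SL}_{2}$), and your ``hands-on alternative'' does not escape it: a differential polynomial vanishing on the generators need not vanish on their products, since a vanishing locus is not a subgroup. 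The fact that your argument would yield differential dimension $0$, while the lemma only claims transcendence degree at most $2$, should itself have been a warning sign.

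The paper's proof avoids bounding the full group altogether. It applies the Galois correspondence to the \emph{single} cyclic group generated by $St_{\frac{8\pi}{5}}$, whose Kolchin closure equals $Gal_{\pz}^{\dt}\left(\widetilde{k(z)}\Big|F\right)$ for $F$ the fixed field of $St_{\frac{8\pi}{5}}$. Since $St_{\frac{8\pi}{5}}$ is unipotent with off-diagonal entry a constant multiple of $C_{0}(t)$, and $C_{0}$ is assumed $\pt$-finite, there is a nonzero \emph{linear} differential polynomial $P$ over $k$ with $P(C_{0})=0$; linearity is the decisive structural point, because it makes $\left\{\left(\begin{smallmatrix}1&\a\\0&1\end{smallmatrix}\right)\;:\;P(\a)=0\right\}$ a Kolchin-closed \emph{subgroup} of $\pt$-differential dimension $0$ containing the cyclic group and hence its closure — here, and only here, vanishing on the generator does propagate to the whole group, since the entries of its powers are $\Z$-multiples of $C_{0}$. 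Proposition \ref{3propo9}, applied over $M=F$ rather than over $k(z)$, then gives that $\widetilde{k(z)}$ has $\pt$-differential transcendence degree $0$ over $F$; and since $St_{\frac{8\pi}{5}}$ fixes $y_{1}(z^{-1},t)$ by (\ref{3eq8}), $F$ contains $k(z)\langle y_{1}(z^{-1},t)\rangle_{\pz,\pt}$, which is $\pt$-generated over $k(z)$ by the two elements $y_{1}(z^{-1},t)$ and $\pz\left(y_{1}(z^{-1},t)\right)$ (higher $\pz$-derivatives being eliminated via the equation). This is where the bound $2$ comes from; it is not a dimension bound on the Galois group over $k(z)$, and no such bound of the kind you attempt is available.
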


\begin{proof}
 The extension field~$\widetilde{k(z)}$ is generated over~$k(z)$ by~$ y_{1}(z^{-1},t)$ and~$y_{2}(z^{-1},t)$. 
By the parameterized differential Galois correspondence (see Theorem 9.5 in \cite {CS}), the Kolchin closure of the group generated by~$St_{\frac{8\pi}{5}}$ is equal to 
$$Gal_{\pz}^{\dt}\left(\widetilde{k(z)}\Big| F\right),$$
where~$F$ is the subfield of~$\widetilde{k(z)}$ fixed by~$St_{\frac{8\pi}{5}}$. Using (\ref{3eq8}), we deduce that~$F$ contains~$$k(z) \Big\langle y_{1}(z^{-1},t) \Big\rangle_{\pz,\pt}.$$
Because ~$C_{0}(t)$ satisfies a linear differential equation with coefficients in~$k$, there exists~$P$, a linear differential polynomial such that this group is of the form
$$ \left\{ \begin{pmatrix}
1 & \a \\ 
0 &1
\end{pmatrix}\hbox{, with } P(\a)=0=P(C_{0}(t)) \right\},$$ and has~$\pt$-differential dimension over~$k$ equal to~$0$. Therefore by Proposition \ref{3propo9}, the~$\pt$-differential transcendence degree of~$\widetilde{k(z)}$ over~$F$ is equal to~$0$. Because of the fact that~$F$ contains~$k(z) \Big\langle y_{1}(z^{-1},t) \Big\rangle_{\pz,\pt},$ there exists a differential polynomial~$Q$ with coefficients in~$k(z)$ such that:~$$Q\Big(y_{1}(z^{-1},t),y_{2}(z^{-1},t)\Big)=0=Q\Big(\pz ( y_{1}(z^{-1},t)),\pz ( y_{2}(z^{-1},t))\Big).$$ 
Therefore, the~$\pt$-differential transcendence degree of~$\widetilde{k(z)}$ over~$k(z)$ is at most~$2$, because~$\widetilde{k(z)}$ is generated as a~$\pt$-differential field over~$k(z)$ by~$$\Big\{y_{1}(z^{-1},t),y_{2}(z^{-1},t),\pz ( y_{1}(z^{-1},t)),\pz (y_{2}(z^{-1},t))\Big\}.$$
\end{proof}
\pagebreak[3]
\begin{theo}
The function~$C_{0}(t)$ is not~$\pt$-finite over~$k$.
\end{theo}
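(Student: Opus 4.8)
The plan is to argue by contradiction, assembling Lemma \ref{3lem7}, the Galois group computation of Lemma \ref{3lem4}, and the dimension dictionary of Proposition \ref{3propo9}. Suppose, for contradiction, that $C_{0}(t)$ is $\pt$-finite over $k$. Then Lemma \ref{3lem7} applies and yields that the $\pt$-differential transcendence degree of $\widetilde{k(z)}$ over $k(z)$ is at most $2$.

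On the other hand, by Proposition \ref{3propo9} this $\pt$-differential transcendence degree is exactly the $\pt$-differential dimension over $k$ of the Galois group $Gal_{\pz}^{\dt}\left(\widetilde{k(z)}\Big|k(z)\right)$. By Lemma \ref{3lem4} this group is $\mathrm{SL}_{2}(k)$, so the whole argument reduces to computing the $\pt$-differential dimension of $\mathrm{SL}_{2}(k)$.

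The one genuinely new ingredient is therefore to show that the $\pt$-differential dimension of $\mathrm{SL}_{2}(k)$ equals $3$. Since $k$ is $\pt$-differentially closed, the group $\mathrm{SL}_{2}(k)$ consists of all $(X_{i,j})\in \mathrm{GL}_{2}(k)$ satisfying the single \emph{algebraic} (non-differential) relation $X_{1,1}X_{2,2}-X_{1,2}X_{2,1}=1$. Hence the radical $\pt$-differential ideal defining it inside $k\{X_{i,j},1/\det(X_{i,j})\}_{\pt}$ is generated by $X_{1,1}X_{2,2}-X_{1,2}X_{2,1}-1$, and a generic point of the group has $X_{1,1},X_{1,2},X_{2,1}$ that are $\pt$-differentially independent over $k$, with $X_{2,2}=\left(1+X_{1,2}X_{2,1}\right)X_{1,1}^{-1}$ determined by them. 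Consequently the $\pt$-differential transcendence degree of the quotient ring, that is the $\pt$-differential dimension of $\mathrm{SL}_{2}(k)$, is $3$.

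Combining the two computations gives $3\leq 2$, which is absurd. Therefore $C_{0}(t)$ is not $\pt$-finite over $k$. I expect no serious obstacle here: the delicate work has already been isolated in Lemmas \ref{3lem4} and \ref{3lem7}, and the remaining step is the generic-point count for the differential dimension of $\mathrm{SL}_{2}(k)$, which only needs to be made precise by observing that a Zariski-closed subgroup of $\mathrm{GL}_{2}(k)$ is Kolchin closed and that its defining differential ideal is generated by its defining polynomial equations.
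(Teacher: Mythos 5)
Your proposal is correct and follows exactly the paper's argument: Lemma \ref{3lem4} and Proposition \ref{3propo9} give that the $\pt$-differential transcendence degree of $\widetilde{k(z)}$ over $k(z)$ is $3$, while Lemma \ref{3lem7} would force it to be at most $2$ if $C_{0}(t)$ were $\pt$-finite, a contradiction. The only difference is that you spell out the step the paper leaves implicit, namely that the $\pt$-differential dimension of $\mathrm{SL}_{2}(k)$ is $3$, and your generic-point count (three differentially independent entries with $X_{2,2}$ rationally determined by them) is a correct justification of that fact.
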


\begin{proof}
As we see from Lemma \ref{3lem4},~$$Gal_{\pz}^{\dt}\left(\widetilde{k(z)} \Big| k(z)\right)=\mathrm{SL}_{2}(k).$$
Therefore, by Proposition \ref{3propo9}, the~$\pt$-differential transcendence degree of~$\widetilde{k(z)}$ over~$k(z)$ is~$3$. If~$C_{0}(t)$ was~$\pt$-finite over~$k$, because of Lemma  \ref{3lem7}, the~$\pt$-differential transcendence degree of~$\widetilde{k(z)}$ over~$k(z)$ would be smaller than~$3$. Therefore,~$C_{0}(t)$ is not~$\pt$-finite over~$k$.
\end{proof}

\pagebreak[3]
\subsection{Which linear differential algebraic groups are parameterized differential Galois groups?}\label{3sec33}

As in~$\S \ref{3sec32}$, let~$k$ be a so-called universal~$(\dt)$-field of characteristic~$0$. Let us consider an equation~$\pz Y(z,t)=A(z,t)Y(z,t)$, with~$A(z,t)\in \mathrm{M}_{m}(k(z))$, let~$\widetilde{k(z)}\Big| k(z)$ be the parameterized Picard-Vessiot extension, and let~$G=Gal_{\pz}^{\dt}\left(\widetilde{k(z)}\Big| k(z)\right)\subset \mathrm{GL}_{m}(k)$ be the parameterized differential Galois group defined in~$\S \ref{3sec21}$. The following theorem of Seidenberg, applied with~$K_{0}=\Q$ and~$K_{1}$, the~$(\dt)$-field generated by~$\Q$ and the~$z$-coefficients of~$A(z,t)$, tells us that there exists a non empty polydisc~$U$ such that~$A(z,t)$ may be seen as an element of~$\mathrm{M}_{m}(\mathcal{M}_{U}(z))$. 
\pagebreak[3]
\begin{theo}[Seidenberg,\cite{Sei58,Sei69}]
Let~$\Q\subset K_{0} \subset K_{1}$ be finitely generated~$(\dt)$-differential extensions of~$\Q$, and assume that~$K_{0}$ consists of meromorphic functions on some domain~$U$ of~$\C^{n}$. Then,~$K_{1}$ is isomorphic to the field~$K_{1}^{*}$ of meromorphic functions on  a non empty polydisc~$U'\subset U$ such that~$K_{0}|_{U'}\subset K_{1}^{*}$, and the derivations in~$\dt$ can be identified with the derivations with respect to the coordinates on~$U'$.
\end{theo}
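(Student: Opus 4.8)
The plan is to construct the embedding generator by generator, using an analytic existence theorem for differential-polynomial systems to realize the differentially algebraic generators and a genericity argument for the differentially transcendental ones. Since $K_1$ is $\dt$-finitely generated over $K_0$, write $K_1=K_0\langle \eta_1,\dots,\eta_s\rangle_{\dt}$. First I would reduce to a single adjunction: it suffices to treat an extension of the form $K_0\langle \eta\rangle_{\dt}\big|K_0$, because after realizing one generator as a meromorphic function on a polydisc $U_1\subset U$, I may pass to a smaller polydisc on which the previously constructed functions remain meromorphic and on which the coefficients of the relations defining the next generator are holomorphic. This is exactly the shrinking-polydisc principle used repeatedly in the paper (cf. Remark \ref{3rem4}), and the nesting $U\supset U_1\supset\cdots\supset U_s$ terminates after $s$ steps, giving the final $U'$.

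For a single $\eta$ I distinguish two cases according to whether $\eta$ is $\dt$-differentially algebraic over $K_0$. In the transcendental case, I would exhibit a meromorphic function on some $U'\subset U$ that is $\dt$-differentially transcendental over $K_0|_{U'}$; adjoining it then yields a field $\dt$-isomorphic to $K_0\langle \eta\rangle_{\dt}$ via $\eta\mapsto f$. Existence of such an $f$ follows from a cardinality/genericity argument: the meromorphic functions on $U'$ form a field of transcendence degree the continuum over $\Q$, whereas the functions satisfying \emph{some} nonzero element of the countable ring $K_0\{y\}_{\dt}$ constitute a countable union of proper (``thin'') subsets, so a meromorphic function avoiding all of them exists.

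In the differentially algebraic case, let $\mathfrak{p}\subset K_0\{y\}_{\dt}$ be the prime $\dt$-differential ideal of relations satisfied by $\eta$. I would choose a characteristic set $A$ of $\mathfrak{p}$ with separant-initial product $H$, so that any zero $f$ of $A$ with $H(f)\neq 0$ is a generic zero of $\mathfrak{p}$ and hence generates a field $\dt$-isomorphic to $K_0\langle\eta\rangle_{\dt}$. The remaining task is purely analytic: solve the system $A=0,\ H\neq 0$ by meromorphic functions. Fix a point $p\in U$ at which the finitely many coefficients occurring in $A$ and $H$ are holomorphic; put $A$ into passive orthonomic (Riquier--Janet) form, so that the Cauchy--Kovalevskaya existence theorem applies, and prescribe holomorphic initial data on the relevant coordinate subspaces through $p$, in general position so that $H(p)\neq 0$. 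The resulting formal solution converges on a polydisc $U'\ni p$ and defines the required meromorphic function; the identification of the $\partial_{t_i}$ with the coordinate derivations on $U'$ is built into the construction, and $K_0|_{U'}\subset K_1^{*}$ holds automatically.

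The main obstacle is the passage, in the multivariate case $n>1$, from the algebraic characteristic-set description of $\mathfrak{p}$ to a PDE system in involutive form to which an analytic existence theorem genuinely applies, together with the verification that the constructed analytic solution is a \emph{generic} zero. The former needs the Riquier--Janet theory of passive systems (an integrability/compatibility check ensuring no hidden relations obstruct prescribing initial data); the latter is what upgrades the construction from a mere $\dt$-differential homomorphism onto a possibly proper specialization to the asserted isomorphism $K_1\cong K_1^{*}$, and it is secured precisely by keeping the separant $H$ nonzero at $p$ through the general-position choice of initial data.
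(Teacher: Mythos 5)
The paper offers no proof of this statement at all: it is quoted verbatim from Seidenberg's two papers, so your sketch can only be measured against Seidenberg's own argument, whose general architecture (reduction to a single adjunction, Riquier--Janet passive form, analytic existence, genericity of initial data) you do reproduce. But there is a genuine gap at the crux, in the differentially algebraic case. Your claim that ``any zero $f$ of $A$ with $H(f)\neq 0$ is a generic zero of $\mathfrak{p}$'' is false. Such an $f$ is indeed a zero of $\mathfrak{p}=[A]:H^{\infty}$ (Rosenfeld's lemma), but nothing forces its annihilator to be exactly $\mathfrak{p}$. Concretely, take $n=1$, $K_{0}=\Q(t)$ (meromorphic on $\C$), and let $\eta$ be a generic solution of $\partial_{t}y=1$, so that $\mathfrak{p}=[\partial_{t}y-1]$ has characteristic set $A=\{\partial_{t}y-1\}$ with initial and separant equal to $1$, hence $H\equiv 1\neq 0$. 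The meromorphic solution $f=t$ satisfies $A=0$ and $H\neq 0$ everywhere, yet $y-t$ vanishes at $f$ and does not lie in $\mathfrak{p}$: the specialization $\eta\mapsto f$ collapses $K_{0}\langle\eta\rangle_{\dt}$ onto $K_{0}$ itself. So your concluding sentence, that genericity ``is secured precisely by keeping the separant $H$ nonzero at $p$,'' is exactly where the theorem remains unproved. What is actually needed is to make the solution avoid $G(f)=0$ for \emph{every} $G\in K_{0}\{y\}_{\dt}\setminus\mathfrak{p}$ --- countably many conditions, since $K_{0}$ is countable --- and the substantive lemma, which your sketch silently assumes, is that the analytic solution family produced from Riquier initial data is rich enough that no such $G$ vanishes on all of it. That identification of the analytic general solution with the generic zero is the heart of Seidenberg's proof, not a routine general-position remark.

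A second, more easily repaired, flaw occurs in the differentially transcendental case: ``a countable union of proper subsets'' of an uncountable set need not be proper, so cardinality alone proves nothing. One must argue, e.g., by Baire category in the space of functions holomorphic on a sub-polydisc (each set $\{f : P(f)\equiv 0\}$ is closed under local uniform convergence, and one must still verify it has empty interior), or exhibit an explicit differentially transcendental function; the same countable-avoidance mechanism, applied to the initial data, is also what repairs the algebraic case above once the density lemma is supplied.
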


Let~$\widetilde{ \mathcal{M}_{U}(z)}\Big| \mathcal{M}_{U}(z)$ be the parameterized Picard-Vessiot extension defined in~$\S \ref{3sec24}$ and let~$Aut_{\pz}^{\dt}\left(\widetilde{ \mathcal{M}_{U}(z)}\Big| \mathcal{M}_{U}(z)\right)$ be the parameterized differential Galois group. Using Corollary \ref{3coro1}, we find that~$Aut_{\pz}^{\dt}\left(\widetilde{ \mathcal{M}_{U}(z)}\Big| \mathcal{M}_{U}(z)\right)$ contains a finitely generated subgroup that is Kolchin-dense in~$Aut_{\pz}^{\dt}\left(\widetilde{ \mathcal{M}_{U}(z)}\Big| \mathcal{M}_{U}(z)\right)$. With Proposition \ref{3propo2}, we find that~$G$ contains a finitely generated subgroup that is Kolchin-dense in~$G$. Combined with Corollary 5.2 in \cite{MS12}, which gives the sufficiency of the condition, this yields the following result:
\pagebreak[3]
\begin{theo}[Inverse problem]\label{3theo3}
Let~$G$ be a  linear differential algebraic subgroup of~$\mathrm{GL}_{m}(k)$. Then,~$G$ is the global parameterized differential Galois group of some equation having coefficients in~$k(z)$ if and only if~$G$ contains a finitely generated subgroup that is Kolchin-dense in~$G$.
\end{theo}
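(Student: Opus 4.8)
The plan is to establish the two implications separately: the sufficiency (``if'') is already available in the literature, so the substance lies in the necessity (``only if''), where $G$ is realized concretely and compared with the groups attached to a field of meromorphic functions. Assume first that $G=Gal_{\pz}^{\dt}\left(\widetilde{k(z)}\Big|k(z)\right)$ is the global parameterized differential Galois group of an equation $\pz Y(z,t)=A(z,t)Y(z,t)$ with $A(z,t)\in \mathrm{M}_{m}(k(z))$. The first step is to bring the coefficients into a field of functions: by the theorem of Seidenberg applied with $K_{0}=\Q$ and $K_{1}$ the $(\dt)$-field generated over $\Q$ by the $z$-coefficients of $A(z,t)$, there is a non empty polydisc $U$ on which $A(z,t)$ may be viewed as an element of $\mathrm{M}_{m}(\mathcal{M}_{U}(z))$. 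The second step invokes Corollary \ref{3coro1}, which furnishes a finitely generated subgroup $H$ that is Kolchin-dense in $Aut_{\pz}^{\dt}\left(\widetilde{\mathcal{M}_{U}(z)}\Big|\mathcal{M}_{U}(z)\right)$, with $H\subset \mathrm{GL}_{m}(\mathcal{M}_{U})$. The third step is Proposition \ref{3propo2}, applied with the differentially closed field $C=k$: it presents both $Aut_{\pz}^{\dt}\left(\widetilde{\mathcal{M}_{U}(z)}\Big|\mathcal{M}_{U}(z)\right)$ and $G$ as, respectively, the $\mathcal{M}_{U}$-rational and $k$-rational points of one and the same linear differential algebraic group, cut out by a fixed family $P_{1},\dots,P_{k}\in \mathcal{M}_{U}\{X_{i,j}\}_{\dt}$. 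In particular $H\subset G$.

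The step I expect to carry the real weight is the transfer of Kolchin-density across the inclusion $\mathcal{M}_{U}\subset k$: $H$ is dense in the $\mathcal{M}_{U}$-points, and one must check that it remains dense in the strictly larger group $G$ of $k$-points, even though $\mathcal{M}_{U}$ is \emph{not} differentially closed. I would argue as in the proof of Proposition \ref{3propo7}. Writing $I_{0}=\sqrt{(P_{1},\dots,P_{k})}$ for the defining radical $(\dt)$-ideal over $\mathcal{M}_{U}$, density of $H$ means precisely that every $Q\in \mathcal{M}_{U}\{X_{i,j}\}_{\dt}$ vanishing on $H$ already lies in $I_{0}$. Now fix an $\mathcal{M}_{U}$-basis $(d_{\lambda})$ of $k$ and expand an arbitrary $Q\in k\{X_{i,j}\}_{\dt}$ vanishing on $H$ as $Q=\sum_{\lambda}d_{\lambda}Q_{\lambda}$ with $Q_{\lambda}\in \mathcal{M}_{U}\{X_{i,j}\}_{\dt}$. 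Since every $h\in H$ has entries in $\mathcal{M}_{U}$, each value $Q_{\lambda}(h)$ lies in $\mathcal{M}_{U}$, so the $\mathcal{M}_{U}$-linear independence of the $d_{\lambda}$ forces $Q_{\lambda}(h)=0$ for all $\lambda$ and all $h\in H$; hence each $Q_{\lambda}\in I_{0}$ and $Q\in I_{0}\cdot k\{X_{i,j}\}_{\dt}$. Because the characteristic is $0$, this extended ideal is again radical (the same kind of base change used, via Corollary A.17 of \cite{VdPS}, in Proposition \ref{3propo7}), so it coincides with the defining ideal of $G$. Thus no $k$-differential polynomial vanishes on $H$ without vanishing on all of $G$, i.e.\ $H$ is a finitely generated Kolchin-dense subgroup of $G$, proving the necessity.

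For the sufficiency, I would invoke Corollary 5.2 in \cite{MS12}: if a linear differential algebraic subgroup $G$ of $\mathrm{GL}_{m}(k)$ contains a finitely generated Kolchin-dense subgroup, then $G$ occurs as the global parameterized differential Galois group of an equation with coefficients in $k(z)$. Combining this with the necessity direction established above yields the stated equivalence. Apart from the density-transfer argument, the proof is an assembly of Seidenberg's theorem, Corollary \ref{3coro1}, Proposition \ref{3propo2}, and the cited theorem of Mitschi--Singer; the one genuinely delicate point, as emphasized, is keeping Kolchin-density under the passage from the non-differentially-closed field $\mathcal{M}_{U}$ to the differentially closed field $k$.
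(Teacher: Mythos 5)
Your proposal is correct and follows essentially the same route as the paper's proof: Seidenberg's theorem to view the coefficients in $\mathcal{M}_{U}(z)$, Corollary \ref{3coro1} to produce a finitely generated Kolchin-dense subgroup of $Aut_{\pz}^{\dt}\left(\widetilde{\mathcal{M}_{U}(z)}\Big|\mathcal{M}_{U}(z)\right)$, Proposition \ref{3propo2} to identify the $\mathcal{M}_{U}$-points and $k$-points as cut out by the same differential polynomials, and Corollary 5.2 of \cite{MS12} for sufficiency. The only difference is that you make explicit the density-transfer step that the paper leaves implicit in its appeal to Proposition \ref{3propo2}; your identification of density of $H$ with membership of its vanishing ideal in $I_{0}$ is legitimate here because that density was established via the fixed-field criterion of Proposition \ref{3propo8}, whose proof delivers exactly that ideal-theoretic statement.
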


In the unparameterized case, any linear algebraic group defined over~$\C$ is a Galois group of a Picard-Vessiot extension (see \cite{Tre}). In fact, every linear algebraic group defined over~$\C$ contains a finitely generated subgroup that is Zariski-dense, which means that Theorem \ref{3theo3}  is a generalization of the result in \cite{Tre}. 
\par The situation is more complicated in the parameterized case. For example, the additive group:~$$ \left\{ \begin{pmatrix}
1 & \a \\ 
0 &1
\end{pmatrix}\hbox{, with } \a\in k \right\},$$ is not the global parameterized differential Galois group of any equation having coefficients in~$k(z)$ (see Section 7 of \cite{CS}). 
In the parameterized case with only regular singular poles, the problem was solved in \cite{MS12}, Corollary 5.2: they obtain the same necessary and sufficient condition on the group as in Theorem \ref{3theo3}. In \cite{S13}, the author characterizes the linear algebraic subgroups of~$\mathrm{GL}_{m}(k)$ that appear as the global parameterized differential Galois groups of some equation having coefficients in~$k(z)$: they are the groups such that the identity component has no quotient isomorphic to the additive group~$(k,+)$ or multiplicative group~$(k^{*},\times)$ of~$k$.

\appendix

\pagebreak[3]
\section{}
Let us keep the same notations as in~$\S \ref{3sec11}$ and~$\S \ref{3sec12}$. The goal of the appendix is to prove the following theorem. Notice that our proof closely follows the unparameterized case, see \cite{BJL,LR01}. See Remark \ref{3rem2} for a discussion of another similar result.
\pagebreak[3]
\begin{theo}\label{3theo4}
 Let~$\pz Y(z,t)=A(z,t)Y(z,t)$, with~$A(z,t)\in \mathrm{M}_{m}\Big(\hat{K}_{U}\Big)$. 
There exists a non empty polydisc~$U'\subset U$ such that we have a fundamental solution  of the form 
$$\hat{P}(z,t) z^{C(t)}e\Big(Q(z,t)\Big)\in \mathrm{GL}_{m}\left(\widehat{\textbf{K}_{U'}}\right),$$ 
with:
\begin{itemize}
 \item ~$\hat{P}(z,t)\in \mathrm{GL}_{m}\Big(\hat{K}_{U'}\Big)$.
 \item ~$C(t) \in \mathrm{M}_{m}(\mathcal{M}_{U'})$.
 \item ~$e\Big(Q(z,t)\Big)=\mathrm{Diag}\Big(e(q_{i}(z,t))\Big)$, with~$q_{i}(z,t) \in \textbf{E}_{U'}$.
\end{itemize}
Moreover, we may choose the same non empty polydisc~$U'$ as in Proposition \ref{3propo4}. Combined with Remark \ref{3rem2}, if~$A(z,t)\in \mathrm{M}_{m}\Big(\mathcal{O}_{U}(\{z\})\Big)$, this gives a sufficient condition on~$t_{0}\in U$, to have a fundamental solution~$\hat{P}(z,t) z^{C(t)}e\Big(Q(z,t)\Big)\in \mathrm{GL}_{m}\Big(\widehat{\textbf{K}_{U'}}\Big)$ in the same form as above with~$t_{0}\in U'$.
\end{theo}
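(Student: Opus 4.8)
The plan is to start from the ramified fundamental solution produced by Proposition \ref{3propo4}, namely $F(z,t)=\hat H(z,t)z^{L(t)}e\big(Q(z,t)\big)$ with $\hat H(z,t)\in\mathrm{GL}_m\big(\hat K_{U'}[z^{1/\nu}]\big)$, and to convert its ramified formal part $\hat H$ into an \emph{unramified} factor $\hat P\in\mathrm{GL}_m(\hat K_{U'})$, at the price of allowing $z^{C(t)}$ and $e\big(Q(z,t)\big)$ to stop commuting. I would work throughout in the ramification variable $s=z^{1/\nu}$ and let $\sigma$ denote the cyclic substitution $s\mapsto\zeta s$, $\zeta=e^{2i\pi/\nu}$, which generates the Galois action of the degree-$\nu$ cover. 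Since $A(z,t)$ has entries in $\hat K_U$, it is $\sigma$-invariant, so the collection of exponentials $\{q_i(z,t)\}$ is stable under $\sigma$ by the uniqueness statement of Remark \ref{3rem5}; this is the structural fact that makes the unramification possible.

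First I would decompose the diagonal $Q$ into $\sigma$-orbits: conjugating $F$ by a constant permutation matrix, arrange $e(Q)$ into blocks, each block being one orbit $\{q(s,t),q(\zeta s,t),\dots,q(\zeta^{d-1}s,t)\}$ of some size $d\mid\nu$. Because of the standing reductions of $\S\ref{3sec14}$ (the levels are independent of $t$ and the degrees in $z^{-1/\nu}$ of the $q_i-q_j$ are controlled), and after a further shrinking of $U'$ justified by Remark \ref{3rem4}, this orbit partition and the ramification order $d$ of each block are \emph{independent of} $t$ on $U'$; hence the permutation matrices and the roots-of-unity data entering the construction are genuinely constant in $t$.

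The core step is the explicit unramification inside each orbit block, following the unparameterized construction of \cite{BJL,LR01}. Within a block of size $d$ the $d$ columns of $\hat H$ attached to the conjugate exponentials are interchanged by $\sigma$; combining them on the right by a fixed Vandermonde matrix in the powers of $\zeta$ and factoring out a diagonal of ramification monomials $z^{j/\nu}$ turns the block into a genuine power series in $z=s^{\nu}$. Since the residue-class components of any element of $\hat K_{U'}[z^{1/\nu}]$ already lie in $\hat K_{U'}$, the resulting $\hat P(z,t)$ has entries in $\hat K_{U'}$ and is invertible. The factored-out monomials together with the cyclic formal monodromy of each block reassemble into a single factor $z^{C(t)}$, where $C(t)$ is the matrix $\frac{1}{\nu}L(t)$ corrected by a constant term that encodes, blockwise, the logarithm of the cyclic permutation; thus $C(t)\in\mathrm{M}_m(\mathcal M_{U'})$ and $z^{C(t)}\in\mathrm{GL}_m(K_{F,U'})$ by Lemma \ref{3lem8}. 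Because this constant term cyclically permutes the distinct diagonal entries of $e\big(Q(z,t)\big)$, the factors $z^{C(t)}$ and $e\big(Q(z,t)\big)$ no longer commute, exactly as announced.

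The main obstacle I anticipate is not the algebra of a single block but the requirement that \emph{all} the constant data (orbit partition, ramification orders, permutation and Vandermonde matrices, and the constant part of $C(t)$) be simultaneously $t$-independent on one fixed polydisc, while the remaining, genuinely $t$-dependent ingredient $C(t)$ stays meromorphic; this is where Remark \ref{3rem4} and the Jordan-decomposition argument of Lemma \ref{3lem8} are used to pass to a convenient $U'$, which can be taken to be the polydisc already furnished by Proposition \ref{3propo4}. Finally, for the case $A(z,t)\in\mathrm{M}_m\big(\mathcal O_U(\{z\})\big)$, the analyticity in $t$ of the $z$-coefficients of $\hat P$ and the sufficient condition on $t_0\in U$ follow by combining the above with the analytic dependence supplied by Remark \ref{3rem2}.
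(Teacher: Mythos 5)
Your substitution $\sigma:z^{1/\nu}\mapsto e^{2i\pi/\nu}z^{1/\nu}$ is exactly the formal monodromy $\hat{m}$ of Definition \ref{3defi1}, so your plan is, like the paper's, a descent along $\hat{m}$; but you implement it through the explicit orbit/Vandermonde construction of the unramified canonical form, whereas the paper's actual proof is shorter and avoids all block combinatorics: it applies Lemma \ref{3lem6} to the pair $F$, $\hat{m}(F)$ to obtain a single connection matrix $\hat{M}(t)\in\mathrm{GL}_{m}(\mathcal{M}_{U'})$ with $\hat{m}\left(\hat{H}z^{L(t)}\right)=\hat{H}z^{L(t)}\hat{M}(t)$, uses Lemma \ref{3lem1} to show that every eigenvalue of $\hat{M}(t)$ is of the form $e^{\beta(t)}$ with $\beta(t)\in\mathcal{M}_{U'}$, hence produces $C(t)\in\mathrm{M}_{m}(\mathcal{M}_{U'})$ with $e^{2i\pi C(t)}=\hat{M}(t)$, and then observes that $\hat{P}:=\hat{H}z^{L(t)}z^{-C(t)}$ is fixed by $\hat{m}$, so it lies in $\mathrm{GL}_{m}\left(\hat{K}_{U'}\right)$ by Proposition \ref{3propo5} --- with no further shrinking of the polydisc of Proposition \ref{3propo4}.

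The genuine gap in your sketch is the assertion that within an orbit block ``the $d$ columns of $\hat{H}$ attached to the conjugate exponentials are interchanged by $\sigma$''. This is not automatic: since $A$ is $\sigma$-invariant, $\sigma(F)$ is another fundamental solution, so $\sigma(F)=F\,M(t)$ where $M(t)$ respects the orbit-block structure, but inside a block $M(t)$ is an arbitrary invertible matrix, not the cyclic shift. To run the Vandermonde trick you must first prove that $M(t)$ has entries in $\mathcal{M}_{U'}$ --- the analogue of Lemma \ref{3lem6}, which rests on the control of the $\pz$-constants via Proposition \ref{3propo5}, and which you never invoke --- and then normalize the basis inside each block so that $M(t)$ becomes the standard cyclic matrix times roots of unity. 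That normalization amounts to diagonalizing $M(t)$ (or extracting $d$-th roots) over $\mathcal{M}_{U'}$, which is not algebraically closed; this is precisely where the eigenvalue analysis of Lemma \ref{3lem1} is needed, and where Remark \ref{3rem4}-type shrinking can be forced, in tension with the clause that the same $U'$ as in Proposition \ref{3propo4} works. Two smaller inaccuracies: your appeal to the standing reductions of $\S\,$\ref{3sec14} is misplaced, since those concern the analytic setting $\mathcal{O}_{U}(\{z\})$ and singular directions while Theorem \ref{3theo4} is purely formal (the $t$-independence of the orbit partition is in any case automatic, as $\sigma q_{i}=q_{j}$ is an identity in $\textbf{E}_{U'}$); and $C(t)$ is not ``$\frac{1}{\nu}L(t)$ plus a constant'' in general --- blockwise it mixes $L(t)$, the diagonal shifts $j/\nu$, and the logarithm of the cyclic permutation.
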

Remark that contrary to Proposition \ref{3propo4},~$\hat{H}(z,t)\in \mathrm{GL}_{m}\Big(\hat{K}_{U'}\Big)$. On the other hand, we lose the commutation between~$ z^{C(t)}$ and~$e\Big(Q(z,t)\Big)$. Before giving the proof of the theorem, we state and prove two lemmas.
\pagebreak[3]
\begin{lem}\label{3lem1}
Let~$U'\subset U$ be a non empty polydisc. Let~$a(t)\in \mathcal{M}_{U'}$ and~$\a(z,t) \in \hat{K}_{F,U'}$ such that ~$\hat{m}(\a(z,t))=a(t)\a(z,t)$. Then there exist~$\hat{h}(z,t) \in \hat{K}_{U'}$ and~$b(t)\in \mathcal{M}_{U'}$ such that~$\a(z,t)=\hat{h}(z,t)z^{b(t)}$.
\end{lem}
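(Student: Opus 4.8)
The plan is to reduce the statement to a first order scalar equation over $\hat{K}_{U'}$ and to solve it explicitly. Assume $\alpha(z,t)\neq 0$ (otherwise take $\hat{h}=0$) and set $u:=\pz\alpha/\alpha\in\hat{K}_{F,U'}$. The first observation is that $u$ is fixed by the formal monodromy: since $\hat{m}$ commutes with $\pz$, since $\hat{m}(\alpha)=a(t)\alpha$, and since $a(t)\in\mathcal{M}_{U'}$ is a $\pz$-constant fixed by $\hat{m}$, one computes $\hat{m}(u)=\pz\big(a(t)\alpha\big)/\big(a(t)\alpha\big)=\pz\alpha/\alpha=u$. Moreover every element of $\hat{K}_{F,U'}$ is fixed by the exponential torus, because each $\tau_{\a}$ acts as the identity on $\hat{K}_{F,U'}$. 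Hence $u$ is fixed by both the monodromy and the exponential torus, so by Proposition 3.25 of \cite{VdPS} (exactly as used in the proof of Proposition \ref{3propo5}) one gets $u\in\widehat{\textbf{K}_{U'}}\cap\overline{\mathcal{M}_{U'}}[[z]][z^{-1}]=\hat{K}_{U'}$.

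Next I would integrate $u$ formally inside the large field $\widehat{\textbf{K}_{U'}}$. Writing $u=\sum_{i\geq -M}u_{i}(t)z^{i}$ with $u_{i}(t)\in\mathcal{M}_{U'}$, set $b(t):=u_{-1}(t)\in\mathcal{M}_{U'}$, let $q(z,t):=\sum_{-M\leq i\leq -2}\frac{u_{i}(t)}{i+1}z^{i+1}\in\textbf{E}_{U'}$ collect the part of higher polar order, and let $\hat{h}(z,t):=\exp\big(\sum_{i\geq 0}\frac{u_{i}(t)}{i+1}z^{i+1}\big)\in\mathcal{M}_{U'}[[z]]^{\times}\subset\hat{K}_{U'}^{\times}$. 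Then $w_{0}:=e\big(q(z,t)\big)z^{b(t)}\hat{h}(z,t)\in\widehat{\textbf{K}_{U'}}$ satisfies $\pz w_{0}/w_{0}=\pz q+b(t)z^{-1}+\pz\hat{h}/\hat{h}=u$, that is $\pz w_{0}=uw_{0}$. Since $\alpha$ and $w_{0}$ are two nonzero solutions of the first order equation $\pz w=uw$, their quotient is a $\pz$-constant of $\widehat{\textbf{K}_{U'}}$, hence lies in $\mathcal{M}_{U'}$; thus $\alpha=c(t)\,e\big(q(z,t)\big)z^{b(t)}\hat{h}(z,t)$ for some $c(t)\in\mathcal{M}_{U'}$.

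The crucial point, and the only place where the structure of $\widehat{\textbf{K}_{U'}}$ is really used, is to show that $q=0$, i.e. that $u$ has at most a simple pole in $z$. This holds because $\alpha\in\hat{K}_{F,U'}$ carries no exponential factor: by the construction of $\widehat{\textbf{K}_{U'}}$ the elements $e(q)$, $q\in\textbf{E}_{U'}$, are linearly independent over $\hat{K}_{F,U'}$, so $e(q)\notin\hat{K}_{F,U'}$ whenever $q\neq 0$; the identity $\alpha=\big(c(t)z^{b(t)}\hat{h}(z,t)\big)e\big(q(z,t)\big)$ with $c,\hat{h}\neq 0$ then forces $q=0$. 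Consequently $M\leq 1$, and $\alpha=c(t)\hat{h}(z,t)z^{b(t)}$ with $c(t)\hat{h}(z,t)\in\hat{K}_{U'}$; renaming $c\hat{h}$ as $\hat{h}$ yields $\alpha=\hat{h}(z,t)z^{b(t)}$, as claimed. I expect the main obstacle to be precisely this transcendence input (the freeness of the $e(q)$ over $\hat{K}_{F,U'}$); the remaining ingredients — the monodromy-invariance of the logarithmic derivative, the one-dimensionality of the solution space over the $\pz$-constants $\mathcal{M}_{U'}$, and the termwise integration of a scalar series — are elementary.
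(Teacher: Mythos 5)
Your proof is correct, but it takes a genuinely different route from the paper's. The paper stays entirely inside the Fuchsian field: it writes~$\alpha=\alpha_{1}/\alpha_{2}$ as a reduced fraction in the free polynomial ring~$\hat{K}_{U'}\left[\log,z^{b_{1}(t)},\dots,z^{b_{k}(t)}\right]$ and tracks the action of~$\hat{m}$ on numerator and denominator --- first excluding~$\log$, then normalizing~$\alpha_{2}$ to contain a monomial~$z^{n_{1}b_{1}(t)+\dots+n_{k}b_{k}(t)}$ with coefficient~$1$ and comparing~$\hat{m}$-eigenvalues to force~$\alpha_{2}\in\hat{K}_{U'}$ and~$\alpha_{1}$ to reduce to a single monomial. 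You instead pass to the logarithmic derivative~$u=\pz\alpha/\alpha$, identify~$\hat{K}_{U'}$ as the fixed field of the monodromy and the exponential torus (the same use of Proposition 3.25 of van der Put--Singer as in Proposition \ref{3propo5}; note there is no circularity, since Proposition \ref{3propo5} does not depend on Lemma \ref{3lem1}), integrate~$u$ termwise inside~$\widehat{\textbf{K}_{U'}}$, use that the~$\pz$-constants of~$\widehat{\textbf{K}_{U'}}$ are~$\mathcal{M}_{U'}$, and kill the would-be irregular part~$e(q)$ by the freeness of the exponentials over the Fuchsian field --- the same fact the paper itself invokes in Lemma \ref{3lem6} via~$\hat{K}_{F,U'}\cap\mathcal{M}_{U'}\Big(\left(e(q)\right)_{q\in\textbf{E}_{U'}}\Big)=\mathcal{M}_{U'}$, so it is available to you. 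Your route imports more machinery but is more structural and essentially computation-free: it constructs~$\hat{h}$ explicitly as the exponential of a primitive and recovers~$e^{2i\pi b(t)}=a(t)$ automatically (which the proof of Theorem \ref{3theo4} needs anyway), whereas the paper's argument is self-contained, using nothing beyond the defining relations of the symbols. One point you should make explicit: the ``linear independence of the~$e(q)$ over~$\hat{K}_{F,U'}$'' holds because~$R_{U'}$ is, by construction, free as a module over its Fuchsian subring with basis~$\left(e(q)\right)_{q\in\textbf{E}_{U'}}$, and a relation~$g\,e(q)=f$ in the fraction field clears denominators to a nontrivial linear relation~$g\,e(q)-f\,e(0)=0$ in~$R_{U'}$; as written, ``by the construction'' is slightly elliptical, though the paper allows itself the same shorthand.
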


\begin{proof}
 Let~$\a(z,t)\in  \hat{K}_{F,U'}$ such that~$\hat{m}(\a(z,t))=a(t)\a(z,t)$. The element~$\a(z,t)$ belongs to the fraction field of a free polynomial ring:
$$P=\hat{K}_{U'}\left[\log,z^{b_{1}(t)},\dots,z^{b_{k}(t)}\right].$$
Write~$\a(z,t)=\a_{1}(z,t)/\a_{2}(z,t)$ with~$\gcd$ in~$P$ equals to 1.
Using the relations in~$\hat{K}_{F,U'}$, and applying~$\hat{m}$ to~$\a_{1}(z,t)/\a_{2}(z,t)$, we find that~$\a(z,t)$ contains no terms in~$\log$. 
One can normalize~$\a_{2}(z,t)$ such that it contains a term of the form~$z^{ n_{1}b_{1}(t)+\dots+n_{k}b_{k}(t)}$ with coefficient~$1$ and~$n_{i}\in \Z$. Using~${\hat{m}(\a_{1}(z,t)/\a_{2}(z,t))=a(t)\a_{1}(z,t)/\a_{2}(z,t)}$, we find that~${\hat{m}(\a_{2}(z,t))=e^{2i\pi\left(n_{1}b_{1}(t)+\dots+n_{k}b_{k}(t)\right)}\a_{2}(z,t)}$ and~$\hat{m}(\a_{1}(z,t))$ erquals to $a(t)e^{2i\pi\left(n_{1}b_{1}(t)+\dots+n_{1}b_{1}(t)\right)}\a_{1}(z,t)$,
which is impossible, unless~$$e^{2i\pi\left(n_{1}b_{1}(t)+\dots+n_{k}b_{k}(t)\right)}=1.$$
This means that~$\a_{2}(z,t)\in \hat{K}_{U'}$ and we may assume~$\a_{2}(z,t)=1$. Applying~$\hat{m}$ to~$\a_{1}(z,t)$, one finds that~$\a_{1}(z,t)$ contains at most one term, that is~$\a(z,t)=\hat{h}(z,t)z^{b(t)}$, with ~$\hat{h}(z,t)\in \hat{K}_{U'}$ and~$b(t)\in \mathcal{M}_{U'}$ satisfying~$e^{2i\pi b(t)}=a(t)$.
\end{proof}
\pagebreak[3]
\begin{lem}\label{3lem6}
Let~$U'\subset U$ be a non empty polydisc. Let~$A(z,t)\in \mathrm{M}_{m}\Big(\hat{K}_{U'}\Big)$. Let~$F_{1}(z,t)e\Big(Q_{1}(z,t)\Big)$ and  ~$F_{2}(z,t)e\Big(Q_{2}(z,t)\Big)$ be two fundamental solutions of:
$$\pz Y(z,t)=A(z,t)Y(z,t),$$
satisfying, for~$i\in \{ 1;2 \}$,~$F_{i}(z,t)\in \mathrm{GL}_{m}\Big(\hat{K}_{F,U'}\Big)$ and~$Q_{i}(z,t)=\mathrm{Diag}[q_{i,j}(z,t)]$ such that~$q_{i,j}(z,t)$ belongs to~$\textbf{E}_{U'}$.
Then,~$F_{1}(z,t)^{-1}F_{2}(z,t)\in \mathrm{GL}_{m}(\mathcal{M}_{U'})$.
\end{lem}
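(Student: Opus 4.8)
The plan is to use the classical fact that the ``quotient'' of two fundamental solutions of the same linear system is a matrix of $\pz$-constants, and then to exploit the purely multiplicative structure of the symbols $e(q(z,t))$ to descend from $\widehat{\textbf{K}_{U'}}$ down to $\mathcal{M}_{U'}$.

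First I would set $Y_{i}=F_{i}(z,t)e\big(Q_{i}(z,t)\big)\in \mathrm{GL}_{m}\big(\widehat{\textbf{K}_{U'}}\big)$ for $i\in\{1,2\}$. Since both $Y_{1}$ and $Y_{2}$ solve $\pz Y=A(z,t)Y$ and $Y_{1}$ is invertible, a direct computation using $\pz(Y_{1}^{-1})=-Y_{1}^{-1}(\pz Y_{1})Y_{1}^{-1}$ gives $\pz(Y_{1}^{-1}Y_{2})=-Y_{1}^{-1}AY_{2}+Y_{1}^{-1}AY_{2}=0$. Hence every entry of $Y_{1}^{-1}Y_{2}$ is a $\pz$-constant of $\widehat{\textbf{K}_{U'}}$. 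Because the field of constants of $\widehat{\textbf{K}_{U'}}$ with respect to $\pz$ equals $\mathcal{M}_{U'}$ (see $\S\ref{3sec11}$), this produces a matrix $C:=Y_{1}^{-1}Y_{2}\in \mathrm{GL}_{m}(\mathcal{M}_{U'})$, where invertibility is automatic since $C$ is a product of invertible matrices over the field $\mathcal{M}_{U'}$.

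Next, writing $M:=F_{1}^{-1}F_{2}\in \mathrm{GL}_{m}(\hat{K}_{F,U'})$ (a product of invertible matrices over the field $\hat{K}_{F,U'}$), the identity $Y_{1}^{-1}Y_{2}=C$ reads $e(-Q_{1})\,M\,e(Q_{2})=C$. As the $e(\pm Q_{i})$ are diagonal, comparing entries yields $M_{i,j}=C_{i,j}\,e\big(q_{1,i}(z,t)-q_{2,j}(z,t)\big)$ for all $i,j$. Now $M_{i,j}\in \hat{K}_{F,U'}$, a field containing no symbol $e(q)$ with $0\neq q\in \textbf{E}_{U'}$: by the construction of $\widehat{\textbf{K}_{U'}}$ in $\S\ref{3sec11}$ (and Proposition 3.22 in \cite{VdPS}), the $e(q)$, $q\in \textbf{E}_{U'}$, are linearly independent over $\hat{K}_{F,U'}$, so $e(q)\in \hat{K}_{F,U'}$ forces $q=0$. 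Thus for each pair $(i,j)$ with $C_{i,j}\neq 0$ one must have $q_{1,i}=q_{2,j}$, whence $e(q_{1,i}-q_{2,j})=1$ and $M_{i,j}=C_{i,j}\in \mathcal{M}_{U'}$; while if $C_{i,j}=0$ then $M_{i,j}=0\in \mathcal{M}_{U'}$. In all cases $M_{i,j}\in \mathcal{M}_{U'}$, so $F_{1}^{-1}F_{2}=M\in \mathrm{M}_{m}(\mathcal{M}_{U'})$, and since $\det M\neq 0$ lies in the field $\mathcal{M}_{U'}$, in fact $F_{1}^{-1}F_{2}\in \mathrm{GL}_{m}(\mathcal{M}_{U'})$.

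The only delicate point is the linear independence of the exponential symbols over $\hat{K}_{F,U'}$; everything else is formal. I expect the main obstacle to be invoking this independence cleanly from the construction of the ring $R_{U'}$ in $\S\ref{3sec11}$, namely that adjoining the $e(q)$ genuinely produces new transcendental elements and never collapses a nonzero $q$ into $\hat{K}_{F,U'}$. This is precisely the structural input guaranteeing that the $\pz$-constants of $\widehat{\textbf{K}_{U'}}$ are exactly $\mathcal{M}_{U'}$, so both the constant-matrix step and the exponential-independence step rest on the same fact.
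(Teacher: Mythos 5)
Your proof is correct and takes essentially the same route as the paper's: first deduce that $\big(F_{1}e(Q_{1})\big)^{-1}F_{2}e(Q_{2})=C(t)\in\mathrm{GL}_{m}(\mathcal{M}_{U'})$ from $\pz$-constancy and the fact that the field of $\pz$-constants of $\widehat{\textbf{K}_{U'}}$ is $\mathcal{M}_{U'}$, then compare entries in $e(Q_{1})C(t)e(-Q_{2})=F_{1}^{-1}F_{2}\in\mathrm{GL}_{m}\big(\hat{K}_{F,U'}\big)$. Your explicit appeal to the independence of the symbols $e(q)$ over $\hat{K}_{F,U'}$ (forcing $q_{1,i}=q_{2,j}$ whenever $C_{i,j}\neq 0$) simply spells out the paper's one-line invocation of $\hat{K}_{F,U'}\cap\mathcal{M}_{U'}\Big(\big(e(q(z,t))\big)_{q(z,t)\in\textbf{E}_{U'}}\Big)=\mathcal{M}_{U'}$.
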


\begin{proof}
 A straightforward computation shows that:
$$ \pz \Bigg(\Big(F_{1}(z,t)e(Q_{1}(z,t))\Big)^{-1}F_{2}(z,t)e(Q_{2}(z,t))\Bigg)=0.$$
By Proposition \ref{3propo5},~$$\Big(F_{1}(z,t)e(Q_{1}(z,t))\Big)^{-1}F_{2}(z,t)e(Q_{2}(z,t))=C(t)\in \mathrm{GL}_{m}(\mathcal{M}_{U'}).$$
Hence, we have the equality:
$$e(Q_{1}(z,t))C(t)e(-Q_{2}(z,t))=F_{1}(z,t)^{-1}F_{2}(z,t).~$$
The entries of~$e(Q_{1}(z,t))C(t)e(-Q_{2}(z,t))$ are of the form~$C_{i,j}(t)e(q_{1,j}(z,t)-q_{2,j}(z,t))$, with~$C_{i,j}(t)$ that belongs to~$\mathcal{M}_{U'}$,
and the matrix~$F_{1}(z,t)^{-1}F_{2}(z,t)$ belongs to~$\mathrm{GL}_{m}\Big(\hat{K}_{F,U'}\Big)$. By construction,~$$\hat{K}_{F,U'}\cap \mathcal{M}_{U'}\Big(\left(e(q(z,t))\right)_{q(z,t) \in \textbf{E}_{U'}}\Big)=\mathcal{M}_{U'},$$ and we obtain that:
$$F_{1}(z,t)^{-1}F_{2}(z,t)\in \mathrm{GL}_{m}(\mathcal{M}_{U'}).$$
\end{proof}

\begin{proof}[Proof of Theorem \ref{3theo4}]
By Proposition \ref{3propo4}, we know that we have a fundamental solution of the parameterized linear differential equation~$\pz Y(z,t)=A(z,t)Y(z,t)$ of the form: 
$$\hat{H}(z,t) z^{L(t)}e\Big(Q(z,t)\Big),$$ with~$\hat{H}(z,t)\in \mathrm{GL}_{m}\Big(\hat{K}_{U'}[z^{1/ \nu}]\Big)$ and~$\nu\in \N^{*}$. 
From Definition \ref{3defi1},~$\hat{m}$ commutes with the derivation~$\pz$, and therefore~$\hat{m}\left(\hat{H}(z,t) z^{L(t)}e(Q(z,t))\right)$ is another  fundamental solution. From the construction of~$\hat{m}$, we deduce that~$\hat{m}\left(\hat{H}(z,t) z^{L(t)}\right)\in \mathrm{GL}_{m}\Big(\hat{K}_{F,U'}\Big)$, and we can apply Lemma \ref{3lem6} to 
 deduce the existence of~$\hat{M}(t)\in \mathrm{GL}_{m}(\mathcal{M}_{U'})$ such that: 
\begin{equation}\label{3eq1}
\hat{m}\left(\hat{H}(z,t) z^{L(t)}\right)=\hat{H}(z,t) z^{L(t)}\hat{M}(t).
\end{equation}
 Let us consider~$\hat{M}(t)=D(t)U(t)$, with~$D(t)$ diagonalizable and~$U(t)$ unipotent such that~${D(t)U(t)=U(t)D(t)}$ is the multiplicative analogue of the Jordan decomposition of~$\hat{M}(t)$. If~$a (t)$ is an eigenvalue of~$D(t)$ (and therefore an eigenvalue of~$\hat{M}(t)$), then
there exists~$0\neq \a(z,t) \in \hat{K}_{F,U'}$ such that~${\hat{m}(\a(z,t))=a (t) \a (z,t)}$, because of the relation (\ref{3eq1}). By Lemma \ref{3lem1}, ~$\a (z,t)$ is equal to~$\hat{h}(z,t)z^{b(t)}$, with~${b(t) \in \mathcal{M}_{U'}}$ satisfying~${e^{2i\pi b (t)}=a (t)}$ and~$\hat{h}(z,t)\in \hat{K}_{U'}$. This implies that~$a(t)$ and all the eigenvalues of~$D(t)$ are of the form~$e^{\b (t)}$, with~${\b (t) \in \mathcal{M}_{U'}}$. So we have proved the existence of~$C(t)\in \mathrm{M}_{m}(\mathcal{M}_{U'})$ such that~$e^{2i\pi C(t)}=\hat{M}(t)$. Let:~$$\hat{P}(z,t)=\hat{H}(z,t)z^{L(t)}z^{-C(t)}.$$ A computation shows that the monodromy of~$z^{C(t)}$ is:
$$\hat{m}\left(z^{C(t)}\right)=e^{2i\pi C(t)}z^{C(t)}=z^{C(t)}e^{2i\pi C(t)}.$$ The matrix~$\hat{P}(z,t)$ is fixed by the monodromy and therefore belongs to~$\mathrm{GL}_{m}\Big(\hat{K}_{U'}\Big)$, because of Proposition \ref{3propo5}. Finally,~$$\hat{P}(z,t)z^{C(t)}e\Big(Q(z,t)\Big)$$ is a fundamental solution of the parameterized linear differential equation~${\partial_{z}Y(z,t)=A(z,t)Y(z,t)}$ that has the required property.
\end{proof}
\fussy 
\pagebreak[3]
\nocite{A,AMW}
\bibliographystyle{alpha}
\bibliography{C:/Users/thomas.dreyfus/Dropbox/Maths/biblio}

\def\cprime{$'$} \def\polhk#1{\setbox0=\hbox{#1}{\ooalign{\hidewidth
  \lower1.5ex\hbox{`}\hidewidth\crcr\unhbox0}}}
\begin{thebibliography}{AHMRW11}

\bibitem[AH09]{A}
Primitivo~B Acosta-Humanez.
\newblock {\em Galoisian Approach to Supersymmetric Quantum Mechanics, Phd
  Dissertation}.
\newblock PhD thesis, Universitat Politecnica de Catalunya, 2009.

\bibitem[AHMRW11]{AMW}
Primitivo~B. Acosta-Hum{\'a}nez, Juan~J. Morales-Ruiz, and Jacques-Arthur Weil.
\newblock Galoisian approach to integrability of {S}chr\"odinger equation.
\newblock {\em Rep. Math. Phys.}, 67(3):305--374, 2011.

\bibitem[Arr12]{Ar}
Carlos~E. Arreche.
\newblock Computing the differential galois group of a one-parameter family of
  second order linear differential equations. preprint available on arkiv.
\newblock 2012.

\bibitem[Bal94]{B}
Werner Balser.
\newblock {\em From divergent power series to analytic functions}, volume 1582
  of {\em Lecture Notes in Mathematics}.
\newblock Springer-Verlag, Berlin, 1994.
\newblock Theory and application of multisummable power series.

\bibitem[Ber92]{Ber}
Daniel Bertrand.
\newblock Groupes alg\'ebriques et \'equations diff\'erentielles lin\'eaires.
\newblock {\em Ast\'erisque}, (206):Exp.\ No.\ 750, 4, 183--204, 1992.
\newblock S{\'e}minaire Bourbaki, Vol. 1991/92.

\bibitem[BJL80]{BJL}
W.~Balser, W.~B. Jurkat, and D.~A. Lutz.
\newblock A general theory of invariants for meromorphic differential
  equations. {III}. {A}pplications.
\newblock {\em Houston J. Math.}, 6(2):149--189, 1980.

\bibitem[Bol97]{Bo}
A.~A. Bolibruch.
\newblock On isomonodromic deformations of {F}uchsian systems.
\newblock {\em J. Dynam. Control Systems}, 3(4):589--604, 1997.

\bibitem[BV85]{BV}
Donald~G. Babbitt and V.~S. Varadarajan.
\newblock Deformations of nilpotent matrices over rings and reduction of
  analytic families of meromorphic differential equations.
\newblock {\em Mem. Amer. Math. Soc.}, 55(325):iv+147, 1985.

\bibitem[Cas72]{C72}
P.~J. Cassidy.
\newblock Differential algebraic groups.
\newblock {\em Amer. J. Math.}, 94:891--954, 1972.

\bibitem[Cas89]{C89}
Phyllis~Joan Cassidy.
\newblock The classification of the semisimple differential algebraic groups
  and the linear semisimple differential algebraic {L}ie algebras.
\newblock {\em J. Algebra}, 121(1):169--238, 1989.

\bibitem[CHS08]{CHS}
Zo{\'e} Chatzidakis, Charlotte Hardouin, and Michael~F. Singer.
\newblock On the definitions of difference {G}alois groups.
\newblock In {\em Model theory with applications to algebra and analysis.
  {V}ol. 1}, volume 349 of {\em London Math. Soc. Lecture Note Ser.}, pages
  73--109. Cambridge Univ. Press, Cambridge, 2008.

\bibitem[CR]{CR}
J~Cano and Jean-Pierre. Ramis.
\newblock Th{\'e}orie de galois diff{\'e}rentielle, multisommabilit{\'e} et
  ph{\'e}nom\`{e}ne de stokes. preprint.

\bibitem[CS07]{CS}
Phyllis~J. Cassidy and Michael~F. Singer.
\newblock Galois theory of parameterized differential equations and linear
  differential algebraic groups.
\newblock In {\em Differential equations and quantum groups}, volume~9 of {\em
  IRMA Lect. Math. Theor. Phys.}, pages 113--155. Eur. Math. Soc., Z\"urich,
  2007.

\bibitem[Dre]{D1}
T.~Dreyfus.
\newblock Computing the galois group of some parameterized linear differential
  equation of order two.
\newblock {\em To appear in Proceedings of the American Mathematical Society.}

\bibitem[DVH12]{DVH}
Lucia Di~Vizio and Charlotte Hardouin.
\newblock Descent for differential {G}alois theory of difference equations:
  confluence and {$q$}-dependence.
\newblock {\em Pacific J. Math.}, 256(1):79--104, 2012.

\bibitem[Eca]{E}
J.~Ecalle.
\newblock R{\'e}surgence et acc{\'e}l{\'e}ration. cours.

\bibitem[{\'E}ca81]{E81}
Jean {\'E}calle.
\newblock {\em Les fonctions r\'esurgentes. {T}ome {I}}, volume~5 of {\em
  Publications Math\'ematiques d'Orsay 81 [Mathematical Publications of Orsay
  81]}.
\newblock Universit\'e de Paris-Sud D\'epartement de Math\'ematique, Orsay,
  1981.
\newblock Les alg{\`e}bres de fonctions r{\'e}surgentes. [The algebras of
  resurgent functions], With an English foreword.

\bibitem[GGO13]{GGO}
Henri Gillet, Sergey Gorchinskiy, and Alexey Ovchinnikov.
\newblock Parameterized {P}icard--{V}essiot extensions and {A}tiyah extensions.
\newblock {\em Adv. Math.}, 238:322--411, 2013.

\bibitem[GO12]{GO}
Sergey Gorchinskiy and Alexey Ovchinnikov.
\newblock Isomonodromic differential equations and differential tannakian
  categories. to appear in journal de math\'ematiques pures et appliqu\'ees.
\newblock 2012.

\bibitem[HS08]{HS}
Charlotte Hardouin and Michael~F. Singer.
\newblock Differential {G}alois theory of linear difference equations.
\newblock {\em Math. Ann.}, 342(2):333--377, 2008.

\bibitem[Kol73]{Kol73}
E.~R. Kolchin.
\newblock {\em Differential algebra and algebraic groups}.
\newblock Academic Press, New York, 1973.
\newblock Pure and Applied Mathematics, Vol. 54.

\bibitem[Kol85]{Kol85}
E.~R. Kolchin.
\newblock {\em Differential algebraic groups}, volume 114 of {\em Pure and
  Applied Mathematics}.
\newblock Academic Press Inc., Orlando, FL, 1985.

\bibitem[Kov86]{Kov}
Jerald~J. Kovacic.
\newblock An algorithm for solving second order linear homogeneous differential
  equations.
\newblock {\em J. Symbolic Comput.}, 2(1):3--43, 1986.

\bibitem[Lan08]{L}
Peter Landesman.
\newblock Generalized differential {G}alois theory.
\newblock {\em Trans. Amer. Math. Soc.}, 360(8):4441--4495, 2008.

\bibitem[LR90]{LR90}
Mich{\`e}le Loday-Richaud.
\newblock Introduction \`a\ la multisommabilit\'e.
\newblock {\em Gaz. Math.}, (44):41--63, 1990.

\bibitem[LR94]{LR94}
Mich{\`e}le Loday-Richaud.
\newblock Stokes phenomenon, multisummability and differential {G}alois groups.
\newblock {\em Ann. Inst. Fourier (Grenoble)}, 44(3):849--906, 1994.

\bibitem[LR95]{LR95}
Mich{\`e}le Loday-Richaud.
\newblock Solutions formelles des syst\`emes diff\'erentiels lin\'eaires
  m\'eromorphes et sommation.
\newblock {\em Exposition. Math.}, 13(2-3):116--162, 1995.

\bibitem[LR01]{LR01}
Mich{\`e}le Loday-Richaud.
\newblock Rank reduction, normal forms and {S}tokes matrices.
\newblock {\em Expo. Math.}, 19(3):229--250, 2001.

\bibitem[LRR11]{LRR}
Mich{\`e}le Loday-Richaud and Pascal Remy.
\newblock Resurgence, {S}tokes phenomenon and alien derivatives for level-one
  linear differential systems.
\newblock {\em J. Differential Equations}, 250(3):1591--1630, 2011.

\bibitem[Mag94]{Ma}
Andy~R. Magid.
\newblock {\em Lectures on differential {G}alois theory}, volume~7 of {\em
  University Lecture Series}.
\newblock American Mathematical Society, Providence, RI, 1994.

\bibitem[Mal83]{M83}
B.~Malgrange.
\newblock Sur les d\'eformations isomonodromiques. {II}. {S}ingularit\'es
  irr\'eguli\`eres.
\newblock In {\em Mathematics and physics ({P}aris, 1979/1982)}, volume~37 of
  {\em Progr. Math.}, pages 427--438. Birkh\"auser Boston, Boston, MA, 1983.

\bibitem[Mal91]{M91}
B.~Malgrange.
\newblock {\em \'{E}quations diff\'erentielles \`a coefficients polynomiaux},
  volume~96 of {\em Progress in Mathematics}.
\newblock Birkh\"auser Boston Inc., Boston, MA, 1991.

\bibitem[Mal95]{M95}
Bernard Malgrange.
\newblock Sommation des s\'eries divergentes.
\newblock {\em Exposition. Math.}, 13(2-3):163--222, 1995.

\bibitem[Mit96]{M}
Claude Mitschi.
\newblock Differential {G}alois groups of confluent generalized hypergeometric
  equations: an approach using {S}tokes multipliers.
\newblock {\em Pacific J. Math.}, 176(2):365--405, 1996.

\bibitem[MO11]{MO}
Andrey Minchenko and Alexey Ovchinnikov.
\newblock Zariski closures of reductive linear differential algebraic groups.
\newblock {\em Adv. Math.}, 227(3):1195--1224, 2011.

\bibitem[MR92]{MR}
B.~Malgrange and J.-P. Ramis.
\newblock Fonctions multisommables.
\newblock {\em Ann. Inst. Fourier (Grenoble)}, 42(1-2):353--368, 1992.

\bibitem[MS12]{MS12}
Claude Mitschi and Michael~F. Singer.
\newblock Monodromy groups of parameterized linear differential equations with
  regular singularities.
\newblock {\em Bull. Lond. Math. Soc.}, 44(5):913--930, 2012.

\bibitem[MS13]{MS13}
Claude Mitschi and Michael~F. Singer.
\newblock Projective isomonodromy and {G}alois groups.
\newblock {\em Proc. Amer. Math. Soc.}, 141(2):605--617, 2013.

\bibitem[PN11]{N}
Ana Pe{\'o}n~Nieto.
\newblock On {$\sigma\delta$}-{P}icard-{V}essiot extensions.
\newblock {\em Comm. Algebra}, 39(4):1242--1249, 2011.

\bibitem[Ram80]{R80}
J.-P. Ramis.
\newblock Les s\'eries {$k$}-sommables et leurs applications.
\newblock In {\em Complex analysis, microlocal calculus and relativistic
  quantum theory ({P}roc. {I}nternat. {C}olloq., {C}entre {P}hys., {L}es
  {H}ouches, 1979)}, volume 126 of {\em Lecture Notes in Phys.}, pages
  178--199. Springer, Berlin, 1980.

\bibitem[Ram85]{R85}
Jean-Pierre Ramis.
\newblock Ph\'enom\`ene de {S}tokes et filtration {G}evrey sur le groupe de
  {P}icard-{V}essiot.
\newblock {\em C. R. Acad. Sci. Paris S\'er. I Math.}, 301(5):165--167, 1985.

\bibitem[Ras10]{Ras}
Jean-Marc Rasoamanana.
\newblock R\'esurgence-sommabilit\'e de s\'eries formelles ramifi\'ees
  d\'ependant d'un param\`etre et solutions d'\'equations diff\'erentielles
  lin\'eaires.
\newblock {\em Ann. Fac. Sci. Toulouse Math. (6)}, 19(2):303--343, 2010.

\bibitem[Rem12]{Re}
Pascal Remy.
\newblock Matrices de {S}tokes-{R}amis et constantes de connexion pour les
  syst\`emes diff\'erentiels lin\'eaires de niveau unique.
\newblock {\em Ann. Fac. Sci. Toulouse Math. (6)}, 21(1):93--150, 2012.

\bibitem[RM90]{RM1}
J.-P. Ramis and J.~Martinet.
\newblock Th\'eorie de {G}alois diff\'erentielle et resommation.
\newblock In {\em Computer algebra and differential equations}, Comput. Math.
  Appl., pages 117--214. Academic Press, London, 1990.

\bibitem[Rob59]{Rob}
Abraham Robinson.
\newblock On the concept of a differentially closed field.
\newblock {\em Bull. Res. Council Israel Sect. F}, 8F:113--128 (1959), 1959.

\bibitem[RS89]{RS}
J.-P. Ramis and Y.~Sibuya.
\newblock Hukuhara domains and fundamental existence and uniqueness theorems
  for asymptotic solutions of {G}evrey type.
\newblock {\em Asymptotic Anal.}, 2(1):39--94, 1989.

\bibitem[Sch01]{Sch}
Reinhard Sch{\"a}fke.
\newblock Formal fundamental solutions of irregular singular differential
  equations depending upon parameters.
\newblock {\em J. Dynam. Control Systems}, 7(4):501--533, 2001.

\bibitem[Sei58]{Sei58}
A.~Seidenberg.
\newblock Abstract differential algebra and the analytic case.
\newblock {\em Proc. Amer. Math. Soc.}, 9:159--164, 1958.

\bibitem[Sei69]{Sei69}
A.~Seidenberg.
\newblock Abstract differential algebra and the analytic case. {II}.
\newblock {\em Proc. Amer. Math. Soc.}, 23:689--691, 1969.

\bibitem[Sib75]{Si}
Yasutaka Sibuya.
\newblock {\em Global theory of a second order linear ordinary differential
  equation with a polynomial coefficient}.
\newblock North-Holland Publishing Co., Amsterdam, 1975.
\newblock North-Holland Mathematics Studies, Vol. 18.

\bibitem[Sib90]{Si90}
Yasutaka Sibuya.
\newblock {\em Linear differential equations in the complex domain: problems of
  analytic continuation}, volume~82 of {\em Translations of Mathematical
  Monographs}.
\newblock American Mathematical Society, Providence, RI, 1990.
\newblock Translated from the Japanese by the author.

\bibitem[Sin09]{S09}
Michael~F. Singer.
\newblock Introduction to the {G}alois theory of linear differential equations.
\newblock In {\em Algebraic theory of differential equations}, volume 357 of
  {\em London Math. Soc. Lecture Note Ser.}, pages 1--82. Cambridge Univ.
  Press, Cambridge, 2009.

\bibitem[Sin13]{S13}
Michael~F. Singer.
\newblock Linear algebraic groups as parameterized {P}icard-{V}essiot {G}alois
  groups.
\newblock {\em J. Algebra}, 373:153--161, 2013.

\bibitem[TT79]{Tre}
Carol Tretkoff and Marvin Tretkoff.
\newblock Solution of the inverse problem of differential {G}alois theory in
  the classical case.
\newblock {\em Amer. J. Math.}, 101(6):1327--1332, 1979.

\bibitem[Ume96]{Um}
Hiroshi Umemura.
\newblock Galois theory of algebraic and differential equations.
\newblock {\em Nagoya Math. J.}, 144:1--58, 1996.

\bibitem[vdPS03]{VdPS}
Marius van~der Put and Michael~F. Singer.
\newblock {\em Galois theory of linear differential equations}, volume 328 of
  {\em Grundlehren der Mathematischen Wissenschaften [Fundamental Principles of
  Mathematical Sciences]}.
\newblock Springer-Verlag, Berlin, 2003.

\bibitem[Was87]{W}
Wolfgang Wasow.
\newblock {\em Asymptotic expansions for ordinary differential equations}.
\newblock Dover Publications Inc., New York, 1987.
\newblock Reprint of the 1976 edition.

\bibitem[Wat95]{Wat}
G.~N. Watson.
\newblock {\em A treatise on the theory of {B}essel functions}.
\newblock Cambridge Mathematical Library. Cambridge University Press,
  Cambridge, 1995.
\newblock Reprint of the second (1944) edition.

\bibitem[Wib12]{Wib}
Michael Wibmer.
\newblock Existence of {$\partial$}-parameterized {P}icard-{V}essiot extensions
  over fields with algebraically closed constants.
\newblock {\em J. Algebra}, 361:163--171, 2012.

\end{thebibliography}
\end{document}